  \let\noindent\empty 
\numberwithin{equation}{section}
\theoremstyle{plain}
\newtheorem{theorem}{Theorem}[section]
\newtheorem{lemma}[theorem]{Lemma}
\newtheorem{proposition}[theorem]{Proposition}
\newtheorem{corollary}[theorem]{Corollary}
\newtheorem{cornot}[theorem]{Corollary/Notation}
\newtheorem{conjecture}[theorem]{Conjecture}
\newtheorem{bigtheorem}{Theorem}
\newtheorem{bigconjecture}[bigtheorem]{Conjecture}
\theoremstyle{definition}
\newtheorem{definition}[theorem]{Definition}
\newtheorem{example}[theorem]{Example}
\newtheorem{assumption}[theorem]{Assumption}
\newtheorem{notation}[theorem]{Notation}
\theoremstyle{remark}
\newtheorem{remark}[theorem]{Remark}
\DeclareMathOperator{\area}{Area}
\DeclareMathOperator{\crit}{Crit}
\DeclareMathOperator{\Cut}{Cut}
\DeclareMathOperator{\cone}{Cone}
\DeclareMathOperator{\core}{Core}
\DeclareMathOperator{\Diff}{Diff}
\DeclareMathOperator{\domain}{Domain}
\DeclareMathOperator{\Dr}{Dr}
\DeclareMathOperator{\forget}{Forget}
\DeclareMathOperator{\Gl}{Gl}
\DeclareMathOperator{\gr}{Gr}
\let\hom\relax\DeclareMathOperator{\hom}{Hom}
\DeclareMathOperator{\im}{Image}
\DeclareMathOperator{\Rol}{Rol}
\DeclareMathOperator{\Sh}{Sh}
\DeclareMathOperator{\St}{St}
\DeclareMathOperator{\closure}{cl}
\DeclareMathOperator{\rk}{rk}
\DeclareMathOperator{\rot}{rot}
\DeclareMathOperator{\sgn}{sgn}
\DeclareMathOperator{\val}{val}
\newcommand{\BB}{\mathbb{B}}
\newcommand{\CC}{\mathbb{C}}
\newcommand{\NN}{\mathbb{N}}
\newcommand{\RR}{\mathbb{R}}
\newcommand{\ZZ}{\mathbb{Z}}
\newcommand{\bE}{\mathbf{E}}
\newcommand{\bG}{\mathbf{G}}
\newcommand{\bL}{\mathbf{L}}
\newcommand{\bO}{\mathbf{O}}
\newcommand{\bP}{\mathbf{P}}
\newcommand{\bQ}{\mathbf{Q}}
\newcommand{\bU}{\mathbf{U}}
\newcommand{\bV}{\mathbf{V}}
\newcommand{\be}{\mathbf{e}}
\newcommand{\bh}{\mathbf{h}}
\newcommand{\bp}{\mathbf{p}}
\newcommand{\bv}{\mathbf{v}}
\newcommand{\bw}{\mathbf{w}}
\newcommand{\bx}{\mathbf{x}}
\newcommand{\by}{\mathbf{y}}
\newcommand{\cA}{\mathcal{A}}
\newcommand{\cB}{\mathcal{B}}
\newcommand{\cC}{\mathcal{C}}
\newcommand{\cD}{\mathcal{D}}
\newcommand{\cE}{\mathcal{E}}
\newcommand{\cF}{\mathcal{F}}
\newcommand{\cG}{\mathcal{G}}
\newcommand{\cH}{\mathcal{H}}
\newcommand{\cI}{\mathcal{I}}
\newcommand{\cK}{\mathcal{K}}
\newcommand{\cL}{\mathcal{L}}
\newcommand{\cLG}{\mathcal{LG}}
\newcommand{\cLT}{\mathcal{LT}}
\newcommand{\cM}{\mathcal{M}}
\newcommand{\cN}{\mathcal{N}}
\newcommand{\cP}{\mathcal{P}}
\newcommand{\cR}{\mathcal{R}}
\newcommand{\cT}{\mathcal{T}}
\newcommand{\cU}{\mathcal{U}}
\newcommand{\cV}{\mathcal{V}}
\newcommand{\cW}{\mathcal{W}}
\newcommand{\scrM}{\mathscr{M}}
\newcommand{\scrP}{\mathscr{P}}
\newcommand{\scrU}{\mathscr{U}}
\newcommand{\sC}{\mathsf{C}}
\newcommand{\sD}{\mathsf{D}}
\newcommand{\sE}{\mathsf{E}}
\newcommand{\sG}{\mathsf{G}}
\newcommand{\sH}{\mathsf{H}}
\newcommand{\sI}{\mathsf{I}}
\newcommand{\sL}{\mathsf{L}}
\newcommand{\sO}{\mathsf{O}}
\newcommand{\sR}{\mathsf{R}}
\newcommand{\sS}{\mathsf{S}}
\newcommand{\sT}{\mathsf{T}}
\newcommand{\sU}{\mathsf{U}}
\newcommand{\sV}{\mathsf{V}}
\DeclareSymbolFont{sfletters}{OT1}{cmss}{m}{n}
\DeclareMathSymbol{\sTheta}{\mathord}{sfletters}{"02}
\newcommand{\sfa}{\mathsf{a}}
\newcommand{\sfb}{\mathsf{b}}
\newcommand{\sfc}{\mathsf{c}}
\newcommand{\sfd}{\mathsf{d}}
\newcommand{\sfe}{\mathsf{e}}
\newcommand{\sfg}{\mathsf{g}}
\newcommand{\sfh}{\mathsf{h}}
\newcommand{\sfm}{\mathsf{m}}
\newcommand{\sfp}{\mathsf{p}}
\newcommand{\sfq}{\mathsf{q}}
\newcommand{\sft}{\mathsf{t}}
\newcommand{\sfv}{\mathsf{v}}
\newcommand{\sfw}{\mathsf{w}}
\newcommand{\sfx}{\mathsf{x}}
\newcommand{\ttE}{\mathtt{E}}
\newcommand{\ttH}{\mathtt{H}}
\newcommand{\ttV}{\mathtt{V}}
\newcommand{\tte}{\mathtt{e}}
\newcommand{\tth}{\mathtt{h}}
\newcommand{\ttv}{\mathtt{v}}
\newcommand{\fG}{\mathfrak{G}}
\newcommand{\fP}{\mathfrak{P}}
\newcommand{\fR}{\mathfrak{R}}
\newcommand{\fT}{\mathfrak{T}}
\newcommand{\fX}{\mathfrak{X}}
\newcommand{\fd}{\mathfrak{d}}
\newcommand{\fr}{\mathfrak{r}}
\newcommand{\ft}{\mathfrak{t}}
\newcommand{\fx}{\mathfrak{x}}
\newcommand{\fy}{\mathfrak{y}}
\newcommand{\fz}{\mathfrak{z}}
\renewcommand{\bar}{\overline}
\renewcommand{\emptyset}{\varnothing}
\renewcommand{\hat}{\widehat}
\renewcommand{\tilde}{\widetilde}
\newcommand{\origin}{\mathbf{0}}
\newcommand{\tameisom}{\stackrel{t}\simeq}
\newcommand{\stabletameisom}{\stackrel{s.t.}\simeq}
\newcommand{\Zmod}[1]{\ZZ/#1\ZZ}
\newcommand{\hybto}[1]{\stackrel{#1}{\Rightarrow}}
\newcommand{\cl}[1]{\closure(#1)}
\newcommand{\rPi}{\mathring{\Pi}}
\newcommand{\ePi}{{\bE\Pi}}
\newcommand{\vPi}{{\bV\Pi}}
\newcommand{\separator}{\diamond}
\newcommand{\CE}{\mathsf{CE}}
\newcommand{\EN}{\mathsf{EkN}}
\newcommand{\id}{\mathsf{Id}}
\newcommand{\Ng}{\mathsf{EtNS}}
\newcommand{\pair}{\mathsf{Pair}}
\newcommand{\co}{\mathsf{co}}
\newcommand{\ess}{\mathsf{ess}}
\newcommand{\ext}{\mathsf{ext}}
\newcommand{\hyb}{\mathsf{hyb}}
\newcommand{\sinf}{\mathsf{inf}}
\newcommand{\intr}{\mathsf{int}}
\newcommand{\reg}{\mathsf{reg}}
\newcommand{\rotation}{\mathsf{rot}}
\newcommand{\sm}{\mathsf{sm}}
\newcommand{\std}{\mathsf{std}}
\newcommand{\tangleReplacement}{\amalg_v}
\newcommand{\tangleReplacementLag}{\amalg_\sfv}
\newcommand{\concavevertex}{\mathsf{concave}}
\newcommand{\positivefolding}{\mathsf{fold^+}}
\newcommand{\infinitesimalmonogon}{\mathsf{I}}
\newcommand{\infinitesimaltriangle}{\mathsf{III}}
\newcommand{\infinitesimalbigonleft}{\mathsf{II,L}}
\newcommand{\infinitesimalbigonmiddle}{\mathsf{II,C}}
\newcommand{\infinitesimalbigonright}{\mathsf{II,R}}
\newcommand{\infinitesimalsquare}{\mathsf{IV}}
\newcommand{\negativefolding}{\mathsf{fold^-}}
\newcommand{\negativefoldingconcave}{\mathsf{fold^-,concave}}
\newcommand{\pmfolding}{\mathsf{fold^\pm}}
\newcommand{\rightmost}{\mathsf{v,most^R}}
\newcommand{\leftmost}{\mathsf{v,most^L}}
\newcommand{\rightrigid}{\mathsf{e,rigid^R}}
\newcommand{\leftrigid}{\mathsf{e,rigid^L}}
\newcommand{\flexible}{\mathsf{v,flex}}
\newcommand{\vertexleftrigid}{\mathsf{v,rigid^L}}
\newcommand{\vertexbirigid}{\mathsf{v,rigid}}
\renewcommand{\S}{Section}
\newcommand*{\sector}{{
	\begin{tikzpicture}[scale=0.15]
		\draw (0,0)--(45:1) arc (45:-45:1)--(0,0);
	\end{tikzpicture}}
}
\begin{document}
\title{A Chekanov-Eliashberg algebra for Legendrian graphs}
\author{Byung Hee An}
\address{Center for Geometry and Physics, Institute for Basic Science (IBS), Pohang 37673, Korea}
\email{anbyhee@ibs.re.kr}
\author{Youngjin Bae}
\address{Research Institute for Mathematical Sciences, Kyoto University, Kyoto 606-8317, Japan}
\email{ybae@kurims.kyoto-u.ac.jp}

\begin{abstract}
We define a differential graded algebra for Legendrian graphs and tangles in the standard contact Euclidean three space.
This invariant is defined combinatorially by using ideas from Legendrian contact homology.
The construction is distinguished from other versions of Legendrian contact algebra by the vertices of Legendrian graphs.
A set of countably many generators and a generalized notion of equivalence are introduced for invariance.
We show a van Kampen type theorem for the differential graded algebras under the tangle replacement. Our construction recovers many known algebraic constructions of Legendrian links via suitable operations at the vertices.
\end{abstract}
\subjclass[2010]{53D42, 57R17; 57M15}
\keywords{Chekanov-Eliashberg algebra, Legendrian contact homology, Legendrian graphs, Bordered Legendrians}

\maketitle

\tableofcontents

\section{Introduction}
The theory of Legendrian submanifolds plays an important role in the study of contact and symplectic topology.
Moreover, it has an interesting connection to low dimensional topology and knot theory.
A key breakthrough in contact topology and symplectic geometry was initiated by Gromov's pseudo-holomorphic disk counting technique.
Invariants of Legendrian submanifolds using this curve counting method were outlined in \cite{Eliashberg2000} and concretized by Chekanov \cite{Chekanov2002} for Legendrian links in the standard  contact $\RR^3$ in terms of combinatorial data.
Later the invariants, realized as a differential graded algebra(DGA), have been generalized in several directions including \cite{EES2005,EES2007,EN2015,LS2013,Ng2010,NT2004,Sabloff2003,Sivek2011}.

Legendrian graphs are used in the proof of the famous Giroux correspondence theorem and recently appeared in the study of arboreal singularities as 1-dimensional Legendrians with singularities. 
They have been studied by several groups including \cite{BI2009,OP2014,Prasolov2014} in their own right, especially in the spirit of classification.
The goal of this article is to extend the curve counting idea to Legendrian graphs in the standard contact $\RR^3$.

The main issue is how to deal with the singularities, i.e., the vertices of the Legendrian graph. 
The crucial feature of the construction of a DGA for Legendrian graphs is that we associate a set of countably many generators, Reeb chords, for each vertex of the Legendrian graph.
There is geometric motivation for such an assignment.
Instead of considering a Legendrian with singularities, let us consider a bordered manifold $S^3\setminus k\mathring B^3$, where $\mathring B^3$ is an open 3-ball and $k$ is the number of vertices of our Legendrian graph. 
Edges in a Legendrian graph are replaced by properly embedded Legendrian arcs in $S^3\setminus k\mathring B^3$.
By admitting a certain standard model near the boundary we have a Reeb orbit for each boundary component which yields infinitely many Reeb chords.
A DGA with infinitely many generators was discussed in \cite{EN2015} where the authors considered Legendrian links in the boundary of a subcritical Stein $4$-manifold.
Note that these two constructions are deeply related both geometrically and algebraically.

The second issue is about the grading of the DGA. For Legendrian knots, there is a canonical construction of a potential function, which is unique up to translation and induces a $\ZZ$ or $\Zmod{r}$-grading.
Similarly, the gradings on $n$-component links are given by componentwise potential functions which have $(n-1)$ degrees of freedom up to translation.
We generalize this construction further to Legendrian graphs by considering {\em edgewise} potential functions. Then each edge contributes one to the degree of freedom for grading and exactly one of them is reduced by the translation action as in the link case. To have a well-defined grading on our DGA, we consider Legendrian graphs with potential instead of Legendrian graphs alone.

The last important issue is about invariance with respect to Legendrian isotopy, or Reidemeister moves for Legendrian graphs.\footnote{See Figure~\ref{fig:RM}.}
The stable-tame isomorphism, a notion of equivalence between DGAs, works well when a pair of generators emerges or cancels out.
Such a phenomenon typically appear when we perform the Legendrian Reidemeister move $\rm(II)$ on Legendrian links in the standard contact $\RR^3$.
When there is a $m$-valent vertex in a Legendrian graph, however, the Legendrian Reidemeister move $\rm(IV_*)$ forces us to develop the notion of algebraic equivalence which cares about the birth and death of $m$ generators. To remedy this problem, we suggest the notion of {\em peripheral structures} and {\em generalized stabilizations}.
With this terminology, we have the following two theorems.

\begin{bigtheorem}[Theorem~\ref{theorem:DGA}]
Let $\cL=(\Lambda,\fP)$ be a Legendrian graph with potential. Then there is a pair $(\cA_\cL, \cP_\cL)$ consisting of a DGA $\cA_\cL\coloneqq(A_\Lambda,|\cdot|_\fP,\partial)$ and a canonical peripheral structure $\cP_\cL$.
\end{bigtheorem}

\begin{bigtheorem}[Theorem~\ref{theorem:invariance}]
The pair $(\cA_\cL, \cP_\cL)$ up to generalized stable-tame isomorphisms is an invariant for $\cL$ under the Legendrian Reidemeister moves for Legendrian graphs with potential. In particular the induced homology $H_*(\cA_\cL,\partial)$ is an invariant.
\end{bigtheorem}

The DGA construction can be generalized to {\em Legendrian tangles} and we consider the operation given by replacing a Darboux neighborhood of a vertex with a suitable Legendrian tangle, which yields a van Kampen type theorem for DGAs.

Legendrian links in a bordered manifold and their associated DGAs were first considered in \cite{Sivek2011} via combinatorial methods and later in \cite{HS2014} with geometric interpretation.
The main statement there was also a van Kampen type theorem for Legendrian links in the standard contact $\RR^3$. Note that their construction has at most two borders, and it can be interpreted as a Legendrian graph with one or two vertices in our terminology. Our construction of a DGA generalizes that of \cite{Sivek2011} as follows:

\begin{bigtheorem}[Theorems~\ref{thm:van Kampen} and \ref{thm:inclusion of DGA diagram}]
Let $\cL$ be a Legendrian graph(or tangle) with potential having a $m$-valent vertex $v$, $\cT$ be a Legendrian $m$-tangle with potential, and $\cL\tangleReplacement\cT$ be a tangle replacement of $\cL$ near $v$. Then we have a following commutative diagram of DGAs:
\[
\begin{tikzcd}
\cI_v\ar[r,"\bp_\infty"]\ar[d,"\bp_v"',hook]& \cA_\cT \ar[d,hook]\\
\cA_\cL\ar[r,"\bw_v"]& \cA_{\cL\tangleReplacement\cT}
\end{tikzcd}
\]
Here $\cI_v$ is a DGA for the $m$-valent vertex $v$ with peripheral structures $\bp_v$ and $\bp_\infty$, and ${\bw}_v$ is defined by evaluating $\bp_\infty$ for the image $\bp_v(\cI_m)$.

Moreover, there is a canonical inclusion from Sivek's DGA diagram to the above DGA diagram.
\end{bigtheorem}

On the other hand, Legendrian links can be considered as Legendrian graphs having bivalent vertices only which are smooth at each vertex.
Conversely, we define an operation, called {\em smoothing}, on a bivalent vertex of a given Legendrian graph, which can be used to define an associated DGA for the result. Then via this operation, we can recover Chekanov--Eliashberg's DGA \cite{Chekanov2002} and Etnyre--Ng--Sabloff's DGA \cite{ENS2002}.

\begin{bigtheorem}[Theorem~\ref{theorem:smoothing}]
Let $\cK=(\Lambda,\fP)$ be a Legendrian circle with $(\Zmod{2\rot(\cK)})$-valued potential consisting of one bivalent vertex $v$ and one edge.
Suppose that two half-edges are opposite and have the same potential. Then there is a DGA isomorphism
\[
\cA_\cK^{\sm}(v)\otimes_\ZZ(\Zmod{2})\to\cA^{\CE}_\cK,
\]
where $\cA^{\CE}_\cK$ is the Chekanov--Eliashberg DGA over $\Zmod{2}$ for the Legendrian knot obtained from $\cK$.
\end{bigtheorem}
\begin{bigtheorem}[Theorem~\ref{theorem:smoothing 2}]
Let $\cL=(\Lambda,\fP)$ be a Legendrian graph with $\ZZ$-valued potential whose underlying graph is a disjoint union of circles. Suppose that each component has only one bivalent vertex whose two half-edges are opposite and have the same potential.
Then there is a DGA isomorphism
\[
\cA_\cL^{\sm}\to\cA^{\Ng}_\cL,
\]
where $\cA^{\Ng}_\cL$ is the $\ZZ$-graded Chekanov--Eliashberg DGA over $\ZZ[\sft_1^{\pm1},\cdots, \sft_m^{\pm1}]$ for the Legendrian $m$-component link $\cL$ generalized by Etnyre, Ng, and Sabloff.
\end{bigtheorem}

Even though we only consider a combinatorial description for pseudo-holomorphic disks in the rest of the article, the main idea of the construction of our DGA and of the proof of the invariance come from the geometric picture sketched above. This model is inspired by the standard local model for Legendrians in boundaries of Weinstein 1-handles discussed by Ekholm and Ng in \cite[\S 4.2]{EN2015}. Indeed, we need half of that standard model. We have the following relation in this regard:

\begin{bigtheorem}[Theorem~\ref{theorem:relation to EN}]
Let $\cL$ be a Legendrian graph with $2m$ vertices and $\Phi$ be a $m$-pair of gluings of vertices such that the gluing $\cL_\Phi$ is a Legendrian link in $\#^m(S^1\times S^2)$. Then the DGA $\cA_\cL$ is generalized stable-tame isomorphic to the DGA $\cA_{\cL_\Phi}^{\EN}$ defined by Ekholm and Ng.
\end{bigtheorem}

For a given Legendrian link in $S^3$, there is a construction of a Weinstein domain obtained by attaching a cotangent cone (or Weinstein two handle) along the neighborhood of the given Legendrian link. To extend this construction to a Legendrian graph $\Lambda$, we need additional data on $\Lambda$, the {\em base points} $I$, which determine preferred Legendrian cycles $\{\Lambda_i\}_{i \in I}$.

Now it is possible to mimic the construction for the Legendrian graph, and note that the resulting Weinstein domain, say $\cW$, depends on a {\em Legendrian ribbon} $R$ under the ribbon isotopies, not the starting Legendrian $\Lambda$. So it is natural and important to consider algebraic invariants for the ribbon structure which can be extracted from $\Lambda$ equipped with the above additional data.

There are two distinguished Lagrangians in $\cW$. One is the Lagrangian skeleton of $\cW$ and the other is its symplectic dual, the union of the corresponding cocore disks $D_\Lambda=\{D_i\}_{i\in I}$. Along with \cite[Theorem~2 and Conjecture~3]{EL2017} we propose a relation between the partially wrapped Floer cohomology of the dual of the Lagrangian skeleton and the newly constructed chain complexes of the Legendrian graph $\Lambda$.

\begin{bigconjecture}[Conjecture~\ref{conjecture:relation with CW}]
There is an $A_\infty$ quasi-isomorphism between the partially wrapped $A_\infty$ algebra $CW^*(D_\Lambda,D_\Lambda)$ and $\cA_{\cL}(\Lambda,\Lambda)=\bigoplus_{i,j}\cA_{\cL}(\Lambda_i,\Lambda_j)$ which extends the quasi-isomorphisms between the chain complexes $CW^*(D_i,D_j)$ and $\cA_{\cL}(\Lambda_i,\Lambda_j)$.
\end{bigconjecture}

It seems interesting to make a comparison between our method and other approaches including the theory of microlocal sheaves, the infinitesimal Fukaya category, and the study of holonomic $\cD$-modules. At the end of this article we give explicit computations of the algebra $\cA_{\cL}(\Lambda,\Lambda)$ for Legendrian graphs which are related to certain arboreal singularities in Nadler's list \cite{Nadler2017}.

\subsection*{Summary of the Paper}
As preliminaries, we recall in \S~\ref{sec:Preliminaries} the basic notions for Legendrian graphs such as Lagrangian projection, Reidemeister moves, and Maslov potentials.

In \S~\ref{sec:Peripheral Structures and Generalized stable tame isomorphism for DGAs}, we introduce a new equivalence relation for DGAs which generalizes stable-tame isomorphism of DGAs. In particular, peripheral structures and generalized stabilizations for DGAs are introduced. 

A construction of a DGA for Legendrian graphs is given in \S~\ref{Differential graded algebra for Legendrian graphs with potentials}. 
Capping paths are introduced as the generators and the grading is induced from the Maslov potential.
Two types of admissible disks are introduced for the differential of generators coming from the crossings and the vertices, respectively.
In order to encode the data of vertices of a graph, we introduce a canonical peripheral structure which is a collection of DG-subalgebras at each vertex.

\S~\ref{section:Hybrid disks and the invariance theorem} is devoted to showing the invariance theorem. Hybrid disks and pairs are introduced as a combinatorial model for pseudo-holomorphic disks in the Lagrangian cobordism induced by the move~$\rm(IV_*)$. A map between two DGAs is given by counting such hybrid disks and pairs.

In \S~\ref{section:Legendrian tangle}, we extend the DGA construction to Legendrian tangles and show the van Kampen type theorem for DGAs.

Relations to the known DGA constructions are dealt with in \S~\ref{section:Applications}. In particular, the DGA construction in \cite{Chekanov2002,EN2015,Ng2003,Sivek2011} are discussed. 

We introduce in \S~\ref{section:partially wrapped Floer homology} the Legendrian ribbon of the Legendrian graph and a related construction of a Weinstein domain.
We also discuss the relation between the composable DGA of Legendrian graphs and the wrapped Floer homology of the newly constructed Weinstein domain. 

In Appendix~\ref{section:manipulation of disks} and \ref{appendix:hybrid admissible pairs}, we perform explicit manipulations of (hybrid) admissible pairs to show $\partial^2=0$ and the invariance property of our DGA.

\section{Preliminaries}\label{sec:Preliminaries}
\subsection{Conventions}
We will use different font styles to denote graphs and other objects in various categories as follows:
\begin{enumerate}
\item An abstract graph will be denoted by $\Gamma$, whose vertices, edges and half-edges are denoted by a typewriter font, such as $\ttv\in\ttV_\Gamma$, $\tte\in\ttE_\Gamma$ and $\tth\in\ttH_\ttv$.
\item A Legendrian graph will be denoted by $\Lambda$, whose vertices, edges and half-edges are denoted by italic letters, such as $v\in\cV_\Lambda$, $e\in\cE_\Lambda$ and $h\in\cH_v$.
\item A projection of a Legendrian graph will be denoted by $\sL$, whose vertices, edges and half-edges are denoted by sans serif letters, such as $\sfv\in\sV_\sL$, $\sfe\in\sE_\sL$ and $\sfh\in\sH_\sfv$.
\item For Maslov potentials and gradings, we will use Fraktur letters, such as $1_\fR\in\fR$, $\ft\in\fT_\sL\subset\fX_\sL$, $\fP\in\fX_\sL^*$ and $\fG(\sL;\fR)$.
\item On the other hand, we will consider the convex polygon $\Pi$, whose vertices, edges and generic points will be denoted by the boldface letters, such as $\bv\in\vPi$, $\be\in\ePi$ and $\bx\in\Pi$.
\end{enumerate}

We will use the notations 
\begin{align*}
\RR^3_0&\coloneqq(\RR^3,\xi_0),&
\xi_0&=\ker(dz-ydx),\\
\RR^3_{\rotation}&\coloneqq(\RR^3,\xi_{\rotation}),&
\xi_{\rotation}&=\ker(dz+xdy-ydx)
\end{align*}
to denote two standard tight contact structures on $\RR^3$, which are contactomorphic to each other via $\Xi:\RR_0^3\to\RR_{\rotation}^3$ defined as
\begin{align}
\setlength{\arraycolsep}{2pt}
\begin{array}{rccc}
\Xi:&\RR^3_0&\to&\RR^3_{\rotation},\\
&(x,y,z)&\mapsto&(x,y,2z-xy).
\end{array}
\end{align}
Notice that two contact forms $dz-ydx$ and $dz+xdy-ydx$ define the same Reeb vector field $\partial_z$.

\subsection{Legendrian graphs}\label{sec:Legendrian graphs}
Throughout this paper, we mean by a {\em directed graph} $\Gamma=(\ttV_\Gamma,\ttE_\Gamma)$ a finite regular CW complex of dimension 1, where $\ttV_\Gamma$ and $\ttE_\Gamma$ are the sets of 0-cells and {\em oriented} 1-cells called {\em vertices} and {\em edges}, respectively. 
Hence each edge $\tte\in \ttE_\Gamma$ can be regarded as a function
\begin{align*}
\tte:([0,1],\{0,1\})\to(\Gamma,\ttV_\Gamma).
\end{align*}

A {\em half-edge} $\tth$ is a restriction of an edge $\tte$, which is either $\tte|_{[0,\epsilon)}$ or $\tte|_{(1-\epsilon,1]}$ for $\epsilon\ll 1$ and whose vertex is defined as either $\tte(0)$ or $\tte(1)$, respectively. We say that $\tth$ is {\em adjacent to $\ttv$} if the vertex of $\tth$ is $\ttv$.
We denote the set of half-edges adjacent to $\ttv$ by $\ttH_\ttv$. Then the number $m$ of half-edges adjacent to $\ttv$ defines the {\em valency} of $\ttv$.
\begin{align*}
\ttH_\ttv&\coloneqq\{\tth_{\ttv,1},\cdots,\tth_{\ttv,m}\},&
\val(\ttv)&\coloneqq m.
\end{align*}
We say that a vertex $\ttv$ is {\em isolated} if it is of valency $0$.

\begin{assumption}[No isolated vertices]
We assume that there are no isolated vertices in $\Gamma$. 
\end{assumption}

A {\em spatial directed graph $\Lambda=(\cV_\Lambda, \cE_\Lambda)$} of a directed graph $\Gamma$ in $\RR^3$ is defined to be an embedding $\Lambda:\Gamma\to\RR^3$, where 
\begin{align*}
\cV_\Lambda&\coloneqq \Lambda(\ttV_\Gamma),&
\cE_\Lambda&\coloneqq\Lambda(\ttE_\Gamma),&
\cH_v&\coloneqq\Lambda(\ttH_\ttv)=\{h_{v,1},\cdots, h_{v,m}\mid h_{v,i}=\Lambda(\tth_{\ttv,i})\}
\end{align*}
for $v=\Lambda(\ttv)$. We use $\Lambda$ to denote the image $\Lambda(\Gamma)$ for convenience's sake.

\begin{definition}[Legendrian graphs, \cite{OP2014}]\label{definition:Legendrian graph}
A spatial directed graph $\Lambda=(\cV_\Lambda,\cE_\Lambda)$ of $\Gamma$ is a {\em Legendrian graph} if 
\begin{enumerate}
\item each edge $e\in\cE_\Lambda$ as an embedding
\[
e:([0,1],\{0,1\})\to(\Lambda,\cV_\Lambda)\subset \RR^3_{\rotation}
\]
is smooth Legendrian on its interior,
\item for each edge, the one-sided differentials at both ends are well-defined and non-vanishing, and therefore the angles between two half-edges at the same vertex are well-defined, and
\item the angle between any two half-edges adjacent to the same vertex is non-zero\footnote{The angle $\pi$ between two half-edges is allowed.}.
\end{enumerate}

We say that two Legendrian graphs $\Lambda$ and $\Lambda'$ are {\em equivalent} if 
they are isotopic, that is, there exists a family of Legendrian graphs
\begin{align*}
\Lambda_t&:\Gamma\times[0,1]\to\RR^3_{\rotation},&
\Lambda_0&=\Lambda,&
\Lambda_1&=\Lambda'.
\end{align*}
\end{definition}

\begin{theorem}[Darboux Theorem for Legendrian graphs]\label{theorem:Darboux}
Let $\Lambda=(\cV_\Lambda,\cE_\Lambda)$ be a Legendrian graph. Then for each vertex $v\in \cV_\Lambda$ of valency $m\ge 1$, there exist a neighborhood $\cU_v$ of $v$ and a contactomorphism $\phi_v$ preserving both orientation and co-orientation
\begin{align*}
\phi_v:(\cU_v\setminus\{v\}, (\cU_v\setminus\{v\})\cap\Lambda)\stackrel{\simeq}\longrightarrow (\cU_\origin\setminus\{\origin\}, (\cU_{\origin}\setminus\{\origin\})\cap T_\Theta),
\end{align*}
where $\cU_\origin$ is the unit ball of $\RR^3_{\rotation}$ and $T_\Theta$ is the union of Legendrian rays
\begin{align*}
T_\Theta&\coloneqq\Lambda_{\theta_1}\cup\cdots\cup\Lambda_{\theta_m},&
\Lambda_\theta&=\left\{ (r\cos\theta,r\sin\theta,0)\in \RR^3_{\rotation}\mid r\ge0\right\},
\end{align*}
for some $\Theta=(\theta_1,\cdots,\theta_m)$ with $0\le \theta_m<\cdots<\theta_1<2\pi$.

We call the neighborhood $\cU_v$ a {\em Darboux neighborhood} of $v$.
\end{theorem}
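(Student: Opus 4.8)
The plan is to first reduce to the rotational standard model by the classical contact Darboux theorem, and then to straighten the half-edges into the radial rays $\Lambda_{\theta_i}$ by an explicit, \emph{localized} contact isotopy, treating the half-edges one at a time. First I would apply the ordinary Darboux theorem for contact $3$-manifolds at the point $v$: there is a contactomorphism $\psi$ from a ball $\cU_v$ around $v$ onto a ball around $\origin\in\R^3_{\rotation}$ with $\psi(v)=\origin$, which (after composing with a strict contact symmetry if necessary) we may take to preserve orientation and co-orientation. Since a contactomorphism carries contact planes to contact planes, each half-edge is sent to a Legendrian arc $\tilde h_i$ issuing from $\origin$ whose one-sided, nonvanishing tangent (Definition~\ref{definition:Legendrian graph}(2)) lies in the contact plane $\xi_{\rotation}|_{\origin}=\{z=0\}$. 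By Definition~\ref{definition:Legendrian graph}(3) and its footnote the pairwise angles are nonzero, so these $m$ tangent directions are pairwise distinct; ordering them as $0\le\theta_m<\cdots<\theta_1<2\pi$ produces the tuple $\Theta$.

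The heart of the argument is a contactomorphism of a punctured ball carrying each $\tilde h_i$ to $\Lambda_{\theta_i}$. The key building block is explicit in the model $\R^3_0$: for $f$ with $f(0)=f'(0)=0$ the map
\[
\Phi_f(x,y,z)=\bigl(x,\ y-f'(x),\ z-f(x)\bigr)
\]
is a strict contactomorphism of $\R^3_0$, since $\Phi_f^*(dz-y\,dx)=dz-y\,dx$, it fixes the origin, and it sends the front arc $\{(x,f'(x),f(x))\}$ to the $x$-axis, i.e.\ to the radial Legendrian ray at angle $0$. A Legendrian arc issuing from the origin with horizontal tangent has exactly this graphical form near $\origin$, so one such $\Phi_f$ straightens it. To straighten $\tilde h_i$ at a general angle $\theta_i$ I would conjugate by the rotational symmetry of $\R^3_{\rotation}$ (rotations about the $z$-axis are strict contactomorphisms, as $x\,dy-y\,dx$ is rotation invariant) together with $\Xi$: the composite
\[
\Psi_i=R_{\theta_i}\circ\Xi\circ\Phi_{f_i}\circ\Xi^{-1}\circ R_{-\theta_i}
\]
is a contactomorphism of $\R^3_{\rotation}$, preserving orientation and co-orientation (each factor is strict or positive-conformal, and $D\Xi|_{\origin}$ fixes the $xy$-directions so the tangent angle is preserved), and it takes $\tilde h_i$ to $\Lambda_{\theta_i}$.

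To straighten all $m$ arcs simultaneously I would exploit that the $\theta_i$ are distinct: choose pairwise disjoint angular sectors $S_i$ about the rays $\Lambda_{\theta_i}$ with $\tilde h_i\subset S_i$ near $\origin$ and $\tilde h_j\cap S_i=\emptyset$ for $j\neq i$. The path $s\mapsto\Phi_{sf_i}$ is generated by a contact vector field with contact Hamiltonian $-f_i$, so the conjugated path $\Psi_i^{(s)}$ is generated by a contact Hamiltonian $H_i$; multiplying $H_i$ by a bump function $\chi_i$ that equals $1$ on an open neighborhood of the swept family of intermediate arcs and is supported in $S_i$ yields a contact isotopy whose generator agrees with that of $\Psi_i^{(s)}$ where $\chi_i\equiv 1$ (there $d\chi_i=0$) and is the identity outside $S_i$. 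Composing the resulting $m$ contactomorphisms, which have disjoint supports, gives a single contactomorphism straightening every arc; precomposing with $\psi$ produces a contactomorphism of the full ball fixing $v$, whose restriction to $\cU_v\setminus\{v\}$ is the desired $\phi_v$, and which preserves orientation and co-orientation since every factor does.

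I expect the principal obstacle to be the singular point itself: the union $T_\Theta$ is not a smooth Legendrian submanifold at $\origin$, so the usual Legendrian neighborhood and isotopy-extension theorems do not apply there directly, which is exactly why the statement is phrased on the punctured neighborhoods $\cU_v\setminus\{v\}\to\cU_\origin\setminus\{\origin\}$. The disjoint-sector construction is what circumvents this: away from $\origin$ each arc is an honest smooth Legendrian arc straightened inside its own sector, while the common endpoint $\origin$ is merely fixed by every factor and never needs a smooth normal form. Two routine verifications remain along the way: that each $\tilde h_i$ has no vertical front tangency near $\origin$ (guaranteed because its distinct tangent is rotated to be horizontal before applying $\Phi_{f_i}$), and that the jet condition $f_i(0)=f_i'(0)=0$ holds, which follows from the tangent-matching secured in the first step.
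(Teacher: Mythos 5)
Your proposal is sound, but be aware that the paper contains no proof to compare against: Theorem~\ref{theorem:Darboux} is dispatched with the single remark that the argument is ``essentially the same as the proof of \cite[Lemma~3.2]{ABK2018}'', the Darboux-type lemma for two transversally intersecting Legendrian arcs (the $4$-valent singular-link case). Your writeup therefore supplies what the paper leaves implicit, and it does so by the same kind of argument that citation points to: classical Darboux at the vertex, then straightening the half-edges one at a time by contact isotopies localized in pairwise disjoint angular sectors, with the vertex itself handled by puncturing rather than by any smooth normal form at $\origin$. What your version buys is explicitness: the strict contactomorphism $\Phi_f$, the conjugation by rotations and $\Xi$, and the disjoint-sector cutoffs make every step concrete and make transparent why the statement is phrased on $\cU_v\setminus\{v\}\to\cU_\origin\setminus\{\origin\}$.

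One step needs more care than you give it. The sectors $S_i$ pinch at $\origin$, so any smooth cutoff $\chi_i$ supported in $S_i$ and equal to $1$ on a subsector has $|d\chi_i|\sim 1/r$ near the puncture; it is then not automatic that the flow of the contact field with Hamiltonian $\chi_iH_i$ fixes the puncture, stays defined up to time $1$, and never pushes points of the punctured ball into or across $\origin$. This is rescued by a vanishing order you do not record: after rotating the tangent to the positive $x$-direction, the Legendrian condition $z'=y$ together with $y'(0)=0$ (which is exactly the tangent-matching) gives $f_i(0)=f_i'(0)=f_i''(0)=0$, hence $H_i=o(r^2)$ and $dH_i=o(r)$, so $|H_i\,d\chi_i|=o(r)$ and the localized contact vector field is $o(r)$ near the puncture; a Gronwall estimate then shows trajectories neither reach $\origin$ in finite time nor escape, and the time-one map extends continuously fixing $\origin$. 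With this estimate, plus the routine normalization you also omit (compose with the contact dilation $(x,y,z)\mapsto(\lambda x,\lambda y,\lambda^2 z)$, which preserves $T_\Theta$, and take $\cU_v$ to be the preimage of the unit ball so that the map is onto $\cU_\origin\setminus\{\origin\}$ and carries $\cU_v\cap\Lambda$ onto $T_\Theta\cap\cU_\origin$ minus the origin), your argument is complete.
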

\begin{proof}
We omit the proof since it is essentially the same as the proof of \cite[Lemma~3.2]{ABK2018}.
\end{proof}

\begin{figure}[ht]
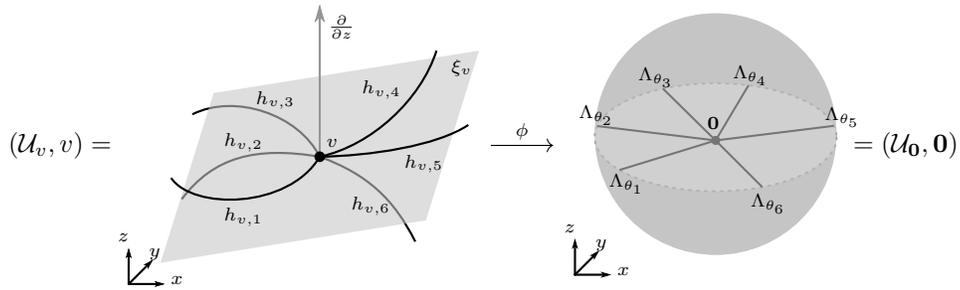

\[
\begin{tikzcd}
(\cU_v,v)=\vcenter{\hbox{\scriptsize\def\sfvgscale{0.8}\input{cyclicorder_input.tex}}} \ar[r,"\phi"] &
\vcenter{\hbox{\scriptsize\input{Darboux_input.tex}}}=(\cU_\origin,\origin)
\end{tikzcd}
\]
\caption{A Darboux neighborhood $\cU_v$ and half-edges of a vertex $v$}
\label{fig:Darboux chart}
\end{figure}

This is nothing but a generalization of the Darboux theorems, such as \cite[Theorem~2.5.1]{Geiges2008} for a single Legendrian and \cite[Lemma~3.2]{ABK2018} for two intersecting Legendrians, but we exclude the central vertex since one can not control how singular the arcs formed by two half-edges are.

However, the contactomorphism $\phi_v$ defined above extends to the vertex $v$ {\em up to equivalence}. In other words, for any $\Lambda$, $v\in\cV_\Lambda$ and its Darboux neighborhood $\cU_v$, there exists an equivalent triple $(\cU_{v'}, \cU_{v'}\cap \Lambda', v')$ via a Legendrian isotopy $\Lambda_t$ such that
\[
(\cU_v,\cU_v\cap\Lambda,v)\stackrel{\Lambda_t}\sim (\cU_{v'}, \cU_{v'}\cap \Lambda', v') \stackrel{\phi_{v'}}{\simeq} (\cU_\origin, \cU_\origin\cap T_\Theta,\origin).
\]
Indeed, one can find such $\Lambda_t$ whose support is arbitrarily small.

\begin{definition}[Tameness]\label{definition:tameness}
We say that a Legendrian graph $\Lambda$ is {\em tame} if the contactomorphism $\phi_v$ for a Darboux neighborhood extends to the vertex $v$, and we denote the set of all tame Legendrian graphs by $\bar\cLG$.
\end{definition}

\begin{assumption}
Unless mentioned otherwise, we assume that all Legendrian graphs are tame throughout this paper.
\end{assumption}

It is easy to see from Theorem~\ref{theorem:Darboux} that one can equip a cyclic order between half-edges at each vertex $v$ by using the co-orientation with respect to the Reeb direction $\partial_z$ for the contact form $dz+xdy-ydx$. This is observed in \cite[Remark~3.3]{OP2014} and also in \cite[\S3.2]{ABK2018}.

\begin{assumption}
For each $v\in\cV_\Lambda$, the intersection $\cU_v\cap \Lambda$ is the same as the union of half-edges at $v=\Lambda(\ttv)$,
\[
\cU_v\cap\Lambda=\bigcup_{h_{v,i}\in\cH_v} h_{v,i}=\bigcup_{\tth_{\ttv,i}\in\ttH_\ttv}\Lambda(\tth_{\ttv,i})
\]
and moreover, half-edges adjacent to $v$
\[
\cH_v=\{h_{v,1},\cdots,h_{v,m}\}
\]
are cyclically ordered as written.
\end{assumption}

Let $\Lambda_t$ be a Legendrian isotopy between Legendrian graphs. Then the angles between two consecutive half-edges at each vertex may vary but their relative positions keep unchanged during the isotopy. Hence this implies that there exist cyclic-order-preserving bijections 
\[
\cH_{v_0}\simeq\cH_{v_t}\simeq\cH_{v_1},\quad v_t\coloneqq\Lambda_t(\ttv)
\]
for any $\ttv\in \ttV_\Gamma$ and $t\in[0,1]$.

\subsection{Lagrangian projection}
Let $\pi_L:\RR^3\to\RR^2_{xy}$ be the projection onto the $xy$-plane $\RR^2_{xy}$.

For $\Lambda=(\cV_\Lambda,\cE_\Lambda)\in\bar\cLG$, the {\em (Lagrangian) projection $\sL=(\sV_\sL,\sE_\sL)$ of $\Lambda$} is defined as the composition 
\[
\begin{tikzcd}
\sL:\Gamma\ar[r,"\Lambda"]&\RR^3_{\rotation}\ar[r,"\pi_L"]&\RR^2_{xy}.
\end{tikzcd}
\]

We also define the sets of vertices $\sV_\sL$, edges $\sE_\sL$ and half-edges $\sH_\sfv$ at $\sfv$ as the images of the corresponding sets of $\Lambda$ under $\pi_L$. Indeed, for $\sfv=\pi_L(v)$,
\begin{align*}
\sV_\sL&\coloneqq\pi_L(\cV_\Lambda),&
\sE_\sL&\coloneqq\pi_L(\cE_\Lambda),&
\sH_\sfv&\coloneqq\pi_L(\cH_v)=\{\sfh_{\sfv,1},\cdots,\sfh_{\sfv,m}\mid\sfh_{\sfv,i}=\pi_L(h_{v,i})\}.
\end{align*}

Then as before, we will use $\sL$ to denote the image $\sL(\Gamma)$. Note that $\sH_\sfv$ inherits the cyclic order from $\cH_v$ via $\pi_L$.

\begin{definition}[General position and regularity]\label{defn:regular projection}
We say that $\Lambda$ is in a {\em general position}, or its projection $\sL=\pi_L(\Lambda)$ is {\em regular} if it satisfies the following.
\begin{enumerate}
\item There are only finitely many transverse double points in $\sL$ and no vertices of $\Lambda$ are double points.
\item No half-edges of $\Lambda$ are parallel to the $y$-axis at their vertices.
\end{enumerate}

We denote the sets of all Legendrian graphs in a general position by $\cLG$.
\end{definition}

\begin{figure}[ht]
\input{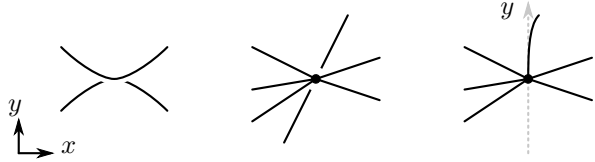}
\caption{Non-regular projections}
\end{figure}

\begin{remark}
Since each edge $e\in\cE_\Lambda$ is smooth Legendrian, $e$ cannot be parallel to the $z$-axis. That is, the restriction $\pi_L|_e$ always becomes a smooth immersion.

The second condition is equivalent to that the vertex is not cuspidal in the front projection.
Moreover, due to the second condition, regularity defined as above is more restrictive than the usual notion of regularity coming from smooth knot (or spatial graph) theory.
\end{remark}

It is obvious that for any $\Lambda\in\bar\cLG$, one can obtain a Legendrian graph $\Lambda'$ from $\Lambda$ by perturbing slightly so that $\Lambda'$ is equivalent to $\Lambda$ and is in general position. In this sense, we may say that $\cLG$ is {\em dense} in $\bar\cLG$.

Throughout this section, we assume that $\Lambda\in\cLG$ and $\sL=\pi_L(\Lambda)$.

\begin{definition}[Standard neighborhoods of projections]
For each vertex $\sfv\in\sV_\sL$, a {\em standard neighborhood $\sU_\sfv\subset\RR^2_{xy}$} is a small open ball neighborhood of $\sfv$ such that $\sU_\sfv$ contains no special points other than $\sfv$. 
We denote the boundary of the closure $\cl{\sU_\sfv}$ by $\sO_\sfv$.
\begin{align*}
\sfv&\in\sU_\sfv\subset\RR^2_{xy},&
\sO_\sfv&\coloneqq\partial(\cl{\sU_\sfv})
\end{align*}

We denote by $\sector_{\sfv,i}$ the $i$-th circular sector at $\sfv$ bounded by two half-edges $\sfh_{\sfv,i}$, $\sfh_{\sfv,i+1}$ and the reference circle $\sO_\sfv$.
\end{definition}

\begin{assumption}
We assume that the Darboux neighborhood $\cU_v$ intersects $\Lambda$ exactly as much as the inverse $\pi_L^{-1}(\sU_\sfv)$ does.
In other words, the restriction of $\pi_L|$ induces a homeomorphism
\[
\pi_L|:\cU_v\cap\Lambda \to\sU_\sfv\cap \sL.
\]
\end{assumption}

\begin{figure}[ht]
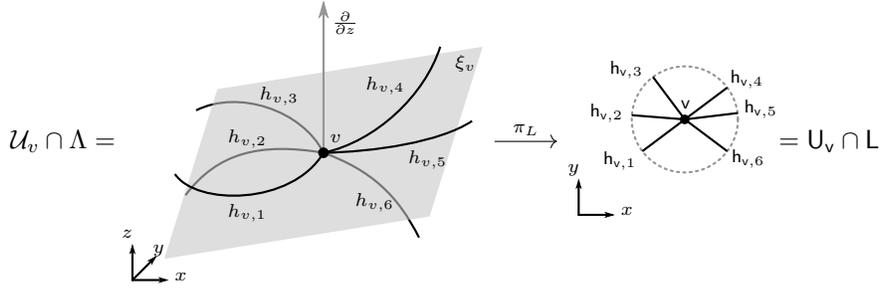

\[
\begin{tikzcd}
\cU_v\cap\Lambda=\vcenter{\hbox{\scriptsize\def\sfvgscale{0.8}\input{cyclicorder_input.tex}}} \ar[r,"\pi_L"] &
\vcenter{\hbox{\scriptsize\input{vertex_lag_input.tex}}}=\sU_\sfv\cap \sL
\end{tikzcd}
\]
\caption{A Lagrangian projection of a Darboux neighborhood $\cU_v$ and a standard neighborhood $\sU_\sfv$.}
\label{fig:vertex projection}
\end{figure}

However, the Lagrangian projection $\pi_L(\cU_v)$ of a Darboux chart is not necessarily the same as $\sU_\sfv$. Instead, one can choose $\cU_v$ and $\sU_\sfv$ such that $\pi_L(\cU_v)$ is very close to $\sU_\sfv$ in the sense that the Hausdorff distance $d_H(\pi_L(\cU_v),\sU_\sfv)$ is arbitrarily small.

\subsection{Reidemeister moves}
Let $\Lambda_t:\Gamma\times[0,1]\to\RR^3_{\rotation}$ be an isotopy between two Legendrian graphs $\Lambda_0, \Lambda_1\in\cLG$, and let $\sL_t\coloneqq\pi_L(\Lambda_t)$ be the Lagrangian projection of $\Lambda_t$.

Note that $\Lambda_t\in\bar\cLG$ for all $t$ but $\Lambda_t\not\in\cLG$ in general. Since $\cLG$ is dense in $\bar\cLG$, without loss of generality, we may assume that $\Lambda_t\in\cLG$ for all but finitely many $t_i$'s by perturbing $\Lambda_t$ slightly if necessary.
Then a bifurcation at each $t_i$ corresponds to a composition of {\em Reidemeister moves} in $\RR_{xy}^2$ depicted in Figure~\ref{fig:RM}. A dashed gray line is a line passing through a vertex which is parallel to the $y$-axis.

\begin{figure}[ht]
\[
\begin{tikzcd}[row sep=0pc]
\vcenter{\hbox{\input{RM_0_a_1_input.tex}}}\ar[r,"\rm(0_a)",leftrightarrow]\quad&\quad
\vcenter{\hbox{\input{RM_0_a_2_input.tex}}}\quad&\quad
\vcenter{\hbox{\input{RM_0_b_1_input.tex}}}\ar[r,"\rm(0_b)",leftrightarrow]\quad&\quad
\vcenter{\hbox{\input{RM_0_b_2_input.tex}}}\\
\vcenter{\hbox{\includegraphics{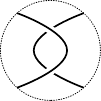}}}\ar[r,"\rm(II)",leftrightarrow]\quad&\quad
\vcenter{\hbox{\includegraphics{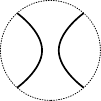}}}\quad&\quad
\vcenter{\hbox{\includegraphics{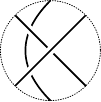}}}\ar[r,"\rm(III)",leftrightarrow]\quad&\quad
\vcenter{\hbox{\includegraphics{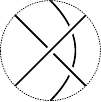}}}\\
\vcenter{\hbox{\includegraphics{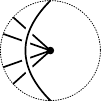}}}\ar[r,"\rm(IV_a)",leftrightarrow]\quad&\quad
\vcenter{\hbox{\includegraphics{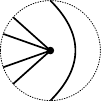}}}\quad&\quad
\vcenter{\hbox{\includegraphics{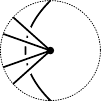}}}\ar[r,"\rm(IV_b)",leftrightarrow]\quad&\quad
\vcenter{\hbox{\includegraphics{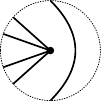}}}
\end{tikzcd}
\]
\caption{Reidemeister moves in Lagrangian projections}
\label{fig:RM}
\end{figure}

Any projection of local isotopy on $\RR^3_{\rotation}$ indeed preserves the {\em area} in the following sense:
the planar isotopy below is realizable as a projection of local isotopy on $\RR^3_{\rotation}$ if and only if two shaded regions have the same area.
\[
\begin{tikzcd}
\vcenter{\hbox{\includegraphics{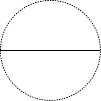}}}\ar[r,leftrightarrow]\quad&\quad
\vcenter{\hbox{\includegraphics{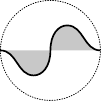}}}
\end{tikzcd}
\]

Therefore rigorously speaking, Reidemeister moves depicted in Figure~\ref{fig:RM} are only schematic diagrams but for convenience's sake, we may ignore this area-preserving condition unless mentioned otherwise.

\begin{remark}
The converse is not always true, that is, the diagram obtained by Reidemeister moves may not be realizable as (the schematic diagram of) the projection of any Legendrian graph.
\end{remark}

\begin{remark}
Reidemeister moves $\rm(0_a)$ and $\rm(0_b)$ are usually not considered since they are planar isotopies. However, since we are using a more restrictive notion of the regularity of the projection than usual, these two moves are necessary.
\end{remark}

\begin{definition}[Legendrian mirror]
The contact involution $\mu:\RR^3_{\rotation}\to\RR^3_{\rotation}$ defined by 
\[
\mu:(x,y,z)\mapsto(x,-y,-z)
\]
is called the {\em Legendrian mirror}.
\end{definition}

\begin{remark}\label{remark:Legendrian mirror}
It is obvious that Reidemeister moves $\rm(IV_a)$ are related with $\rm(IV_b)$ via the Legendrian mirror $\mu$.
That is, if two projections of Legendrian links $\sL_0$ and $\sL_1$ are related with $\rm(IV_a)$, then the projections of their mirrors $\mu(\sL_0)$ and $\mu(\sL_1)$ are related to $\rm(IV_b)$.
\[
\begin{tikzcd}[column sep=3pc]
\sL_0\ar[r,"\rm(IV_a)",leftrightarrow]\ar[d,"\mu"',leftrightarrow]\ar[rd,"\circlearrowright",phantom] &\sL_1\ar[d,"\mu",leftrightarrow]\\
\mu(\sL_0)\ar[r,"\rm(IV_b)",leftrightarrow] &\mu(\sL_1)
\end{tikzcd}
\]
\end{remark}

\begin{assumption}
We assume that the isotopy $\Lambda_t$ is fixed outside of a small neighborhood where each Reidemeister move occurs.
\end{assumption}

\subsection{Maslov potentials on Legendrian graphs}\label{sec:potential}
For a Legendrian link $\Lambda$, a {\em Maslov potential} is a function with values in a cyclic group $\fR$---$\ZZ$ or $\Zmod{m}$ in practice---whose domain is the set of subarcs obtained by removing cusps from the front projection of $\Lambda$, such that the difference between potentials of two adjacent arcs in the front projection is precisely $1_\fR\in\fR$.
Therefore the changes of the potential are concentrated at the cusps at where the tangent space of the Legendrian is parallel to the $y$-axis.

On the other hand, if $\Lambda$ is a knot, then the relations between two adjacent arcs determine a unique Maslov potential up to translation but if $\Lambda$ is a link of $n$ components, then the set of Maslov potential is an affine space modeled on $\fR^{n-1}$.
That is, if we regard circles as graphs consisting of one vertex and one edge, then this means that the choices of Maslov potentials on different edges are independent to each other.

In this section, we define $\fR$-valued Maslov potentials on the Lagrangian projections of Legendrian graphs by mimicking the above two ideas as follows:
\begin{enumerate}
\item Whenever one passes the tangent space parallel to $y$-axis, the Maslov potential changes;
\item Each edge possesses a Maslov potential independently.
\end{enumerate}

\begin{assumption}
Throughout this paper, the abelian group $\fR$ is assumed to be a cyclic group with chosen generator $1_\fR$. If $\fR$ is trivial, then we regard $1_\fR$ as the identity $0$.
\end{assumption}

Consider a {\em stretching map $f_M:\RR^2_{xy}\to\RR^2_{xy}$} which is a diffeomorphism defined as
\[
f_M:(x,y)\mapsto(Mx,y/M)\footnotemark
\]
and\footnotetext{This diffeomorphism lifts to a contactomorphism $\tilde f_M:(x,y,z)\mapsto(Mx, y/M,z)$ on $\RR^3_{\rotation}$ via $\pi_L$.}
let $\sL_M\coloneqq f_M(\sL)$. For large enough $M$, the slopes $dy/dx$ of half-edges of $\sL_M$ at each vertex become close to 0 since $\Lambda$ is in general position and no half-edges are parallel to the $y$-axis. 

Since our graph is directed, we have a parametrization $\sfe_M(t)$ for each edge $\sfe\in \sE_\sL$ and $\sfe_M\coloneqq f_M(\sfe)\in \sE_{\sL_M}$.
Hence its initial and terminal half-edges are well-defined and denoted by $\sfh_{s(\sfe_M)}$ and $\sfh_{t(\sfe_M)}$, respectively.
Then by perturbing both ends slightly, we may assume that two half-edges are parallel to the $x$-axis. Then the tangent space $D\sfe_M(t)$ defines a loop in $\gr(1,T_\mathbf{0}\RR^2_{xy})$ by the canonical identification $T_{\sfe_M(t)}\RR^2_{xy}\simeq T_\mathbf{0}\RR^2_{xy}$ for each $t$. Since $\pi_1(\gr(1,T_\mathbf{0}\RR^2_{xy}))\simeq \ZZ\langle 1_{\gr}\rangle$ generated by the counterclockwise $\pi$-rotation $1_{\gr}$, the homotopy class $[D\sfe_M(t)]$ gives us a number $n_\sfe$ such that 
\[
[D\sfe_M(t)]=n_\sfe\cdot 1_{\gr}.
\]

Let $\fX_\sL$ be the free $\fR$-module generated by $1$ and all half-edges of $\sL$, and $\fX_\sL^*$ the $\fR$-dual of $\fX_\sL$, and let $\fX_\sL'$ be the affine subspace of $\fX_\sL^*$ consisting of $\fR$-linear functions sending $1\in \fX_\sL$ to $1_\fR\in \fR$.
\begin{align*}
\fX_\sL&\coloneqq\fR\langle 1\rangle\oplus \bigoplus_{\sfv\in\sV_\sL}\fR\langle \sH_\sfv\rangle,&
\fX_\sL^*&\coloneqq\hom_\fR(\fX_\sL,\fR),&
\fX_\sL'&\coloneqq\{\fP\in\fX_\sL^*\mid\fP(1)=1_\fR\}.
\end{align*}
Then we have
\begin{align*}
\fX_\sL&\simeq\fR^{2E+1}\simeq\fX_\sL^*,&
\fX'_\sL&\stackrel{\textit{aff}}\simeq\fR^{2E},
\end{align*}
where $E=\#\sE_\sL(=\#\cE_\Lambda)$ is the number of edges of $\sL$ and ``$\stackrel{\textit{aff}}\simeq$'' denotes an affine isomorphism.

We define an element $\ft_\sfe\in\fX_\sL$ for each $\sfe\in\sE_\sL$ as
\begin{align*}
\ft_\sfe&\coloneqq\sfh_{s(\sfe)}-\sfh_{t(\sfe)}-n_\sfe\cdot 1
\end{align*}
and let $\fT_\sL\coloneqq\fR\langle \ft_\sfe\mid\sfe\in \sE_\sL\rangle$ be the submodule of $\fX_\sL$ generated by all $\ft_\sfe$'s.
\begin{remark}\label{remark:potentials_not_depending_on_orientation}
The submodue $\fT_\sL$ does not depend on the choice of orientations on edges. That is, if we change the orientation of $\sfe$, then 
\begin{align*}
\sfh_{s(-\sfe)}&=\sfh_{t(\sfe)},&
\sfh_{t(-\sfe)}&=\sfh_{s(\sfe)},&
n_{-\sfe}=-n_\sfe,
\end{align*}
since $[D(-\sfe_M(t))] = -[D\sfe_M(t)] = -n_\sfe\cdot 1_{\gr}$.
Therefore, 
\begin{align*}
\ft_{-\sfe}&=\sfh_{s(-\sfe)}-\sfh_{t(-\sfe)}-n_{-\sfe}\cdot 1=\sfh_{t(\sfe)}-\sfh_{s(\sfe)}+n_\sfe\cdot 1=-\ft_\sfe,
\end{align*}
and this implies that $\fT_\sL$ is well-defined.
\end{remark}

\begin{definition}[Maslov potential]
We call $\fP\in \fX_\sL'$ an {\em $\fR$-valued Maslov potential} if $\fP$ vanishes on $\fT_\sL$, and we say that two potentials $\fP$ and $\fP'$ are {\em equivalent} if they are the same up to translation, i.e., 
\[
\fP-\fP'\equiv\fr
\]
for some $\fr\in\fR$. We write as $\fP\sim\fP'$.
\end{definition}

\begin{notation}
We denote the set of all Maslov potentials by $\tilde\fG(\sL;\fR)$ and the set of all equivalence classes of Maslov potentials by $\fG(\sL;\fR)$.
\begin{align*}
\tilde\fG(\sL;\fR)&\coloneqq \{\fP\in \fX_\sL'\mid\fT_\sL\subset\ker\fP\},&
\fG(\sL;\fR)&\coloneqq\tilde\fG(\sL;\fR)/\sim
\end{align*}
\end{notation}

Since $\ft_\sfe$'s are linearly independent, $\fT_\sL$ is of rank $E$ and moreover, it is a direct summand of $\fX_\sL$ whose complement is an $\fR$-submodule of rank $(E+1)$ isomorphic to $\fX_\sL\big/\fT_\sL$.
\begin{align*}
\fT_\sL\oplus \left(\fX_\sL\big/\fT_\sL\right)&\simeq \fX_\sL,&
\fT_\sL&\simeq\fR^E,&
\fX_\sL\big/\fT_\sL&\simeq\fR^{E+1}
\end{align*}
Therefore
\begin{align*}
\tilde\fG(\sL;\fR)
&=\left\{\bar\fP\in\hom_\fR(\fX_\sL\big/\fT_\sL,\fR)\mid\bar\fP([1])=1_\fR\in \fR\right\}
\stackrel{\textit{aff}}\simeq \fR^E.
\end{align*}

Now we consider an action of $\fR$ on $\tilde\fG(\sL;\fR)$ defined as for $\fr\in\fR$,
\[
(\fr\cdot\fP)(\sfh) \coloneqq \fP(\sfh)+\fr.
\]
Then $\fG(\sL;\fR)$ is the same as the orbit space $\tilde\fG(\sL;\fR)\big/\fR$, and therefore
\[
\fG(\sL;\fR)= \tilde\fG(\sL;\fR)\big/ \fR\stackrel{\textit{aff}}\simeq \fR^{E-1}.
\]

\begin{definition}[Legendrian graphs with potential]
For a Legendrian graph $\Lambda\in\cLG$ with projection $\sL$, the set of Maslov potentials $\fG(\Lambda;\fR)$ for $\Lambda$ is defined as
\[
\fG(\Lambda;\fR)\coloneqq\fG(\sL;\fR).
\]

We denote by $\cLG_\fR$ the set of all Legendrian graphs in general position with $\fR$-valued Maslov potentials. That is,
\[
\cLG_\fR\coloneqq\{\cL=(\Lambda,\fP)\mid \Lambda\in\cLG, \fP\in\fG(\Lambda;\fR)\}.
\]
\end{definition}

Notice that for the trivial group $\mathbf{1}$, we have $\fG(\Lambda;\mathbf{1})=\{*\}$ and so $\cLG_{\mathbf{1}}\simeq\cLG$.

Indeed, the construction $\fG(\Lambda;-)$ is functorial. Namely, if we have a homomorphism $\mathfrak{f}:\fR\to\fR'$, then it induces a canonical morphism $\mathfrak{f}_*$
\[
\begin{tikzcd}
\tilde \fG(\Lambda;\fR)\ar[r,"\tilde{\mathfrak{f}}_*"]\ar[d,"/\fR"']& \tilde \fG(\Lambda;\fR)\otimes_{\fR} \fR' \ar[r,hook]& \tilde\fG(\Lambda;\fR')\ar[d,"/\fR'"]\\
\fG(\Lambda;\fR)\ar[rr,"\mathfrak{f}_*"]&& \fG(\Lambda;\fR').
\end{tikzcd}
\]

Furthermore if $\mathfrak{f}$ is epic, then so is $\tilde{\mathfrak{f}}_*$ and we have a natural isomorphism
\[
\tilde\fG(\Lambda;\fR)\otimes_\fR \fR'\stackrel{\textit{aff}}\simeq \tilde\fG(\Lambda;\fR')
\]
between affine spaces. Therefore we have an epimorphism
\begin{align}\label{eq:coefficient change}
\mathfrak{f}_*:\fG(\Lambda;\fR)\to \fG(\Lambda;\fR').
\end{align}
In particular, the induced map $\cLG_\fR\to\cLG_{\mathbf{1}}\simeq\cLG$ from $\fR\to\mathbf{1}$ is the same as the map forgetting about potentials which will be denoted by $\forget$. Then
\[
\forget^{-1}(\Lambda)=\fG(\Lambda;\fR)\stackrel{\textit{aff}}\simeq \fR^{\#\cE_\Lambda-1}.
\]
and the natural question is whether the equivalence on $\cLG$ given by Reidemeister moves lifts to $\cLG_\fR$ via $\forget$ or not. This is equivalent to whether the following implication is true or not:
\[
\Lambda\sim \Lambda'\Longrightarrow\cG(\Lambda;\fR)\stackrel{\textit{aff}}\simeq\cG(\Lambda';\fR).
\]
The answer is yes.
\begin{theorem}\label{theorem:equivalence on Legendrian graphs with potentials}
The equivalence on $\cLG$ lifts to $\cLG_\fR$. 
\end{theorem}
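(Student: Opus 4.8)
The plan is first to dispatch the statement by a dimension count, and then to upgrade the resulting bijection to a canonical one suitable for the later invariance theory.

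For the bare statement, recall from the computation above that $\fG(\Lambda;\fR)=\fG(\sL;\fR)\stackrel{\textit{aff}}\simeq\fR^{E-1}$ with $E=\#\cE_\Lambda$. An equivalence $\Lambda\sim\Lambda'$ is by definition a Legendrian isotopy $\Lambda_t\colon\Gamma\times[0,1]\to\R^3_{\rotation}$ over a \emph{fixed} abstract graph $\Gamma$, so $E$ is the same for $\Lambda$ and $\Lambda'$. Hence both potential spaces are affine spaces of dimension $E-1$ over $\fR$ and are therefore affinely isomorphic, which is exactly the asserted implication.

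To obtain a \emph{preferred} isomorphism --- the version needed to transport potentials along isotopies in the DGA invariance --- I would instead work move by move. Every equivalence is a finite composition of the local moves $\rm(0_a),\rm(0_b),\rm(II),\rm(III),\rm(IV_a),\rm(IV_b)$, so it suffices to produce a canonical affine isomorphism for a single move. For $\rm(II),\rm(III),\rm(IV_a),\rm(IV_b)$ the vertices and the half-edge sets $\sH_\sfv$ are left untouched and no half-edge becomes parallel to the $y$-axis during the move; since each $n_\sfe$ is the homotopy class of a tangent-line path rel endpoints in $\gr(1,T_\origin\R^2_{xy})$, all the $n_\sfe$ are preserved. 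Thus $\fX_\sL=\fX_{\sL'}$ and $\fT_\sL=\fT_{\sL'}$, and the identity gives the required isomorphism.

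The one nontrivial move is $\rm(0_a)$/$\rm(0_b)$, where a single half-edge $\sfh_i$ crosses the vertical line through its vertex and is replaced by $\sfh_i'$. This affects only the edge $\sfe$ carrying $\sfh_i$: as $\sfh_i$ passes the vertical direction, the horizontal-perturbation convention used to define $n_\sfe$ switches its rotational sense, so $n_{\sfe'}=n_\sfe\pm1$ while every other half-edge and rotation number is unchanged. I would then define $\Psi\colon\tilde\fG(\sL;\fR)\to\tilde\fG(\sL';\fR)$ by letting $\Psi(\fP)$ agree with $\fP$ on $1$ and on all half-edges $\neq\sfh_i$, and assigning to $\sfh_i'$ the unique value for which $\Psi(\fP)$ vanishes on $\ft_{\sfe'}$. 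Then $\Psi(\fP)$ automatically kills every $\ft$, so it is a Maslov potential on $\sL'$; $\Psi$ is affine and bijective (its inverse shifts back by the same unit); and since it changes only the $\sfh_i$-coordinate it commutes with the translation action $(\fr\cdot\fP)(\sfh)=\fP(\sfh)+\fr$, hence descends to an affine isomorphism $\fG(\sL;\fR)\stackrel{\textit{aff}}\simeq\fG(\sL';\fR)$. The only genuinely delicate point is the bookkeeping here: confirming that $\rm(0_a)/\rm(0_b)$ shifts $n_\sfe$ by exactly one unit and fixing the sign of the adjustment against the counterclockwise $\pi$-rotation generator $1_\gr$ of $\pi_1(\gr(1,T_\origin\R^2_{xy}))$; once that is settled the remaining verifications are purely formal.
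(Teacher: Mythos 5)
Your move-by-move construction is precisely the paper's proof: for the moves $\rm(II),\rm(III),\rm(IV_a),\rm(IV_b)$ the potential is transported by the identity via the induced bijection $F_\sH$ on half-edges, and for $\rm(0_a)/\rm(0_b)$ one uses that the rotation number shifts by $n_{\sfe'}=n_\sfe\pm1$ and corrects the value at $\sfh_i'$ by $\mp1_\fR$ (equivalently, by the unique value killing $\ft_{\sfe'}$), so the proposal is correct and takes essentially the same approach. Your preliminary dimension count is a valid (if vacuous) proof of the literal implication as the paper phrases it, but, as you note yourself, the canonical move-induced isomorphism is the content actually needed, and that part coincides with the paper's argument.
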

\begin{proof}
Let $\sL$ and $\sL'$ be the projections of $\Lambda$ and $\Lambda'$, respectively. 
Then it suffices to show that $\fG(\sL;\fR)$ and $\fG(\sL';\fR)$ are canonically affine isomorphic.

Without loss of generality, we may assume that $\sL'$ is obtained from $\sL$ by only one Reidemeister move, say ${\rm (F)}$, depicted in Figure~\ref{fig:RM}.
Then the move $\rm(F)$ induces bijections 
\begin{align*}
{\rm (F)}_\sE&:\sE_\sL\to\sE_{\sL'},&
{\rm (F)}_\sH&:\sH_\sL\to\sH_{\sL'}.
\end{align*}

Suppose that $\fP\in\tilde\fG(\sL;\fR)$. Unless $\rm(F)=\rm(0_a)$ or $\rm(0_b)$, we define 
$\fP'\in\fX_{\sL'}'$ as 
\begin{align*}
\fP'(\sfh')&\coloneqq\fP({\rm (F)}_\sH^{-1}(\sfh')).
\end{align*}
Then it is obvious that $\fP'\in\tilde\fG(\sL';\fR)$.

On the other hand, suppose that $\rm(F)=\rm(0_a)$ or $\rm(0_b)$. Indeed, we assume that $\sfh_i$ and $\sfh'_i={\rm (F)}_\sH(\sfh_i)$ are lying on the different side with respect to a line parallel to the $y$-axis.
Let $\sfe\in\sE_\sL$ and $\sfe'\in\sE_{\sL'}$ be edges containing $\sfh_i$ and $\sfh'_i$, respectively.

If we apply the stretching map $f_M$, the angle between two half-edges $f_M(\sfh_i)$ and $f_M(\sfh_i')$ gets closer to $\pi$, which contributes $\pm1_{\gr}\in\pi_1(\gr(1,\RR^2))$.
Hence the elements $\ft_\sfe$ and $\ft_{\sfe'}$ are as follows:
\[
\begin{tikzcd}
\ft_\sfe\coloneqq\sfh_i-\sfh_j-n_\sfe\cdot 1\quad \ar[r,"\rm(F)",leftrightarrow]&\quad \ft_{\sfe'}\coloneqq\sfh_i'-\sfh_j'-n_{\sfe'}\cdot1,
\end{tikzcd}
\]
where $\sfh_j$ and $\sfh_j'={\rm (F)}_\sH(\sfh_j)$ are half-edges of $\sfe$ and $\sfe'$ different from $\sfh_i$ and $\sfh_i'$, respectively, and
\[
n_{\sfe'}=n_\sfe\pm1.
\]
Therefore if we define $\fP'$ as 
\[
\fP'(\sfh')=\begin{cases}
\fP(\sfh) & \sfh'={\rm (F)}_\sH(\sfh)\neq \sfh_i';\\
\fP(\sfh)\mp1_\fR & \sfh'={\rm (F)}_\sH(\sfh)=\sfh_i',
\end{cases}
\]
then $\fP'$ becomes a Maslov potential for $\sL'$ as desired.
\end{proof}

\section{Generalized stable tame isomorphism for DGAs}\label{sec:Peripheral Structures and Generalized stable tame isomorphism for DGAs}
For algebraic preliminaries, we introduce the notions of peripheral structures and generalized stable tame isomorphisms for DGAs which are essential in the invariance theorem for Legendrian graphs. 
Throughout this section, a DGA $\cA$ is a triple $(A,|\cdot|,\partial)$, where $A\coloneqq\ZZ\langle \sG\rangle$ is a free unital associative algebra over $\ZZ$ with a chosen set $\sG$ of generators, $|\cdot|$ is a $\ZZ$-grading on $A$, and $\partial:A\to A$ is a graded linear map of degree $-1$ satisfying the Leibniz rule and $\partial^2=0$.

Let $\cA'=(A'=\ZZ\langle\sG'\rangle,|\cdot|',\partial')$ and $\cA''=(A''=\ZZ\langle\sG''\rangle,|\cdot|'',\partial'')$ be DGAs.
Suppose that there are two DGA morphisms $\phi':\cI\to\cA'$ and $\phi'':\cI\to\cA''$ from $\cI=(I=\ZZ\langle \sI\rangle, |\cdot|_I, \partial_I)$.
Then the {\em amalgamated free product} $\cA'\ast_{\cI}\cA''$ of $\cA'$ and $\cA''$ over $\cI$ is defined as follows:
\begin{align*}
A&\coloneqq\ZZ\left\langle\sG',\sG''~\middle|~ \phi'(\rho)=\phi''(\rho), \forall\rho\in\sI\right\rangle.
\end{align*}
Especially, if $\cI=(\ZZ,0,0)$ is the trivial algebra, then we have the {\em free product} $\cA'\ast \cA''$.

\begin{remark}
The amalgamated free product may not be the same as the homotopy push-out of dg category, but if $\cI$ acts freely on one of $\cA'$ or $\cA''$ via $\phi'$ or $\phi''$, then it is the same as (up to chain equivalence) the homotopy push-out of $\phi'$ and $\phi''$. 
In this case, we will say that it is the {\em push-out} rather than the amalgamated free product.
\end{remark}

\subsection{Peripheral structures}

\begin{definition}[{\cite[\S2.3]{EN2015}}]For $m\ge 1$, let $\cI_m=(I_m,|\cdot|, \partial_m)$ be the free associative unital $\ZZ$-graded algebra defined as
\begin{align*}
I_m&\coloneqq\ZZ\langle\rho_{i,\ell}\mid i\in \Zmod{m}, \ell\ge 1\rangle,\\
\partial_m \rho_{i,\ell}&\coloneqq\delta_{\ell,m}+\sum_{j=1}^{\ell-1} (-1)^{|\rho_{i,j}|-1}\rho_{i,j} \rho_{i+j,\ell-j},\\
\partial_m(ab)&\coloneqq(\partial_m a) b+(-1)^{|a|}a(\partial_m b)
\end{align*}
for any homogeneous elements $a$ and $b$ in $I_m$.
\end{definition}

\begin{remark}
The DGA $\cI_m$ is not uniquely determined since $I_m$ may possess multiple different gradings.
\end{remark}

\begin{definition}[Peripheral structure]\label{def:peripheral structure}
A {\em peripheral structure $\bp$} of $\cA$ is a DGA morphism
\[
\bp:\cI_m\to \cA
\]
where $\cI_m\coloneqq(I_m,|\cdot|,\partial_m)$ is a DGA for some $m\ge 1$ and grading $|\cdot|$ on $I_m$.
\end{definition}

\begin{example}[Distinguished peripheral structure]\label{ex:distinguished}
We consider the DGA $\cI_\emptyset\coloneqq(I_2,|\cdot|_\emptyset,\partial_2)$ and the map $\bp_\emptyset:\cI_\emptyset\to \cA$ defined as follows: 
\begin{align*}
|\rho_{i,\ell}|_\emptyset&\coloneqq(\ell-1),&
\bp_\emptyset(\rho_{i,\ell})&\coloneqq\begin{cases}
1 & \ell=1;\\
0 & \ell\neq 1.
\end{cases}
\end{align*}
Then $\bp_\emptyset$ becomes a DGA morphism, called the {\em distinguished peripheral structure}.
\end{example}

From now on, we will consider a pair $(\cA,\cP)$, where $\cP$ is a collection of peripheral structures containing $\bp_\emptyset$.
Recall that a DGA homomorphism $f:\cA\to \cA'$ is called {\em elementary} if $f$ maps all generators of $\cA$ to generators of $\cA'$ except for only one generator $g\in\cA$ whose image is the sum of a generator $g'\in\cA$ and words not involving $g'$.

\begin{definition}[Tame isomorphisms]
Let $(\cA,\cP)$ and $(\cA',\cP')$ be two DGAs with peripheral structures. 
\begin{enumerate}
\item A {\em homomorphism} $f:(\cA,\cP)\to(\cA',\cP')$ is a DGA homomorphism $f:\cA\to\cA'$ satisfying that the function
\[
\setlength{\arraycolsep}{2pt}
\begin{array}{rccc}
f_*:&\cP&\to&\cP'\\
&\bp&\mapsto& f\circ \bp
\end{array}
\]
induced by post-composition with $f$ is well-defined.

\item A homomorphism $f$ is an {\em elementary isomorphism} if $f:\cA\to\cA'$ is an elementary isomorphism between DGAs, see \cite[\S 2.2]{Chekanov2002}, and $f_*:\cP\to\cP'$ is a bijection.

\item A finite composition of elementary isomorphisms is called a {\em tame isomorphism}, and we denote it by
\[
(\cA, \cP)\tameisom (\cA',\cP')
\]
if $(\cA,\cP)$ and $(\cA',\cP')$ are tame isomorphic.
\end{enumerate}
\end{definition}

Any homomorphism $f$ preserves the distinguished peripheral structure $\bp_\emptyset$ since $\im(\bp_\emptyset)$ is the coefficient ring $\ZZ$. Therefore the above definition for pairs is equivalent to the usual definition for DGAs if we set 
\[
\cP=\cP'=\{\bp_\emptyset\}.
\]

\subsection{Generalized stabilizations}
Let $\bp:\cI_m=(I_m,|\cdot|,\partial_m)\to \cA$ be a peripheral structure.
For each $\fd\in \fR$, we define $\cF^{\fd \pm}_\bp\coloneqq(F_m^\pm,|\cdot|^{\fd},\bar\partial_m)$ as follows:
\begin{enumerate}
\item The algebra $F_m^+$ (resp. $F_m^-$) is the free product of $I_m$ and the unital free associative algebra generated by the $\bar\sfc_i$'s (resp. $\bar\sfc_{-i}$'s)
\begin{align*}
F_m^+& \coloneqq \ZZ\langle\{\bar\rho_{i,\ell}\}\amalg \{\bar\sfc_i\}\rangle\simeq I_m\ast\ZZ\langle \bar\sfc_1,\dots, \bar\sfc_m\rangle;\\
F_m^-& \coloneqq \ZZ\langle\{\bar\rho_{i,\ell}\}\amalg \{\bar\sfc_{-i}\}\rangle\simeq I_m\ast\ZZ\langle \bar\sfc_{-1},\dots, \bar\sfc_{-m}\rangle.
\end{align*}
\item The grading $|\cdot|^{\fd}$ is defined as
\begin{align*}
|\bar\rho_{i,\ell}|^{\fd}&\coloneqq|\rho_{i,\ell}|, &i\in\Zmod{m}, \ell\ge 1;\\
|\bar\sfc_1|^{\fd}&\coloneqq \fd;\\
|\bar\sfc_k|^{\fd}&\coloneqq \fd+|\rho_{1,k-1}|+1,  &1<k\le m; \\
|\bar\sfc_{-1}|^{\fd}&\coloneqq \fd;\\
|\bar\sfc_{-k}|^{\fd}&\coloneqq \fd+|\rho_{-k,k-1}|+1,  &1<k\le m.
\end{align*}
\item The differential $\bar\partial_m$ is defined as
\begin{align*}
\bar\partial_m \bar\rho_{i,\ell}&\coloneqq
\delta_{\ell,m}+\sum_{j=1}^{\ell-1} (-1)^{|\bar\rho_{i,j}|^{\fd}-1}\bar\rho_{i,j} \bar\rho_{i+j,\ell-j},& i\in \Zmod{m}, \ell\ge 1;\\
\bar\partial_m \bar\sfc_1&\coloneqq0;\\
\bar\partial_m \bar\sfc_{k}&\coloneqq (-1)^{|\bar\sfc_k|^\fd-1}\sum_{i=1}^{k-1}\bar\sfc_i \bar\rho_{i,k-i},& 1<k\le m;\\
\bar\partial_m \bar\sfc_{-1}&\coloneqq0;\\
\bar\partial_m \bar\sfc_{-k}&\coloneqq (-1)^{|\bar\sfc_{-k}|^\fd-1}\sum_{i=1}^{k-1}\bar\rho_{-k,k-i}\bar\sfc_{-i},& 1< k\le m,
\end{align*}
where $\bp^{\fd \pm}:I_m\to F_m^\pm$ is the canonical inclusion defined as $\rho_{i,\ell}\mapsto \bar\rho_{i,\ell}$.
\end{enumerate}

Then by definition, $\bp^{\fd \pm}$ induces a DGA morphism
\[
\bp^{\fd \pm}:\cI_m=(I_m,|\cdot|,\partial_m)\to \cF_\bp^{\fd \pm}=(F_m^\pm, |\cdot|^{\fd},\bar\partial_m),
\]
or equivalently, a peripheral structure for $\cF_\bp^{\fd \pm}$.

\begin{definition}[Generalized stabilization]
Let $(\cA,\cP)$ be a DGA with a collection of peripheral structures. Suppose that $(\bp:\cI_m\to\cA)\in\cP$ and $\fd\in \fR$ are given.
The {\em $\fd$-th generalized $(\pm)$-stabilization} $S_\bp^{\fd \pm}(\cA,\cP)$ of $(\cA,\cP)$ with respect to $\bp$ is the pair
\[
S_\bp^{\fd \pm}(\cA,\cP)\coloneqq(\cA_\bp^{\fd \pm}, \cP_\bp^{\fd \pm})
\]
defined as follows:
\begin{enumerate}
\item The DGA $\cA_\bp^{\fd \pm}\coloneqq(A_\bp^{\fd \pm}, |\cdot|_\bp^{\fd}, \partial_\bp^{\fd})$ is the push-out of $\bp$ and $\bp^{\fd \pm}$
\[
\begin{tikzcd}[column sep=4pc]
\cI_m=(I_m,|\cdot|,\partial_m)\ar[r,"\bp^{\fd \pm}"]\ar[d,"\bp"']& \cF_\bp^{\fd \pm}=(F_m^\pm,|\cdot|^{\fd},\bar\partial_m)\ar[d,"\exists\iota_\bp^\pm"]\\
\cA=(A,|\cdot|,\partial)\ar[r,"\exists\iota_\bp^{\fd \pm}"]&\cA_\bp^{\fd \pm}=(A_\bp^{\fd \pm}, |\cdot|^{\fd}, \bar\partial).
\end{tikzcd}
\]
\item The collection $\cP_\bp^{\fd \pm}$ of induced peripheral structures is defined by post-composition with the canonical map 
$\iota_\bp^{\fd \pm}:\cA\to \cA_\bp^{\fd \pm}$ so that
\[
\cP_\bp^{\fd \pm}\coloneqq \{ \iota_\bp^{\fd \pm}\circ \bp':\cI_{\bp'}\to \cA\to \cA_\bp^{\fd \pm}\mid \bp'\in \cP\}.
\]
\end{enumerate}
\end{definition}

\begin{remark}\label{remark:EN_stabilizations}
Each generalized stabilization defined above is a composition of {\em stabilization} and {\em destabilization} up to tame isomorphisms in the sense of Ekholm and Ng \cite[Definition~2.16]{EN2015}. 
Notice that infinitely many pairs of generators are necessary to make our generalized stabilizations.
\end{remark}

\begin{remark}\label{remark:anti-isomorphism and stabilizations}
The two DGA constructions $\cF_\bp^{\fd\pm}$ are related via the DGA antiisomorphism, which is an isomorphism as abelian groups, preserves gradings and differentials but reverses the order of multiplication,
\begin{align*}
\mu_*&:\cF_\bp^{\fd+}\to\cF_{\mu^*(\bp)}^{\fd-},&
\mu_*(\bar\sfc_i)&=\bar\sfc_{-i},&
\mu_*(\bar\rho_{i,\ell})&=\bar\rho_{-i-\ell,\ell},
\end{align*}
where $\mu^*(\bp):\mu^*(\cI_m)\to\mu^*(\cA)$ is the induced peripheral structure of $\bp$ and $\mu^*(-)$ is the DGA obtained by reversing the order of multiplication so that $\mu$ becomes an antiisomorphism.

Furthermore, so are the positive and negative $\fd$-th generalized stabilizations. That is, there is an anti-isomorphism of DGAs
\[
\mu_*:S_{\mu^*(\bp)}^{\fd+}(\mu^*(\cA),\mu^*(\cP))\to S_\bp^{\fd-}(\cA,\cP).
\]
\end{remark}

\begin{example}[Generalized stabilization with $\bp_\emptyset$]
We consider the generalized stabilization $\cA_{\bp_\emptyset}^{\fd+}$ with respect to the distinguished peripheral structure $\bp_\emptyset$.
Then since $|\rho_{1,1}|=|\rho_{2,1}|=0$, the grading and differential on $\{\bar\sfc_1,\bar\sfc_2\}$ become
\begin{align*}
|\bar\sfc_1|^{\fd}&\coloneqq \fd,& \bar\partial_2 \bar\sfc_1&\coloneqq 0;\\
|\bar\sfc_2|^{\fd}&\coloneqq \fd+|\rho_{1,1}|+1=\fd+1, &
\bar\partial_2 \bar\sfc_2&\coloneqq (-1)^{|\bar\sfc_1|}\bar\sfc_1 \bar\rho_{1,1}.
\end{align*}

Then the push-out of $\bp_\emptyset:I_2\to A$ and $\bp^{\fd+}:I_2\to F_2^+$ is isomorphic to
\[
A\ast_{I_2} F_2^+ \simeq A \ast \ZZ\langle \bar\sfc_1, \bar\sfc_2\rangle
\]
by the definition of $\bp_\emptyset:\rho_{i,\ell}\mapsto\delta_{\ell,1}$. Moreover, the DGA $\cA_{\bp_\emptyset}^{\fd+}$ is isomorphic to the free product
\[
\cA_{\bp_\emptyset}^{\fd+}\simeq \cA\ast E({\fd}),
\]
where the DGA $E({\fd})\coloneqq (\ZZ\langle \bar\sfc_1,\bar\sfc_2\rangle,|\cdot|_{E({\fd})},\partial_{E({\fd})})$
is defined as
\begin{align*}
|\bar\sfc_1|_{E({\fd})}&=\fd, & \partial_{E({\fd})}(\bar\sfc_1)&=0;\\
|\bar\sfc_2|_{E({\fd})}&=\fd+1, & \partial_{E({\fd})}(\bar\sfc_2)&=(-1)^{|\bar\sfc_1|}\bar\sfc_1.
\end{align*}

Moreover, one can ignore the sign by using $\bar\sfc_1'\coloneqq(-1)^{|\bar\sfc_1|}\bar\sfc_1$.
Therefore $\cA_{\bp_\emptyset}^{\fd+}$ is nothing but what is called the {\em $\fd$-th stabilization} of $\cA$ in the literature.
\end{example}

\begin{definition}[Generalized stable-tame isomorphism]
We say that two DGAs $(\cA,\cP)$ and $(\cA', \cP')$ with collections of peripheral structures are {\em generalized stable-tame isomorphic} if $(\cA,\cP)$ and $(\cA',\cP')$ are tame isomorphic up to generalized stabilizations, and denote this by
\[
(\cA,\cP)\stabletameisom(\cA',\cP'),
\]
i.e.,
\[
S_{\bp_k}^{\fd_k \pm}(\cdots S_{\bp_1}^{\fd_1 \pm}(\cA,\cP)\cdots)
\tameisom
S_{\bp'_\ell}^{\fd'_\ell \pm}(\cdots S_{\bp'_1}^{\fd_1 \pm}(\cA',\cP')\cdots)
\]
for some $\fd_i, \fd'_i\in \fR$ and $\bp_j\in (\cdots(\cP_{\bp_1}^{\fd_1 \pm})\cdots)_{\bp_{j-1}}^{\fd_{j-1} \pm}, \bp'_j\in (\cdots(\cP')_{\bp'_1}^{\fd_1 \pm})\cdots)_{\bp'_{j-1}}^{\fd'_{j-1} \pm}$.
\end{definition}

\begin{proposition}\label{prop:isom_p_md}
The DGA morphisms $\bp^{\fd \pm}$ are chain equivalences. In particular, $\bp^{\fd \pm}$ induce the isomorphisms between homology groups
\[
(\bp^{\fd \pm})_*:H_*(\cI_m)\to H_*(\cF_\bp^{\fd \pm}).
\]
\end{proposition}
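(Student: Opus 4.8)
I will treat the $(+)$ case; the $(-)$ case then follows formally from the anti-isomorphism $\mu_*\colon \cF_\bp^{\fd+}\to\cF_{\mu^*(\bp)}^{\fd-}$ of Remark~\ref{remark:anti-isomorphism and stabilizations}, which carries a chain homotopy equivalence on one side to one on the other. The plan is to produce an explicit chain homotopy inverse of $\bp^{\fd+}$ rather than merely an abstract homology isomorphism. First I would define the retraction $g\colon \cF_\bp^{\fd+}\to\cI_m$ by $g(\bar\rho_{i,\ell})=\rho_{i,\ell}$ and $g(\bar\sfc_k)=0$. A one-line check against the formulas for $\bar\partial_m$ and $\partial_m$ shows $g$ is a DGA morphism, and by construction $g\circ\bp^{\fd+}=\id_{\cI_m}$. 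It remains to show $\bp^{\fd+}\circ g\simeq\id$.

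The key structural observation is that $\bar\partial_m$ preserves the number of $\bar\sfc$-letters in a word: the differential of a $\bar\rho$-letter stays in the $\bar\sfc$-free part $I_m$, while $\bar\partial_m\bar\sfc_k=(-1)^{|\bar\sfc_k|^\fd-1}\sum_{i=1}^{k-1}\bar\sfc_i\bar\rho_{i,k-i}$ replaces one $\bar\sfc$-letter by exactly one $\bar\sfc$-letter. Hence $F_m^+=\bigoplus_{n\ge0}F^{(n)}$ splits as a direct sum of subcomplexes indexed by $\bar\sfc$-count, with $F^{(0)}=\im(\bp^{\fd+})\cong I_m$, and $\bp^{\fd+}\circ g$ equals the projection $P$ onto $F^{(0)}$. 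So everything reduces to showing that $F^{(n)}$ is chain contractible for every $n\ge1$, after which $H=\bigoplus_n h_n$ with $h_0=0$ assembles the desired homotopy $\bar\partial_m H+H\bar\partial_m=\id-P=\id-\bp^{\fd+}g$.

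To contract $F^{(n)}$ I would exploit the constant term $\delta_{\ell,m}$ in $\partial_m$. For a basis word $w$ containing a $\bar\sfc$, write $w=u\,\bar\sfc_k\,w'$ where $\bar\sfc_k$ is the \emph{leftmost} $\bar\sfc$-letter and $u\in I_m$, and set $h(w)=\pm\,u\,\bar\rho_{k,m}\,\bar\sfc_k\,w'$, prepending a single letter $\bar\rho_{k,m}$ whose first index matches the cyclic label $k$ so that signs and the cyclic composability are compatible (and $h=0$ on $\bar\sfc$-free words). Since $\bar\partial_m\bar\rho_{k,m}=1+\sum_{j=1}^{m-1}(\pm)\bar\rho_{k,j}\bar\rho_{k+j,m-j}$, the constant term reproduces $w$ exactly, so that $(\bar\partial_m h+h\bar\partial_m)(w)=w+(\text{error})$. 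To control the error I would introduce the energy $\mathcal E(w):=\sum_{\bar\rho\text{-letters }\bar\rho_{i,\ell}}\ell+\sum_{\bar\sfc\text{-letters }\bar\sfc_k}k$; one checks that $\bar\partial_m$ never increases $\mathcal E$, that the quadratic and $\bar\sfc$-terms preserve it, and that only the $\delta_{\ell,m}$-terms strictly decrease it (by $m$). With respect to the exhaustive, bounded-below ascending filtration $\mathcal F_p=\{\mathcal E\le p\}$, the map $h$ raises $\mathcal E$ by $m$ while the leading identity term is recovered on the associated graded; running the resulting spectral sequence (or, equivalently, a homological-perturbation correction $H\rightsquigarrow H\sum_j(\pm)^j\epsilon^j$ of the leading $h$, where $\epsilon=\id-P-(\bar\partial_m h+h\bar\partial_m)$ strictly raises $\mathcal E$) forces $H_*(F^{(n)})=0$ for $n\ge1$.

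\emph{Main obstacle.} The crux is the last step: pinning down the signs in the definition of $h$ and verifying that the perturbation/spectral-sequence argument genuinely converges on the \emph{uncompleted} algebra. The difficulty is that, for $m\ge2$, the quadratic terms of $\partial_m\rho_{i,m}$ prevent the naive homotopy from squaring off exactly (in contrast to the clean $m=1$ case, where $\partial_m\rho_{1,1}=1$ gives a finite contraction), so one must argue that the $\mathcal E$-increasing corrections terminate in each homological degree. I would secure this either by working in the connective gradings actually occurring (where $\mathcal E$ is controlled in each degree so the filtration is regular and the correction is eventually zero), or by first concluding that $\bp^{\fd+}$ is a quasi-isomorphism from acyclicity of the associated graded and then upgrading to a chain homotopy equivalence via the standard fact that a quasi-isomorphism between bounded-below complexes of free $\Z$-modules admits a homotopy inverse. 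Either way, once $\bp^{\fd+}$ is shown to be a chain equivalence, the induced maps $(\bp^{\fd\pm})_*\colon H_*(\cI_m)\to H_*(\cF_\bp^{\fd\pm})$ are isomorphisms, as asserted.
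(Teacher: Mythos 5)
Your architecture matches the paper's more closely than you might expect: your retraction $g$ is exactly the paper's $\pi^+$, the reduction of the $(-)$ case via $\mu_*$ is the paper's, and the paper also proves the statement by an explicit degree-one homotopy that acts on the leftmost $\bar\sfc$-letter and is fed by the constant term $\delta_{\ell,m}$. The genuine gap sits exactly where you locate your ``main obstacle,'' and neither of your proposed repairs closes it. Your homotopy $h(w)=\pm u\,\bar\rho_{k,m}\,\bar\sfc_k\,w'$ produces an error $\epsilon=\id-P-(\bar\partial_m h+h\bar\partial_m)$ that is \emph{not} locally nilpotent, so the series $\sum_j(\pm 1)^j\epsilon^j$ does not exist in the uncompleted algebra: already for $m=2$ one computes, using $\bar\partial_2\bar\rho_{i,1}=0$, $\bar\partial_2\bar\sfc_1=0$ and $\bar\partial_2\bar\rho_{1,2}=1\pm\bar\rho_{1,1}\bar\rho_{2,1}$, that $\epsilon^j(\bar\sfc_1)=\pm(\bar\rho_{1,1}\bar\rho_{2,1})^j\,\bar\sfc_1\neq0$ for every $j$. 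Moreover all of these words have the same degree whenever $|\rho_{i,1}|=0$ (e.g.\ for the distinguished structure $\bp_\emptyset$), so energy is not controlled in a fixed homological degree and your fix (a) fails; recall also that $\fR$ may be a finite cyclic group, and even for $\fR=\Z$ generators may have negative degree, so ``bounded below'' is unavailable. Fix (b) fails for a different reason: the associated graded of your energy filtration is the complex with the $\delta_{\ell,m}$-terms deleted, and that complex is \emph{not} acyclic on $F^{(n)}$ for $n\ge1$ — for instance $\bar\sfc_1$ is a cycle there whose primitives in the full complex all have energy $m+1>1$, so it survives to $E_1$. All of the acyclicity is carried by the filtration-lowering constant terms, so the spectral sequence does not collapse and identifying its higher differentials is precisely the work the argument was supposed to avoid.

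The repair is not better convergence bookkeeping but a better homotopy, and this is the paper's key lemma. Instead of prepending $\bar\rho_{k,m}$, replace the leftmost letter $\bar\sfc_k$ (up to sign) by
\[
\tilde\sfc_k\coloneqq(-1)^{|\bar\sfc_k|^\fd-1}\sum_{i=1}^m\bar\sfc_i\,\bar\rho_{i,m-i+k},
\]
i.e.\ sum over \emph{all} cyclic labels $i$, with the $\bar\rho$-letter placed to the \emph{right} of $\bar\sfc_i$ and with shifted second index; the term $i=k$ carries $\bar\rho_{k,m}$, whose constant term reproduces $\bar\sfc_k$. The crucial point is the exact identity
\[
\bar\partial_m^+\tilde\sfc_k=\bar\sfc_k+(-1)^{|\bar\sfc_k|^\fd-1}\sum_{i=1}^{k-1}\tilde\sfc_i\,\bar\rho_{i,k-i},
\]
in which the error terms are again built from the $\tilde\sfc_i$'s themselves. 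Consequently, defining $h^+(x\bar\sfc_k y)=\pm x\,\tilde\sfc_k\,y$ for $x$ free of $\bar\sfc$-letters, the error coming from $\bar\partial_m^+(x\tilde\sfc_k y)$ is cancelled on the nose by $h^+\bigl(x\,\bar\partial_m^+(\bar\sfc_k)\,y\bigr)$, and $\bar\partial_m^+h^++h^+\bar\partial_m^+=\id-\bp^{\fd+}\circ\pi^+$ holds exactly — finitely, with no completion, and for an arbitrary $\fR$-grading. Your $\bar\sfc$-count decomposition $F_m^+=\bigoplus_n F^{(n)}$, your energy estimates, and your treatment of the $(-)$ case are all correct and compatible with this; only the contraction itself must be replaced by this self-correcting one.
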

\begin{proof}
As discussed in Remark~\ref{remark:anti-isomorphism and stabilizations}, it suffices to prove the claim only for the positive stabilization.

Let $\pi^+:\cF_\bp^{\fd +}\to\cI$ be a DGA morphism left inverse to $\bp^{\fd +}$, defined as 
\begin{align*}
\pi^+(\bar\rho_{i,\ell})&\coloneqq\rho_{i,\ell},&
\pi^+(\bar\sfc_i)&\coloneqq 0.
\end{align*}

We claim that $\pi^+$ is a chain homotopy inverse of $\bp^{\fd +}$, and it suffices to prove that there exists $h^+:\cF_\bp^{\fd +}\to\cF_\bp^{\fd +}$ of degree 1 such that
\[
\bar\partial_m \circ h^+ + h^+\circ\bar\partial_m = \id-(\bp^{\fd +}\circ\pi^+).
\]

We need the following lemma.
\begin{lemma}\label{lem:tildec}
For each $1\le k\le m$, let us define 
\[
\tilde\sfc_k\coloneqq (-1)^{|\bar\sfc_k|^\fd-1}\sum_{i=1}^m \bar\sfc_i \bar\rho_{i,m-i+k}\in F_m^+.
\]
Then the following holds:
\[
\bar\partial_m^+\tilde c_k=\bar\sfc_k+(-1)^{|\bar\sfc_k|^\fd-1}\sum_{i=1}^{k-1} \tilde \sfc_i\bar\rho_{i,k-i}.
\]
\end{lemma}
\begin{proof}
By direct computation, we have
\begin{align*}
(-1)^{|\bar\sfc_k|^\fd-1}\bar\partial_m\tilde \sfc_k&=\sum_{i=1}^m (\bar\partial_m \bar\sfc_i) \bar\rho_{i,m+k-i}
+ (-1)^{|\bar\sfc_i|^\fd}\bar\sfc_i(\bar\partial_m\bar\rho_{i,m+k-i})\\
&=\sum_{i=1}^m (-1)^{|\bar\sfc_i|^\fd-1}\left(\sum_{j=1}^{i-1}
\bar\sfc_j\bar\rho_{j,i-j}\right)\bar\rho_{i,m+k-i}+
(-1)^{|\bar\sfc_i|^\fd}\bar\sfc_i(\bar\partial_m\bar\rho_{i,m+k-i})\\
&=\sum_{j=1}^{m} (-1)^{|\bar\sfc_j|^\fd-1}\bar\sfc_j
\left(
\sum_{i=j+1}^{m}
(-1)^{|\bar\sfc_i|^\fd-|\bar\sfc_j|^\fd}\bar\rho_{j,i-j}\bar\rho_{i,m+k-i} - \partial_m\bar\rho_{j,m+k-j}
\right).
\end{align*}

Since $(-1)^{|\bar\sfc_i|^\fd-|\bar\sfc_j|^\fd}=(-1)^{|\bar\rho_{i,j-i}|^\fd-1}$ and
\[
\partial_m\bar\rho_{j,m+k-j} = \delta_{m+k-j,m} + \sum_{i=j+1}^{m+k-1} (-1)^{|\bar\rho_{j,i-j}|^\fd-1}\bar\rho_{j,i-j}\bar\rho_{i,m+k-i},
\]
the above equation is the same as
\begin{align*}
&\sum_{j=1}^m (-1)^{|\bar\sfc_j|^\fd-1}\bar\sfc_j \left(\delta_{m+k-j,m}+\sum_{i=m+1}^{m+k-1}(-1)^{|\bar\rho_{j,i-j}|^\fd-1}\bar\rho_{j,i-j}\bar\rho_{i,m+k-i}\right)\notag\\
&=(-1)^{|\bar\sfc_k|^\fd-1}\bar\sfc_k+
\sum_{\ell=1}^{k-1}\left(\sum_{j=1}^m (-1)^{|\bar\sfc_j|^\fd+|\bar\rho_{j,m+\ell-j}|^\fd}\bar\sfc_j \bar\rho_{j,m+\ell-j}\right)\bar\rho_{\ell,k-\ell}\notag\\
&=(-1)^{|\bar\sfc_k|^\fd-1}\bar\sfc_k+
\sum_{\ell=1}^{k-1}(-1)^{|\bar\sfc_\ell|^\fd+1}\left(\sum_{j=1}^m\bar\sfc_j \bar\rho_{j,i-j}\right)\bar\rho_{\ell,k-\ell}\notag\\
&=(-1)^{|\bar\sfc_k|^\fd-1}\bar\sfc_k+\sum_{i=1}^{k-1}\tilde\sfc_i\bar\rho_{i,k-i}.\qedhere
\end{align*}
\end{proof}

Let $w\in\cF_\bp^{\fd +}$ be a monomial. We define $h^+(w)$ inductively as follows:
\begin{align*}
h^+(1)&\coloneqq 0,&
h^+(w)&\coloneqq\begin{cases}
(-1)^{|x\bar\sfc_k|^\fd-1}x\tilde\sfc_k y & w=x\bar\sfc_k y, h^+(x)=0;\\
0 & \text{otherwise}.
\end{cases}
\end{align*}
Then $h^+(w)=0$ if and only if $w$ does not contain any $\bar\sfc_k$'s, and moreover, if $h^+(x)=0$,
\[
h^+(x w y)=(-1)^{|x|^\fd}x h^+(w) y
\]
for any $w, y\in\cF_\bp^{\fd+}$.

If $w$ contains no $\bar\sfc_\ell$'s, then neither does $\bar\partial_m(w)$ and therefore
\[
(\id-\bp^{\fd +}\circ\pi^+)(w)=w-w=0=\bar\partial_m \circ h^+(w)+h^+\circ\bar\partial_m(w).
\]

Otherwise, if $w=x \bar\sfc_k y$ with no $\bar\sfc_k$'s in $x$, then 
it is easy to check that 
\begin{align*}
&\mathrel{\hphantom{=}}(\bar\partial_m\circ h^+ +h^+\circ\bar\partial_m)\left((-1)^{|x\bar\sfc_k|^\fd-1}w\right)\\
&=\bar\partial_m(x\tilde \sfc_k y)
+(-1)^{|x\bar\sfc_k|^\fd-1}h^+( \bar\partial_m(x) \bar\sfc_k y + (-1)^{|x|^\fd}x\bar\partial_m(\bar\sfc_k)y + (-1)^{|x\bar\sfc_k|^\fd}x \bar\sfc_k\bar\partial_m(y))\\
&=\bar\partial_m(x\tilde \sfc_k y)
-\bar\partial_m(x)\tilde \sfc_k y + x(h^+\circ\bar\partial_m)(\bar\sfc_k) y + (-1)^{|x\bar\sfc_k|^\fd}x\tilde \sfc_k\bar\partial_m(y))\\
&=x(\bar\partial_m(\tilde \sfc_k) + (h^+\circ\bar\partial_m)(\bar\sfc_k)) y\\
&=x\bar\sfc_k y = w
\end{align*}
by the previous lemma. Since $\pi^+(w)=0$, we have 
\[
(\id-\bp^{\fd +}\circ \pi^+)(w)=w-0=w,
\]
which shows that $(\bp^{\fd +})_*:H_*(\cI_\bp)\to H_*(\cF_\bp^{\fd +})$ is an isomorphism.
\end{proof}

\begin{proposition}\label{proposition:stabilization preserves homology}
Let $(\cA,\cP)$ be a DGA with a collection of peripheral structures, and let $\cA_\bp^{\fd \pm}$ be the $\fd$-th generalized $(\pm)$-stabilization with respect to $\bp\in \cP$ defined as above. Then the DGA morphism $\iota_\bp^{\fd \pm}$ induces a chain equivalence, whose chain homotopy inverse is precisely the projection 
\[
\pi_\bp^{\fd \pm}:\cA_\bp^{\fd \pm}\to\cA.
\]

In particular, it induces an isomorphism between homology groups
\[
(\iota_\bp^{\fd \pm})_*:H_*(\cA)\to H_*(\cA_\bp^{\fd \pm}).
\]
\end{proposition}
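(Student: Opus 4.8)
The plan is to reduce everything to the explicit chain homotopy already built in Proposition~\ref{prop:isom_p_md} and transport it across the push-out. By Remark~\ref{remark:anti-isomorphism and stabilizations} the anti-isomorphism $\mu_*$ identifies the positive and negative generalized stabilizations, and since an anti-isomorphism carries chain homotopies to chain homotopies, it suffices to treat the $(+)$ case. First I would make the push-out concrete. As $\bp^{\fd+}$ is the canonical inclusion $I_m\hookrightarrow I_m\ast\Z\langle\bar\sfc_1,\dots,\bar\sfc_m\rangle$, the amalgamated free product collapses exactly as in the computation of $\cA_{\bp_\emptyset}^{\fd+}$ above, giving
\[
A_\bp^{\fd+}\simeq A\ast\Z\langle\bar\sfc_1,\dots,\bar\sfc_m\rangle,
\]
where each $\bar\rho_{i,\ell}$ is identified with $\bp(\rho_{i,\ell})\in A$ and $\bar\partial$ extends $\partial$ by the stabilization formula for the $\bar\sfc_k$'s, now reading $\bar\rho_{i,\ell}$ as $\bp(\rho_{i,\ell})$.

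Next I would produce the projection $\pi_\bp^{\fd+}$ from the universal property of the push-out, applied to $\id_\cA\colon\cA\to\cA$ and to $\bp\circ\pi^+\colon\cF_\bp^{\fd+}\to\cA$, where $\pi^+$ is the left inverse from the proof of Proposition~\ref{prop:isom_p_md}. These two maps agree on $\cI_m$ because $\bp\circ\pi^+\circ\bp^{\fd+}=\bp$, so $\pi_\bp^{\fd+}$ is a well-defined DGA morphism that restricts to the identity on $\cA$ and sends each $\bar\sfc_k$ to $0$. In particular $\pi_\bp^{\fd+}\circ\iota_\bp^{\fd+}=\id_\cA$, so the entire content is to exhibit a chain homotopy $\iota_\bp^{\fd+}\circ\pi_\bp^{\fd+}\simeq\id$ on $\cA_\bp^{\fd+}$.

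The observation that makes the earlier computation run verbatim is that, $\bp$ being a DGA morphism, the images $\bp(\rho_{i,\ell})$ obey the very same differential relations as the $\bar\rho_{i,\ell}$, namely
\[
\partial\,\bp(\rho_{i,\ell})=\delta_{\ell,m}+\sum_{j=1}^{\ell-1}(-1)^{|\rho_{i,j}|-1}\bp(\rho_{i,j})\,\bp(\rho_{i+j,\ell-j}).
\]
Hence Lemma~\ref{lem:tildec} holds word-for-word in $A_\bp^{\fd+}$ with $\tilde\sfc_k$ defined by the identical formula, and I would define $h_\bp^+\colon\cA_\bp^{\fd+}\to\cA_\bp^{\fd+}$ by the same inductive recipe: split a monomial at its first $\bar\sfc_k$, replace that letter by $\tilde\sfc_k$ with the prescribed sign, and set $h_\bp^+=0$ on monomials free of $\bar\sfc$'s. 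The homotopy identity $\bar\partial\,h_\bp^+ + h_\bp^+\bar\partial=\id-\iota_\bp^{\fd+}\pi_\bp^{\fd+}$ is then verified on monomials exactly as in Proposition~\ref{prop:isom_p_md}: it is immediate when no $\bar\sfc$ occurs, and for $w=x\bar\sfc_k y$ with $x$ free of $\bar\sfc$'s the Leibniz expansion combined with Lemma~\ref{lem:tildec} telescopes to $w$, while $\iota_\bp^{\fd+}\pi_\bp^{\fd+}(w)=0$.

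The only point deserving attention is that the prefix $x$ is now a word in the generators of $\cA$ and may carry a nontrivial differential. But this is precisely the situation already handled in Proposition~\ref{prop:isom_p_md}, where $x$ consisted of $\bar\rho$'s with nonzero differential; the argument there uses only the Leibniz rule and the property $h_\bp^+(xwy)=(-1)^{|x|^\fd}x\,h_\bp^+(w)\,y$ for $\bar\sfc$-free $x$, both of which persist here. Thus no new phenomenon arises and the signs are identical, so I expect this step to be routine bookkeeping rather than the crux. Since $\pi_\bp^{\fd+}\circ\iota_\bp^{\fd+}=\id$ and $\iota_\bp^{\fd+}\circ\pi_\bp^{\fd+}\simeq\id$, the two maps are mutually inverse chain homotopy equivalences, which yields the claimed isomorphism $(\iota_\bp^{\fd\pm})_*$ on homology.
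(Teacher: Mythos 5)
Your proposal is correct and follows essentially the same route as the paper: the paper's proof of Proposition~\ref{proposition:stabilization preserves homology} is literally the one-line statement that it ``follows by exactly the same argument as Proposition~\ref{prop:isom_p_md},'' and your write-up is a faithful, careful elaboration of that argument (identifying the push-out as $A\ast\Z\langle\bar\sfc_1,\dots,\bar\sfc_m\rangle$, obtaining $\pi_\bp^{\fd\pm}$ from the universal property, and transporting the homotopy $h^+$ via the observation that the $\bp(\rho_{i,\ell})$ satisfy the same differential relations as the $\bar\rho_{i,\ell}$). Your attention to the point that the prefix $x$ may now carry a nontrivial differential, and why this is already covered by the original computation, is exactly the right check.
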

\begin{proof}
This follows by exactly the same argument as Proposition~\ref{prop:isom_p_md}.
\end{proof}

\begin{remark}
The proofs of Propositions~\ref{prop:isom_p_md} and \ref{proposition:stabilization preserves homology} can be deduced from the fact that generalized stabilizations are compositions of (de)stabilizations involving infinitely many cancelling pairs of generators up to tame isomorphisms as mentioned in Remark~\ref{remark:EN_stabilizations}.
\end{remark}

\section{DGA for Legendrian graphs with potentials}\label{Differential graded algebra for Legendrian graphs with potentials}
By extending the idea of disk counting, we construct a DGA for Legendrian graphs with potential.
In addition to oriented admissible disks in the Lagrangian projection, we take into account {\em infinitesimal} disks near each vertex for the differential of generators from vertices.
We also assign a canonical peripheral structure, a collection of DG-subalgebras, in order to encode the data of vertices of Legendrian graphs as follows:

\begin{theorem}[Associated DGA with peripheral structures]\label{theorem:DGA}
Let $\cL=(\Lambda,\fP)$ be a Legendrian graph with potential. Then there is a pair $(\cA_\cL, \cP_\cL)$ consisting of a DGA $\cA_\cL\coloneqq(A_\Lambda,|\cdot|_\fP,\partial)$ and a canonical peripheral structure $\cP_\cL$.
\end{theorem}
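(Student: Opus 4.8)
The plan is to construct the three pieces of data---the underlying graded algebra $A_\Lambda$, the grading $|\cdot|_\fP$, and the differential $\partial$---directly from the regular Lagrangian projection $\sL = \pi_L(\Lambda)$, and then to package the vertex data into the peripheral structure $\cP_\cL$. First I would fix a projection $\sL$ in general position (which exists by density of $\cLG$ in $\bar\cLG$) together with a Maslov potential $\fP \in \fG(\Lambda;\fR)$. The generators of $A_\Lambda$ should be of two kinds: one generator for each transverse double point (crossing) of $\sL$, as in the classical Chekanov--Eliashberg construction, and, for each $m$-valent vertex $\sfv$, a countable family of generators $\rho_{\sfv,i,\ell}$ indexed by $i \in \Zmod{m}$ and $\ell \ge 1$, matching the generators of $\cI_m$. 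These vertex generators are the ``Reeb chords'' produced by the standard model near each boundary component described in the introduction, and they are what distinguishes this DGA from the link case. I would then let $A_\Lambda$ be the free unital associative $\Z$-algebra on all of these.

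Next I would define the grading $|\cdot|_\fP$. On a crossing generator the degree is defined by the usual rotation-number/Conley--Zehnder recipe, using the potential difference $\fP(\sfh)-\fP(\sfh')$ of the two strands meeting at the crossing together with a correction from the rotation of the tangent line; this is exactly the point at which the \emph{edgewise} potential of \S\ref{sec:potential} enters, and the fact that $\fP$ vanishes on $\fT_\sL$ guarantees the degree is well-defined independently of how one traverses an edge. On the vertex generators I would set $|\rho_{\sfv,i,\ell}|_\fP$ so that, after restricting to the subalgebra generated by the $\rho_{\sfv,i,\ell}$, the grading agrees with an admissible grading on $I_m$; concretely the degree should encode the cyclic-order jump at $\sfv$ weighted by the local potential values on the half-edges $\sfh_{\sfv,i}$, so that the relation $\partial_m \rho_{i,\ell} = \delta_{\ell,m} + \sum (-1)^{|\rho_{i,j}|-1}\rho_{i,j}\rho_{i+j,\ell-j}$ becomes homogeneous of degree $-1$.

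For the differential, I would set $\partial$ on crossing generators to count oriented admissible disks in $\sL$ with one positive and several negative corners (with signs and with the usual boundary orientation conventions), and on each vertex generator $\rho_{\sfv,i,\ell}$ to count the \emph{infinitesimal} admissible disks near $\sfv$; the model is chosen precisely so that these infinitesimal counts reproduce the differential $\partial_m$ of $\cI_m$ together with mixed terms pairing a vertex with nearby crossings. Extending by the signed Leibniz rule then defines $\partial$ on all of $A_\Lambda$. The peripheral structure $\cP_\cL$ is the collection, over all vertices $\sfv$, of the DGA morphisms $\bp_{\sfv}\colon \cI_{\val(\sfv)} \to \cA_\cL$ sending $\rho_{i,\ell}$ to $\rho_{\sfv,i,\ell}$, together with the distinguished $\bp_\emptyset$; that each $\bp_{\sfv}$ is a chain map is built into the definition of the infinitesimal part of $\partial$.

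The main obstacle is verifying $\partial^2 = 0$, and this is where the two kinds of disks interact. For purely crossing-to-crossing terms one uses the standard broken-disk / gluing argument: the codimension-one boundary strata of the moduli of admissible disks come in cancelling pairs. The genuinely new work is controlling the contributions that involve a vertex: one must show that the boundary of the infinitesimal moduli near $\sfv$ (encoded by $\partial_m^2 = 0$ in $\cI_m$, which follows from the $A_\infty$-type identity satisfied by the $\rho_{i,\ell}$) glues correctly with the ordinary disks that have a corner approaching $\sfv$. I expect this to require a careful case analysis of how an admissible disk can degenerate at a vertex---breaking off an infinitesimal polygon versus sliding a corner past a half-edge---matching each degeneration with exactly one other. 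This is presumably the content deferred to Appendix~\ref{section:manipulation of disks}, and I would structure the proof so that the crossing-only cancellation is standard and the vertex interactions are reduced, via the compatibility of the grading with $\partial_m$, to the algebraic identity $\partial_m^2=0$ already implicit in the definition of $\cI_m$.
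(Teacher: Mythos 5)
Your construction matches the paper's almost step for step: the same generating set (crossings plus countably many vertex generators $\sfv_{i,\ell}$ per vertex), the same grading scheme from the edgewise potential, the same differential counting regular and infinitesimal admissible disks extended by the graded Leibniz rule, the same peripheral structures $\bp_\sfv\colon\cI_{\val(\sfv)}\to\cA_\cL$ together with $\bp_\emptyset$, and the same strategy for $\partial^2=0$ by pairing boundary degenerations --- the paper's Appendix~\ref{section:manipulation of disks} carries out exactly the case analysis you anticipate, via gluing/shrinking/stretching/cutting operations and a handshaking argument on a moduli graph.

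Two points, however, need correction or completion. First, you write that the infinitesimal counts for $\partial\rho_{\sfv,i,\ell}$ reproduce $\partial_m$ ``together with mixed terms pairing a vertex with nearby crossings.'' Read literally this is fatal: if the differential of a vertex generator contained any crossing generators, then $\bp_\sfv$ would not be a chain map, contradicting your own (and the theorem's) claim that $\cP_\cL$ is a collection of peripheral structures. In the paper, the infinitesimal disks contributing to $\partial\sfv_{i,\ell}$ are required to map into the closed standard neighborhood $\cl{\sU_\sfv}$ (Definition~\ref{definition:infinitesimal}), so all of their corners sit at $\sfv$, and Proposition~\ref{prop:boundary of vertex} gives exactly $\partial \sfv_{i,\ell}=\delta_{\ell,\val(\sfv)}+\sum_{\ell_1+\ell_2=\ell}(-1)^{|\sfv_{i,\ell_1}|-1}\sfv_{i,\ell_1} \sfv_{i+\ell_1,\ell_2}$ with no crossing terms; the mixed terms occur only in the differentials of \emph{crossing} generators. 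Second, you never verify that $\partial$ applied to a crossing generator is a finite sum. This is not automatic here, precisely because each vertex contributes infinitely many generators, all of action zero, so the classical ``positive action decreases along the differential'' argument needs supplementing. The paper's Proposition~\ref{proposition:finiteness} does this via Stokes' theorem: every negative corner, whether at a crossing or at a vertex, forces the disk to cover regions of $\R^2\setminus\sL$ of definite area, which bounds the number of contributing disks. Without some such argument, $\partial$ is not even well defined on $A_\Lambda$.
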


\subsection{Capping paths}\label{sec:canonical_path}
Let us consider the set of double points of $\sL$ by $\sC_\sL$, and define the set $\sS_\sL$ of {\em special points} as the disjoint union 
\[
\sS_\sL\coloneqq\sC_\sL\amalg \sV_\sL.
\]

Now introduce capping paths as generators of our DGA.
Consider the sets
\begin{align*}
\tilde \sV_\sfv&\coloneqq\{\sfv_{i,\ell}\mid i\in\Zmod{\val(\sfv)}, \ell\ge 1\},&
\tilde\sV_\sL&\coloneqq\coprod_{\sfv\in\sV_\sL} \tilde\sV_\sfv.
\end{align*}

\begin{notation}
We denote the union of $\sC_\sL$ and $\tilde\sV_\sL$ by $\sG_\sL$.
\[
\sG_\sL\coloneqq\sC_\sL\amalg\tilde\sV_\sL
\]
\end{notation}

The projection $\tilde\sV_\sL\to\sV_\sL$ defined as $\sfv_{i,\ell}\mapsto\sfv$ extends to 
\[
\sG_\sL=\sC_\sL\amalg\tilde\sV_\sL\to\sC_\sL\amalg\sV_\sL=\sS_\sL
\]
which maps $\sfc\in\sC_\sL$ to $\sfc$.
Moreover, the assignment $\sfv_{i,\ell}\mapsto(\sfh_{\sfv,i},\ell)$ gives us a bijection
$\tilde\sV_\sfv\to\sH_\sfv\times\ZZ_{\ge 1}.$
Therefore one can interpret $\sfv_{i,\ell}$ as the region covered by clockwise $\ell$ sectors of $\sU_\sfv$ starting with the half-edge $\sfh_{\sfv,i}$.

\subsubsection{Capping paths for $\sC_\sL$}
Suppose that $\sfg=\sfc\in\sC_\sL$ is a double point. 
Let $c_\pm\in\Lambda$ be the two lifts of $\sfc$ chosen so that $c_+$ has a greater $z$-value than $c_-$. That is,
\begin{align*}
\pi_L(c_+)&=\pi_L(c_-)=\sfc\in\sC_\sL,& z(c_+)&>z(c_-).
\end{align*}
We denote the two edges of $\Lambda$ containing $c_+$ and $c_-$ by $e_{c}^+$ and $e_{c}^-$. Their projections are denoted by $\sfe_{\sfc}^+$ and $\sfe_{\sfc}^-$, and are called the {\em upper} and {\em lower edges} at $\sfc$, respectively.
\[
\begin{tikzcd}
c_\pm\in e_{c}^\pm\in \cE_\Lambda
\quad\ar[r,"\pi_L"]&\quad
\sfc\in\sfe_{\sfc}^\pm\in\sE_\sL.
\end{tikzcd}
\]
Then the tangent space $T_\sfc \sL\subset\RR_{xy}^2$ is the union $T_\sfc \sfe_{\sfc}^{+}\cup T_\sfc \sfe_{\sfc}^{-}$.
We define a path $\sfc(t)$ such that 
\begin{align*}
\sfc(t)&:[0,1]\to \gr(1,T_\sfc\RR^2_{xy}),&
\sfc(0)&=T_\sfc \sfe_{\sfc}^{+},&
\sfc(1)&=T_\sfc \sfe_{\sfc}^{-}
\end{align*}
and $\sfc(t)$ rotates {\em counterclockwise} as $t$ increases.

Pictorially, if we assign a $(+)$ or $(-)$ sign to each region near $\sfc$ separated by $\sL$ as depicted in Figure~\ref{fig:signs of regions}, then the path $\sfc(t)$ sweeps through both positive regions counterclockwise. See Figure~\ref{fig:capping path of crossing}.
\begin{figure}[ht]
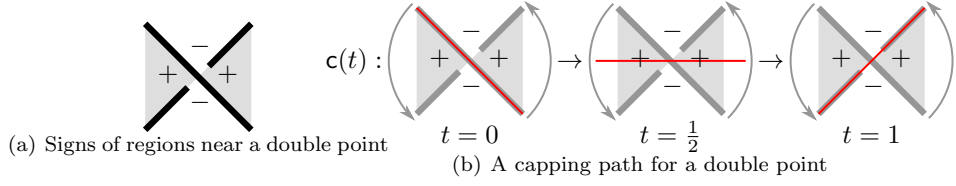

\subfigure[Signs of regions near a double point\label{fig:signs of regions}]{\makebox[0.4\textwidth]{
$\vcenter{\hbox{\input{crossing_sign_input.tex}}}$
}}
\subfigure[A capping path for a double point\label{fig:capping path of crossing}]{\makebox[0.5\textwidth]{
$\setlength\arraycolsep{1pt}
\begin{array}{rccccc}
\sfc(t):&\vcenter{\hbox{\input{canonicalpath_crossing_0_input.tex}}}&\to&
\vcenter{\hbox{\input{canonicalpath_crossing_0-5_input.tex}}}&\to&
\vcenter{\hbox{\input{canonicalpath_crossing_1_input.tex}}}\\
& t=0 & & t=\frac12 & & t=1
\end{array}
$}
}
\caption{Signs of regions and a capping path for a double point}
\label{figure:Reeb sign of crossing}
\end{figure}

\subsubsection{Capping paths for $\tilde\sV_\sL$}
Now suppose that $\sfg=\sfv_{i,\ell}\in\tilde\sV_\sL$. We define a path $\sfv_i:[0,1]\to \gr(1,T_\sfv\RR^2_{xy})$ in advance for each $\sfv\in\sV_\sL$ and $i\in\Zmod{\val(\sfv)}$ as a loop based at $T_\sfv \sfh_{\sfv,i}$ which turns {\em counterclockwise} $\pi$ radians. Then the (free) homotopy class $[\sfv_i(t)]$ corresponds to a generator $1_{\gr}\in\pi_1(\gr(1,T_\sfv\RR^2_{xy}))$.

We use induction on $\ell$ to define $\sfv_{i,\ell}(t)$.
For $\ell>1$, we define $\sfv_{i,\ell+1}$ as the concatenation
\begin{equation}\label{eqn:concatenation}
\sfv_{i,\ell+1}(t)\coloneqq \sfv_{i,1}(t)\cdot \sfv_{i+1}^{-1}(t)\cdot \sfv_{i+1,\ell}(t),
\end{equation}
where $\sfv_{i+1}(t)$ is the loop defined previously.

For $\ell=1$, the path $\sfv_{i,1}(t)$ is the same as the path defined by the tangent line along the boundary of the sector $\sector_{\sfv,i}$ after smoothing.
See Figure~\ref{figure:sectors and capping paths} for examples.
\begin{figure}[ht]
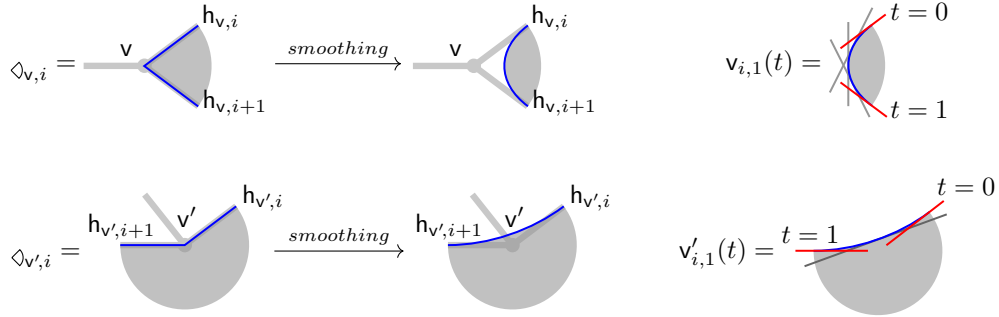

\begin{tikzcd}
\sector_{\sfv,i}=\vcenter{\hbox{\input{sector_vertex_0_input.tex}}}\arrow[rr,"smoothing"] & & \vcenter{\hbox{\input{sector_vertex_0_smoothing_input.tex}}} &
\sfv_{i,1}(t)=\vcenter{\hbox{\input{sector_vertex_0_capping_path_input.tex}}}\\
\sector_{\sfv'\!,i}=\vcenter{\hbox{\input{sector_vertex_1_input.tex}}}\arrow[rr,"smoothing"] & & \vcenter{\hbox{\input{sector_vertex_1_smoothing_input.tex}}} &
\sfv'_{i,1}(t)=\vcenter{\hbox{\input{sector_vertex_1_capping_path_input.tex}}}\\
\end{tikzcd}
\caption{Sectors and capping paths}
\label{figure:sectors and capping paths}
\end{figure}

More precisely, let $0<\theta\le 2\pi$ be the angle between two half-edges $\sfh_{\sfv,i}$ and $\sfh_{\sfv,i+1}$ given by clockwise rotation.\footnote{The angle $\theta$ is $2\pi$ if and only if $\sfh_{\sfv,i}=\sfh_{\sfv,i+1}$, or equivalently, if $\sfv$ is univalent.}
A path $\sfv_{i,1}:[0,1]\to \gr(1,T_\sfv\RR^2_{xy})$ is defined by
\begin{align*}
\sfv_{i,1}(0)&\coloneqq T_\sfv \sfh_{\sfv,i},&
\sfv_{i,1}(1)&\coloneqq T_\sfv \sfh_{\sfv,i+1}
\end{align*}
and as $t$ increases $\sfv_{i,1}(t)$ rotates by $(\pi-\theta)$ {\em counterclockwise} if $\theta<\pi$ and rotates by $(\theta-\pi)$ {\em clockwise} if $\pi<\theta$.

Note that the path $\sfv_{i,m}(t)$ for $m=\val(\sfv)$ becomes a loop which is a clockwise rotation by $\pi$. In general, $\sfv_{i,km}(t)$ corresponds to a {\em clockwise} rotation by $(2k-1)\pi$ for $k\ge 0$.\footnote{When $k=0$, then the clockwise $-\pi$ rotation means a counterclockwise rotation by $\pi$, which is the same as $\sfv_i(t)$.}

\begin{notation}
The free associative unital algebra over $\ZZ$ generated by $\sG_\sL$ is denoted by $A_\Lambda$.
\[
A_\Lambda\coloneqq\ZZ\langle \sG_\sL \rangle
\]
\end{notation}

\subsubsection{Geometric interpretation of generators}\label{sec:geometric model}
We interpret generators as Reeb chords. In particular we focus on a local model of each vertex by considering the standard model of a Weinstein one-handle.

Let us start with a function $F:\CC^2\to \RR$ defined by
\begin{align*}
F(x_1+iy_1, x_2+iy_2)=\frac{1}{2}(x_1^2+y_1^2)+2x_2^2-y_2^2.
\end{align*}
Then $F^{-1}(-\delta)$ and $F^{-1}(\delta)$ are homeomorphic to $\RR^3\times S^0$ and $S^2\times \RR$, respectively.
Now consider the {\em Liouville vector field}
\[
Z=\frac{1}{2}(x_1\partial _{x_1}+y_1\partial_{y_1})+2x_2\partial x_2-y_2\partial y_2
\]
of $F^{-1}(\pm\delta)$ with respect to the {\em standard symplectic form}
$\omega_{\rm std}=dx_1\wedge dy_1+dx_2\wedge dy_2.$ This defines a {\em contact form} on each $F^{-1}(\pm\delta)$ by restricting the following 1-form
\[
\omega_{\rm std}(Z,\cdot )=\frac{1}{2}(x_1dy_1-y_1dx_1)+2x_2 dy_2+y_2 dx_2,
\]
whose {\em Reeb vector field} $R$ is proportional to
\[
\frac{1}{2}(x_1\partial_{y_1}-y_1\partial_{x_1})+2x_2\partial_{y_2}+y_2\partial_{x_2},
\]
on each level set of $F$. We especially consider the following half of $F^{-1}(\delta)$
\[
N(\delta)\coloneqq F^{-1}(\delta)\cap \{y_2\geq 0\},
\]
which is homeomorphic to $S^2\times[0,+\infty)$. It is straightforward to check that there exists a unique closed {\em Reeb orbit} $R$ on $\partial N(\delta)=F^{-1}(\delta)\cap \{y_2= 0\}$ parametrized as 
\[
t \mapsto \left(
\sqrt{2\delta}\cos{t}, \sqrt{2\delta}\sin{t}, 0, 0
\right).
\]

For a Legendrian graph $\Lambda\subset \RR^3_{\rotation}$ and $v\in\cV_{\Lambda}$, by Theorem~\ref{theorem:Darboux}, we identify the standard neighborhood $\cU_v\subset\RR_{\rotation}^3$ with the open unit ball at the origin
\[
(\cU_v,\cU_v\cap\Lambda)\stackrel{\phi_v}{\simeq}(\cU_\origin,\cU_\origin\cap T_\Theta)
\]
and for sufficiently small $\delta_v>0$, let us consider
\begin{align*}
\cN_v&\coloneqq\{(x,y,z)\in\cU_\origin\mid x^2+y^2+z^2\ge \delta_v\}.
\end{align*}

Then there is a diffeomorphism $\bar\phi_v:N(\delta_v)\to\cN_v$ defined as $\bar \phi_v(x_1,y_1,x_2) \coloneqq (x, y, z)$ such that
\begin{align*}
x&\coloneqq x_1 \frac{y_2+\sqrt{\delta_v}}{\sqrt{2(y_2^2+\delta_v)}}, &
y&\coloneqq y_1 \frac{y_2+\sqrt{\delta_v}}{\sqrt{2(y_2^2+\delta_v)}}, &
z&\coloneqq x_2 \frac{\sqrt{2}(y_2+\sqrt{\delta_v})}{\sqrt{y_2^2+\delta_v}},
\end{align*}
where 
\[
y_2=\sqrt{\frac12(x_1^2+y_1^2)+2x_2^2-\delta_v}.
\]

We now equip $\cN_v$ with the contact form $\alpha_v$ which is given by interpolating the contact form on $\RR^3$ and the one obtained from $N(\delta_v)$ via $\bar\phi_v$. Then the periodic Reeb orbit $R_v$ is the equator of the sphere $\partial\cN_v=\{(x,y,z)\mid x^2+y^2+z^2=\delta_v\}$ parametrized as
\[
R_v(t)\coloneqq\{(\sqrt{\delta_v}\cos t, \sqrt{\delta_v}\sin t, 0)\in \partial\cN_v\}.
\]

Suppose that $m=\val(v)$ pair of Legendrian arcs in $\cN_v$ intersects $R_v$ at $\theta=\frac{2\pi}{m}$.
Then we have infinitely many {Reeb chords} on each $\cN_v$ which correspond the generators $\sfv_{i,\ell}$'s
\[
\sfv_{i,\ell}\longleftrightarrow\left\{R_v(t)~\middle|~ \frac{2\pi}{m}i \leq t \leq \frac{2\pi}{m}(i+\ell)\right\}.
\]
We call $\cN_v$ the {\em local geometric model} for $v\in\cV_{\Lambda}$.

As a result, we have a pair consisting of a bordered manifold with bordered Legendrian arcs 
\begin{align*}
\cB\Lambda&\coloneqq \RR^3\setminus \cU_\cV\cup \cN_\cV,& \cE\Lambda&\coloneqq \Lambda \cap \cB\Lambda,
\end{align*}
where $\cU_\cV\coloneqq \coprod_{v\in\cV}\cU_v$ and $\cN_\cV\coloneqq \coprod_{v\in\cV}\cN_v$.

\subsection{Gradings from Maslov potentials}\label{section:grading_singularpoint}
We will use the given potential $\fP$ to define an $\fR$-grading on $A_\Lambda$ by assigning an element $|\sfg|_\fP\in\fR$ for each $\sfg\in\sG_\sL$ as follows.

\subsubsection{Grading on $\sC_\sL$}\label{subsubsection:grading on c}
For $\sfc\in \sC_\sL$, let $\sfe_{\sfc}^{+}$ and $\sfe_{\sfc}^{-}$ be the upper and lower edges, respectively, which are oriented.
We choose two oriented subarcs $\gamma_{\sfc}^{\pm}$ of $\sfe_{\sfc}^{\pm}$ from $\sfc$ to their terminal vertices, say $\sfv\coloneqq \sfe_{\sfc}^+(1)$ and $\sfw\coloneqq \sfe_{\sfc}^-(1)$, respectively. 
\begin{align*}
\gamma_{\sfc}^{\pm}&:[0,1]\to \sfe_{\sfc}^{\pm},&
\gamma_{\sfc}^{\pm}(0)&=\sfc,&
\gamma_{\sfc}^{+}(1)&=\sfv,&
\gamma_{\sfc}^{-}(1)&=\sfw.
\end{align*}
See Figure~\ref{fig:paths for grading}.

\begin{figure}[ht]
\def\sfvgscale{1.2}\input{canonicalpath_grading_input.tex}
\caption{The two subarcs $\gamma_{\sfc}^{+}$ and $\gamma_{\sfc}^{-}$ of $\sL$ for a double point $\sfc\in \sC_\sL$}
\label{fig:paths for grading}
\end{figure}

We denote by $\sfh_{\sfv,i}$ and $\sfh_{\sfw,j}$ the two half-edges contained in $\gamma_{\sfc}^{+}$ and $\gamma_{\sfc}^{-}$, respectively.
As before, by using the stretching map $f_M$ and perturbing slightly, we may assume that $\sfh_{\sfv,i}$ and $\sfh_{\sfw,j}$ are parallel to the $x$-axis.
Then the loop $\hat \sfc(t)$ is defined to be the concatenation 
\[
\hat \sfc(t)\coloneqq D\gamma_{\sfc}^{+}(t)\cdot D\gamma_{\sfc}^{-}(t)^{-1}\cdot \sfc(t)^{-1}\in \gr(1,\RR^2),
\]
which travels along $\sfe_{\sfc}^{+}$ from $T_\sfc \sfe_{\sfc}^{+}$ to $T_\sfv \sfh_{\sfv,i}\approx \partial_x \approx T_\sfw \sfh_{\sfw,j}\in \gr(1,\RR^2)$ and then to $T_\sfc \sfe_{\sfc}^{-}$ along $\sfe_{\sfc}^{-}$.
We define $|\sfc|_\fP$ as
\begin{align*}\label{eq:Gr1}\tag{Gr1}
|\sfc|_\fP\coloneqq \fP(\sfh_{\sfv,i})-\fP(\sfh_{\sfw,j})+n_\sfc\cdot1_\fR\in \fR,
\end{align*}
where $[\hat\sfc] = n_\sfc\cdot 1_{\gr}\in\pi_1(\gr(1,\RR^2))$.

If $\sfc$ is a self-intersecting point of an edge $\sfe\in \sE_\sL$, then we may have that $\sfh_{\sfv,i}=\sfh_{\sfw,j}$ and one of $D\gamma_{\sfc}^{+}$ or $(D\gamma_{\sfc}^{-})^{-1}$ is completely canceled by the other up to homotopy. In this case $|\sfc|_\fP$ recovers the usual definition of the grading in the literature.

\subsubsection{Grading on $\tilde\sV_\sL$}
Suppose that $\sH_\sfv$ is decomposed into two parts 
\[
\sH_\sfv=\{\sfh_{\sfv,1},\dots, \sfh_{\sfv,a}\}\amalg \{\sfh_{\sfv,a+1},\dots, \sfh_{\sfv,a+b}\},\quad
a+b=\val(\sfv)
\]
so that $\sfh_{\sfv,1},\dots, \sfh_{\sfv,a}$ are on the left of $\sfv$ and $\sfh_{\sfv,a+1},\dots, \sfh_{\sfv,a+b}$ are on the right of $\sfv$, respectively.

We use the same strategy to define the grading of $\sfv_{i,\ell}$ by regarding $\sfh_{\sfv,\ell}$ and $\sfh_{\sfv,i+\ell}$ as the upper and lower strand for $\sfv_{i,\ell}$ together with two constant paths $D\gamma_{\sfv_{i,\ell}}^+$, $D\gamma_{\sfv_{i,\ell}}^-$ at $\sfh_{\sfv,\ell}$, $\sfh_{\sfv,i+\ell}$ in $\gr(1,\RR^2)$, respectively.

Then as before, two tangent spaces of half-edges $\sfh_{\sfv,i}$ and $\sfh_{\sfv,i+\ell}$ become close enough via the streching map $f_M$ to be identified and so we can compose paths to define the loop
\[
\hat\sfv_{i,\ell}\coloneqq D\gamma_{\sfv_{i,\ell}}^+(t)\cdot D\gamma_{\sfv_{i,\ell}}^-(t)\cdot \sfv_{i,\ell}^{-1}(t)=\sfv_{i,\ell}^{-1}(t).
\]

Suppose that this path turns $n_{\sfv_{i,\ell}}\pi$ for some $n_{\sfv_{i,\ell}}\in\ZZ$. Namely,
\[
[\hat\sfv_{i,\ell}]=n_{\sfv_{i,\ell}}\cdot 1_\fR.
\]
Then we define the grading $|\sfv_{i,\ell}|_\fP$ as the same as before.
\[
|\sfv_{i,\ell}|_\fP\coloneqq \fP(\sfh_{\sfv,i})-\fP(\sfh_{\sfv,i+\ell})+n_{\sfv_{i,\ell}}\cdot 1_\fR
\]

By the concatenation formula (\ref{eqn:concatenation}) for the capping paths we have the relation
\[
n_{\sfv_{i,\ell+r}} = n_{\sfv_{i,\ell}}+n_{\sfv_{i+\ell,r}}+1,
\]
where the last 1 comes from the path $\sfv_{i+\ell}^{-1}(t)$ in $\sfv_{i,\ell+r}(t)$.
Therefore the following grading relation holds:
For $\ell,r\ge 1$,
\begin{align*}\label{eq:Gr3}\tag{Gr3}
|\sfv_{i,\ell+r}|_\fP\coloneqq|\sfv_{i,\ell}|_\fP+|\sfv_{i+\ell,r}|_\fP+1_\fR\in \fR.
\end{align*}
It is easy to check from the definition that for $m=\val(\sfv)$,
\begin{align}
\sum_{j=1}^m (|\sfv_{j,1}|_\fP+1_\fR)&=2\cdot 1_\fR,&
|\sfv_{i,m}|_\fP &= 1_\fR,\quad\forall i\in\Zmod{m}.
\end{align}

Explicitly, $|\sfv_{i,1}|_\fP\in \fR$ is given as follows:
\begin{enumerate}
\item If $a,b\ge 1$, then for $1\le i\le a+b$, 
\begin{align*}\tag{Gr$2_1$}\label{eq:Gr2_1}
|\sfv_{i,1}|_\fP\coloneqq\begin{cases}
\fP(\sfh_{\sfv,i})-\fP(\sfh_{\sfv,i+1})-1_\fR & i\neq a, a+b;\\
\fP(\sfh_{\sfv,a})-\fP(\sfh_{\sfv,a+1}) & i=a;\\
\fP(\sfh_{\sfv,a+b})-\fP(\sfh_{\sfv,1}) & i=a+b.
\end{cases}
\end{align*}
\item If $a=0$, then for $1\leq i\leq b$,
\begin{align*}\tag{Gr$2_2$}\label{eq:Gr2_2}
|\sfv_{i,1}|_\fP\coloneqq
\begin{cases}
\fP(\sfh_{\sfv,i})-\fP(\sfh_{\sfv,i+1})-1_\fR & i\neq b;\\
\fP(\sfh_{\sfv,b})-\fP(\sfh_{\sfv,1})+1_\fR & i=b.
\end{cases}
\end{align*}
\item If $b=0$, then for $1\leq i\leq a$,
\begin{align*}\tag{Gr$2_3$}\label{eq:Gr2_3}
|\sfv_{i,1}|_\fP\coloneqq
\begin{cases}
\fP(\sfh_{\sfv,i})-\fP(\sfh_{\sfv,i+1})-1_\fR & i\neq a;\\
\fP(\sfh_{\sfv,a})-\fP(\sfh_{\sfv,1})+1_\fR & i=a.
\end{cases}
\end{align*}
\end{enumerate}

\begin{proposition}\label{proposition:degree well defined}
The function
\[
|\cdot|_\fP:\sG_\sL\to\fR
\]
determined by the conditions {\rm(\ref{eq:Gr1})}, {\rm(\ref{eq:Gr2_1})}, {\rm(\ref{eq:Gr2_2})}, {\rm(\ref{eq:Gr2_3})} and {\rm(\ref{eq:Gr3})} is well-defined and independent of the choices of orientations on edges.
\end{proposition}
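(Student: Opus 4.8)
The plan is to handle the double points and the vertex generators separately, since the ambiguities to be ruled out are of different natures. For a crossing $\sfc\in\sC_\sL$, once orientations are fixed the subarcs $\gamma_\sfc^{\pm}$ from $\sfc$ to the terminal vertices of the upper and lower edges are unique, and the only remaining auxiliary choice in (\ref{eq:Gr1}) is the perturbation making the terminal half-edges horizontal; as that perturbation varies in a contractible family it does not affect the homotopy class defining $n_\sfc$. Thus the sole genuine issue for crossings is independence of orientations, which amounts to showing that travelling instead to the opposite endpoint of $\sfe_\sfc^{+}$ (or of $\sfe_\sfc^{-}$) leaves (\ref{eq:Gr1}) unchanged. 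Reversing $\sfe\coloneqq\sfe_\sfc^{+}$ replaces $\fP(\sfh_{t(\sfe)})$ by $\fP(\sfh_{s(\sfe)})$ and $n_\sfc$ by some $n_\sfc'$, so the net change is $\bigl(\fP(\sfh_{s(\sfe)})-\fP(\sfh_{t(\sfe)})\bigr)+(n_\sfc'-n_\sfc)\cdot 1_\fR$.

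The key is to split the Gauss map of $\sfe$ at $\sfc$. Cutting the full edge there writes its tangent-line path in $\gr(1,\R^2)$ as a concatenation $A\cdot B$, where $A$ runs from $\partial_x$ at the initial vertex to $T_\sfc\sfe$ with net rotation $\alpha$, and $B$ from $T_\sfc\sfe$ to $\partial_x$ at the terminal vertex with net rotation $\beta$, so that $\alpha+\beta=n_\sfe$ by the very definition of $n_\sfe$. The loop $\hat\sfc$ uses $B$ as its upper arc, contributing $+\beta$ to its winding, whereas after reorientation it uses $A^{-1}$, contributing $-\alpha$; the lower arc and the counterclockwise sweep $\sfc(t)$, of rotation $\sigma$, are untouched. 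Hence $n_\sfc'-n_\sfc=-\alpha-\beta=-n_\sfe$. On the other hand the Maslov condition $\fP(\ft_\sfe)=0$, available for $\fP$ because $\fT_\sL$ is orientation-insensitive (Remark~\ref{remark:potentials_not_depending_on_orientation}), reads $\fP(\sfh_{s(\sfe)})-\fP(\sfh_{t(\sfe)})=n_\sfe\cdot 1_\fR$. The two contributions cancel, so $|\sfc|_\fP$ is unchanged; an identical computation handles a reorientation of $\sfe_\sfc^{-}$, and reorientations of edges not meeting $\sfc$ do not enter (\ref{eq:Gr1}) at all.

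For the vertex generators I would first settle consistency of the recursion (\ref{eq:Gr3}). Since (\ref{eq:Gr3}) only increases the length index, solving it against the base values produces the closed form $|\sfv_{i,\ell}|_\fP=\sum_{j=0}^{\ell-1}|\sfv_{i+j,1}|_\fP+(\ell-1)\cdot 1_\fR$ (indices read modulo $\val(\sfv)$), and a one-line check shows this expression satisfies (\ref{eq:Gr3}) for every split $\ell+r$; therefore all iterated concatenations agree and the assignment is well-defined. The base cases (\ref{eq:Gr2_1}), (\ref{eq:Gr2_2}), (\ref{eq:Gr2_3}) involve no choices: the left/right partition of $\sH_\sfv$ and the number of times a sector meets the vertical line are intrinsic to the projection near $\sfv$, and the resulting values depend only on the potentials $\fP(\sfh_{\sfv,i})$. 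As none of the half-edges, their potentials, nor the sector geometry refer to edge orientations, orientation independence is automatic here. As a final consistency check I would verify the displayed identities $\sum_{j=1}^{m}\bigl(|\sfv_{j,1}|_\fP+1_\fR\bigr)=2\cdot 1_\fR$ and $|\sfv_{i,m}|_\fP=1_\fR$ with $m=\val(\sfv)$, both of which fall out of the cyclic telescoping of the three base cases.

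The main obstacle is the rotation bookkeeping in the crossing case: the arcs constituting $\hat\sfc$ have net rotations $\alpha,\beta,\sigma$ that are individually not integer multiples of $1_{\gr}$, so one must confirm that reorganizing them after an orientation reversal shifts the integral loop class $n_\sfc$ by precisely $-n_\sfe$, exactly matching the jump forced by the Maslov relation. Once this single cancellation is pinned down, every remaining verification is routine.
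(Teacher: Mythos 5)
Your proof is correct and follows essentially the same route as the paper's: split into crossings versus vertex generators, handle the vertex case by noting the base values and recursion depend only on the (orientation-insensitive) potential, and for crossings cancel the orientation-reversal effect against the vanishing of $\fP$ on $\ft_\sfe$. The paper's own proof is just a terse version of this — it asserts well-definedness is obvious and that "the effects cancel" — whereas you make both points explicit (the closed form $|\sfv_{i,\ell}|_\fP=\sum_{j=0}^{\ell-1}|\sfv_{i+j,1}|_\fP+(\ell-1)\cdot 1_\fR$ for the recursion, and the winding computation $n_\sfc'-n_\sfc=-n_\sfe$ for the crossing case), which is exactly the bookkeeping the paper leaves to the reader.
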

\begin{proof}
The well-definedness is obvious. For $\sfv_{i,\ell}\in\tilde\sV_\sL$, $|\sfv_{i,\ell}|_\fP$ depends only on the given Maslov potential $\fP$ which is orientation independent as seen in Remark~\ref{remark:potentials_not_depending_on_orientation}. 
For $\sfc\in\sC_\sL$, if we change the orientation of an edge, then subarcs $\gamma_\sfc^\pm$ and so their terminal vertices may change simultaneously. However, from the vanishing condition for the Maslov potential $\fP$ on $\sft_\sfe$ described in \S~\ref{sec:potential}, their effects on $|\sfc|_\fP$ should be cancelled and this completes the proof.
\end{proof}

Now we are ready to define graded algebras associated to Legendrian graphs with Maslov potentials.
\begin{definition}[Graded algebra]\label{definition:algebra}
Let $\cL=(\Lambda,\fP)\in\cLG_\fR$ be a Legendrian graph with an $\fR$-valued Maslov potential.
We define a graded algebra
\[
\cA_\cL=(A_\Lambda,|\cdot|_\fP),
\]
where $A_\Lambda$ is the free unital associative algebra defined previously and $|\cdot|_\fP$ is the grading as above.
\end{definition}

\subsection{Admissible disks}\label{section:admissible disks}

Let $\Pi_t\subset\RR^2$ be an embedded $(t+1)$-gon which admits a canonical decomposition into open cells as follows:
\begin{align*}
\Pi_t&=\vPi_t\cup \ePi_t \cup \rPi_t,&
\ePi_t&=\{\be_0,\cdots, \be_t\},&
\vPi_t&=\{\bv_0,\cdots,\bv_t\};\\
\partial \Pi_t&=\vPi_t \cup \ePi_t,&
\partial\be_r &= \{\bv_r, \bv_{r+1}\},
\end{align*}
where the index $r$ is assumed to be in $\Zmod{(t+1)}$.

As usual, we call $\vPi_t$, $\ePi_t$ and $\rPi_t$ the sets of {\em vertices}, {\em edges} and {\em interior points} of $\Pi_t$, respectively.
We assume that vertices are labeled counterclockwise so that the labeling convention induces the orientation on $\Pi_t$.

For convenience's sake, we denote the half-edges at $\bv\in\vPi_t$ by $\bh_\bv^-$ and $\bh_\bv^+$ as shown in Figure~\ref{fig:Pi}, respectively, and suppress $t$ and simply use $\Pi$ unless any ambiguity occurs.
\begin{figure}[ht]
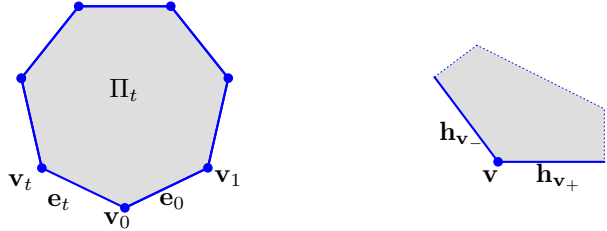

\begin{align*}
\vcenter{\hbox{\input{polygon_input.tex}}}& &&
\vcenter{\hbox{\input{polygon_neighborhood_input.tex}}}
\end{align*}
\caption{A $(t+1)$-gon $\Pi_t$ with vertices $\{\bv_0,\cdots, \bv_t\}$ and half-edges $\bh_\bv^\pm$ at $\bv$}
\label{fig:Pi}
\end{figure}

Recall the set $\sS_\sL=\sC_\sL\amalg \sV_\sL$ of special points.
Let $f$ denote a map of triples
\[
f:(\Pi,\partial\Pi,\vPi)\to(\RR^2,\sL,\sS_\sL).
\]

\begin{definition}[Standard neighborhoods and half-edges]
For each $\bv\in\vPi$ with $f(\bv)=\sfg\in\sS_\sL$, the {\em standard neighborhood} $\bU_\bv$ of $\bv$ is defined to be the component of the inverse $f^{-1}(\sU_\sfg)$ of the standard neighborhood $\sU_\sfg$ of $\sfg$ which contains $\bv$
\begin{align*}
\bv&\in\bU_\bv\subset f^{-1}(\sU_\sfg),&
f(\bv)&=\sfg\in\sS_\sL.
\end{align*}
\end{definition}

Recall that a point $\bx$ is a critical point of $f$ if the rank of the Jacobian $Df$ at $\bx$ is not maximal, and we denote the set of critical points by $\crit(f)$, which can be decomposed (or stratified) as 
\begin{align*}
\crit(f)&=\crit^0(f)\amalg\crit^1(f),&
\crit^i(f)&\coloneqq\{\bx\in\crit(f)\mid \rk Df|_\bx=i\}.
\end{align*}
For convenience's sake, we regard vertices as regular points.

\begin{assumption}[Differentiable disks]
From now on, any differentiable disk $f$ is assumed to satisfy the following:
\begin{enumerate}
\item $f$ is differentiable on $\rPi$ which extends to $\ePi$;
\item $\crit(f)$ is of measure-zero; and 
\item $f$ is smooth on $\bU_\bv\setminus\{\bv\}$ and $\bh_\bv^+\cup \bh_\bv^-=(\be_\bv^+\cup\be_\bv^-)\cap\bU_\bv$ for each $\bv\in\vPi$, where $\be_\bv^+$ and $\be_\bv^-$ are right and left edges adjacent to $\bv$, respectively.
\end{enumerate}
\end{assumption}

\begin{definition}[Inverse and critical graphs]
Let $f$ be a differentiable disk. We define the {\em inverse graph} $\bL_f$ and the {\em critical graph} $\bO_f$ as the inverse image of $\sL$ and the set of critical points of $f$, respectively.
\begin{align*}
\bL_f&\coloneqq f^{-1}(\sL),&
\bO_f&\coloneqq \crit(f).
\end{align*}
\end{definition}

We will indicate $\bL_f$ and $\bO_f$ by thick black lines and thin red lines as shown in Figure~\ref{fig:folding degree}, respectively.

\begin{definition}[$m$-folding edges]\label{def:folding edges}
Let $f$ be a differentiable disk. Suppose that an edge $\be\in\ePi$ has a unique critical point $\bx\in\be$.
We say that an edge $\be$ is either an {\em $m$-folding edge} or an {\em $(m/2)$-folding edge} for some $m\in\ZZ_{\ge1}$ if there exists a neighborhood $\bU_\bx$ of $\bx$ such that $f$ on $\bU_\bx\simeq\CC_{\ge0}\coloneqq\{x+iy\mid y\ge0\}$\footnote{This identification does not require any condition about orientations.} is modeled as either
\[
\setlength{\arraycolsep}{2pt}
\begin{array}{ccccc}
f|_{\bU_\bx}&:&
(\CC_{\ge0},0)&\to& (\CC,0)\\
&&z&\mapsto& z^m
\end{array}
\quad\text{or}\quad
\begin{array}{ccccccccc}
f|_{\bU_\bx}&:&
(\CC_{\ge0},0)&\to & (\CC,0)&\simeq& (\RR^2,\origin)&\to & (\RR^2,\origin) \\
&&z&\mapsto& z^m &\simeq& (a,b)&\to & (a^2,b),
\end{array}
\]
respectively.
\end{definition}

\begin{figure}[ht]
\begin{tikzcd}[column sep=0pt]
m & 1 & 1/2 & 2 & 2/2& 3 & 3/2& \cdots \\
\bU_\bx\ar[dd] & \vcenter{\hbox{\includegraphics[scale=0.9]{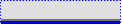}}} & 
\vcenter{\hbox{\includegraphics[scale=0.9]{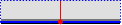}}} &
\vcenter{\hbox{\includegraphics[scale=0.9]{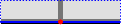}}} & 
\vcenter{\hbox{\includegraphics[scale=0.9]{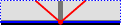}}} &
\vcenter{\hbox{\includegraphics[scale=0.9]{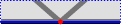}}} & 
\vcenter{\hbox{\includegraphics[scale=0.9]{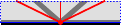}}} & 
\cdots \\ \\
f(\bU_\bx) & 
\vcenter{\hbox{\includegraphics{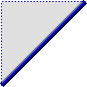}}} &
\vcenter{\hbox{\includegraphics{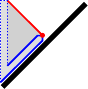}}} &
\vcenter{\hbox{\includegraphics{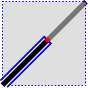}}} &
\vcenter{\hbox{\includegraphics{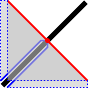}}} &
\vcenter{\hbox{\includegraphics{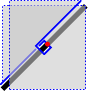}}} & 
\vcenter{\hbox{\includegraphics{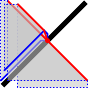}}} &
\cdots
\end{tikzcd}
\caption{$m$ and $(m/2)$-folding edges}\label{fig:folding degree}
\end{figure}

Let $\ell_f(\bv)$ be the number of regions separated by $\sL$ near $f(\bv)$ that $f$ covers, or equivalently, the number of components of $\bU_\bv\setminus\bL_f$
\begin{equation}\label{eq:ell_on_vertex}
\ell_f(\bv)\coloneqq \#(\pi_0(\bU_\bv\setminus\bL_f)).
\end{equation}
Since $\partial\Pi\subset\bL_f$, it is obvious that $\ell_f(\bv)=\val_{\bL_f}(\bv)-1$.

\begin{figure}[ht]
\renewcommand{\arraystretch}{1.5}
\begin{tabular}{c|c|c|c|c|c}
$f|_{\bU_\bv}$&
$\vcenter{\hbox{\includegraphics{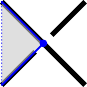}}}$&
$\vcenter{\hbox{\includegraphics{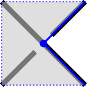}}}$&
$\vcenter{\hbox{\includegraphics{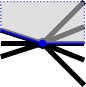}}}$&
$\vcenter{\hbox{\includegraphics{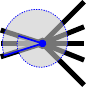}}}$&$\cdots$
\\
\hline
$\ell_f(\bv)$&1 & 3 & 3 & 10 & $\cdots$
\end{tabular}
\caption{The number $\ell_f(\bv)$}
\label{figure:ell_f}
\end{figure}

\begin{definition}[Canonical label]\label{def:Canonical label}
Let $f$ be a differentiable disk.
We define a function $\tilde f:\vPi\to \sG_\sL$, called the {\em canonical label} of $f$, as
\[
\tilde f(\bv)\coloneqq\begin{cases}
\sfc & f(\bv)=\sfc\in\sC_\sL;\\
\sfv_{i,\ell} & f(\bv)=\sfv\in\sV_\sL,
\end{cases}
\]
where $\ell\coloneqq\ell_f(\bv)$ and the index $i\in\Zmod{\val(\sfv)}$ is determined 
as follows:
\[
\begin{cases}
f(\bh_{\bv_+}\cap \bU_\bv)\subset \sfh_{\sfv,i} & f\text{ is orientation-reversing on }\bU_\bv;\\
f(\bh_{\bv_-}\cap \bU_\bv)\subset \sfh_{\sfv,i} & f\text{ is orientation-preserving on }\bU_\bv.
\end{cases}
\]
\end{definition}

The canonical label $\tilde f$ is a lift of the restriction $f|_\vPi:\vPi\to\sS_\sL$ with respect to the obvious projection $\sG_\sL\to\sS_\sL$ defined by $\sfv_{i,\ell}\mapsto \sfv$ and $\sfc\mapsto\sfc$.
\[
\begin{tikzcd}[column sep=3pc]
& \sG_\sL\ar[d]\\
\vPi\ar[r,"f|_\vPi"']\ar[ru,dashed,"\exists\tilde f"] & \sS_\sL
\end{tikzcd}
\]

For convenience's sake, we will assume that $\tilde f$ is freely extended to words on $\vPi$ so that
\[
\tilde f(\bv_{i_1}\cdots\bv_{i_r}) \coloneqq \tilde f(\bv_{i_1})\cdots\tilde f(\bv_{i_r}) \in A_\Lambda.
\]

\begin{definition}[Convexities of vertices]
We say that a vertex $\bv\in\vPi$ is
\begin{enumerate}
\item {\em convex} if $f(\bv)\in\sC_\sL$ and $\ell_f(\bv)=1$;
\item {\em concave} if $f(\bv)\in\sC_\sL$ and $\ell_f(\bv)=3$;
\item {\em neutral} if $f(\bv)\in\sV_\sL$.
\end{enumerate}
\end{definition}
\begin{remark}
Convexity, concavity, and neutrality are mutually exclusive but do not give a trichotomy since $\ell_f(\bv)$ can be arbitrarily large even if $f(\bv)\in\sC_\sL$.
\end{remark}

\begin{definition}[Signs of vertices]\label{definition:signs_of_vertices}
Let $f(\bv)\in \sC_\sL$. A convex vertex $\bv$ has {\em positive} or {\em negative} {\em Reeb sign} if the sign of the region covered by $f$ near $\bv$ is positive or negative, respectively.
Similarly, a concave vertex $\bv$ has {\em positive} or {\em negative} {\em Reeb sign} if two of the three regions covered by $f$ near $\bv$ is positive or negative, respectively.

Any vertex $\bv$ has the {\em orientation sign} $\sgn(f,\bv)$, which is either $+1$ or $(-1)^{|\tilde f(\bv)|-1}$ according to the orientation signs of quadrants or sectors assigned previously as depicted in Figure~\ref{figure:convex and concave vertices}. Here we make a certain choice at each crossing $\sC_\sL$ for the orientation sign (see Remark~\ref{remark:orientation sign}). We denote
\[
\sgn(f, \bv_{i_1}\cdots\bv_{i_k})\coloneqq\sgn(f,\bv_{i_1})\cdots\sgn(f,\bv_{i_k}).
\]
\end{definition}

\begin{figure}[ht]
\subfigure[Reeb and orientation signs of convex vertices]{
\renewcommand{\arraystretch}{1.5}
\begin{tabular}{c|c|c|c|c}
Convex vertices at $\sfc$ $\vcenter{\hbox{\rule{0pt}{5pc}}}$&
$\vcenter{\hbox{\includegraphics{crossing_sign_convex_positive.pdf}}}$ &
$\vcenter{\hbox{\scalebox{-1}{\includegraphics{crossing_sign_convex_positive.pdf}}}}$ &
$\vcenter{\hbox{\includegraphics{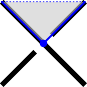}}}$ &
$\vcenter{\hbox{\scalebox{-1}[-1]{\includegraphics{crossing_sign_convex_negative.pdf}}}}$\\
\hline
Reeb sign & \multicolumn{2}{c|}{Positive} & \multicolumn{2}{c}{Negative}\\
\hline
Orientation sign & $+1$ & $(-1)^{|\sfc|-1}$ & $+1$ & $(-1)^{|\sfc|-1}$
\end{tabular}
}

\subfigure[Reeb and orientation signs of concave vertices]{
\renewcommand{\arraystretch}{1.5}
\begin{tabular}{c|c|c|c|c}
Concave vertices at $\sfc$ $\vcenter{\hbox{\rule{0pt}{5pc}}}$&
$\vcenter{\hbox{\scalebox{-1}[-1]{\includegraphics{crossing_sign_concave_negative.pdf}}}}$ &
$\vcenter{\hbox{\includegraphics{crossing_sign_concave_negative.pdf}}}$ &
$\vcenter{\hbox{\scalebox{-1}{\includegraphics{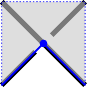}}}}$ &
$\vcenter{\hbox{\includegraphics{crossing_sign_concave_positive.pdf}}}$ \\
\hline
Reeb sign & \multicolumn{2}{c|}{Negative} & \multicolumn{2}{c}{Positive}\\
\hline
Orientation sign\footnotemark & $+1$ & $(-1)^{|\sfc|-1}$ & $+1$ & $(-1)^{|\sfc|-1}$
\end{tabular}
}

\subfigure[Neutral vertices and orientation signs]{
\renewcommand{\arraystretch}{1.5}
\begin{tabular}{c|c|c|c}
Neutral vertices at $\sfv$ $\vcenter{\hbox{\rule{0pt}{5pc}}}$&
$\vcenter{\hbox{\includegraphics{vertex_neutral_1.pdf}}}$&
$\vcenter{\hbox{\includegraphics{vertex_neutral_2.pdf}}}$ & $\cdots$ \\
\hline
Orientation sign & \multicolumn{3}{c}{$+1$ or $(-1)^{|\sfv_{i,\ell}|-1}$}
\end{tabular}
}
\caption{Convexities and signs of vertices}
\label{figure:convex and concave vertices}
\end{figure}
\footnotetext{These values are the products of orientation signs for convex regions inside.}

\begin{remark}\label{remark:orientation sign}
We are following the convention of Ekholm and Ng. As mentioned in \cite[\S2.4]{EN2015}, the roles of $+1$ and $(-1)^{|\tilde f(\bv)|-1}$ are interchangable by the automorphism $\sfc\mapsto (-1)^{|\sfc|-1}\sfc$ or $\sfv_{i,\ell}\mapsto (-1)^{|\sfv_{i,\ell}|-1}\sfv_{i,\ell}$ for all $i,\ell$.
\end{remark}

\begin{assumption}\label{assumption:orientation sign of vertex}
We assign the orientation sign for all $\sfv_{i,\ell}$ to be $+1$ unless we mention otherwise. That is, for any $f$ with $f(\bv)=\sfv$,
\[
\sgn(f,\bv)=1.
\]
\end{assumption}

\begin{definition}[Regular admissible disks]\label{def:Regular admissible disks}
A differentiable disk $f$ is {\em regular admissible} if 
\begin{enumerate}
\item $f$ is smooth and orientation-preserving on $\rPi$;
\item All vertices are either convex, concave or neutral;
\item All edges are either smooth or 2-folding;
\item $\bv_0$ is the only vertex having a positive Reeb sign.
\end{enumerate}
\end{definition}

Remark that the critical graph $\bO_f$ for a regular admissible disk $f$ is discrete and indicating positions of 2-folding edges.

\begin{definition}[Infinitesimal admissible disks]\label{definition:infinitesimal}
A differentiable disk $f$ is {\em infinitesimal admissible} if
\begin{enumerate}
\item $f$ maps a pair $(\Pi,\bO_f)$ into $(\cl{\sU_\sfv}, \sO_\sfv)$ for some $\sfv\in\sV_\sL$;
\item For each $\bx\in\rPi\setminus\bO_f$, $f$ is orientation-reversing at $\bx$ if and only if $\bx\in\bU_{\bv_0}$;
\item $\bO_f$ is connected.
\end{enumerate}
\end{definition}

\begin{remark}
This definition is not optimal in the sense that the third condition follows from the first two. However, we regard the third statement not as a consequence but as a condition for convenience's sake.
\end{remark}

\begin{example}
An infinitesimal triangle $f$ looks as follows:
\[
\vcenter{\hbox{
\begingroup%
  \makeatletter%
  \providecommand\color[2][]{%
    \errmessage{(Inkscape) Color is used for the text in Inkscape, but the package 'color.sty' is not loaded}%
    \renewcommand\color[2][]{}%
  }%
  \providecommand\transparent[1]{%
    \errmessage{(Inkscape) Transparency is used (non-zero) for the text in Inkscape, but the package 'transparent.sty' is not loaded}%
    \renewcommand\transparent[1]{}%
  }%
  \providecommand\rotatebox[2]{#2}%
  \ifx\svgwidth\undefined%
    \setlength{\unitlength}{75.25807728bp}%
    \ifx\svgscale\undefined%
      \relax%
    \else%
      \setlength{\unitlength}{\unitlength * \real{\svgscale}}%
    \fi%
  \else%
    \setlength{\unitlength}{\svgwidth}%
  \fi%
  \global\let\svgwidth\undefined%
  \global\let\svgscale\undefined%
  \makeatother%
  \begin{picture}(1,0.63321851)%
    \put(0,0){\includegraphics[width=\unitlength,page=1]{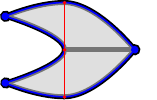}}%
    \put(0.83400868,0.42292462){\color[rgb]{0,0,0}\makebox(0,0)[lb]{\smash{$\sfv$}}}%
  \end{picture}%
\endgroup%
}}\!\!\!=\left(
\begin{tikzcd}[column sep=1pc]
\vcenter{\hbox{\def\sfvgscale{1}
\begingroup%
  \makeatletter%
  \providecommand\color[2][]{%
    \errmessage{(Inkscape) Color is used for the text in Inkscape, but the package 'color.sty' is not loaded}%
    \renewcommand\color[2][]{}%
  }%
  \providecommand\transparent[1]{%
    \errmessage{(Inkscape) Transparency is used (non-zero) for the text in Inkscape, but the package 'transparent.sty' is not loaded}%
    \renewcommand\transparent[1]{}%
  }%
  \providecommand\rotatebox[2]{#2}%
  \ifx\svgwidth\undefined%
    \setlength{\unitlength}{69.4574425bp}%
    \ifx\svgscale\undefined%
      \relax%
    \else%
      \setlength{\unitlength}{\unitlength * \real{\svgscale}}%
    \fi%
  \else%
    \setlength{\unitlength}{\svgwidth}%
  \fi%
  \global\let\svgwidth\undefined%
  \global\let\svgscale\undefined%
  \makeatother%
  \begin{picture}(1,1.01865929)%
    \put(0,0){\includegraphics[width=\unitlength,page=1]{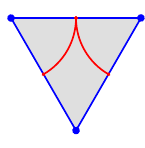}}%
    \put(0.45054523,0.01358183){\color[rgb]{0,0,0}\makebox(0,0)[lb]{\smash{$\bv_0$}}}%
    \put(0.92054877,0.97490499){\color[rgb]{0,0,0}\makebox(0,0)[lb]{\smash{$\bv_1$}}}%
    \put(-0.00478036,0.97199666){\color[rgb]{0,0,0}\makebox(0,0)[lb]{\smash{$\bv_2$}}}%
  \end{picture}%
\endgroup%
}}\ar[r,"\simeq"]&
\vcenter{\hbox{\def\sfvgscale{1}
\begingroup%
  \makeatletter%
  \providecommand\color[2][]{%
    \errmessage{(Inkscape) Color is used for the text in Inkscape, but the package 'color.sty' is not loaded}%
    \renewcommand\color[2][]{}%
  }%
  \providecommand\transparent[1]{%
    \errmessage{(Inkscape) Transparency is used (non-zero) for the text in Inkscape, but the package 'transparent.sty' is not loaded}%
    \renewcommand\transparent[1]{}%
  }%
  \providecommand\rotatebox[2]{#2}%
  \ifx\svgwidth\undefined%
    \setlength{\unitlength}{82.027897bp}%
    \ifx\svgscale\undefined%
      \relax%
    \else%
      \setlength{\unitlength}{\unitlength * \real{\svgscale}}%
    \fi%
  \else%
    \setlength{\unitlength}{\svgwidth}%
  \fi%
  \global\let\svgwidth\undefined%
  \global\let\svgscale\undefined%
  \makeatother%
  \begin{picture}(1,0.84338818)%
    \put(0,0){\includegraphics[width=\unitlength,page=1]{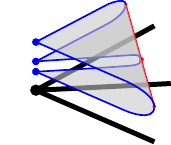}}%
    \put(0.11137751,0.66046517){\color[rgb]{0,0,0}\makebox(0,0)[lb]{\smash{$\bv_0$}}}%
    \put(0.05472964,0.49052191){\color[rgb]{0,0,0}\makebox(0,0)[lb]{\smash{$\bv_1$}}}%
    \put(0.05226676,0.40678218){\color[rgb]{0,0,0}\makebox(0,0)[lb]{\smash{$\bv_2$}}}%
    \put(0.15950347,0.16541219){\color[rgb]{0,0,0}\makebox(0,0)[lb]{\smash{$\sfv$}}}%
  \end{picture}%
\endgroup%
}}\ar[r,"f"]&
\vcenter{\hbox{\def\sfvgscale{1}
\begingroup%
  \makeatletter%
  \providecommand\color[2][]{%
    \errmessage{(Inkscape) Color is used for the text in Inkscape, but the package 'color.sty' is not loaded}%
    \renewcommand\color[2][]{}%
  }%
  \providecommand\transparent[1]{%
    \errmessage{(Inkscape) Transparency is used (non-zero) for the text in Inkscape, but the package 'transparent.sty' is not loaded}%
    \renewcommand\transparent[1]{}%
  }%
  \providecommand\rotatebox[2]{#2}%
  \ifx\svgwidth\undefined%
    \setlength{\unitlength}{55.6679686bp}%
    \ifx\svgscale\undefined%
      \relax%
    \else%
      \setlength{\unitlength}{\unitlength * \real{\svgscale}}%
    \fi%
  \else%
    \setlength{\unitlength}{\svgwidth}%
  \fi%
  \global\let\svgwidth\undefined%
  \global\let\svgscale\undefined%
  \makeatother%
  \begin{picture}(1,1.1855457)%
    \put(0,0){\includegraphics[width=\unitlength,page=1]{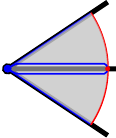}}%
    \put(-0.00596449,0.37720928){\color[rgb]{0,0,0}\makebox(0,0)[lb]{\smash{$\sfv$}}}%
  \end{picture}%
\endgroup%
}}
\end{tikzcd}\right)
\]

Then $f$ is orientation-preserving on two neighborhoods $\bU_{\sfv_1}$ and $\bU_{\sfv_2}$ of vertices $\sfv_1$ and $\sfv_2$ and orientation-reversing on $\bU_{\sfv_0}$. The critical graph $\bO_f$ depicted in red is connected and intersects each and every edge exactly once, and so edges are $(1/2)$, $(2/2)$ and $(1/2)$-folding (see Figure~\ref{fig:folding degree}).
\end{example}

\begin{example}[Infinitesimal bigons]\label{example:infinitesimal bigons} Another examples are infinitesimal bigons as follows:
\begin{align*}
&\vcenter{\hbox{
\begingroup%
  \makeatletter%
  \providecommand\color[2][]{%
    \errmessage{(Inkscape) Color is used for the text in Inkscape, but the package 'color.sty' is not loaded}%
    \renewcommand\color[2][]{}%
  }%
  \providecommand\transparent[1]{%
    \errmessage{(Inkscape) Transparency is used (non-zero) for the text in Inkscape, but the package 'transparent.sty' is not loaded}%
    \renewcommand\transparent[1]{}%
  }%
  \providecommand\rotatebox[2]{#2}%
  \ifx\svgwidth\undefined%
    \setlength{\unitlength}{58.40840098bp}%
    \ifx\svgscale\undefined%
      \relax%
    \else%
      \setlength{\unitlength}{\unitlength * \real{\svgscale}}%
    \fi%
  \else%
    \setlength{\unitlength}{\svgwidth}%
  \fi%
  \global\let\svgwidth\undefined%
  \global\let\svgscale\undefined%
  \makeatother%
  \begin{picture}(1,1.23854202)%
    \put(0,0){\includegraphics[width=\unitlength,page=1]{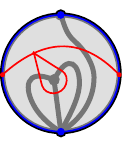}}%
    \put(0.41120876,0.01207152){\color[rgb]{0,0,0}\makebox(0,0)[lb]{\smash{$\bv_0$}}}%
    \put(0.41120876,1.18584198){\color[rgb]{0,0,0}\makebox(0,0)[lb]{\smash{$\bv_1$}}}%
  \end{picture}%
\endgroup%
}}&
&\vcenter{\hbox{
\begingroup%
  \makeatletter%
  \providecommand\color[2][]{%
    \errmessage{(Inkscape) Color is used for the text in Inkscape, but the package 'color.sty' is not loaded}%
    \renewcommand\color[2][]{}%
  }%
  \providecommand\transparent[1]{%
    \errmessage{(Inkscape) Transparency is used (non-zero) for the text in Inkscape, but the package 'transparent.sty' is not loaded}%
    \renewcommand\transparent[1]{}%
  }%
  \providecommand\rotatebox[2]{#2}%
  \ifx\svgwidth\undefined%
    \setlength{\unitlength}{58.40840098bp}%
    \ifx\svgscale\undefined%
      \relax%
    \else%
      \setlength{\unitlength}{\unitlength * \real{\svgscale}}%
    \fi%
  \else%
    \setlength{\unitlength}{\svgwidth}%
  \fi%
  \global\let\svgwidth\undefined%
  \global\let\svgscale\undefined%
  \makeatother%
  \begin{picture}(1,1.23854202)%
    \put(0,0){\includegraphics[width=\unitlength,page=1]{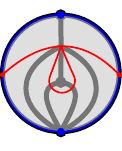}}%
    \put(0.41120876,0.01207152){\color[rgb]{0,0,0}\makebox(0,0)[lb]{\smash{$\bv_0$}}}%
    \put(0.41120876,1.18584198){\color[rgb]{0,0,0}\makebox(0,0)[lb]{\smash{$\bv_1$}}}%
  \end{picture}%
\endgroup%
}}&
&\vcenter{\hbox{
\begingroup%
  \makeatletter%
  \providecommand\color[2][]{%
    \errmessage{(Inkscape) Color is used for the text in Inkscape, but the package 'color.sty' is not loaded}%
    \renewcommand\color[2][]{}%
  }%
  \providecommand\transparent[1]{%
    \errmessage{(Inkscape) Transparency is used (non-zero) for the text in Inkscape, but the package 'transparent.sty' is not loaded}%
    \renewcommand\transparent[1]{}%
  }%
  \providecommand\rotatebox[2]{#2}%
  \ifx\svgwidth\undefined%
    \setlength{\unitlength}{58.40839883bp}%
    \ifx\svgscale\undefined%
      \relax%
    \else%
      \setlength{\unitlength}{\unitlength * \real{\svgscale}}%
    \fi%
  \else%
    \setlength{\unitlength}{\svgwidth}%
  \fi%
  \global\let\svgwidth\undefined%
  \global\let\svgscale\undefined%
  \makeatother%
  \begin{picture}(1,1.23854206)%
    \put(0,0){\includegraphics[width=\unitlength,page=1]{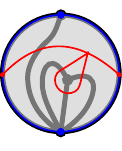}}%
    \put(0.41120877,0.01207152){\color[rgb]{0,0,0}\makebox(0,0)[lb]{\smash{$\bv_0$}}}%
    \put(0.41120877,1.18584202){\color[rgb]{0,0,0}\makebox(0,0)[lb]{\smash{$\bv_1$}}}%
  \end{picture}%
\endgroup%
}}
\end{align*}
where the vertex $f(\bv_0)$ is trivalent, and $f$ covers 5 sectors on $\bU_{\bv_0}$ and 2 sectors on $\bU_{\bv_1}$ and both edges are $(1/2)$-folding.

Notice that these disks are distinct even up to the action of the diffeomorphism group on the domain.
\end{example}

Suppose that $f$ is infinitesimal admissible. 
Since there is a unique special point $\sfv$ in $\cl{\sU_\sfv}$,
\[
f(\bv_0)=f(\bv_1)=\cdots=f(\bv_t)=\sfv
\]
and so every edge contains a critical point, which is unique and $(m/2)$-folding for some $m$ by (1)\footnote{See, figures in \S~\ref{subsubsection:infinitesimal disks} for examples of the infinitesimal disks. One can see that every edge of such disks contains a unique critical point of $(m/2)$-folding type.}.
Moreover, each component of $\Pi\setminus (\bL_f\cup\bO_f)$ is diffeomorphically mapped onto the interior of a sector $\sector_{\sfv,i}$ for some $i$ by $f$.
Therefore the following conditions hold:
\begin{enumerate}
\item [(4)] All vertices are neutral;
\item [(5)] All edges are $(m/2)$-folding for some $m\ge 1$;
\item [(6)] All $\bU_\bv$'s are disjoint.
\end{enumerate}

Condition (2) implies that the orientation is reversed an odd number of times on neighborhoods of $\be_i$ if and only if $t>0$ and $i$ is either $0$ or $t$. Therefore
\begin{enumerate}
\item [($\rm5'$)] $m$ is odd if and only if $t>0$ and $\be$ is either $\be_0$ or $\be_t$.
\end{enumerate}

On the other hand, by condition (1), the graph $\bL_f$ defines a singular foliation $\mathbf{FL}_f$, where the critical point on a $(m/2)$-folding edge corresponds to a $(m+1)$-prong hyperbolic singularity while points $\bx$ with $f(\bx)=\sfv$ including vertices correspond to elliptic singularities.
Furthermore, one can regard $\bx$ is a source or a sink if $f$ is orientation-reversing or orientation-preserving, respectively. Then condition (2) implies uniqueness of the source which is precisely $\bv_0$.
\begin{align*}
\text{Sources}&=\{\bv_0\},&
\text{Sinks}&=\{\bv_1,\cdots,\bv_t\}\cup\{\bx\in\rPi\mid f(\bx)=\sfv\}
\end{align*}
Notice that the graph $\bO_f$ defines a dual foliation $\mathbf{FO}_f$ and so the sinks of $\mathbf{FL}_f$ in $\rPi$ have one-to-one correspondence with minimal circuits of $\bO_f$. See Figure~\ref{fig:singular foliation}.

\begin{figure}[ht]
\begin{align*}
f&=\vcenter{\hbox{\input{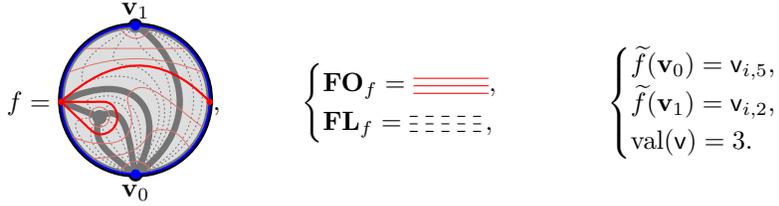}}},&
&\begin{cases}
\mathbf{FO}_f=
\begin{tikzpicture}[baseline=-.5ex]
\draw[color=red](0,0)--(1,0);
\draw[color=red](0,-.1)--(1,-.1);
\draw[color=red](0,.1)--(1,.1);
\end{tikzpicture},\\
\mathbf{FL}_f=\begin{tikzpicture}[baseline=-.5ex]
\draw[dashed](0,0)--(1,0);
\draw[dashed](0,-.1)--(1,-.1);
\draw[dashed](0,.1)--(1,.1);
\end{tikzpicture},
\end{cases}
&
&\begin{cases}
\tilde f(\bv_0)=\sfv_{i,5},\\
\tilde f(\bv_1)=\sfv_{i,2},\\
\val(\sfv)=3.
\end{cases}
\end{align*}
\caption{The singular foliations $\mathbf{FL}_f$ and $\mathbf{FO}_f$ on an infinitesimal admissible disk}
\label{fig:singular foliation}
\end{figure}

\begin{remark}
Let us consider a geometric realization of infinitesimal disks near $\sfv$ as a pseudo-holomorphic map from a punctured disk to the symplectization of the geometric model $(\cB\Lambda,\cE\Lambda)$ in \S~\ref{sec:geometric model}. Note that some of the images lie on the plane generated by the Reeb orbit $R_v$ on the boundary $\phi_v^{-1}(\cN_v)$ and the symplectization coordinate. So its corresponding image in $\RR^2_{xy}$ is degenerate and has area zero. To visualize this disk in $\RR_{xy}^2$, we need to introduce the orientation-reversing part which cancels out the orientation-preserving region and makes the total area zero.
\end{remark}

\begin{remark}\label{remark:Legendrian mirror on disk}
The Legendrian mirror $\mu$ gives us a bijection between admissible disks of a given Legendrian $\sL$ and its mirror $\mu(\sL)$ as follows: Let $f$ be an admissible disk on $\Pi_t$ for $\sL$. Since $\mu$ on $\RR^2_{xy}$ is orientation-reversing, by pre-composition with the orientation-reversing involution $\tau$ on $\Pi_t$ sending $\bv_i\mapsto\bv_{t+1-i}$, we have an admissible disk for $\mu(\sL)$. Therefore by identifying $\sG_{\mu(\sL)}$ with $\mu(\sG_\sL)$ in the canonical way, we have
\begin{align}\label{equation:Legendrian mirror on labels}
\tilde {\mu(f)}(\bv_i)=\mu(\tilde f(\bv_{t+1-i}))\in\sG_{\mu(\sL)}.
\end{align}
\end{remark}

\subsubsection{Degree of admissible disks}
Let $f$ be a regular or infinitesimal admissible disk on $\Pi_t$. 
For each edge $\be\in \ePi$ parametrized by $u\in[0,1]$, we define a path $D\be(u)$ in $\gr(1,\RR^2)$ of derivatives of $\be(u)$ by setting $D\be(u)$ to be $T_{f(\be(u))} \sL$ when $D\be(u)$ vanishes. This can be done in the obvious way.

On the other hand, for two adjacent edges $\be, \be'$ sharing common vertex $\bv$, the end points $D\be(1)$ and $D\be'(0)$ are exactly the same as end points of a path $\tilde f(\bv)^{-1}$ if $\bv=\bv_0$ or $\tilde f(\bv)$ otherwise.

Hence by concatenating all vertex paths and edge paths in an alternating manner, we obtain the loop in $\gr(1,\RR^2)$ as follows:
\begin{align*}
D(\partial f)&\coloneqq \tilde f(\bv_0)^{-1}\cdot D\be_0\cdot \tilde f(\bv_1)\cdot D\be_1 \cdot \quad\cdots\quad \cdot D\be_{t-1}\cdot \tilde f(\bv_t)\cdot D\be_t\in \pi_1(\gr(1,\RR^2)).
\end{align*}

\begin{definition}[Degree of an admissible disk]\label{definition:degree of admissible disk}
The {\em $\ZZ$-degree $|f|_\ZZ$ of $f$} is defined as the integer determined by the homotopy class $[D(\partial f)]$
\[
[D(\partial f)]=|f|_\ZZ\cdot 1_{\gr}\in \pi_1(\gr(1,\RR^2)),
\]
and the {\em ($\fR$-)degree $|f|$} is defined as
\[
|f|\coloneqq|f|_\ZZ\cdot 1_\fR\in \fR.
\]
\end{definition}

Note that the degree $|f|_\ZZ$ does not depend on the choice of the Maslov potential but only on the immersion $f$.
However, the following proposition shows how the two degrees $|f|_\ZZ$ and $|\cdot|_\fP$ are related. 

\begin{proposition}\label{proposition:degree formula}
The following holds:
\[
|f|=\left|\tilde f(\bv_0)\right|_\fP-\sum_{r=1}^t\left|\tilde f(\bv_r)\right|_\fP.
\]
\end{proposition}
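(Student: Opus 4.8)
My plan is to reduce the identity to purely local computations at the special points, exploiting the additivity of rotation numbers together with a telescoping of potentials; the bulk of the work is then a sign‑bookkeeping case analysis. Since $|f|$ is by definition the class of the loop $D(\partial f)$ in $\pi_1(\gr(1,\R^2))\simeq\Z\langle 1_\gr\rangle$, and this group is abelian, the rotation numbers of the concatenated pieces simply add. Writing $\rot(\cdot)\in\R$ for the net rotation of a path measured in units of $1_\gr$ (so $\rot$ of a loop is an integer), the very definition of $D(\partial f)$ gives
\begin{align*}
|f|_\Z = -\rot(\tilde f(\bv_0)) + \sum_{r=1}^{t}\rot(\tilde f(\bv_r)) + \sum_{r=0}^{t}\rot(D\be_r),
\end{align*}
where $\rot(\tilde f(\bv_r))$ is the rotation of the capping path ($\sfc(t)$ or $\sfv_{i,\ell}(t)$) attached at $\bv_r$. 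The whole task is to trade the edge rotations $\rot(D\be_r)$ and the capping rotations for the gradings $|\tilde f(\bv_r)|_\fP$.

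The device I would use to perform this trade is an \emph{extended potential} $\tilde\fP$, defined along the smooth arcs of $\sL$ traversed by $\partial\Pi$: set $\tilde\fP=\fP(\sfh)$ over each half-edge $\sfh$, and decree that $\tilde\fP$ decreases by $\rot\cdot 1_\fR$ as the tangent line rotates counterclockwise along the boundary. The defining relation $\fP(\ft_\sfe)=0$, i.e. $\fP(\sfh_{s(\sfe)})-\fP(\sfh_{t(\sfe)})=n_\sfe\cdot 1_\fR$, is precisely the compatibility that makes $\tilde\fP$ well defined, and by Proposition~\ref{proposition:degree well defined} it is independent of the chosen edge orientations. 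By construction one then has $\rot(D\be_r)\cdot 1_\fR=\Phi_r^{\mathrm{out}}-\Phi_r^{\mathrm{in}}$ along each edge, where $\Phi_r^{\mathrm{out}},\Phi_r^{\mathrm{in}}$ are the values of $\tilde\fP$ at the two ends of $\be_r$.

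Summing over $r$ and regrouping the endpoint values by the vertex at which they sit converts $\sum_r\rot(D\be_r)\cdot 1_\fR$ into $\sum_{r=0}^{t} J_r$, where $J_r\coloneqq\Phi_r^{\mathrm{out}}-\Phi_{r-1}^{\mathrm{in}}$ (indices mod $t+1$) is the jump of $\tilde\fP$ across $f(\bv_r)$ from the incoming to the outgoing edge. Multiplying the displayed identity by $1_\fR$, the proposition reduces to the local statements
\begin{align*}
J_0-\rot(\tilde f(\bv_0))\cdot 1_\fR &= |\tilde f(\bv_0)|_\fP,\\
J_r+\rot(\tilde f(\bv_r))\cdot 1_\fR &= -|\tilde f(\bv_r)|_\fP\qquad(1\le r\le t),
\end{align*}
since substituting these and telescoping yields exactly $|\tilde f(\bv_0)|_\fP-\sum_{r=1}^t|\tilde f(\bv_r)|_\fP$. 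For a crossing I would rewrite {\rm(\ref{eq:Gr1})} using $\tilde\fP(c_\pm)=\fP(\sfh)+\rot(\gamma_\sfc^\pm)\cdot 1_\fR$ along the subarcs $\gamma_\sfc^\pm$, obtaining $|\sfc|_\fP=\tilde\fP(c_+)-\tilde\fP(c_-)-\rot(\sfc)\cdot 1_\fR$; matching the incoming and outgoing branches of $\partial\Pi$ to the upper and lower edges at $\sfc$ (as dictated by the endpoints of the capping path inserted in $D(\partial f)$) then identifies $J_r$ with $\pm(\tilde\fP(c_-)-\tilde\fP(c_+))$ and gives the required formula. For a neutral vertex the capping path runs between the horizontal half-edge directions of the stretched diagram, so $\rot(\sfv_{i,\ell})$ is an integer number of half-turns; the terms $-1_\fR$ and the exceptional $+1_\fR$/no-correction cases of {\rm(\ref{eq:Gr2_1})}--{\rm(\ref{eq:Gr2_3})}, together with the concatenation rule {\rm(\ref{eq:Gr3})}, record exactly these half-turns and the sectors crossing the vertical line, again matching $J_r$ with the difference of the two relevant half-edge potentials.

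The main obstacle is this last case analysis. One must pin down, simultaneously, the sign of each $J_r$ and the direction of each inserted capping path across all configurations of Figure~\ref{figure:convex and concave vertices}: convex and concave vertices with either Reeb sign, and neutral vertices with half-edges distributed on either side of the vertical line. The two sign ambiguities — whether $\partial\Pi$ enters on the upper or lower branch, and whether the inserted path is $\sfc(t)$ or its reverse — are correlated and cancel, but the genuinely delicate point is keeping the stretching normalization consistent between the computation of $|f|_\Z$ and the definitions {\rm(\ref{eq:Gr1})}--{\rm(\ref{eq:Gr3})} of the grading, so that all rotations are measured in one frame. Once this bookkeeping is fixed, the local identities, and hence the stated formula, follow.
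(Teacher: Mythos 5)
Your proof is correct, but it takes a genuinely different route from the paper's. The paper handles the regular case by choosing, for each corner $\sfg_r=\tilde f(\bv_r)$, reference arcs $\gamma^\pm_{\sfg_r}$ running to vertices of $\sL$ that are \emph{compatible with the disk}, i.e.\ $\gamma^+_{\sfg_0}=f|_{\be_0}\cdot\gamma^+_{\sfg_1}$, $\gamma^-_{\sfg_i}=f|_{\be_i}\cdot\gamma^+_{\sfg_{i+1}}$, $\gamma^-_{\sfg_t}=f|_{\be_t}\cdot\gamma^-_{\sfg_0}$; with this choice all the potential terms in $|\sfg_0|_\fP-\sum_r|\sfg_r|_\fP$ cancel in pairs, and the claim reduces to the winding-number identity $[D(\partial f)]=n_{\sfg_0}\cdot 1_{\gr}-\sum_{r}n_{\sfg_r}\cdot 1_{\gr}$, which is then verified by an explicit concatenation computation in $\pi_1(\gr(1,\R^2))$; infinitesimal disks are treated by a separate short count using {\rm(\ref{eq:Gr3})}. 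You instead trivialize the Maslov potential along $\sL$ (your extended potential $\tilde\fP$, well defined precisely because $\fP$ vanishes on $\fT_\sL$), convert each boundary-edge rotation into a difference of $\tilde\fP$-values, and reduce the proposition to local identities at the punctures --- in effect the intrinsic reformulation $|\sfc|_\fP=\tilde\fP(c_+)-\tilde\fP(c_-)-\rot(\sfc)\cdot 1_\fR$ of {\rm(\ref{eq:Gr1})} and its analogue for {\rm(\ref{eq:Gr2_1})}--{\rm(\ref{eq:Gr3})}, which I checked do hold with your sign conventions. Your route buys uniformity (regular and infinitesimal disks are handled by one argument, where the paper splits into two cases) and localizes all sign bookkeeping at the special points; the paper's route never leaves $\fR$ and $\pi_1(\gr(1,\R^2))$, at the cost of a global compatible choice of reference arcs. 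One point you should make explicit to be fully rigorous: when $\fR$ is a finite cyclic group, expressions such as $\rot(\sfc)\cdot 1_\fR$ with $\rot(\sfc)\notin\Z$ only make sense if $\tilde\fP$ is declared to take values in the realification of $\fR$ (e.g.\ $\R/k\Z$ when $\fR=\Zmod{k}$), together with the observation that the combinations $J_r\pm\rot(\tilde f(\bv_r))\cdot 1_\fR$ you actually use land back in $\fR$; this is routine, but without it the extended potential is not literally an $\fR$-valued function.
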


\begin{proof}
For convenience's sake, let us denote $\tilde f(\bv_r)$ by $\sfg_r$ for $0\le r\le t$. Suppose that $f$ is regular.
We start by choosing reference curves $\gamma_{\sfg_{r}}^\pm:[0,1]\to\sL$ from $f(\bv_r)$ to the vertices\footnote{If $\sfg_r$ is a vertex, then we may choose a constant path on $\sfg_r$.} which are used in \S~\ref{subsubsection:grading on c} to define $|\sfg_r|_\fP$ which satisfy the following conditions:
\begin{align}\label{eqn:reference curve}
\gamma^+_{\sfg_0}&=f|_{\be_0} \cdot \gamma^+_{\sfg_1},&
\gamma^-_{\sfg_t}&=f|_{\be_t}\cdot \gamma^-_{\sfg_0},&
\gamma^-_{\sfg_i}&=f|_{\be_i}\cdot \gamma^+_{\sfg_{i+1}}, 0\le i<t
\end{align}
up to homotopy.

It is easy to check that such a set of reference curves exists.
Since $\gamma_{\sfg_r}^\pm(1)\in \sV(\sL)$, for a sufficiently small $\epsilon>0$, the image $\im(\gamma_{\sfg_r}^\pm|_{(1-\epsilon,1]})$ determines an half edge of the vertex $\gamma_{\sfg_r}^\pm(1)$. We simply denote the induced half edge by $\sfh(\gamma_{\sfg_r}^\pm)$,
then (\ref{eqn:reference curve}) implies that
\begin{align}\label{eqn:half edges}
\sfh(\gamma^+_{\sfg_0})&=\sfh(\gamma^+_{\sfg_1}),& \sfh(\gamma^-_{\sfg_t})&=\sfh(\gamma^-_{\sfg_0}),& \sfh(\gamma^-_{\sfg_i})&=\sfh(\gamma^+_{\sfg_{i+1}}),
\end{align}
for $0\le i<t$.

Now recall from (\ref{eq:Gr1}) in \S~\ref{section:grading_singularpoint} that
\[
|\sfg_r|_\fP=\fP(\sfh(\gamma^+_{\sfg_r}))-\fP(\sfh(\gamma^-_{\sfg_r}))+n_{\sfg_r}\cdot 1_{\fR},
\]
where $n_{\sfg_r}$ is defined to satisfy $[\hat\sfg_r]=n_{\sfg_r}\cdot 1_{\gr}\in\pi_1(\gr(1,\RR^2))$ for
\[
\hat \sfg_r(t) = D\gamma^+_{\sfg_r}(t)\cdot D\gamma^-_{\sfg_r}(t)^{-1}\cdot \sfg_r(t)^{-1} \in \gr(1,\RR^2).
\]
Then the right hand side of the statement becomes
\begin{align*}
|\sfg_0|_\fP-\sum_{r=1}^t\left| \sfg_r \right|_\fP
&= \fP(\sfh(\gamma^+_{\sfg_0}))-\fP(\sfh(\gamma^-_{\sfg_0}))+n_{\sfg_0}\cdot 1_{\fR}\\
&\phantom{=} -\sum_{r=1}^{t}\fP(\sfh(\gamma^+_{\sfg_r}))+\sum_{r=1}^{t}\fP(\sfh(\gamma^-_{\sfg_r}))-\sum_{r=1}^t n_{\sfg_r}\cdot 1_{\fR}\\
&=n_{\sfg_0}\cdot 1_{\fR} - \sum_{r=1}^t n_{\sfg_r}\cdot 1_{\fR},
\end{align*}
where we use (\ref{eqn:half edges}) for the second equality.

So it suffices to show that
\[
[D(\partial f)]=n_{\sfg_0}\cdot 1_{\gr} - \sum_{r=1}^t n_{\sfg_r}\cdot 1_{\gr} \in \pi_1(\gr(1,\RR^2)).
\]
By definition, $n_{\sfg_0}\cdot 1_{\gr} - \sum_{r=1}^t n_{\sfg_r}\cdot 1_{\gr}$ is equal to
\begin{align*}
[D\gamma^+_{\sfg_0}(t)\cdot &D\gamma^-_{\sfg_0}(t)^{-1}\cdot \sfg_0(t)^{-1}\cdot \prod_{r=1}^t\left( D\gamma^+_{\sfg_r}(t)\cdot D\gamma^-_{\sfg_r}(t)^{-1}\cdot \sfg_r(t)^{-1}\right)^{-1} ]\\
&=[D\gamma^+_{\sfg_0}(t)\cdot D\gamma^-_{\sfg_0}(t)^{-1}\cdot \sfg_0(t)^{-1}\cdot \prod_{r=1}^t\left(\sfg_r(t) \cdot D\gamma^-_{\sfg_r}(t)\cdot D\gamma^+_{\sfg_r}(t)^{-1}\right) ]\\
&=[ D\gamma^-_{\sfg_0}(t)^{-1}\cdot \sfg_0(t)^{-1}\cdot D\gamma^+_{\sfg_0}(t)\cdot \prod_{r=1}^t\left( D\gamma^+_{\sfg_r}(t)^{-1}\cdot \sfg_r(t)\cdot D\gamma^-_{\sfg_r}(t)\right) ]\\
&=[ D\gamma^-_{\sfg_0}(t)^{-1}\cdot \sfg_0(t)^{-1}\cdot Df|_{\be_0} \cdot \sfg_1(t)\cdots Df|_{\be_{t-1}}\cdot \sfg_t(t)\cdot D\gamma^-_{\sfg_t}(t)]\\
&=[D(\partial f)],
\end{align*}
where we use (\ref{eqn:reference curve}) for the third and fourth equalities above.

Suppose that $f$ is infinitesimal with $\tilde f(\bv_j)=\sfv_{i_j,\ell_j}$ for some $i_j\in\Zmod{\val(\sfv)}$. Then 
$\ell_0=\ell_1+\cdots+\ell_t+m\cdot\val(\sfv)$ 
and by (\ref{eq:Gr3}), we have
\begin{align*}
& &|\sfv_{i_0,\ell_0}|_\fP=\sum_{j=1}^t |\sfv_{i_j,\ell_j}|_\fP + ((t-1)+2m)\cdot 1_\fR\notag\\
&\Longleftrightarrow&
|\sfv_{i_0,\ell_0}|_\fP-\sum_{j=1}^t|\sfv_{i_j,\ell_j}|_\fP = |f|_\ZZ\cdot 1_\fR = |f|.\qedhere
\end{align*}
\end{proof}

\subsubsection{Equivalences of admissible disks}
We may regard two admissible disks as the same whenever they are {\em combinatorially} the same, or equivalently, their critical sets are {\em isomorphic} via one-parameter family of admissible disks in the following sense.

\begin{definition}[Equivalent classes of admissible disks]\label{definition:equivalence of admissible disks}
We say that two admissible disks $f$ and $f'$ are {\em equivalent} and denote this by $f\sim f'$ if there is a family of admissible disks $\{f_u\mid u\in[0,1]\}\subset\tilde\cM_t(\sfg)$ such that
\begin{align*}
f_0&=f,&
f_1&=f',&
(\Pi, \bO_{f_0})&\cong (\Pi, \bO_{f_u})\cong (\Pi, \bO_{f_1}),
\end{align*}
where the isomorphism $\cong$ means the diffeomorphism inducing an isomorphism between properly embedded graphs (or stratified spaces).
\end{definition}

Notice that if two disks are the same up to the action of $\Diff^+(\Pi,\partial\Pi, \vPi)$ by the pre-composition, then they are equivalent. Therefore the usual equivalence relation coming from the $\Diff^+(\Pi,\partial\Pi, \vPi)$ action is finer than our equivalence relation.

\begin{example}[Infinitesimal bigons again]
Recall infinitesimal bigons in Example~\ref{example:infinitesimal bigons}.
As mentioned there, they are all distinct but can be connected by one-parameter family of admissible disks.
Roughly speaking this family can be obtained by {\em rolling} the image of the region containing a sink.
\end{example}

\begin{notation}\label{notation:admissible disks}
From now on, we will focus on admissible disks of degree 1 or 2, and define the following sets:
\begin{align*}
\cM&\coloneqq\{f\mid f\text{ is admissible}, |f|_\ZZ=1, 2\}\big/\sim, &\cM_t&\coloneqq\{f\in\cM\mid \text{ Domain of }f=\Pi_t \};\\
\cM^\reg&\coloneqq\{f\in\cM\mid f\text{ is regular}\},&\cM_{(d)}&\coloneqq\{f\in\cM\mid |f|_\ZZ=d\}; \\
\cM^{\sf{inf}}&\coloneqq\{f\in\cM\mid f\text{ is infinitesimal}\},&\cM(\sfg)&\coloneqq\{f\in\cM\mid \tilde f(\bv_0)=\sfg\}.
\end{align*}
We also introduce notation for their intersections by decorating with sub- and superscripts. For example,
\begin{align*}
\cM^\reg_t(\sfc)&\coloneqq\cM^\reg \cap \cM_t \cap \cM(\sfc),& \cM(\sfv_{i,\ell})_{(d)}&\coloneqq \cM(\sfv_{i,\ell}) \cap \cM_{(d)}.
\end{align*}
\end{notation}

\subsection{Classifications of admissible disks}
We classify admissible disks in $\cM$, which are of degree 1 or 2.

\begin{proposition}\label{proposition:degree admissible disk}Let $f$ be an admissible disk. Then the $\ZZ$-degree $|f|_\ZZ$ of $f$ can be computed as follows:
\begin{enumerate}
\item If $f$ is regular,
\[
|f|_\ZZ=1+\#(2\text{-folding edges})+\#(\text{concave vertices}).
\]
\item If $f$ is infinitesimal,
\[
|f|_\ZZ=-2\chi(\bO_f)+(t+1).
\]
\end{enumerate}
\end{proposition}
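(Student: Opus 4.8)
By definition $|f|_\Z$ is the winding number, in units of the counterclockwise half-turn $1_{\gr}$, of the tangent-line loop $D(\partial f)$ in $\gr(1,\R^2)\cong\mathbb{RP}^1$, so both formulas will be obtained by evaluating this single winding number. Since rotation in $\gr(1,\R^2)$ is additive under concatenation, I would write $[D(\partial f)]$ as the sum of the rotations of the individual edge paths $D\be_r$ and capping paths $\tilde f(\bv_r)$ (with $\bv_0$ reversed) and compute each contribution locally. The one structural point to exploit is that the angle-dependent (non-half-integer) parts of these individual paths must cancel once they are strung together into an honest loop, so that only integer half-turn counts survive.

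For part~(1), I would first treat the reference case with no $2$-folding edges and no concave vertices, where $f$ is an orientation-preserving immersion with only convex and neutral corners and a single positive vertex $\bv_0$, and check directly that the local contributions sum to $1$; this is the graph analogue of the fact that the standard polygons in Legendrian contact homology carry degree~$1$, the reversal $-\tilde f(\bv_0)$ at the positive corner being exactly what brings the naive turning down to a single half-turn. I would then show that each of the two admissible ``defects'' contributes one further half-turn. Using the local model $z\mapsto z^2$ from Definition~\ref{def:folding edges}, a $2$-folding edge reverses the boundary tangent line and hence adds $+1$; and a concave vertex, where $f$ covers three of the four local regions at $\sfc$ rather than one, forces the adjacent edge tangents to sweep the reflex side and adds one further $+1$ relative to a convex vertex (the capping path $\tilde f(\bv)=\sfc$ itself being unchanged). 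Summing over all edges and vertices gives $|f|_\Z=1+\#(2\text{-folding edges})+\#(\text{concave vertices})$.

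For part~(2) every vertex is neutral and every edge is $(m/2)$-folding, and I would replace the local bookkeeping by a global index computation for the singular foliation $\mathbf{FL}_f$ whose leaves are the components of $\bL_f$. The heart of the argument is the intermediate identity
\[
|f|_\Z=(t-1)+2\,\#\{\text{interior sinks of }\mathbf{FL}_f\}.
\]
This is a (relative) Poincar\'e--Hopf statement for the tangent-line field of $\mathbf{FL}_f$ on the disk $\Pi$: its only interior singularities are the interior preimages of $\sfv$, which are elliptic sinks of line-field index $1$, while $\bv_0$ is a boundary source, $\bv_1,\dots,\bv_t$ boundary sinks, and the folds $\bx_0,\dots,\bx_t$ the $(m_r{+}1)$-prong hyperbolic points on $\partial\Pi$. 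Comparing the boundary winding $|f|_\Z$ with the interior index sum (most transparently by doubling $\Pi$ across $\partial\Pi$ and applying Poincar\'e--Hopf on the resulting sphere) yields the displayed identity, the term $(t-1)$ recording the net boundary contribution. Finally, by the correspondence recorded just before the statement the interior sinks are in bijection with the minimal circuits of $\bO_f$, so their number is $b_1(\bO_f)$, and since $\bO_f$ is connected $b_1(\bO_f)=1-\chi(\bO_f)$. Substituting gives $|f|_\Z=(t-1)+2\bigl(1-\chi(\bO_f)\bigr)=-2\chi(\bO_f)+(t+1)$.

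The main obstacle is the sign- and angle-bookkeeping for the capping paths, and above all the Poincar\'e--Hopf accounting of the \emph{boundary} singularities in part~(2): since every singularity of $\mathbf{FL}_f$ lies on $\partial\Pi$ (apart from the interior sinks), one must track carefully how each boundary source, sink, and $(m_r{+}1)$-prong contributes to the boundary winding, which is precisely where the constant $(t-1)$ is produced. A convenient sanity check throughout is the bigon of Figure~\ref{fig:singular foliation}, for which $t=1$, there is a single interior sink, $\chi(\bO_f)=0$, and all of the above must return $|f|_\Z=2$.
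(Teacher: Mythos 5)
Your part (1) is correct and is essentially the paper's own argument: the paper compares $D(\partial f)$ with the tangent-line loop of a smooth collar curve (which has winding $2\cdot 1_{\gr}$) and records a correction of $-1$ at the unique positive vertex, $+1$ at each concave vertex, and $+1$ at each $2$-folding edge; this is the same baseline-plus-defects bookkeeping you describe.

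Part (2) has a genuine gap. The quantity $|f|_\Z$ is a \emph{target-side} winding: it is the class of the loop $(-\tilde f(\bv_0))\cdot D\be_0\cdot\tilde f(\bv_1)\cdots\tilde f(\bv_t)\cdot D\be_t$ in $\gr(1,\R^2)$, and for an infinitesimal disk all of its $f$-dependence sits in the capping paths $\tilde f(\bv_j)=\sfv_{i_j,\ell_j}$ (each $D\be_r$ is a constant path, since $\be_r$ maps into a single ray). By contrast, Poincar\'e--Hopf applied to the tangent-line field of $\mathbf{FL}_f$ --- on the double of $\Pi$ or relatively on $\Pi$ --- computes a purely \emph{domain-intrinsic} quantity in which $|f|_\Z$ never appears. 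Concretely, doubling $\Pi$ and summing indices over the sphere gives
\[
2=\chi(S^2)=2s+(t+1)+\sum_{r=0}^{t}\left(1-m_r\right),
\]
where $s$ is the number of interior sinks and $m_r$ is the folding multiplicity of $\be_r$; this is a relation among $s$, $t$ and the $m_r$'s only (and even it requires smoothing the cone angles created at the doubled corners). Moreover, the identification of $|f|_\Z$ with ``the boundary winding of the tangent-line field of $\mathbf{FL}_f$'' is false as stated: $\partial\Pi$ is a union of leaves of $\mathbf{FL}_f$, so along $\partial\Pi$ the foliation's tangent line is the tangent line of $\partial\Pi$ itself, whose winding is $2\cdot 1_{\gr}$ no matter what $f$ is; pushing to a collar curve does not help either, since by Poincar\'e--Hopf that winding is $2s\cdot 1_{\gr}$, again missing the $(t-1)$. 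In either case the discrepancy with $|f|_\Z$ is concentrated at the boundary singularities, and evaluating it requires exactly the target-side local analysis your sketch omits: the rotation of the capping path $\sfv_{i,\ell}$ equals $\pi$ minus the total angle swept by its $\ell$ sectors, and the sector counts satisfy $\ell_0=\ell_1+\cdots+\ell_t+s\cdot\val(\sfv)$ because $f$ is a disk. Feeding these two facts into the definition of $D(\partial f)$ yields $|f|_\Z=(t-1)+2s$ directly --- which is precisely the paper's proof --- and at that point the doubling/Poincar\'e--Hopf step is superfluous. Your remaining steps (interior sinks correspond to minimal circuits of $\bO_f$, whose number is $1-\chi(\bO_f)$ by connectedness) agree with the paper and are fine, as is your sanity check on the bigon.
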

\begin{proof}
\noindent~(1)~Let $\gamma_f$ be the curve in $\Pi$ which is the inner boundary of a collar neighborhood of $\partial\Pi$. Then $D\gamma_f$ defines a loop in $\gr(1,\RR^2)$ corresponding to $2\cdot 1_{\gr}$ since $\gamma_f$ is smooth and one full rotation. Note that $D\gamma_f$ and $D(\partial f)$ can be identified away from $\bU_{\bv}$ for $\bv\in \bV\Pi$ and $\crit(f)$, i.e., the singular points in the edge.
We need to compare the local path $D\gamma_f|_{\bU_{\bv}}$ and the induced capping path $D(\partial f)|_{\bU_{\bv}}$ in $\gr(1,2)$. When $\bv\in\vPi$ is
\begin{enumerate}
\item convex negative or neutral, then $D\gamma_f|_{\bU_{\bv}}$ and $D(\partial f)|_{\bU_{\bv}}$ are homotopic;
\item concave negative, $D(\partial f)|_{\bU_{\bv}}$ contributes a $\pi$-rotation compared to $D\gamma_f|_{\bU_{\bv}}$;
\item convex positive, then $D(\partial f)|_{\bU_{\bv}}$ is the inverse of the capping path $-\tilde f(\bv)$. So it contributes a $(-\pi)$-rotation followed by $D\gamma_f|_{\bU_{\bv}}$;
\item concave positive, the two paths on $\pi_1(\gr(1,2))$ agree.
\end{enumerate}

Now suppose that any edges $\be$ containing a singular point $\bp$ are 2-folding. Then $D\gamma_f|_{\bU_{\bp}}$ contributes a $(-\pi)$-rotation, while $D(\partial f)|_{\bU_{\bp}}$ is constant.
Therefore by summing up all these contributions,
\[
|f|_\ZZ=[D\gamma_f]-\#(\text{positive vertices})+\#(2\text{-folding edges})+\#(\text{concave vertices}).
\]

\noindent~(2)~Suppose that for each $0\le j\le t$, $\tilde f(\bv_j)=\sfv_{i_j,\ell_j}$ for some $i_j\in\Zmod{\val(\sfv)}$ with $\ell_j\ge 1$ such that $i_j=i_0+\ell_0+\cdots+\ell_{j-1}$.
Then it is obvious that $\ell_0 = \ell_1+\cdots +\ell_t + m\cdot \val(\sfv)$ for some $m\ge 0$ since $f$ is a disk.
Moreover, by considering the singular foliation defined by $\bL_f$ mentioned earlier, $m$ is the same as the number of sinks in the interior, or the number of minimal circuits of $\bO_f$, which can be formulated as $m=1-\chi(\bO_f)$.
This implies that near $\bv_0$, $f$ makes $m$ additional full turns counterclockwise, which contributes $2m$ to $|f|_\ZZ\in\pi_1(\gr(1,2))$.

On the other hand, all edges other than $\be_0$ and $\be_t$ play exactly the same role of folding edges and contribute $1$ to $|f|_\ZZ$. Here we exclude $\be_0$ and $\be_t$ since $f$ changes orientations.

Finally, the turns corresponding to $\ell_1, \cdots, \ell_t$ are cancelled with $\ell_0-m\cdot\val(\sfv)$ and so there are no more contributions to $|f|_\ZZ$. Therefore we have
\[
|f|_\ZZ = 2(1-\chi(\bO_f))+(t+1-2)=-2\chi(\bO_f)+(t+1)
\]
as claimed.
\end{proof}

\subsubsection{Regular admissible disks}
Let $f$ be a regular admissible disk.
Then
\[
|f|_\ZZ=1+\#(2\text{-folding edges})+\#(\text{concave vertices}).
\]

\begin{enumerate}
\item Degree 1 disks ($|f|_\ZZ=1$) : All edges are smooth and all vertices are convex.

\item Degree 2 disks ($|f|_\ZZ=2$) : Either all edges are smooth and there is only one concave (not necessarily negative) vertex $\bv$,
\[
\begin{tikzcd}
\vcenter{\hbox{\includegraphics{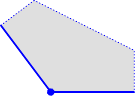}}}\ar[r,"f"]&
\vcenter{\hbox{\includegraphics{crossing_sign_concave_positive.pdf}}}\quad
\vcenter{\hbox{\scalebox{-1}{\includegraphics{crossing_sign_concave_positive.pdf}}}}\quad
\vcenter{\hbox{\includegraphics{crossing_sign_concave_negative.pdf}}}\quad
\vcenter{\hbox{\includegraphics{crossing_sign_concave_negative.pdf}}}
\in\cM^\concavevertex
\end{tikzcd}
\]
or $f$ has exactly one 2-folding edge $\be$ with all convex vertices, which gives us a one-parameter family of admissible disks as follows:
\[
\begin{tikzcd}
\vcenter{\hbox{\includegraphics{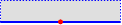}}}\ar[r,"f"]&
\vcenter{\hbox{\includegraphics{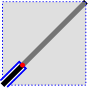}}}=
\vcenter{\hbox{\includegraphics{folding_edge_2.pdf}}}=
\vcenter{\hbox{\includegraphics{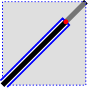}}}
\in\cM^\positivefolding
\end{tikzcd}
\]
Obviously, these disks are all equivalent by Definition~\ref{definition:equivalence of admissible disks}. We denote the sets of regular admissible disks of degree 2 with a folding edge and with a concave vertex by $\cM^\positivefolding$ and $\cM^\concavevertex$, respectively.
\[
\cM^\reg_{(2)}=\cM^\positivefolding\amalg\cM^\concavevertex
\]
\end{enumerate}

\subsubsection{Infinitesimal disks}\label{subsubsection:infinitesimal disks}
Let $f$ be an infinitesimal admissible disk on $\Pi_t$ with $\tilde f(\bv_0)=\sfv_{i,\ell}\in\tilde\sV_\sL$. We use the formula in Proposition~\ref{proposition:degree admissible disk}(2)
\[
|f|_\ZZ=-2\chi(\bO_f)+(t+1).
\]
Since $\bO_f$ is connected and $|f|_\ZZ\le 2$, either $\chi(\bO_f)=0$ or $\chi(\bO_f)=1$.

\begin{enumerate}
\item Degree 1 ($|f|_\ZZ=1$) : There are two possibilities,
\begin{align*}
\begin{cases}
t=0;\\
\chi(\bO_f)=0,
\end{cases}&\Longleftrightarrow
\begin{cases}
\Pi \text{ is a monogon};\\
\bO_f=S^1,
\end{cases}
&
\begin{cases}
t=2;\\
\chi(\bO_f)=1,
\end{cases}&\Longleftrightarrow
\begin{cases}
\Pi \text{ is a triangle};\\
\bO_f \text{ is a tree},
\end{cases}
\end{align*}
where each corresponds to either $\tilde f(\bv_0)=\sfv_{i,\val(\sfv)}$ and so
\[
\vcenter{\hbox{\def\svgscale{0.9}
\begingroup%
  \makeatletter%
  \providecommand\color[2][]{%
    \errmessage{(Inkscape) Color is used for the text in Inkscape, but the package 'color.sty' is not loaded}%
    \renewcommand\color[2][]{}%
  }%
  \providecommand\transparent[1]{%
    \errmessage{(Inkscape) Transparency is used (non-zero) for the text in Inkscape, but the package 'transparent.sty' is not loaded}%
    \renewcommand\transparent[1]{}%
  }%
  \providecommand\rotatebox[2]{#2}%
  \ifx\svgwidth\undefined%
    \setlength{\unitlength}{71.22725826bp}%
    \ifx\svgscale\undefined%
      \relax%
    \else%
      \setlength{\unitlength}{\unitlength * \real{\svgscale}}%
    \fi%
  \else%
    \setlength{\unitlength}{\svgwidth}%
  \fi%
  \global\let\svgwidth\undefined%
  \global\let\svgscale\undefined%
  \makeatother%
  \begin{picture}(1,0.81994741)%
    \put(0,0){\includegraphics[width=\unitlength,page=1]{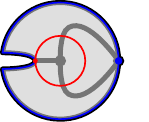}}%
    \put(0.82461508,0.52505124){\color[rgb]{0,0,0}\makebox(0,0)[lb]{\smash{$\sfv$}}}%
  \end{picture}%
\endgroup%
}}\!\!\!\!\!=\left(
\begin{tikzcd}[column sep=1pc]
\vcenter{\hbox{\def\svgscale{0.9}
\begingroup%
  \makeatletter%
  \providecommand\color[2][]{%
    \errmessage{(Inkscape) Color is used for the text in Inkscape, but the package 'color.sty' is not loaded}%
    \renewcommand\color[2][]{}%
  }%
  \providecommand\transparent[1]{%
    \errmessage{(Inkscape) Transparency is used (non-zero) for the text in Inkscape, but the package 'transparent.sty' is not loaded}%
    \renewcommand\transparent[1]{}%
  }%
  \providecommand\rotatebox[2]{#2}%
  \ifx\svgwidth\undefined%
    \setlength{\unitlength}{56.79996323bp}%
    \ifx\svgscale\undefined%
      \relax%
    \else%
      \setlength{\unitlength}{\unitlength * \real{\svgscale}}%
    \fi%
  \else%
    \setlength{\unitlength}{\svgwidth}%
  \fi%
  \global\let\svgwidth\undefined%
  \global\let\svgscale\undefined%
  \makeatother%
  \begin{picture}(1,1.14991942)%
    \put(0,0){\includegraphics[width=\unitlength,page=1]{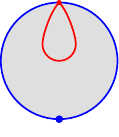}}%
    \put(0.40862016,0.01660845){\color[rgb]{0,0,0}\makebox(0,0)[lb]{\smash{$\bv_0$}}}%
  \end{picture}%
\endgroup%
}}\ar[r,"\simeq"]&
\vcenter{\hbox{\def\svgscale{0.9}
\begingroup%
  \makeatletter%
  \providecommand\color[2][]{%
    \errmessage{(Inkscape) Color is used for the text in Inkscape, but the package 'color.sty' is not loaded}%
    \renewcommand\color[2][]{}%
  }%
  \providecommand\transparent[1]{%
    \errmessage{(Inkscape) Transparency is used (non-zero) for the text in Inkscape, but the package 'transparent.sty' is not loaded}%
    \renewcommand\transparent[1]{}%
  }%
  \providecommand\rotatebox[2]{#2}%
  \ifx\svgwidth\undefined%
    \setlength{\unitlength}{92.39999976bp}%
    \ifx\svgscale\undefined%
      \relax%
    \else%
      \setlength{\unitlength}{\unitlength * \real{\svgscale}}%
    \fi%
  \else%
    \setlength{\unitlength}{\svgwidth}%
  \fi%
  \global\let\svgwidth\undefined%
  \global\let\svgscale\undefined%
  \makeatother%
  \begin{picture}(1,0.85056873)%
    \put(0,0){\includegraphics[width=\unitlength,page=1]{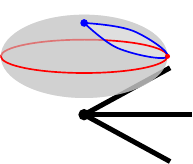}}%
    \put(0.39393939,0.81767845){\color[rgb]{0,0,0}\makebox(0,0)[lb]{\smash{$\bv_0$}}}%
    \put(0.39393939,0.12503776){\color[rgb]{0,0,0}\makebox(0,0)[lb]{\smash{$\sfv$}}}%
  \end{picture}%
\endgroup%
}}\ar[r,"f"]&
\vcenter{\hbox{\def\svgscale{0.9}
\begingroup%
  \makeatletter%
  \providecommand\color[2][]{%
    \errmessage{(Inkscape) Color is used for the text in Inkscape, but the package 'color.sty' is not loaded}%
    \renewcommand\color[2][]{}%
  }%
  \providecommand\transparent[1]{%
    \errmessage{(Inkscape) Transparency is used (non-zero) for the text in Inkscape, but the package 'transparent.sty' is not loaded}%
    \renewcommand\transparent[1]{}%
  }%
  \providecommand\rotatebox[2]{#2}%
  \ifx\svgwidth\undefined%
    \setlength{\unitlength}{76.3999998bp}%
    \ifx\svgscale\undefined%
      \relax%
    \else%
      \setlength{\unitlength}{\unitlength * \real{\svgscale}}%
    \fi%
  \else%
    \setlength{\unitlength}{\svgwidth}%
  \fi%
  \global\let\svgwidth\undefined%
  \global\let\svgscale\undefined%
  \makeatother%
  \begin{picture}(1,0.84816754)%
    \put(0,0){\includegraphics[width=\unitlength,page=1]{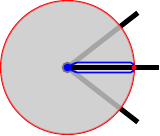}}%
    \put(0.37172775,0.26701571){\color[rgb]{0,0,0}\makebox(0,0)[lb]{\smash{$\sfv$}}}%
  \end{picture}%
\endgroup%
}}
\end{tikzcd}\right)\in\cM^\infinitesimalmonogon
\]
or $\tilde f(\bv_0)=\sfv_{i,\ell_0}$, $\tilde f(\bv_1)=\sfv_{i,\ell_1}$ and $\tilde f(\bv_2)=\sfv_{i+\ell_1,\ell_2}$ with $\ell_0=\ell_1+\ell_2$.
\[
\vcenter{\hbox{\def\svgscale{0.9}}}\!\!\!=\left(
\begin{tikzcd}[column sep=1pc]
\vcenter{\hbox{\def\svgscale{0.9}}}\ar[r,"\simeq"]&
\vcenter{\hbox{\def\svgscale{0.9}}}\ar[r,"f"]&
\vcenter{\hbox{\def\svgscale{0.9}}}
\end{tikzcd}\right)\in\cM^\infinitesimaltriangle
\]

We denote the sets of infinitesimal admissible monogons and triangles of degree 1 by $\cM^\infinitesimalmonogon$ and $\cM^\infinitesimaltriangle$, respectively.
\[
\cM^\sinf_{(1)}=\cM^\infinitesimalmonogon\amalg\cM^\infinitesimaltriangle
\]
\end{enumerate}

In summary, we have the following corollary.
\begin{corollary}\label{corollary:classification_of_infinitesimal_degree_1}
For each $\sfv_{i,\ell}$, $\cM^\infinitesimalmonogon(\sfv_{i,\ell})$ contains at most one element, which happens if and only if $\ell=\val(\sfv)$.
Moreover, the elements in $\cM^{\infinitesimaltriangle}(\sfv_{i,\ell})$ have one-to-one correspondence with the indices $1\le \ell_1<\ell$ as above.
\end{corollary}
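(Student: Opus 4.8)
The plan is to extract both statements directly from the degree-$1$ classification of infinitesimal disks established just above, using the balance relation recorded in the proof of Proposition~\ref{proposition:degree admissible disk}(2). Recall that for an infinitesimal $f$ with $\tilde f(\bv_j)=\sfv_{i_j,\ell_j}$ one has $\ell_0=\ell_1+\cdots+\ell_t+m\cdot\val(\sfv)$, where $m=1-\chi(\bO_f)$ is the number of interior sinks of the foliation $\mathbf{FL}_f$. Combined with the dichotomy $(t,\chi(\bO_f))\in\{(0,0),(2,1)\}$ forced by $|f|_\Z=1$ and connectivity of $\bO_f$, this relation pins down the labels in each case.

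For $\cM^\infinitesimalmonogon(\sfv_{i,\ell})$ we are in the case $t=0$, $\chi(\bO_f)=0$, so $m=1$ and the balance relation collapses to $\ell_0=\val(\sfv)$. Hence a degree-$1$ monogon necessarily carries the label $\tilde f(\bv_0)=\sfv_{i,\val(\sfv)}$, which shows $\cM^\infinitesimalmonogon(\sfv_{i,\ell})=\emptyset$ unless $\ell=\val(\sfv)$. When $\ell=\val(\sfv)$, I would exhibit the single full wrap around $\sfv$ starting at $\sfh_{\sfv,i}$ (the disk drawn in the classification) and argue that its combinatorial type---one interior elliptic sink, the $\val(\sfv)$ sectors covered once each, and the critical circle $\bO_f\simeq S^1$---is completely determined by the pair $(\sfv,i)$. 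Thus there is exactly one such disk up to the equivalence of Definition~\ref{definition:equivalence of admissible disks}, giving the first assertion.

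For $\cM^\infinitesimaltriangle(\sfv_{i,\ell})$ we are in the case $t=2$, $\chi(\bO_f)=1$, so $m=0$ and the balance relation reads $\ell_0=\ell_1+\ell_2$. The source label $\sfv_{i,\ell}$ fixes $\ell_0=\ell$ and the index $i$, and the classification forces $\tilde f(\bv_1)=\sfv_{i,\ell_1}$, $\tilde f(\bv_2)=\sfv_{i+\ell_1,\ell_2}$. Hence the only remaining datum is the ordered splitting $\ell=\ell_1+\ell_2$ with $\ell_1,\ell_2\ge 1$, i.e. the single integer $\ell_1$ with $1\le\ell_1<\ell$ (which then determines $\ell_2=\ell-\ell_1$ and the second index $i+\ell_1$). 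I would check that each such $\ell_1$ is realized by a triangle whose critical tree $\bO_f$ has one trivalent node separating the two interior sinks of weights $\ell_1$ and $\ell_2$, and that distinct $\ell_1$ produce inequivalent disks, yielding the bijection $\cM^\infinitesimaltriangle(\sfv_{i,\ell})\leftrightarrow\{\,\ell_1:1\le\ell_1<\ell\,\}$.

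The substantive point in both cases is rigidity: that the combinatorial type of the disk is uniquely reconstructed from the label data, and that every admissible $f$ with the prescribed source indeed takes the asserted form rather than some higher-wrapping variant. This is exactly where the identity $m=1-\chi(\bO_f)$ must be used to forbid extra interior sinks, since a grading match in $\fR$ alone would not suffice because $\fR$ may be finite. Once non-negativity of $m$ together with the degree-$1$ constraint rules out $m\ge 1$ in the triangle case and pins $m=1$ in the monogon case, the remaining verification---that the foliation $\mathbf{FL}_f$ with its prescribed sinks determines $f$ up to equivalence---is routine bookkeeping on $\Pi$.
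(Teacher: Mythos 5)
Your proof is correct and takes essentially the same approach as the paper, which states this corollary as a direct summary of its degree-$1$ classification of infinitesimal disks: the formula $|f|_\Z=-2\chi(\bO_f)+(t+1)$ together with connectivity of $\bO_f$ forces $(t,\chi(\bO_f))\in\{(0,0),(2,1)\}$, and the balance relation $\ell_0=\ell_1+\cdots+\ell_t+(1-\chi(\bO_f))\cdot\val(\sfv)$ then pins down the labels exactly as you argue, with uniqueness/realization read off from the combinatorics of the foliation. One terminological slip that does not affect the argument: in the triangle case $m=1-\chi(\bO_f)=0$, so there are no interior sinks at all (interior sinks correspond to minimal circuits of $\bO_f$, and a tree has none); the two sinks carrying the weights $\ell_1,\ell_2$ are the boundary vertices $\bv_1,\bv_2$.
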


\begin{enumerate}
\item[(2)] Degree 2 ($|f|_\ZZ=2$) : There are two possibilities as before.
\begin{align}
\begin{cases}\label{eqn:degree2infinitesimal-1}
t=1;\\
\chi(\bO_f)=0,
\end{cases}&\Longleftrightarrow
\begin{cases}
\Pi \text{ is a bigon};\\
\bO_f \text{ contains a loop}.
\end{cases}\\
\begin{cases}\label{eqn:degree2infinitesimal-2}
t=3;\\
\chi(\bO_f)=1,
\end{cases}&\Longleftrightarrow
\begin{cases}
\Pi \text{ is a quadrilateral};\\
\bO_f \text{ is a tree}.
\end{cases}
\end{align}

The first solution gives us a one-parameter family $\{f_u\mid u\in[0,1]\}$ of admissible disks depicted in Figure~\ref{figure:admissible infinitesimal bigons},
where the internal vertex of $\bO_f$ moves along $\sO_\sfv$ via $f_u$.
By Definition~\ref{definition:equivalence of admissible disks} again, all disks $f_u$ for $u\in(0,1)$ are equivalent.

\begin{figure}[ht]
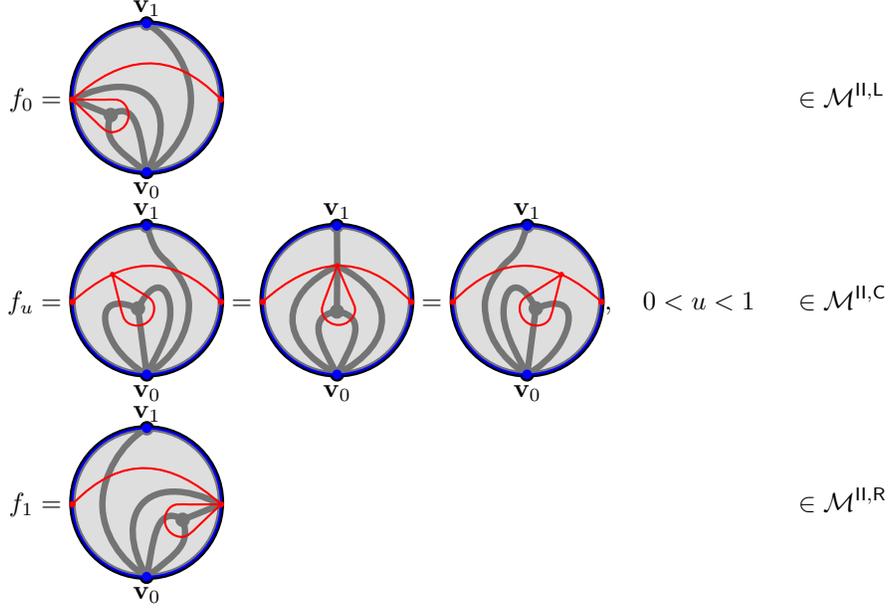

\begin{align*}\label{eq:family of disks with interior vertex}
f_0&=\vcenter{\hbox{\input{bigon_degree_2_L_0_input.tex}}}&&\in\cM^{\infinitesimalbigonleft}\\
f_u&=\vcenter{\hbox{\input{bigon_degree_2_L_1_input.tex}}}=\vcenter{\hbox{\input{bigon_degree_2_input.tex}}}=\vcenter{\hbox{\input{bigon_degree_2_R_1_input.tex}}},\quad 0<u<1&&\in\cM^{\infinitesimalbigonmiddle}\\
f_1&=\vcenter{\hbox{\input{bigon_degree_2_R_0_input.tex}}}&&\in\cM^{\infinitesimalbigonright}
\end{align*}
\caption{A family of infinitesemial admissible bigons}
\label{figure:admissible infinitesimal bigons}
\end{figure}

We denote the sets of infinitesimal admissible bigons whose critical graphs $\bO_f$ look like the three equivalence classes $f_0, f_u$ and $f_1$ above for some $u\in(0,1)$ by $\cM^{\infinitesimalbigonleft}, \cM^{\infinitesimalbigonmiddle}$ and $\cM^{\infinitesimalbigonright}$, respectively.

Moreover, the difference between the numbers of sectors $\ell_f(\bv_0)$ and $\ell_f(\bv_1)$ that $f$ covers at $\bv_0$ and $\bv_1$, respectively, is exactly the same as $\val(\sfv)$
\[
\ell_f(\bv_0)-\ell_f(\bv_1)=\val(\sfv)
\]
since the inner circular region corresponding to a sink consumes exactly one full turn ($=\val(\sfv)$ sectors). This is equivalent to
\begin{align*}
\ell_0-\ell_1&=\val(\sfv),&
\tilde f(\bv_0)&=\sfv_{i,\ell_0},&
\tilde f(\bv_1)&=\sfv_{i,\ell_1}.
\end{align*}

On the other hand, the second solution corresponds to the following admissible disk:
\[
\vcenter{\hbox{\def\svgscale{0.9}
\begingroup%
  \makeatletter%
  \providecommand\color[2][]{%
    \errmessage{(Inkscape) Color is used for the text in Inkscape, but the package 'color.sty' is not loaded}%
    \renewcommand\color[2][]{}%
  }%
  \providecommand\transparent[1]{%
    \errmessage{(Inkscape) Transparency is used (non-zero) for the text in Inkscape, but the package 'transparent.sty' is not loaded}%
    \renewcommand\transparent[1]{}%
  }%
  \providecommand\rotatebox[2]{#2}%
  \ifx\svgwidth\undefined%
    \setlength{\unitlength}{75.26018286bp}%
    \ifx\svgscale\undefined%
      \relax%
    \else%
      \setlength{\unitlength}{\unitlength * \real{\svgscale}}%
    \fi%
  \else%
    \setlength{\unitlength}{\svgwidth}%
  \fi%
  \global\let\svgwidth\undefined%
  \global\let\svgscale\undefined%
  \makeatother%
  \begin{picture}(1,0.6332008)%
    \put(0,0){\includegraphics[width=\unitlength,page=1]{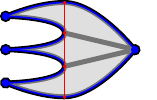}}%
    \put(0.83401333,0.42291279){\color[rgb]{0,0,0}\makebox(0,0)[lb]{\smash{$\sfv$}}}%
  \end{picture}%
\endgroup%
}}\!\!\!=\left(
\begin{tikzcd}[column sep=1pc]
\vcenter{\hbox{\def\svgscale{0.9}
\begingroup%
  \makeatletter%
  \providecommand\color[2][]{%
    \errmessage{(Inkscape) Color is used for the text in Inkscape, but the package 'color.sty' is not loaded}%
    \renewcommand\color[2][]{}%
  }%
  \providecommand\transparent[1]{%
    \errmessage{(Inkscape) Transparency is used (non-zero) for the text in Inkscape, but the package 'transparent.sty' is not loaded}%
    \renewcommand\transparent[1]{}%
  }%
  \providecommand\rotatebox[2]{#2}%
  \ifx\svgwidth\undefined%
    \setlength{\unitlength}{82.35441873bp}%
    \ifx\svgscale\undefined%
      \relax%
    \else%
      \setlength{\unitlength}{\unitlength * \real{\svgscale}}%
    \fi%
  \else%
    \setlength{\unitlength}{\svgwidth}%
  \fi%
  \global\let\svgwidth\undefined%
  \global\let\svgscale\undefined%
  \makeatother%
  \begin{picture}(1,0.86296456)%
    \put(0,0){\includegraphics[width=\unitlength,page=1]{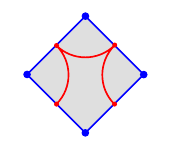}}%
    \put(0.42864895,0.00856151){\color[rgb]{0,0,0}\makebox(0,0)[lb]{\smash{$\bv_0$}}}%
    \put(0.42864895,0.825588){\color[rgb]{0,0,0}\makebox(0,0)[lb]{\smash{$\bv_2$}}}%
    \put(-0.001838,0.40008336){\color[rgb]{0,0,0}\makebox(0,0)[lb]{\smash{$\bv_3$}}}%
    \put(0.89930147,0.40008336){\color[rgb]{0,0,0}\makebox(0,0)[lb]{\smash{$\bv_1$}}}%
  \end{picture}%
\endgroup%
}}\ar[r,"\simeq"]&
\vcenter{\hbox{\def\svgscale{0.9}
\begingroup%
  \makeatletter%
  \providecommand\color[2][]{%
    \errmessage{(Inkscape) Color is used for the text in Inkscape, but the package 'color.sty' is not loaded}%
    \renewcommand\color[2][]{}%
  }%
  \providecommand\transparent[1]{%
    \errmessage{(Inkscape) Transparency is used (non-zero) for the text in Inkscape, but the package 'transparent.sty' is not loaded}%
    \renewcommand\transparent[1]{}%
  }%
  \providecommand\rotatebox[2]{#2}%
  \ifx\svgwidth\undefined%
    \setlength{\unitlength}{80.15336973bp}%
    \ifx\svgscale\undefined%
      \relax%
    \else%
      \setlength{\unitlength}{\unitlength * \real{\svgscale}}%
    \fi%
  \else%
    \setlength{\unitlength}{\svgwidth}%
  \fi%
  \global\let\svgwidth\undefined%
  \global\let\svgscale\undefined%
  \makeatother%
  \begin{picture}(1,0.9324274)%
    \put(0,0){\includegraphics[width=\unitlength,page=1]{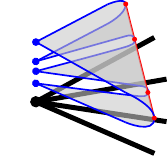}}%
    \put(0.16323374,0.16928065){\color[rgb]{0,0,0}\makebox(0,0)[lb]{\smash{$\sfv$}}}%
    \put(0.11398227,0.73136408){\color[rgb]{0,0,0}\makebox(0,0)[lb]{\smash{$\bv_0$}}}%
    \put(0.0526276,0.57256877){\color[rgb]{0,0,0}\makebox(0,0)[lb]{\smash{$\bv_1$}}}%
    \put(0.05010713,0.49317204){\color[rgb]{0,0,0}\makebox(0,0)[lb]{\smash{$\bv_2$}}}%
    \put(0.05010709,0.4196058){\color[rgb]{0,0,0}\makebox(0,0)[lb]{\smash{$\bv_3$}}}%
  \end{picture}%
\endgroup%
}}\ar[r,"f"]&
\vcenter{\hbox{\def\svgscale{0.9}
\begingroup%
  \makeatletter%
  \providecommand\color[2][]{%
    \errmessage{(Inkscape) Color is used for the text in Inkscape, but the package 'color.sty' is not loaded}%
    \renewcommand\color[2][]{}%
  }%
  \providecommand\transparent[1]{%
    \errmessage{(Inkscape) Transparency is used (non-zero) for the text in Inkscape, but the package 'transparent.sty' is not loaded}%
    \renewcommand\transparent[1]{}%
  }%
  \providecommand\rotatebox[2]{#2}%
  \ifx\svgwidth\undefined%
    \setlength{\unitlength}{54.55568249bp}%
    \ifx\svgscale\undefined%
      \relax%
    \else%
      \setlength{\unitlength}{\unitlength * \real{\svgscale}}%
    \fi%
  \else%
    \setlength{\unitlength}{\svgwidth}%
  \fi%
  \global\let\svgwidth\undefined%
  \global\let\svgscale\undefined%
  \makeatother%
  \begin{picture}(1,1.20971671)%
    \put(0,0){\includegraphics[width=\unitlength,page=1]{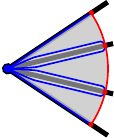}}%
    \put(-0.00554909,0.38489985){\color[rgb]{0,0,0}\makebox(0,0)[lb]{\smash{$\sfv$}}}%
  \end{picture}%
\endgroup%
}}
\end{tikzcd}\right)\in\cM^{\infinitesimalsquare}
\]
In this case, if $\tilde f(\bv_j)=\sfv_{i_j, \ell_j}$, then $\ell_0=\ell_1+\ell_2+\ell_3$ and $i_j=i_0+\ell_1+\cdots+\ell_{j-1}$.

We denote the set of infinitesimal admissible quadrilaterals of degree 2 by $\cM^{\infinitesimalsquare}$. Then
\[
\cM^\sinf_{(2)}=\cM^{\infinitesimalbigonleft}\amalg\cM^{\infinitesimalbigonmiddle}\amalg\cM^{\infinitesimalbigonright}\amalg\cM^{\infinitesimalsquare}.
\]
As before, for each $\sfv_{i,\ell}$, each of sets $\cM^{\infinitesimalbigonleft}(\sfv_{i,\ell})$, $\cM^{\infinitesimalbigonmiddle}(\sfv_{i,\ell})$ and $\cM^{\infinitesimalbigonright}(\sfv_{i,\ell})$ has at most one element, which happens if and only if $\ell>\val(\sfv)$.
\end{enumerate}

\subsection{DGA and canonical peripheral structures}

Recall the signed label $\tilde f$ defined in Definition~\ref{definition:signs_of_vertices}.

\begin{definition}\label{definition:total label}
We define a function $\cP:\cM\to A_\Lambda$ by for $f\in\cM_t$,
\[
\cP(f)\coloneqq \sgn(f)\tilde f(\bv_1\cdots \bv_t),
\]
where $\sgn(f)$ is defined as 
\[
\sgn(f)\coloneqq
\sgn(f,\bv_0\cdots\bv_t)(-1)^{|\tilde f(\bv_1\cdots\bv_{r_1})|-1}\cdots(-1)^{|\tilde f(\bv_1\cdots\bv_{r_k})|-1}.
\]
Here, the indices $r_i$ are the positions of the 2-folding edges or the concave vertices. 
\end{definition}

More precisely, since we are assuming that $\sgn(f,\bv)=1$ for any $f$ with $\tilde f(\bv)\in \tilde\sV_\sL$ by Assumption~\ref{assumption:orientation sign of vertex}, we have
\[
\sgn(f)=
\begin{cases}
\sgn(f,\bv_0\cdots\bv_t) & f\in\cM^{\reg}_{(1)};\\
\sgn(f,\bv_0\cdots\bv_t)(-1)^{|\tilde f(\bv_1\cdots\bv_{r})|-1} & f\in \cM^{\reg}_{(2)}; \\ 
+1 & f\in \cM^{\infinitesimalmonogon}\cup \cM^{\infinitesimalbigonleft}\cup \cM^{\infinitesimalbigonmiddle}\cup \cM^{\infinitesimalbigonright};\\
(-1)^{|\tilde f(\bv_1)|-1} & f\in\cM^{\infinitesimaltriangle};\\
(-1)^{|\tilde f(\bv_2)|} & f\in\cM^{\infinitesimalsquare}.
\end{cases}
\]

Now we equip $\cA_\cL$ with a differential $\partial$ by using admissible disks and the function $\cP$ as follows.

\begin{definition}\label{definition:differential}
Let $\cL=(\Lambda,\fP)\in\cLG_\fR$ and let $\cA_\cL=(A_\Lambda,|\cdot|)$ be the graded algebra defined in Definition~\ref{definition:algebra}. 
A {\em differential $\partial: A_\Lambda\to A_\Lambda$} is defined as follows: For any $\sfg\in \sG_\sL$,
\begin{align}\label{equation:differential another form}
\partial \sfg\coloneqq
\sum_{f\in\cM(\sfg)_{(1)}} \cP(f)
\end{align}
and $\partial$ is linearly extended from $\sG_\sL$ to $A_\Lambda$ via the graded Leibniz rule with $\partial 1=0$. That is,
\begin{align*}
\partial(\sfg\sfg') &\coloneqq (\partial \sfg)\sfg' + (-1)^{|\sfg|}\sfg(\partial \sfg'),& \partial 1 &\coloneqq0.
\end{align*}
\end{definition}

\begin{proposition}\label{prop:boundary of vertex}
For any $\sfv_{i,\ell}\in \sG_\sL$, the following holds:
\begin{equation}\label{eqn:differential_vertex}
\partial \sfv_{i,\ell}=\delta_{\ell,\val(\sfv)}+\sum_{\ell_1+\ell_2=\ell}(-1)^{|\sfv_{i,\ell_1}|-1}\sfv_{i,\ell_1} \sfv_{i+\ell_1,\ell_2}.
\end{equation}
Here $\delta_{k,\ell}$ is the Kronecker delta.
\end{proposition}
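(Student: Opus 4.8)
The plan is to unwind the definition of $\partial$ on the generator $\sfv_{i,\ell}$ and to read off the contributions directly from the classification of admissible disks already established in \S\ref{subsubsection:infinitesimal disks}. By Definition~\ref{definition:differential} we have $\partial\sfv_{i,\ell}=\sum_{f\in\cM(\sfv_{i,\ell})_{(1)}}\cP(f)$, so the whole task reduces to identifying precisely which degree-$1$ admissible disks $f$ satisfy $\tilde f(\bv_0)=\sfv_{i,\ell}$ and then computing $\cP(f)$ for each.

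First I would argue that every such $f$ is infinitesimal rather than regular. Indeed, $\tilde f(\bv_0)=\sfv_{i,\ell}$ forces $f(\bv_0)=\sfv\in\sV_\sL$, so $\bv_0$ is a neutral vertex. If $f$ were regular, then by condition~(4) of Definition~\ref{def:Regular admissible disks} the base vertex $\bv_0$ would have to be the unique positive vertex; but only convex or concave vertices (those mapping to double points) carry a Reeb sign, so a neutral $\bv_0$ cannot be positive. Hence no regular disk contributes, and $\cM(\sfv_{i,\ell})_{(1)}=\cM^{\mathsf{inf}}_{(1)}(\sfv_{i,\ell})=\cM^{\infinitesimalmonogon}(\sfv_{i,\ell})\amalg\cM^{\infinitesimaltriangle}(\sfv_{i,\ell})$, using the degree-$1$ infinitesimal classification.

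Next I would invoke Corollary~\ref{corollary:classification_of_infinitesimal_degree_1}. The monogon part $\cM^{\infinitesimalmonogon}(\sfv_{i,\ell})$ is nonempty exactly when $\ell=\val(\sfv)$, in which case it has a single element with $t=0$; its total label is the empty word and $\sgn(f)=+1$ by Definition~\ref{definition:total label}, so this contributes precisely the term $\delta_{\ell,\val(\sfv)}$. The triangle part $\cM^{\infinitesimaltriangle}(\sfv_{i,\ell})$ is in bijection with the choices $1\le\ell_1<\ell$, each giving a disk with $t=2$, $\tilde f(\bv_1)=\sfv_{i,\ell_1}$ and $\tilde f(\bv_2)=\sfv_{i+\ell_1,\ell_2}$ where $\ell_2=\ell-\ell_1$. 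For these Definition~\ref{definition:total label} gives $\sgn(f)=(-1)^{|\tilde f(\bv_1)|-1}=(-1)^{|\sfv_{i,\ell_1}|-1}$, whence $\cP(f)=(-1)^{|\sfv_{i,\ell_1}|-1}\sfv_{i,\ell_1}\sfv_{i+\ell_1,\ell_2}$. Summing over all admissible $\ell_1$ reproduces $\sum_{\ell_1+\ell_2=\ell}(-1)^{|\sfv_{i,\ell_1}|-1}\sfv_{i,\ell_1}\sfv_{i+\ell_1,\ell_2}$, and adding the monogon term yields the stated formula.

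The genuinely delicate point is not the enumeration but the bookkeeping of signs: one must verify that the global sign $\sgn(f)$ produced by Definition~\ref{definition:total label}, together with Assumption~\ref{assumption:orientation sign of vertex} setting every orientation sign $\sgn(f,\bv)=1$ for vertex labels, combines to exactly $+1$ in the monogon case and $(-1)^{|\sfv_{i,\ell_1}|-1}$ in the triangle case. I expect this sign check, rather than the disk count, to be where the argument demands the most care; the enumeration itself is immediate once the regular disks are excluded and Corollary~\ref{corollary:classification_of_infinitesimal_degree_1} is applied.
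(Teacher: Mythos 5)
Your proposal is correct and follows essentially the same route as the paper: the paper's own proof is a one-line citation of exactly the ingredients you use, namely Corollary~\ref{corollary:classification_of_infinitesimal_degree_1} together with Definitions~\ref{definition:total label} and~\ref{definition:differential}. Your additional observation that regular disks cannot contribute (since condition~(4) of Definition~\ref{def:Regular admissible disks} forces $\bv_0$ to be convex or concave, hence to map to a double point) is left implicit in the paper but is a correct and worthwhile clarification.
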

\begin{proof}
This is a direct consequence of Corollary~\ref{corollary:classification_of_infinitesimal_degree_1} and Definitions~\ref{definition:total label} and \ref{definition:differential}.
\end{proof}

\begin{remark}
In \cite{EN2015}, the authors considered Legendrian links in $\#_k (S^1\times S^2)$ with the tight contact structure and constructed a DGA invariant. The presence of a closed Reeb orbit in each boundary component of a Weinstein handle forces them to consider infinitely many Reeb chords as seen in \S~\ref{sec:geometric model}. Their definition of the {\em internal DGA} in \cite[\S 2.3]{EN2015} on induced generators exactly coincides with (\ref{eqn:differential_vertex}).
\end{remark}

\begin{proposition}\label{proposition:finiteness}
The map $\partial$ is well-defined for $\sfg\in\sC_\sL$, i.e., $\partial \sfg$ has only finitely many terms.
\end{proposition}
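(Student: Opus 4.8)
The plan is to prove finiteness by a positivity-of-area argument, after first reducing to regular disks. Since $\sfc\in\sC_\sL$ is a double point and not a vertex, any $f\in\cM(\sfc)_{(1)}$ satisfies $\tilde f(\bv_0)=\sfc\notin\tilde\sV_\sL$; as every infinitesimal disk has $\tilde f(\bv_0)\in\tilde\sV_\sL$ by Definition~\ref{definition:infinitesimal}, such an $f$ cannot be infinitesimal, so $f\in\cM^\reg(\sfc)_{(1)}$. By the classification of degree $1$ regular disks in \S\ref{subsubsection:infinitesimal disks} (the case $\#(2\text{-folding edges})+\#(\text{concave vertices})=0$ of Proposition~\ref{proposition:degree admissible disk}), $f$ is then an orientation-preserving immersion on $\rPi$ whose edges are all smooth and whose vertices are all convex, i.e.\ a convex immersed polygon with corners at double points of $\sL$.

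First I would set up the action identity. Choosing a primitive $\lambda\coloneqq\tfrac12(x\,dy-y\,dx)$ of the area form $dx\wedge dy$ on $\R^2_{xy}$, and writing $\sfa(\sfc')\coloneqq z(c'_+)-z(c'_-)>0$ for the height of the Reeb chord over a double point $\sfc'$, Stokes' theorem applied to $f$ gives
\[
\area(f)=\int_\Pi f^*(dx\wedge dy)=\oint_{\partial\Pi}f^*\lambda=\sfa(\sfc)-\sum_{r=1}^{t}\sfa(\tilde f(\bv_r)),
\]
since along each smooth boundary edge the integral of $\lambda$ computes the $z$-increment of the corresponding Legendrian lift, and the convex corners at $\bv_0$ (positive) and at $\bv_1,\dots,\bv_t$ (negative) contribute the chord heights with the indicated signs. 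Because $f$ is an orientation-preserving immersion, $\area(f)>0$; hence, setting $\sfa_{\min}\coloneqq\min_{\sfc'\in\sC_\sL}\sfa(\sfc')>0$ (a positive minimum, as $\sC_\sL$ is finite), both the number of negative corners and the total area are bounded:
\[
t\le \sfa(\sfc)/\sfa_{\min},\qquad 0<\area(f)\le \sfa(\sfc).
\]

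Finally I would deduce finiteness from these bounds. The complement $\R^2\setminus\sL$ has finitely many regions, of which only the bounded ones can lie in the compact image $f(\Pi)$; let $A_{\min}>0$ be the smallest area among these bounded regions. Assigning to each bounded region $R$ the covering multiplicity $m_f(R)\in\Z_{\ge0}$ of $f$ over $R$, the area identity reads $\sum_R m_f(R)\,\area(R)=\area(f)\le\sfa(\sfc)$, so each $m_f(R)\le \sfa(\sfc)/A_{\min}$ and the multiplicity vector $m_f$ ranges over a finite set. A convex immersed polygon with boundary on $\sL$ is determined, up to the equivalence of Definition~\ref{definition:equivalence of admissible disks}, by its multiplicity vector together with its cyclically ordered tuple of corner double points, and for a fixed $m_f$ there are only finitely many admissible corner tuples; hence $\cM(\sfc)_{(1)}$ is finite and $\partial\sfc$ has finitely many terms.

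The hard part will be this last reduction: making precise that the equivalence class of $f$ is pinned down by the finite data $(m_f,\text{corners})$. I expect to argue this by reconstructing $f$ face by face, using that the multiplicities on two adjacent regions differ by exactly $1$ across the branch of $\sL$ that the boundary traverses, so that $m_f$ recovers the boundary word and the immersion up to the allowed isotopy of Definition~\ref{definition:equivalence of admissible disks}. This is precisely where the orientation-preserving and convexity hypotheses are essential, since they rule out the folding and backtracking that would otherwise allow infinitely many combinatorial types for a fixed area.
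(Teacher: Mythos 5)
Your reduction to regular disks and your Stokes/action identity are fine — they are the same starting point as the paper's proof — but the step immediately after contains a genuine gap that breaks the argument. You claim that a degree-$1$ regular admissible disk is ``a convex immersed polygon with corners at double points of $\sL$''. That is false for Legendrian graphs: degree $1$ only excludes $2$-folding edges and \emph{concave} vertices (Proposition~\ref{proposition:degree admissible disk}(1)); \emph{neutral} vertices, i.e.\ corners $\bv_r$ with $f(\bv_r)\in\sV_\sL$ and canonical label $\tilde f(\bv_r)=\sfw_{i,\ell}\in\tilde\sV_\sL$ (Definition~\ref{def:Canonical label}), are allowed by Definition~\ref{def:Regular admissible disks} and do occur — they are exactly why $\partial\sfc$ can contain vertex generators at all (e.g.\ $\partial\sfa=\sfv_{3,1}\sfw_{1,1}$ in the Legendrian theta-graph example). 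Such corners have action $z(\sfw_{i,\ell})=0$, so your key estimate $t\le \sfa(\sfc)/\sfa_{\min}$ (with $\sfa_{\min}$ a minimum taken over \emph{double points} only) does not follow from the area identity: a priori a disk could have arbitrarily many zero-action corners at graph vertices, with labels $\sfw_{i,\ell}$ of arbitrarily large rotation $\ell$, without your inequality noticing.

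Closing this gap is precisely the content of the paper's proof, and it requires one further geometric input that your proposal lacks: if a corner of $f$ is neutral with label $\sfw_{i,\ell}$, then by the definition of the canonical label the disk covers, near $\sfw$, the $\ell$ sectors corresponding to $\sfw_{i,1},\dots,\sfw_{i+\ell-1,1}$, hence meets bounded components $\sD_{j_1},\dots,\sD_{j_\ell}$ of $\R^2\setminus\sL$ (the image of a compact disk with boundary on $\sL$ cannot meet the unbounded component), and since $\int_{\Pi_t}f^*(dx\wedge dy)$ counts covered regions with multiplicity, each neutral corner contributes at least $\sum_{k=1}^{\ell}\area(\sD_{j_k})\ge \ell\cdot\min_{j}\area(\sD_j)>0$ to the right-hand side of the action identity even though its action vanishes. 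With that lemma, every corner — at a crossing or at a vertex — contributes a uniform positive amount determined by the configuration of $\sL$, so both $t$ and the rotations $\ell$ are bounded in terms of $z(\sfc)$, and finiteness follows. Your remaining steps (bounded multiplicity vector, reconstruction of the disk from finite combinatorial data) would then go through essentially unchanged, provided the corner data also records the vertex labels $\sfw_{i,\ell}$.
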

\begin{proof}
Let us start by defining the {\em action} $z(\sfg)\in\RR_{\ge0}$ for each $\sfg\in\sG_\sL$ as follows:
\begin{enumerate}
\item If $\sfg=\sfc\in\sC_\sL$, then
\[
z(\sfc)\coloneqq |z(c_+)-z(c_-)|>0,
\]
where $c_+$ and $c_-$ are the two lifts of $\sfc$ to $\Lambda$.
\item If $\sfg=\sfv_{i,\ell}\in\tilde\sV_\sL$, then
\[
z(\sfv_{i,\ell})\coloneqq0.
\]
\end{enumerate}
Since $\sfg\in\sC_\sL$, an admissible disk $f\in \cM_t(\sfg)_{(1)}$ is regular.
Then by Stokes' theorem we have
\begin{align}\label{eqn:action and area}
z(\sfg)= \sum_{r=1}^t z\left(\tilde f(\bv_r)\right)+\int_{\Pi_t}f^*(dx\wedge dy).
\end{align}
Let $\{\sD_j\}_{i\in J}$ be a set of bounded connected components of $\RR^2\setminus \sL$.
When $\Pi_t \cap f^{-1}(\sD_j)$ is nonempty, $\int_{\Pi_t \cap f^{-1}(\sD_j)}f^*(dx\wedge dy)$ is positive and in particular greater or equal than $\area(\sD_j)$. Note that
\[
\int_{\Pi_t}f^*(dx\wedge dy)=\sum_{j\in J}\int_{\Pi_t \cap f^{-1}(\sD_j)}f^*(dx\wedge dy).
\]

Suppose that $\tilde f(\bv_r)=\sfc$ for some $r\neq0$ and generator $\sfc\in \sC_\sL$. There exist two regions $\sD_j$ and $\sD_j'$ containing a negative corner of $f(\bv_r)$.
One of them, say $\sD_j$, satisfies $\Pi_t \cap f^{-1}(\sD_j)\neq \emptyset$.
Recall that the action of a generator $\sC_\sL$ is positive. So $\tilde f(\bv_r)$ contributes at least two positive terms $z(\tilde f(\bv_r))+\area(\sD_j)$ in the right hand side of (\ref{eqn:action and area}).

On the other hand, assume that $\tilde f(\bv_r)=\sfw_{i,\ell}$ for some generator in $\sV_\sL$.
Then there exist $\sD_{j_1},\dots,\sD_{j_\ell}$ such that $\sD_{j_k}$ has a corner which corresponds to $\sfw_{i+k-1,1}$, for $k=1,\dots,\ell$.
Definition~\ref{def:Canonical label} implies that $\Pi_t \cap f^{-1}(\sD_{j_k})\neq \emptyset$, for $k=1,\dots,\ell$. Thus $\tilde f(\bv_r)$ contributes at least $\ell$ positive terms $\sum_{k=1}^{\ell}\area(\sD_{j_k})$ in the right hand side of (\ref{eqn:action and area}).

Note that each term of $z(\tilde f(\bv_r))+\min\{\area(\sD_j),\area(\sD_j') \}$ and $\sum_{k=1}^{\ell}\area(\sD_{j_k})$ are determined by the configuration of $\sL$.
Thus $\cM(\sfg)$ is finite for a fixed $\sfg\in \sC_\sL$ because of the constraint (\ref{eqn:action and area}). This proves the proposition.
\end{proof}

\begin{theorem}\label{thm:differential}
The map $\partial$ is a differential. That is, $|\partial|=-1_\fR$ and $\partial^2=0$.
\end{theorem}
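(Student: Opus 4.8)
The first claim $|\partial| = -1_\fR$ is immediate from Proposition~\ref{proposition:degree formula}. Each term of $\partial\sfg$ has the form $\cP(f) = \sgn(f)\,\tilde f(\bv_1\cdots\bv_t)$ for some $f\in\cM(\sfg)_{(1)}$, and since $|f|_\Z = 1$ we have $|f| = 1_\fR$. Proposition~\ref{proposition:degree formula} then gives $\sum_{r=1}^t|\tilde f(\bv_r)|_\fP = |\tilde f(\bv_0)|_\fP - 1_\fR = |\sfg|_\fP - 1_\fR$, so the output word has degree $|\sfg|_\fP - 1_\fR$ and $\partial$ lowers $|\cdot|_\fP$ by $1_\fR$.

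For $\partial^2 = 0$, the Leibniz rule reduces the claim to verifying $\partial^2\sfg = 0$ for each generator $\sfg\in\sG_\sL$. Expanding $\partial^2\sfg = \sum_{f_1\in\cM(\sfg)_{(1)}}\partial(\cP(f_1))$ and applying $\partial$ letterwise to the word $\tilde f_1(\bv_1\cdots\bv_t)$ through a second degree-1 disk $f_2$, every resulting monomial is indexed by a \emph{composable pair} $(f_1,f_2)$, in which the positive corner of $f_2$ is substituted into one of the negative corners of $f_1$. The plan is to view each such pair as a two-level degeneration of a single degree-2 admissible disk and then invoke the classification of $\cM_{(2)}$ from \S\ref{subsubsection:infinitesimal disks} and the preceding subsection.

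Concretely, I would construct a fixed-point-free involution on the set of composable pairs that preserves the underlying word while reversing the total sign, so that all terms cancel. The involution swaps the two ends of the defining one-parameter family of each degree-2 disk (or the two resolutions of its distinguished rigid feature): the two ends yield two distinct composable pairs representing the same word with opposite sign. These pairs organize along the classification. Pairs of regular disks correspond to the two degenerations of a regular degree-2 disk, namely the folding-edge family $\cM^\positivefolding$ and the concave-vertex disks $\cM^\concavevertex$; pairs supported near a single vertex correspond to the infinitesimal families $\cM^{\infinitesimalbigonleft}\amalg\cM^{\infinitesimalbigonmiddle}\amalg\cM^{\infinitesimalbigonright}$ and the squares $\cM^{\infinitesimalsquare}$; and the mixed pairs, in which a regular disk produces a neutral corner $\sfv_{i,\ell}$ that is subsequently split by an infinitesimal disk, account for the remaining ends. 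In particular, for a vertex generator the identity $\partial^2\sfv_{i,\ell} = 0$ reduces via Proposition~\ref{prop:boundary of vertex} to the purely algebraic relation $\partial_m^2 = 0$ in the internal DGA $\cI_m$ with $m = \val(\sfv)$, which follows by reassociating the double sum defining $\partial_m$.

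The hard part will be the sign bookkeeping: one must check that the two composable pairs exchanged by the involution carry genuinely opposite signs once the contributions of $\sgn(f,\bv_0\cdots\bv_t)$, the Koszul signs produced by the Leibniz rule, and the correction factors $(-1)^{|\cdot|-1}$ attached to 2-folding edges and concave vertices in Definition~\ref{definition:total label} are all combined. Verifying this case by case — in particular for the delicate mixed regular/infinitesimal degenerations near a vertex — is exactly the explicit manipulation of admissible pairs carried out in Appendix~\ref{section:manipulation of disks}, to which the remaining computation is deferred.
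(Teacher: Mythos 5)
Your proposal is correct and follows essentially the same route as the paper: the degree claim via Proposition~\ref{proposition:degree formula}, and $\partial^2=0$ by writing $\partial^2\sfg$ as a sum over admissible (composable) pairs and cancelling these in twos as the two ends of chains of degenerations of degree-2 disks organized by the classification of $\cM_{(2)}$ — precisely the moduli-graph argument carried out in Appendix~\ref{section:manipulation of disks}. Your only deviation — handling vertex generators algebraically via Proposition~\ref{prop:boundary of vertex} and the relation $\partial_m^2=0$ in $\cI_m$, rather than geometrically through the infinitesimal bigons, squares, and the rolling move — is a valid and harmless shortcut.
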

It is obvious to see that $|\partial|=-1_\fR$ by Proposition~\ref{proposition:degree formula}, and we will prove that $\partial^2=0$ later in Appendix~\ref{section:manipulation of disks}.

\begin{definition}[Differential graded algebra]\label{definition:DGA}
For $\cL=(\Lambda,\fP)\in\cLG_\fR$, the {\em differential graded algebra (DGA) $\cA_\cL$} for $\cL$ is the triple
\[
\cA_\cL\coloneqq(A_\Lambda,|\cdot|,\partial),
\]
where
\begin{enumerate}
\item $A_\Lambda$ is the free associative unital algebra over $\ZZ$ generated by $\sG_\sL$
\[
A_\Lambda\coloneqq\ZZ\langle \sG_\sL\rangle,
\]
\item $|\cdot|$ is a degree function on $A_\Lambda$ defined by the potential $\fP$ as in \S~\ref{section:grading_singularpoint}
\[
|\cdot|\coloneqq|\cdot|_\fP:A_\Lambda\to\fR,
\]
\item $\partial$ is the differential on $A_\Lambda$ defined in Definition~\ref{definition:differential}
\[
\partial:A_\Lambda\to A_\Lambda.
\]
\end{enumerate}
\end{definition}

\begin{definition}[Canonical peripheral structures]\label{definition:Canonical peripheral structures}
For each vertex $v\in \cV_\cL$, we consider a DGA 
\[
\cI_v\coloneqq(I_\sfv\coloneqq I_{\val(\sfv)},|\cdot|_\sfv,\partial_\sfv\coloneqq \partial_{\val(\sfv)})
\]
which is isomorphic to the DG-subalgebra of $\cA_\cL$ generated by $\sfv_{i,\ell}$.
Then a {\em canonical peripheral structure} of $\cA_\cL$ at $\sfv$ is a canonical injective DGA morphism
\[
\bp_v:\cI_v\to\cA.
\]

The collection $\cP_\cL$ of {\em canonical peripheral structures} is denoted by
\[
\cP_\cL\coloneqq \{\bp_\emptyset\}\cup \{\bp_v\mid v\in \cV_\Lambda\}.
\]
\end{definition}

\begin{proof}[Proof of Theorem~{\rm\ref{theorem:DGA}}]
Definition~\ref{definition:DGA} and Definition~\ref{definition:Canonical peripheral structures} provide a DGA and a collection of canonical peripheral structures for $\cL=(\Lambda,\fP)$.
\end{proof}

Before closing this section, we consider the Legendrian mirror $\mu$ and the induced map on DGAs.
\begin{lemma}\label{lemma:Legendrian mirror anti-isomorphism}
Let $\mu(\cL)=(\mu(\Lambda),\mu(\fP))$ be the Legendrian mirror of a Legendrian graph with potential $\cL$ such that $\mu(\fP)$ is given by
\[
\mu(\fP)(\mu(\sfh))\coloneqq \fP(\sfh)
\]
for any half-edge $\sfh$ of $\sL=\pi_L(\Lambda)$.
Then there exists an anti-isomorphism 
\[
\mu_*:\cA_{\cL}\to\cA_{\mu(\cL)}
\]
between DGAs.
\end{lemma}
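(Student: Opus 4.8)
The plan is to realize $\mu_*$ as the algebra anti-homomorphism induced by the geometric mirror, and to reduce the verification of the DGA axioms to the disk correspondence already recorded in Remark~\ref{remark:Legendrian mirror on disk}. First I would set up the correspondence on generators. The involution $\mu$ carries $\sL$ to $\mu(\sL)$ and hence identifies the special points of the two projections; under this identification a double point $\sfc$ goes to a double point $\mu(\sfc)$ with its upper and lower edges interchanged (since $\mu$ reverses the $z$-coordinate), while a vertex generator $\sfv_{i,\ell}$ goes to a generator of $\mu(\sfv)$ with the cyclic labelling of half-edges reversed, because $\mu$ reverses the co-orientation $\partial_z$ and hence the cyclic order on $\cH_\sfv$. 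Concretely this is the index rule $\sfv_{i,\ell}\mapsto \mu(\sfv)_{-i-\ell,\ell}$ already appearing in Remark~\ref{remark:anti-isomorphism and stabilizations}. I would then define $\mu_*\colon A_\Lambda\to A_{\mu(\Lambda)}$ by declaring $\mu_*(\sfg)=\mu(\sfg)$ on generators and extending it as a graded anti-homomorphism, so that $\mu_*(\sfg_1\cdots\sfg_r)=\mu_*(\sfg_r)\cdots\mu_*(\sfg_1)$ up to the Koszul sign dictated by $|\cdot|$. Since $\mu$ is an involution, $\mu_*$ is automatically a bijection, with inverse the anti-homomorphism attached to $\mu(\cL)$.

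Second, I would check that $\mu_*$ preserves the grading, i.e.\ $|\mu_*(\sfg)|_{\mu(\fP)}=|\sfg|_\fP$ for every $\sfg\in\sG_\sL$. For vertex generators this follows from $(\ref{eq:Gr2_1})$--$(\ref{eq:Gr3})$ together with the defining relation $\mu(\fP)(\mu(\sfh))=\fP(\sfh)$ and the reversal of the cyclic index. For double points one uses $(\ref{eq:Gr1})$, keeping track of the interchange of the upper and lower half-edges and of the behaviour of the winding number $n_\sfc$ under the orientation-reversing reflection induced by $\mu$ on $\gr(1,\R^2)$. The cleanest consistency check here is that the mirror preserves the $\Z$-degree of every admissible disk: by Proposition~\ref{proposition:degree admissible disk} this degree counts $2$-folding edges and concave vertices, both of which are preserved by $\mu$, so grading preservation on generators is exactly what is forced by the degree formula of Proposition~\ref{proposition:degree formula}.

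Third, and this is the heart of the matter, I would verify that $\mu_*$ intertwines the differentials. Fix $\sfg\in\sG_\sL$. By Remark~\ref{remark:Legendrian mirror on disk} the assignment $f\mapsto \mu(f)=\mu\circ f\circ\tau$, where $\tau$ is the orientation-reversing involution $\bv_i\mapsto\bv_{t+1-i}$ of $\Pi_t$, is a degree-preserving bijection from $\cM(\sfg)_{(1)}$ onto $\cM(\mu(\sfg))_{(1)}$, and it satisfies $\tilde{\mu(f)}(\bv_i)=\mu(\tilde f(\bv_{t+1-i}))$. Reading off the output word, this gives $\tilde{\mu(f)}(\bv_1\cdots\bv_t)=\mu(\tilde f(\bv_t))\cdots\mu(\tilde f(\bv_1))$, which is precisely $\mu_*$ applied to $\tilde f(\bv_1\cdots\bv_t)$; the reversal of the vertex order induced by $\tau$ is exactly what makes $\mu_*$ an \emph{anti}-homomorphism rather than a homomorphism. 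Summing over $f$ then yields $\mu_*(\partial\sfg)=\partial(\mu_*\sfg)$, provided the coefficients agree, and this suffices since both sides are extended from generators by (anti-)Leibniz.

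The main obstacle is precisely the bookkeeping of these coefficients, that is, showing $\mu_*(\cP(f))=\cP(\mu(f))$ for each disk. Here one must compare $\sgn(f)$ of Definition~\ref{definition:total label} with $\sgn(\mu(f))$, accounting simultaneously for (i) the Koszul sign produced by reversing the word through a graded anti-homomorphism, (ii) the fact that the convex/concave/neutral type is preserved while Reeb signs flip with the over/under swap, and (iii) the correction factors $(-1)^{|\tilde f(\bv_1\cdots\bv_{r})|-1}$ attached to $2$-folding edges and concave vertices, whose positions are permuted by $\tau$. I would organize this by treating the strata $\cM^{\reg}_{(1)}$, $\cM^{\reg}_{(2)}$ and the infinitesimal families separately, using the sign tables of Figure~\ref{figure:convex and concave vertices}; the grading preservation established in the second step guarantees that every Koszul sign is computed with the correct degrees, so the stratum-by-stratum comparison closes the argument.
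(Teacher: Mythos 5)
Your proposal is correct and follows essentially the same route as the paper's proof: define $\mu_*$ on generators via the canonical identification $\sG_{\mu(\sL)}\simeq\mu(\sG_\sL)$ and extend as a graded anti-homomorphism, observe that grading preservation is immediate from the defining relation $\mu(\fP)(\mu(\sfh))=\fP(\sfh)$, and then intertwine the differentials using the disk bijection $f\mapsto\mu(f)=\mu\circ f\circ\tau$ and the label equation of Remark~\ref{remark:Legendrian mirror on disk}, which gives $\cP(\mu(f))=\mu_*(\cP(f))$ and hence $\partial_\mu\circ\mu_*=\mu_*\circ\partial$ by the defining formula (\ref{equation:differential another form}). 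The only difference is one of detail: the paper asserts the identity $\cP(\mu(f))=\mu_*(\cP(f))$ directly from the label equation, whereas you explicitly flag and sketch the sign bookkeeping (Koszul signs, Reeb-sign flips, orientation-sign factors) that this assertion implicitly contains.
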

\begin{proof}
Let $\cA_{\mu(\cL)}\coloneqq(A_{\mu(\Lambda)},|\cdot|_\mu,\partial_\mu)$. Then $A_{\mu(\Lambda)}$ is the free associative unital algebra generated by $\sG_{\mu(\sL)}$ which is identified with $\mu(\sG_\sL)$ in the canonical way.
Moreover, by the definition of $\mu(\fP)$, it is obvious that for any $\sfg\in\sG_\sL$,
\[
|\mu(\sfg)|_\mu=|\sfg|.
\]
Therefore the map $\mu_*$ defined by
\begin{align*}
\mu_*(\sfg)&=\mu(\sfg),&
\mu_*(\sfg\sfg')&=\mu(\sfg')\mu(\sfg)
\end{align*}
becomes an anti-isomorphism between graded algebras.

Finally, the equation (\ref{equation:Legendrian mirror on labels}) in Remark~\ref{remark:Legendrian mirror on disk} implies that for any admissible disk $f$,
\[
\cP(\mu(f))=\mu_*(\cP(f))
\]
and therefore by the equation (\ref{equation:differential another form}), we have
\[
\partial_\mu\circ\mu_* = \mu_*\circ\partial.\qedhere
\]
\end{proof}

\subsection{Examples}
In this section, we compute several examples of DGAs.

\subsubsection{Legendrian Theta graphs}
Let $\Theta$ be a Legendrian graph which consists of two vertices $\cV_\Theta=\{v,w\}$ and three edges $\cE_\Theta=\{e_1, e_2, e_3\}$ joining $v$ and $w$, and has the Lagrangian projection $\sTheta=\pi_L(\Theta)$ as follows:
\begin{align*}
\sTheta&=\vcenter{\hbox{
\begingroup%
  \makeatletter%
  \providecommand\color[2][]{%
    \errmessage{(Inkscape) Color is used for the text in Inkscape, but the package 'color.sty' is not loaded}%
    \renewcommand\color[2][]{}%
  }%
  \providecommand\transparent[1]{%
    \errmessage{(Inkscape) Transparency is used (non-zero) for the text in Inkscape, but the package 'transparent.sty' is not loaded}%
    \renewcommand\transparent[1]{}%
  }%
  \providecommand\rotatebox[2]{#2}%
  \ifx\svgwidth\undefined%
    \setlength{\unitlength}{133.87495375bp}%
    \ifx\svgscale\undefined%
      \relax%
    \else%
      \setlength{\unitlength}{\unitlength * \real{\svgscale}}%
    \fi%
  \else%
    \setlength{\unitlength}{\svgwidth}%
  \fi%
  \global\let\svgwidth\undefined%
  \global\let\svgscale\undefined%
  \makeatother%
  \begin{picture}(1,0.36551885)%
    \put(0,0){\includegraphics[width=\unitlength,page=1]{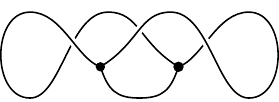}}%
    \put(0.31175915,0.07152867){\color[rgb]{0,0,0}\makebox(0,0)[lb]{\smash{$\sfv$}}}%
    \put(0.6625651,0.07152867){\color[rgb]{0,0,0}\makebox(0,0)[lb]{\smash{$\sfw$}}}%
    \put(0.48018445,0.31148281){\color[rgb]{0,0,0}\makebox(0,0)[lb]{\smash{$\sfa$}}}%
    \put(0.21688665,0.27811115){\color[rgb]{0,0,0}\makebox(0,0)[lb]{\smash{$\sfb_1$}}}%
    \put(0.7070829,0.27811115){\color[rgb]{0,0,0}\makebox(0,0)[lb]{\smash{$\sfb_2$}}}%
    \put(0.47148857,0.03501402){\color[rgb]{0,0,0}\makebox(0,0)[lb]{\smash{$\sfe_1$}}}%
    \put(0.07341501,0.34252631){\color[rgb]{0,0,0}\makebox(0,0)[lb]{\smash{$\sfe_2$}}}%
    \put(0.84565314,0.34252631){\color[rgb]{0,0,0}\makebox(0,0)[lb]{\smash{$\sfe_3$}}}%
  \end{picture}%
\endgroup%
}}&
\sH_\sTheta&=\vcenter{\hbox{
\begingroup%
  \makeatletter%
  \providecommand\color[2][]{%
    \errmessage{(Inkscape) Color is used for the text in Inkscape, but the package 'color.sty' is not loaded}%
    \renewcommand\color[2][]{}%
  }%
  \providecommand\transparent[1]{%
    \errmessage{(Inkscape) Transparency is used (non-zero) for the text in Inkscape, but the package 'transparent.sty' is not loaded}%
    \renewcommand\transparent[1]{}%
  }%
  \providecommand\rotatebox[2]{#2}%
  \ifx\svgwidth\undefined%
    \setlength{\unitlength}{122.92041927bp}%
    \ifx\svgscale\undefined%
      \relax%
    \else%
      \setlength{\unitlength}{\unitlength * \real{\svgscale}}%
    \fi%
  \else%
    \setlength{\unitlength}{\svgwidth}%
  \fi%
  \global\let\svgwidth\undefined%
  \global\let\svgscale\undefined%
  \makeatother%
  \begin{picture}(1,0.26548239)%
    \put(0,0){\includegraphics[width=\unitlength,page=1]{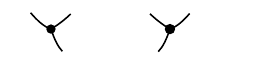}}%
    \put(0.14658603,0.09260536){\color[rgb]{0,0,0}\makebox(0,0)[lb]{\smash{$\sfv$}}}%
    \put(0.68813479,0.09260536){\color[rgb]{0,0,0}\makebox(0,0)[lb]{\smash{$\sfw$}}}%
    \put(0,0){\includegraphics[width=\unitlength,page=2]{Theta_halfedges.pdf}}%
    \put(-0.00246285,0.23829009){\color[rgb]{0,0,0}\makebox(0,0)[lb]{\smash{$\sfh_{\sfv,2}$}}}%
    \put(0.27374669,0.23504716){\color[rgb]{0,0,0}\makebox(0,0)[lb]{\smash{$\sfh_{\sfv,3}$}}}%
    \put(0.22983211,0.00622933){\color[rgb]{0,0,0}\makebox(0,0)[lb]{\smash{$\sfh_{\sfv,1}$}}}%
    \put(0.52028486,0.00622862){\color[rgb]{0,0,0}\makebox(0,0)[lb]{\smash{$\sfh_{\sfw,1}$}}}%
    \put(0.73369456,0.23504716){\color[rgb]{0,0,0}\makebox(0,0)[lb]{\smash{$\sfh_{\sfw,3}$}}}%
    \put(0.45865032,0.24044078){\color[rgb]{0,0,0}\makebox(0,0)[lb]{\smash{$\sfh_{\sfw,2}$}}}%
  \end{picture}%
\endgroup%
}}
\end{align*}

We define a potential $\fP:\sH_\sTheta\to\ZZ$ as follows:
\begin{align*}
\fP(\sfh_{\sfv,1})&=\fx,&\fP(\sfh_{\sfv,2})&=\fy,&\fP(\sfh_{\sfv,3})&=\fz,\\
\fP(\sfh_{\sfw,1})&=\fx,&\fP(\sfh_{\sfw,2})&=\fy-1,&\fP(\sfh_{\sfw,3})&=\fz-1.
\end{align*}

The DGA $\cA_{(\Theta, \fP)}$ is generated by the set
\[
\sG_{\sTheta}=\{\sfv_{i,\ell}, \sfw_{i,\ell}\mid i\in\Zmod{3}, \ell\ge1\}\amalg\{\sfa, \sfb_1, \sfb_2\},
\]
whose gradings are given by
\begin{align*}
|\sfv_{i,1}|&=|\sfw_{i,1}|=\begin{cases}
\fx-\fy& i=1;\\
\fy-\fz& i=2;\\
\fz-\fx-1& i=3,
\end{cases}&
|\sfa|&=\fz-\fy,&
|\sfb_1|&=|\sfb_2|=1,
\end{align*}
and the differentials for generators $\sfa, \sfb_1$ and $\sfb_2$ are as follows:
\begin{align*}
\partial \sfa &= \sfv_{3,1}\sfw_{1,1},\\
\partial \sfb_1 &= 1+(-1)^{|\sfb_1|-1}(\sfv_{2,1}\sfa+\sfv_{2,2}\sfw_{1,1})\\
&=1+\sfv_{2,1}\sfa+\sfv_{2,2}\sfw_{1,1},\\
\partial \sfb_2 &= 1+(-1)^{|\sfb_2|-1}((-1)^{|\sfa|-1}\sfa\sfw_{2,1}+\sfv_{3,1}\sfw_{1,2})\\
&=1+(-1)^{|\sfa|-1}\sfa\sfw_{2,1}+\sfv_{3,1}\sfw_{1,2}.
\end{align*}

Then it is not hard to check that $\partial^2=0$. For example,
\begin{align*}
\partial^2 \sfb_1 &= 0+(-1)^{|\sfv_{2,1}|}\sfv_{2,1}\partial(\sfa) + \partial(\sfv_{2,2})\sfw_{1,1}\\
&=(-1)^{|\sfv_{2,1}|}\sfv_{2,1}\sfv_{3,1}\sfw_{1,1}+(-1)^{|\sfv_{2,1}|-1}\sfv_{2,1}\sfv_{3,1}\sfw_{1,1}=0,\\
\partial^2 \sfb_2 &= 0+ (-1)^{|\sfa|-1}\partial(\sfa)\sfw_{2,1}+(-1)^{|\sfv_{3,1}|}\sfv_{3,1}\partial(\sfw_{1,2})\\
&=(-1)^{\fz-\fy-1}\sfv_{3,1}\sfw_{1,1}\sfw_{2,1}+(-1)^{\fz-\fx-1}\sfv_{3,1}(-1)^{|\sfw_{1,1}|-1}\sfw_{1,1}\sfw_{2,1}=0.
\end{align*}

\subsubsection{4-valent graphs and Legendrian singular links}
Let $\Lambda$ be a 4-valent Legendrian graph.
Then it can be naturally regarded as a Legendrian singular link by making two pairs of opposite half-edges at each vertex smooth.
Furthermore, one can equip orientations on edges which are consistent at every vertex, and so can choose the half-edge at each vertex which corresponds to $+x$-axis in the standard model as observed in \cite[Lemma~3.2]{ABK2018}.
This is well-defined and so we label half-edges with indices $\{1,2,3,4\}$ clockwise starting from the chosen half-edge.
See Figure~\ref{figure:local model 4-valent vertex}.

\begin{figure}[ht]
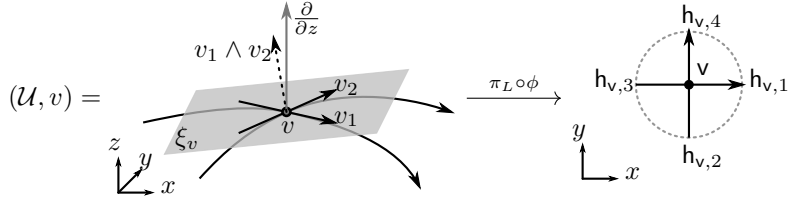

\[
\begin{tikzcd}[column sep=3pc]
(\cU, v)=\vcenter{\hbox{\input{localmodel_input.tex}}}\ar[r,"\pi_L\circ\phi"]&
\vcenter{\hbox{\input{vertex_lag_singular_oriented_input.tex}}}
\end{tikzcd}
\]
\caption{A local picture of a 4-valent vertex $v$ and the projection of the standard model}
\label{figure:local model 4-valent vertex}
\end{figure}

One of the direct consequence is as follows: let $\Lambda$ and $\Lambda'$ be two equivalent oriented Legendrian singular links via the isotopy $\phi_t$ and let $\bp_v^\circlearrowright:\cI_v\to\cA_\Lambda$ and $\bp_{v'}^\circlearrowright:\cI_{v'}\to\cA_{\Lambda'}$ be two peripheral structures for $v\in \cV_\Lambda$ and $v'\in\cV_{\Lambda'}$ with $\phi_1(v)=v'$.
Then we have the following commutative diagram of DGAs.
\[
\begin{tikzcd}
\cA_\Lambda \ar[r,"\phi_*","\stabletameisom"'] & \cA_{\Lambda'}\\
\cI_4\ar[u,"\bp_v^\circlearrowright"]\ar[r,"\phi_*","\simeq"']&\cI_4'\ar[u,"\bp_{v'}^\circlearrowright"']
\end{tikzcd}
\]
Here we abuse the notation of the above map, which is in fact a sequence of (zig-zags of) generalized stable tame isomorphisms.
However, the bottom map is always a DGA isomorphism which maps $\sfv_{i,\ell}$ to $\sfv'_{i,\ell}$. That is, $\phi$ preserves both indices $i$ and $\ell$.

Let us see a concrete example.
The {\em oriented Legendrian pinched figure-eight knot} and its reverse, denoted by $4_1^\times$ and $-4_1^\times$, are Legendrian graphs, each of which consists of one 4-valent vertex with two edges as depicted below.
\begin{align*}
4_1^\times&=\vcenter{\hbox{
\begingroup%
  \makeatletter%
  \providecommand\color[2][]{%
    \errmessage{(Inkscape) Color is used for the text in Inkscape, but the package 'color.sty' is not loaded}%
    \renewcommand\color[2][]{}%
  }%
  \providecommand\transparent[1]{%
    \errmessage{(Inkscape) Transparency is used (non-zero) for the text in Inkscape, but the package 'transparent.sty' is not loaded}%
    \renewcommand\transparent[1]{}%
  }%
  \providecommand\rotatebox[2]{#2}%
  \ifx\svgwidth\undefined%
    \setlength{\unitlength}{160.62711321bp}%
    \ifx\svgscale\undefined%
      \relax%
    \else%
      \setlength{\unitlength}{\unitlength * \real{\svgscale}}%
    \fi%
  \else%
    \setlength{\unitlength}{\svgwidth}%
  \fi%
  \global\let\svgwidth\undefined%
  \global\let\svgscale\undefined%
  \makeatother%
  \begin{picture}(1,0.3899927)%
    \put(0,0){\includegraphics[width=\unitlength,page=1]{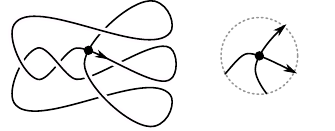}}%
    \put(0.25918759,0.32643636){\color[rgb]{0,0,0}\makebox(0,0)[lb]{\smash{$\sfb_1$}}}%
    \put(0.25918759,0.03848075){\color[rgb]{0,0,0}\makebox(0,0)[lb]{\smash{$\sfb_2$}}}%
    \put(0.33360212,0.22304674){\color[rgb]{0,0,0}\makebox(0,0)[lb]{\smash{$\sfa$}}}%
    \put(-0.0018847,0.1905288){\color[rgb]{0,0,0}\makebox(0,0)[lb]{\smash{$\sfc_1$}}}%
    \put(0.14766792,0.24222361){\color[rgb]{0,0,0}\makebox(0,0)[lb]{\smash{$\sfc_2$}}}%
    \put(0.23313093,0.11101978){\color[rgb]{0,0,0}\makebox(0,0)[lb]{\smash{$\sfd$}}}%
    \put(0.230422,0.26098417){\color[rgb]{0,0,0}\makebox(0,0)[lb]{\smash{$\sfv$}}}%
    \put(0.74122808,0.244499){\color[rgb]{0,0,0}\makebox(0,0)[lb]{\smash{$\sfv$}}}%
    \put(0.84291917,0.32913075){\color[rgb]{0,0,0}\makebox(0,0)[lb]{\smash{$\sfh_4$}}}%
    \put(0.88949562,0.13928692){\color[rgb]{0,0,0}\makebox(0,0)[lb]{\smash{$\sfh_1$}}}%
    \put(0.78337209,0.06155782){\color[rgb]{0,0,0}\makebox(0,0)[lb]{\smash{$\sfh_2$}}}%
    \put(0.60826813,0.13751776){\color[rgb]{0,0,0}\makebox(0,0)[lb]{\smash{$\sfh_3$}}}%
  \end{picture}%
\endgroup%
}}&
-4_1^\times&=\vcenter{\hbox{
\begingroup%
  \makeatletter%
  \providecommand\color[2][]{%
    \errmessage{(Inkscape) Color is used for the text in Inkscape, but the package 'color.sty' is not loaded}%
    \renewcommand\color[2][]{}%
  }%
  \providecommand\transparent[1]{%
    \errmessage{(Inkscape) Transparency is used (non-zero) for the text in Inkscape, but the package 'transparent.sty' is not loaded}%
    \renewcommand\transparent[1]{}%
  }%
  \providecommand\rotatebox[2]{#2}%
  \ifx\svgwidth\undefined%
    \setlength{\unitlength}{161.22314314bp}%
    \ifx\svgscale\undefined%
      \relax%
    \else%
      \setlength{\unitlength}{\unitlength * \real{\svgscale}}%
    \fi%
  \else%
    \setlength{\unitlength}{\svgwidth}%
  \fi%
  \global\let\svgwidth\undefined%
  \global\let\svgscale\undefined%
  \makeatother%
  \begin{picture}(1,0.38855093)%
    \put(0,0){\includegraphics[width=\unitlength,page=1]{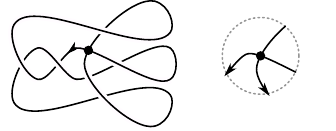}}%
    \put(0.2582294,0.32522955){\color[rgb]{0,0,0}\makebox(0,0)[lb]{\smash{$\sfb_1$}}}%
    \put(0.2582294,0.03833849){\color[rgb]{0,0,0}\makebox(0,0)[lb]{\smash{$\sfb_2$}}}%
    \put(0.33236881,0.22222216){\color[rgb]{0,0,0}\makebox(0,0)[lb]{\smash{$\sfa$}}}%
    \put(-0.00187774,0.18982443){\color[rgb]{0,0,0}\makebox(0,0)[lb]{\smash{$\sfc_1$}}}%
    \put(0.147122,0.24132813){\color[rgb]{0,0,0}\makebox(0,0)[lb]{\smash{$\sfc_2$}}}%
    \put(0.23226906,0.11060935){\color[rgb]{0,0,0}\makebox(0,0)[lb]{\smash{$\sfd$}}}%
    \put(0.22957014,0.26001933){\color[rgb]{0,0,0}\makebox(0,0)[lb]{\smash{$\sfv$}}}%
    \put(0.74218474,0.24359511){\color[rgb]{0,0,0}\makebox(0,0)[lb]{\smash{$\sfv$}}}%
    \put(0.84349988,0.32791398){\color[rgb]{0,0,0}\makebox(0,0)[lb]{\smash{$\sfh_2$}}}%
    \put(0.88990414,0.13877198){\color[rgb]{0,0,0}\makebox(0,0)[lb]{\smash{$\sfh_3$}}}%
    \put(0.78417295,0.06133025){\color[rgb]{0,0,0}\makebox(0,0)[lb]{\smash{$\sfh_4$}}}%
    \put(0.60971634,0.13700937){\color[rgb]{0,0,0}\makebox(0,0)[lb]{\smash{$\sfh_1$}}}%
  \end{picture}%
\endgroup%
}}
\end{align*}
Here, we are following the labeling convention for half-edges as given in Figure~\ref{figure:local model 4-valent vertex}.

Notice that for each edge $\sfe$, the number $n_\sfe$ used to define a potential in \S~\ref{sec:potential} is zero. 
For simplicity, we assume that all half-edges have the same potential.

Then the DGAs $\cA_{4_1^\times}$ and $\cA_{-4_1^\times}$ are generated by
\[
\sG_{4_1^\times}=\sG_{-4_1^\times}=\{\sfv_{i,\ell}\mid i\in\Zmod{4}, \ell\ge 1\}\amalg \{ \sfa, \sfb_1,\sfb_2, \sfc_1,\sfc_2,\sfd\},
\]
whose gradings are as follows: 
\begin{align*}
|\sfv_{1,1}|&=|\sfv_{3,1}|=0,& |\sfb_1|&=1,&|\sfc_1|&=1,& |\sfd|&=0,\\
|\sfv_{2,1}|&=|\sfv_{4,1}|=-1,& |\sfb_2|&=1,&|\sfc_2|&=-1.
\end{align*}

The differentials $\partial$ and $\partial'$ for each generator in $\cA_{4_1^\times}$ and $\cA_{-4_1^\times}$ are as follows:
\begin{align*}
\partial \sfa&=1+(-1)^{|\sfd|-1}\sfv_{1,1}\sfd=1-\sfv_{1,1}\sfd,&\partial' \sfa&=1+(-1)^{|\sfd|-1}\sfv_{3,1}\sfd=1-\sfv_{3,1}\sfd,\\
\partial \sfb_1&= 1+\sfv_{3,1}+(-1)^{|\sfc_1|+|\sfc_2|-2}\sfc_1\sfc_2\sfv_{3,1}&\partial' \sfb_1&= 1+\sfv_{1,1}+(-1)^{|\sfc_1|+|\sfc_2|-2}\sfc_1\sfc_2\sfv_{1,1}\\
&=1+\sfv_{3,1}+\sfc_1\sfc_2\sfv_{3,1},& &=1+\sfv_{1,1}+\sfc_1\sfc_2\sfv_{1,1},\\
\partial \sfb_2&= 1+\sfd+\sfd\sfc_2\sfc_1+\sfv_{2,1}\sfc_1,&\partial' \sfb_2&= 1+\sfd+\sfd\sfc_2\sfc_1+\sfv_{4,1}\sfc_1,\\
\partial \sfc_1&=\partial\sfc_2=\partial\sfd=0,&
\partial' \sfc_1&=\partial'\sfc_2=\partial'\sfd=0.
\end{align*}

Therefore two DGAs $\cA_{4_1^\times}$ and $\cA_{-4_1^\times}$ are obviously isomorphic via $\sfv_{i,\ell}\mapsto\sfv_{i+2,\ell}$. 
However, this isomorphism is not consistent with the choice of the orientation since $\sfh_{\sfv,1}$ in $4_1^\times$ is outward but $\sfh_{\sfv,3}$ in $-4_1^\times$ is inward.
Indeed, there are no orientation-preserving isotopies between $4_1^\times$ and $-4_1^\times$ as seen in \cite{ABK2018}, and therefore $4_1^\times$ is irreversible.

In terms of DGAs and peripheral structures, one can prove that generalized stable-tame isomorphism classes of pairs $(\cA_{4_1^\times},\bp_v^\circlearrowright)$ and $(\cA_{-4_1^\times},\bp_v^\circlearrowright)$ are not the same.
\[
(\cA_{4_1^\times},\bp_v^\circlearrowright)\not\stabletameisom(\cA_{-4_1^\times},\bp_v^\circlearrowright)
\]

In this sense, one can conclude that a DGA together with peripheral structures has more information than the DGA alone has.

\subsubsection{More rotations at vertices}
Compared with Legendrian links, one of the unique characteristics of DGAs for Legendrian graphs is that each vertex contributes infinitely many generators.
However, for all above examples, the differential of each crossing generator does not involve any vertex generator which rotates more than a full turn, and therefore one may wonder about the necessity of generators having more rotations. 
Here we present a very simple example that we must consider vertex generators rotating more than a full turn.

Let $\sL$ be a Lagrangian projection of Legendrian graph with one bivalent vertex $\sfv$ and one (oriented) edge as follows:
\[
\sL=\vcenter{\hbox{
\begingroup%
  \makeatletter%
  \providecommand\color[2][]{%
    \errmessage{(Inkscape) Color is used for the text in Inkscape, but the package 'color.sty' is not loaded}%
    \renewcommand\color[2][]{}%
  }%
  \providecommand\transparent[1]{%
    \errmessage{(Inkscape) Transparency is used (non-zero) for the text in Inkscape, but the package 'transparent.sty' is not loaded}%
    \renewcommand\transparent[1]{}%
  }%
  \providecommand\rotatebox[2]{#2}%
  \ifx\svgwidth\undefined%
    \setlength{\unitlength}{91.48807982bp}%
    \ifx\svgscale\undefined%
      \relax%
    \else%
      \setlength{\unitlength}{\unitlength * \real{\svgscale}}%
    \fi%
  \else%
    \setlength{\unitlength}{\svgwidth}%
  \fi%
  \global\let\svgwidth\undefined%
  \global\let\svgscale\undefined%
  \makeatother%
  \begin{picture}(1,0.51865652)%
    \put(0,0){\includegraphics[width=\unitlength,page=1]{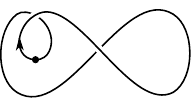}}%
    \put(0.49298785,0.32544708){\color[rgb]{0,0,0}\makebox(0,0)[lb]{\smash{$\sfa$}}}%
    \put(0.13287069,0.48501142){\color[rgb]{0,0,0}\makebox(0,0)[lb]{\smash{$\sfb$}}}%
    \put(0.15629294,0.12343041){\color[rgb]{0,0,0}\makebox(0,0)[lb]{\smash{$\sfv$}}}%
  \end{picture}%
\endgroup%
}}
\]
We equip an order on half-edges such that the outgoing half-edge is indexed by $1$.
Then the generators and degrees for $\cA(\sL)$ are given as
\begin{align*}
\sG_\sL&=\{\sfv_{1,\ell},\sfv_{2,\ell}\mid \ell\ge 1\}\amalg\{\sfa, \sfb\},\\
|\sfv_{1,\ell}|&=\begin{cases}
2r-1 & \ell=2r;\\
2r+2 & \ell=2r+1,
\end{cases}&
|\sfv_{2,\ell}|&=\begin{cases}
2r-1 & \ell=2r;\\
2r-2 & \ell=2r+1,
\end{cases}&
|\sfa|&=1,&
|\sfb|&=-1.
\end{align*}

The differentials for $\sfa$ and $\sfb$ are 
\begin{align*}
\partial \sfa &= 1+(-1)^{|\sfb|-1}\sfb\sfv_{2,1}\sfb+(-1)^{|\sfb|-1}\sfb\sfv_{2,2}+\sfv_{1,2}\sfb+\sfv_{1,3}\\&=1+\sfb\sfv_{2,1}\sfb+\sfb\sfv_{2,2}+\sfv_{1,2}\sfb+\sfv_{1,3},\\
\partial \sfb &= \sfv_{1,1}.
\end{align*}
Notice that the monomial $\sfv_{1,3}$ corresponds to an immersed bigon which is folded twice on the inner circular region. See Figure~\ref{figure:more turns}.
\begin{figure}[ht]
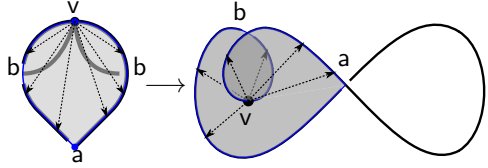

\[
\vcenter{\hbox{\def\svgscale{0.7}\input{more_turns_domain_input.tex}}}\longrightarrow
\vcenter{\hbox{\def\svgscale{1.2}\input{more_turns_image_input.tex}}}
\]
\caption{The immersed bigon corresponding to $\sfv_{1,3}$ in $\partial \sfa$}
\label{figure:more turns}
\end{figure}

Moreover, we have
\begin{align*}
\partial^2 \sfa &= 0+\left(\sfv_{1,1}\sfv_{2,1}\sfb+(-1)^{|\sfb\sfv_{2,1}|}\sfb\sfv_{2,1}\sfv_{1,1}\right)\\
&\mathrel{\hphantom{=}}+\left(\sfv_{1,1}\sfv_{2,2}+(-1)^{|\sfb|}\sfb(1+(-1)^{|\sfv_{2,1}|-1}\sfv_{2,1}\sfv_{1,1})\right)\\
&\mathrel{\hphantom{=}}+\left( (1+(-1)^{|\sfv_{1,1}|-1}\sfv_{1,1}\sfv_{2,1})\sfb+(-1)^{|\sfv_{1,2}|}\sfv_{1,2}\sfv_{1,1}\right)\\
&\mathrel{\hphantom{=}}+\left( (-1)^{|\sfv_{1,1}|-1}\sfv_{1,1}\sfv_{2,2}+(-1)^{|\sfv_{1,2}|-1}\sfv_{1,2}\sfv_{1,1}\right)\\
&=0.
\end{align*}

\section{Hybrid disks and the invariance theorem}\label{section:Hybrid disks and the invariance theorem}
This section is devoted to show that the pair of a DGA and the collection of canonical peripheral structures is an invariant for Legendrian graphs with potential. 
More precisely, we show the following.

\begin{theorem}[Invariance theorem]\label{theorem:invariance}
The pair $(\cA_\cL, \cP_\cL)$ up to generalized stable-tame isomorphisms is an invariant for $\cL$ under Legendrian Reidemeister moves for Legendrian graphs with potential.
\end{theorem}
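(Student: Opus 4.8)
The plan is to reduce the statement to a move-by-move verification and then treat each Reidemeister move from Figure~\ref{fig:RM} separately. Since any Legendrian isotopy between graphs in general position decomposes, after a small perturbation, into a finite sequence of the moves $(0_a)$, $(0_b)$, $(II)$, $(III)$, $(IV_a)$, $(IV_b)$, and since the isotopy may be taken to be supported in a small neighborhood of each move, it suffices to exhibit, for each single move, a generalized stable-tame isomorphism
\[
(\cA_\cL,\cP_\cL)\stabletameisom(\cA_{\cL'},\cP_{\cL'})
\]
between the DGAs of the two local pictures, compatible with the canonical peripheral structures. The grading on the two sides is matched throughout using the canonical affine isomorphism $\fG(\sL;\fR)\simeq\fG(\sL';\fR)$ produced in Theorem~\ref{theorem:equivalence on Legendrian graphs with potentials}.

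First I would dispose of the planar moves $(0_a)$ and $(0_b)$. These change neither the set of crossings nor the vertices; they only push one half-edge across the vertical line and hence alter a single potential value by $\pm 1_\fR$, exactly as recorded in the proof of Theorem~\ref{theorem:equivalence on Legendrian graphs with potentials}. The induced bijection on generators is then a degree-preserving DGA isomorphism that visibly carries $\bp_\sfv$ to $\bp_{\sfv'}$ for each vertex and fixes $\bp_\emptyset$, so it is a tame isomorphism of pairs. The moves $(II)$ and $(III)$ are the classical ones, occurring away from any vertex: the move $(II)$ introduces a cancelling pair of crossing generators $\sfa,\sfb$ with $\partial\sfb=\sfa+(\text{higher order})$, and I would realize each side as an ordinary stabilization $\cA_{\bp_\emptyset}^{\fd+}$ of the other, followed by a tame isomorphism built from the admissible-disk count; the move $(III)$ leaves the generating set unchanged and is handled by an explicit tame automorphism conjugating the two differentials. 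In all three cases no vertex appears in the local model, so each peripheral structure $\bp_\sfv$ is carried along unchanged.

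The heart of the argument is the vertex move $(IV_a)$, in which a strand is pushed past an $m$-valent vertex and $m$ new crossing generators $\sfc_1,\dots,\sfc_m$ are born (or die). Here I would use the hybrid disks and hybrid pairs developed in \S\ref{section:Hybrid disks and the invariance theorem} as the combinatorial model for holomorphic disks in the Lagrangian cobordism induced by the move, and define the comparison map $\Phi:\cA_\cL\to\cA_{\cL'}$ by counting such hybrid objects. The content is then twofold: first, that $\Phi$ is a DGA morphism, which follows from the boundary-degeneration analysis of one-parameter families of hybrid disks carried out in Appendix~\ref{appendix:hybrid admissible pairs}; and second, that $\Phi$ exhibits one side as a generalized $(\pm)$-stabilization $S_{\bp_\sfv}^{\fd\pm}$ of the other with respect to the canonical peripheral structure $\bp_\sfv$ at the relevant vertex, the $m$ born generators being precisely the $\bar\sfc_i$ of $\cF_{\bp_\sfv}^{\fd\pm}$. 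Matching the hybrid count with the defining differential of $\cF_{\bp_\sfv}^{\fd\pm}$, and checking that $\Phi_*$ sends $\cP_\cL$ bijectively onto $\cP_{\cL'}$, yields the desired generalized stable-tame isomorphism. Finally, $(IV_b)$ comes for free from $(IV_a)$: by Remark~\ref{remark:Legendrian mirror} the two moves are exchanged by the Legendrian mirror $\mu$, which induces a DGA anti-isomorphism $\mu_*$ (Lemma~\ref{lemma:Legendrian mirror anti-isomorphism}) interchanging the positive and negative generalized stabilizations (Remark~\ref{remark:anti-isomorphism and stabilizations}); transporting the $(IV_a)$-isomorphism through $\mu_*$ produces the $(IV_b)$-isomorphism.

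The main obstacle will be the $(IV_a)$ step: setting up the hybrid disk and pair count so that it is simultaneously a chain map and agrees, term by term and sign by sign, with the differential of the generalized stabilization $\cF_{\bp_\sfv}^{\fd\pm}$. The difficulty is that infinitely many generators are created at the vertex, so one must control the a priori unbounded contributions of disks wrapping multiple times around the vertex and confirm that the resulting formulas for $\bar\partial\,\bar\sfc_k$ and $\bar\partial\,\bar\rho_{i,\ell}$ match the definition exactly, including the orientation signs fixed in Figure~\ref{figure:convex and concave vertices}. This is precisely where the explicit manipulation of hybrid admissible pairs in Appendix~\ref{appendix:hybrid admissible pairs} is required.
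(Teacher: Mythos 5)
Your proposal follows essentially the same route as the paper: reduce to single moves, dismiss $(0_a)$, $(0_b)$ via the potential change, invoke the classical Chekanov arguments for $(\mathrm{II})$ and $(\mathrm{III})$, prove $(\mathrm{IV}_a)$ by using hybrid disk/pair counts to identify the DGA after the move with a generalized stabilization $S_{\bp_\sfv}^{\fd\pm}$ with respect to the canonical peripheral structure (the paper's Corollary~\ref{corollary:dga isomorphism}, resting on Proposition~\ref{proposition:commutativity} and Appendix~\ref{appendix:hybrid admissible pairs}), and obtain $(\mathrm{IV}_b)$ by transporting through the Legendrian mirror via Lemma~\ref{lemma:Legendrian mirror anti-isomorphism} and Remark~\ref{remark:anti-isomorphism and stabilizations}. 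The only cosmetic difference is the direction of your comparison map (the paper's tame isomorphism $\Phi$ goes from $\cA_{\cL'}$ to the stabilized $\bar\cA_\cL$, built inductively along the action filtration), which does not affect the argument.
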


In other words, whenever we have two equivalent Legendrian graphs $\cL$ and $\cL'$ with potentials, then the two DGAs with canonical peripheral structures $(\cA_\cL,\cP_\cL)$ and $(\cA_{\cL'},\cP_{\cL'})$ are generalized stable-tame isomorphic.
\[
\cL\simeq \cL'\Longrightarrow (\cA_\cL,\cP_\cL)\stabletameisom(\cA_{\cL'},\cP_{\cL'})
\]

\subsection{Strategy of the proof}\label{sec:strategy of the proof}
Recall the Reidemeister moves for Legendrian graphs with potentials shown in Figure~\ref{fig:RM}. Then without loss of any generality, we may assume that $\cL$ and $\cL'$ are different by exactly one Reidemeister move. The moves $\rm(0_a)$ and $\rm(0_b)$ only change the potentials and induce an isomorphism not only between DGAs but also the canonical peripheral structures, and there is nothing to prove.

The moves $\rm(II)$ and $\rm(III)$ are well-studied in the literature. See \cite{Chekanov2002}.
Note that the proofs of invariance under $\rm(II)$ and $\rm(III)$ are still valid for Legendrian {\em graphs}, and therefore we only need to prove invariance under the moves $\rm(IV_a)$ and $\rm(IV_b)$.

We consider two Legendrian graphs $\cL$ and $\cL'$ whose Lagrangian projections, denoted by $\sL$ and $\sL'$, only differ by Reidemeister move $\rm(IV_a)$. 
For convenience's sake, we will use the notation $(\cdot)$ and $(\cdot)'$ to denote $(\cdot)_\sL$ and $(\cdot)_{\sL'}$. For example,
the DGAs for $\cL$ and $\cL'$ with corresponding collections of canonical peripheral structures are denoted by $(\cA,\cP)$ and $(\cA',\cP')$, respectively.
\begin{align*}
\sC&\coloneqq \sC_\sL,&\sV&\coloneqq \sV_\sL,&\sG&\coloneqq \sG_\sL,&\cdots\\
\sC'&\coloneqq \sC_{\sL'},&\sV'&\coloneqq \sV_{\sL'},&\sG'&\coloneqq \sG_{\sL'}, &\cdots
\end{align*}

Let $\cA_\sfv^{\fd +}$ denote the $\fd$-th generalized ${\rm (+)}$-stabilization of $\cA$ with respect to the canonical peripheral structure 
$\bp_v:\cI_v \to \cA$, and we simply use bar notation $\bar{(\cdot)}$ for this stabilized algebra. For example,
\begin{align*}
\bar\cA&\coloneqq \cA_\sfv^{\fd +}=(\bar A, |\bar{\ \cdot\ }|, \bar\partial),&
\bar A&\coloneqq\ZZ\langle \bar \sG\rangle.
\end{align*}

Let $\sU$ and $\sU'$ be two neighborhoods of the vertices $\sfv\in\sV$ and $\sfv'\in\sV'$, respectively.
Henceforth, the outside of $\sU$ and $\sU'$ will be regarded as the same. We label the double points in $\sU'$ as $\{\sfc_1',\dots,\sfc_m'\}$ as shown in Figure~\ref{fig:RMIV}.

\begin{figure}[ht]
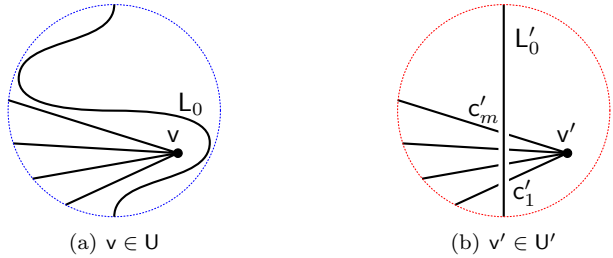

\subfigure[$\sfv\in\sU$]{\makebox[0.4\textwidth]{\input{nbhd_v_input.tex}}}
\subfigure[$\sfv'\in\sU'$]{\makebox[0.4\textwidth]{\input{nbhd_v_prime_input.tex}}}
\caption{Neighborhoods $\sU$ and $\sU'$ of $\sfv$ and $\sfv'$}
\label{fig:RMIV}
\end{figure}

We denote by $C_i'$ the triangular region in $\sU'$ determined by $\sfc_i', \sfc_{i+1}'$ and $\sfv'$.
Then by Stokes' theorem, there are obvious relations between $z(\sfc_i')$ and $\area(C_j')$ given by
\[
z(\sfc_m')=z(\sfc_{m-1}')+\area(C_{m-1})>\cdots>z(\sfc_2')=z(\sfc_1')+\area(C_1')>z(\sfc_1')>0.
\]
\begin{assumption}\label{assump:action}
By moving an arc $\sL_0'$ slightly in $\sU'$, we may assume that $\area(C_i')$'s are sufficiently small so that for any $\sfg'\in\sG'\setminus\{\sfc_1',\dots,\sfc_m'\}$ with $z(\sfg')\neq 0$,
\[
z(\sfc_m')-z(\sfc_1')=\sum_{i=1}^{m-1}\area(C_i') < z(\sfg')
\]
and either
\[
z(\sfg')<z(\sfc_1')\quad\text{ or }\quad z(\sfg')>z(\sfc_m').
\]
\end{assumption}

The canonical identifications between $\sG$ and a subset of $\sG'$ and between $\sG'$ and $\bar\sG$ will be denoted by $\iota$ and $\iota'$, and their left inverses will be denoted by $\pi$ and $\pi'$, respectively. 
\begin{align*}
&\begin{tikzcd}[ampersand replacement=\&]
\sG\cup\{0\}\ar[r,"\iota",shift left=0.5ex]\&\sG'\cup\{0\}\ar[r,"\iota'",shift left=0.5ex]\ar[l,"\pi",shift left=0.5ex]\&\bar\sG\cup\{0\},\ar[l,"\pi'",shift left=0.5ex]
\end{tikzcd}&
z(\sfg)&=z(\iota(\sfg)),&
\pi(\sfg')&\coloneqq\begin{cases}
0 & \sfg'=\sfc_i';\\
\sfg & \sfg'=\iota(\sfg).
\end{cases}
\end{align*}

Notice that the compositions $\iota'\circ\iota$ and $\pi\circ\pi'$ denoted by $\bar\iota$ and $\bar\pi$ induce two canonical DGA morphisms $\bar\iota_*=\iota_\sfv^{\fd+}$ and $\bar\pi_*=\pi_\sfv^{\fd+}$, respectively.
\begin{equation}\label{eq:bar iota}
\begin{tikzcd}[column sep=4pc]
\cA\ar[r, "\bar\iota_*=\iota_\sfv^{\fd +}",shift left=.5ex]&
\bar\cA\coloneqq\cA_\sfv^{\fd+}\ar[l,"\bar\pi_*=\pi_\sfv^{\fd +}", shift left=.5ex].
\end{tikzcd}
\end{equation}

\begin{notation}
Let us denote the generators having action at least $\sfc_1'$ by $\sfg_i'$ so that
\begin{align*}
\sfg_i'&\coloneqq \sfc_i'\quad\forall i\le m,&
z(\sfg_1')&<\cdots<z(\sfg_m')< z(\sfg_{m+1}')\le z(\sfg_{m+2}')\le\cdots\le z(\sfg_{m+n}')
\end{align*}
and let $\sG'_{[i]}$ be the subset of $\sG'$ defined as
\begin{align*}
\sG'_{[0]}&\coloneqq\{\sfg'\in\sG'\mid z(\sfg')<z(\sfc_1')\},&
\sG'_{[i]}&\coloneqq \sG'_{[0]}\amalg\{\sfg_1',\cdots, \sfg_i'\}.
\end{align*}
\end{notation}

We define $\sfg_i\coloneqq\pi(\sfg_i')$ and $\sG_{[i]}\coloneqq\pi(\sG_{[i]}')$ as the images under the partial function $\pi$ of $\sfg_i'$ and $\sG_{[i]}'$, respectively, so that 
\begin{align*}
\sG_{[0]}&=\sG_{[1]}=\cdots=\sG_{[m]},&
\sG_{[m+j]}&=\sG_{[0]}\amalg\{\sfg_{m+1},\cdots,\sfg_{m+j}\}.
\end{align*}

\begin{definition}
We define the filtered DGAs $\cA_\bullet$ and $\cA_\bullet'$ as filtrations of DG-subalgebras of $\cA$ and $\cA'$ generated by $\sG_{[i]}$ and $\sG'_{[i]}$, respectively.
\begin{align*}
\cA_\bullet&\coloneqq \{\cA_{[i]}=(A_{[i]},|\cdot|,\partial)\},&
A_{[i]}&\coloneqq \Zmod{2}\langle \sG_{[i]}\rangle;\\
\cA'_\bullet&\coloneqq \{\cA'_{[i]}=(A_{[i]}',|\cdot'|,\partial')\},& 
A_{[i]}'&\coloneqq \Zmod{2}\langle \sG'_{[i]}\rangle;
\end{align*}
\end{definition}

\begin{assumption}[hybrid admissible disks]
For each $\bar\sfg\in\bar\sG$, we will define the finite set $\scrM(\bar\sfg)_{(0)}$ such that it decomposed into 
\[
\scrM(\bar\sfg)_{(0)}=\coprod_{t\ge s\ge 0}\scrM_{t,s}(\bar\sfg)_{(0)},
\]
and for each $f\in\scrM_{t,s}(\bar\sfg_i)_{(0)}$, there are $\sgn(f)=\pm1$ and a function $\tilde f:\{\bv_1,\dots, \bv_t\}\to \left(\sG_{[i-1]}\amalg \sG'_{[i-1]}\right)$ such that 
\begin{align*}
\tilde f(\bv_1),\cdots,\tilde f(\bv_s)&\in \sG'_{[i-1]},&
\tilde f(\bv_{s+1}),\cdots,\tilde f(\bv_t)&\in \sG_{[i-1]}.
\end{align*}

The geometric construction of $\scrM_{t,s}(\bar\sfg)_{(0)}$ will be given in Definition~\ref{def:hybrid admissible disk} and Notation~\ref{not:hybrid disks}.
We call elements in $\scrM(\bar\sfg)_{(0)}$ {\em hybrid admissible disks} of degree $0$ and $\tilde f$ the {\em canonical label} for $f$ as before.
\end{assumption}

\begin{definition}\label{def:filtered graded algebra}
We define a morphism $\Psi_\bullet:A'_\bullet\to A_\bullet$ between filtered graded algebras inductively as follows:
\begin{enumerate}
\item $\Psi_{[0]}\coloneqq \pi_*$ and $\Psi^\hyb(\sfg')\coloneqq0$ for $\sfg'\in\sG'_{[0]}$;
\item for each $i\ge 1$ and $\sfg'\in\sG'_{[i]}$,
\begin{align*}
\Psi_{[i]}(\sfg')&\coloneqq\begin{cases}
\Psi_{[i-1]}(\sfg')& \sfg'\neq\sfg'_i;\\
\pi_*(\sfg'_i)+\Psi^\hyb(\sfg'_i)
&\sfg'=\sfg'_i,
\end{cases}\\
\Psi^\hyb(\sfg'_i)&\coloneqq
\sum_{\substack{t\ge r\ge 0\\f\in\scrM_{t,r}(\bar\sfg_i)_{(0)}}} \sgn(f)\Psi_{[i-1]}\left(\tilde f(\bv_1 \cdots \bv_r)\right) \tilde f(\bv_{r+1}\cdots \bv_t)\in A_{[i-1]}.
\end{align*}
\end{enumerate}
\end{definition}

The soundness of this definition will be shown in Remark~\ref{remark:soundness of definition of Psi}.
We have the following propositon which will be proved in \S~\ref{section:obstruction} and \S~\ref{section:hybrid disks}.
\begin{proposition}\label{proposition:commutativity}
The following holds:
\begin{enumerate}
\item The restrictions of $\iota_*$ and $\pi_*$ on $\cA_{[0]}$ and $\cA_{[0]}'$ are isomorphisms between the DGAs
\[
\begin{tikzcd}
\cA_{[0]}\ar[r,"\iota_*",shift left=.5ex] & \cA_{[0]}'\ar[l,"\pi_*", shift left=.5ex]
\end{tikzcd}
\]
which are inverse to each other. In particular, $\Psi_{[0]}=\pi_*$ commutes with differentials.
\item Suppose that $\Psi_{[i-1]}$ commutes with differentials. Then so does $\Psi_{[i]}$, i.e.,
\begin{align}\label{eqn:Psi commutes differential}
(\partial\circ \Psi_{[i]})(\sfg_i')=(\Psi_{[i]}\circ\partial')(\sfg_i').
\end{align}
\end{enumerate}

Therefore $\Psi_\bullet$ becomes a morphism between filtered DGAs.
\end{proposition}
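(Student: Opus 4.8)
The plan is to prove the two parts in turn, with part (1) serving as the base case that feeds the induction in part (2), and then to read off that $\Psi_\bullet$ is a filtered chain map from the Leibniz rule together with the two verifications.

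For part (1) I would argue purely by action (filtration) bookkeeping. By Assumption~\ref{assump:action} every generator of $\sG'_{[0]}$ has action strictly smaller than $z(\sfc_1')$, and none of the crossings $\sfc_j'$ lie in $\sG'_{[0]}$; hence $\pi$ restricts to a bijection $\sG'_{[0]}\to\sG_{[0]}$ with inverse $\iota$. The point is that an admissible disk for $\sL'$ contributing to $\partial'\sfg'$ with $\sfg'\in\sG'_{[0]}$ can, by the action--area identity (\ref{eqn:action and area}) used in the proof of Proposition~\ref{proposition:finiteness}, only acquire negative corners at generators of strictly smaller action, so all its corners again lie in $\sG'_{[0]}$. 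Since $\sL$ and $\sL'$ agree outside the vertex neighbourhood $\sU'$, and such low-action disks cannot reach the triangular regions $C_i'$ whose corners are the $\sfc_i'$, these disks are in canonical bijection with the corresponding admissible disks for $\sL$. Therefore $\iota_*$ and $\pi_*$ restrict to mutually inverse DGA isomorphisms between $\cA_{[0]}$ and $\cA_{[0]}'$, and in particular $\Psi_{[0]}=\pi_*$ commutes with the differentials.

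For part (2) I would reduce the chain-map identity to its value on the single new generator $\sfg_i'$: on all other generators of $\sG'_{[i]}$ the map $\Psi_{[i]}$ agrees with $\Psi_{[i-1]}$, and since both are algebra maps obeying the graded Leibniz rule, the inductive hypothesis gives commutativity there automatically. Expanding $\Psi_{[i]}(\sfg_i')=\pi_*(\sfg_i')+\Psi^{\hyb}(\sfg_i')$, the identity (\ref{eqn:Psi commutes differential}) becomes a balance among three families of terms: the honest $\sL$-disks produced by $\partial\circ\pi_*$, the boundary terms $\partial\circ\Psi^{\hyb}$, and the pulled-back $\sL'$-differential $\Psi_{[i-1]}\circ\partial'$ (here the inductive hypothesis is what lets us replace $\Psi_{[i]}$ by $\Psi_{[i-1]}$ on the lower-action output $\partial'\sfg_i'$). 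The mechanism is the boundary analysis of the one-parameter families of hybrid disks with output $\bar\sfg_i$ (equivalently, the hybrid admissible pairs of degree $1$ studied in Appendix~\ref{appendix:hybrid admissible pairs}): each such family is an interval whose two ends are obtained by splitting off either an honest admissible disk of $\sL$ at the top, yielding the terms of $\partial\circ\Psi_{[i]}$, or an honest admissible disk of $\sL'$ at the bottom, yielding the terms of $\Psi_{[i]}\circ\partial'$, while every other degeneration cancels in pairs. Summing the signed count of ends to zero gives (\ref{eqn:Psi commutes differential}).

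The hard part will be precisely this breaking/gluing bookkeeping for hybrid objects: classifying the ends of the degree-$1$ hybrid moduli, checking that the interaction with the stabilization generators $\bar\sfc_k$ and with the peripheral vertex generators $\sfv_{i,\ell}$ contributes no spurious ends, and verifying that the orientation signs $\sgn(f)$ assemble correctly so the cancellations are exact rather than merely mod $2$. This is the content carried out in \S\ref{section:obstruction} and \S\ref{section:hybrid disks} and in Appendix~\ref{appendix:hybrid admissible pairs}, to which I would defer the explicit enumeration, thereby reducing Proposition~\ref{proposition:commutativity} to that analysis. I would also note that the subsidiary finiteness issue---that $\Psi^{\hyb}(\sfg_i')$ is a finite sum landing in $A_{[i-1]}$, so that the inductive definition of $\Psi_{[i]}$ is sound---is handled by the same action estimates as in part (1), and is recorded in Remark~\ref{remark:soundness of definition of Psi}.
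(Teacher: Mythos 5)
Your part (1) contains a genuine gap. The step that does all the work is the claim that ``such low-action disks cannot reach the triangular regions $C_i'$,'' and this is neither justified nor true in the form you need: a small action bound does not confine the image of an admissible disk away from the region where $\sL$ and $\sL'$ differ. Indeed, the vertex generators $\sfv'_{i,\ell}$ have action zero, hence lie in $\sG'_{[0]}$, and any regular disk with a negative corner at such a generator necessarily enters $\sU'$; even a disk with all corners outside $\sU'$ can have boundary running along the strand $\sL_0'$ through $\sU'$; and Assumption~\ref{assump:action} makes the areas of the $C_i'$ arbitrarily small, so an area estimate cannot prevent a low-action disk from covering them. Consequently the asserted ``canonical bijection'' of moduli does not follow from action bookkeeping alone. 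What is actually needed---and what the paper proves---is the deformation/obstruction analysis: by Lemma~\ref{lem:adm_obstruction} a disk whose image meets $\sU$ (resp.\ $\sU'$) can still be transported across the isotopy realizing $\rm(IV_a)$, the unique obstruction being a properly embedded component of $f^{-1}(\beta)$ (resp.\ $f'^{-1}(\beta')$), and by Remark~\ref{remark:small implies no obstructions} such a component forces $z(\sfg)>z(\sfc_m')$, so it never occurs for generators of $\sG_{[0]}$. Combining this with Lemma~\ref{lemma:infinitesimal correspondence} for infinitesimal disks produces the $\cP$-preserving bijection $\iota_\cM$ that your argument presupposes; without it, part (1) is unproved.

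For part (2) your strategy coincides with the paper's underlying idea (a one-dimensional moduli of hybrid objects whose boundary yields the identity), but as written it is a strategy statement, not a proof: the entire combinatorial content---the operations $\Gl_\pm$, $\Sh_\pm$, $\St$, $\Cut_\pm$, $\Dr^{\sL}$, $\Dr^{\sR}$, the moduli graph, and the handshaking argument---is deferred to Appendix~\ref{appendix:hybrid admissible pairs}, which is precisely the paper's own proof. Your description of the ends is also too coarse: the exterior vertices of the moduli graph come in three families, $\scrM^{\leftmost}(\bar\sfg_i)$, $\scrM^{\rightmost}(\bar\sfg_i)$, and the hybrid pairs $\scrM^\pair_{(1\hybto{}0)}(\bar\sfg_i)$ (split according to the position of $r$ relative to the separator), and it is this last family that produces the $\partial\circ\Psi^{\hyb}$ terms; this is why the identity takes the four-sum form of Proposition~\ref{proposition:new commutativity} rather than a two-ended ``top/bottom'' cancellation. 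Finally, a small misattribution: replacing $\Psi_{[i]}$ by $\Psi_{[i-1]}$ on $\partial'\sfg_i'$ uses only that $\partial'\sfg_i'\in A'_{[i-1]}$, where the two maps agree by definition; the inductive hypothesis is instead consumed inside the appendix when expanding $(\partial\circ\Psi^{\hyb})(\sfg_i')$, where one rewrites $\partial\bigl(\Psi_{[i-1]}(\tilde f(\bv_1\cdots\bv_s))\bigr)$ as $\Psi_{[i-1]}\bigl(\partial'(\tilde f(\bv_1\cdots\bv_s))\bigr)$.
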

\begin{remark}
Neither $\iota$ nor $\pi$ induces a DGA morphism between $\cA$ and $\cA'$ since $\iota_*$ and $\pi_*$ do not commute with differentials in general.
\end{remark}

Then there is an essential corollary of this proposition as follows:
\begin{corollary}\label{corollary:dga isomorphism}
There is a tame DGA isomorphism 
\[
\Phi:\cA'\to \bar\cA.
\]
\end{corollary}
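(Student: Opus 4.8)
The plan is to promote the filtered chain map $\Psi_\bullet$ of Proposition~\ref{proposition:commutativity} to an honest DGA isomorphism onto the stabilization $\bar\cA=\cA_\sfv^{\fd+}$ by remembering the generators $\bar\sfc_1,\dots,\bar\sfc_m$ that $\Psi$ throws away. I would define an algebra homomorphism $\Phi\colon\cA'\to\bar\cA$ on generators by
\[
\Phi(\sfg')\coloneqq\iota'_*(\sfg')+\bar\iota_*\!\bigl(\Psi^{\hyb}(\sfg')\bigr),
\]
and extend it multiplicatively. Here $\iota'$ is the canonical identification $\sG'\simeq\bar\sG$ sending $\sfc_i'\mapsto\bar\sfc_i$ and $\iota(\sfg)\mapsto\sfg$, and $\bar\iota_*=\iota_\sfv^{\fd+}$ is the inclusion $\cA\hookrightarrow\bar\cA$ of~(\ref{eq:bar iota}). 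Because $\bar\pi_*$ annihilates every $\bar\sfc_i$ and satisfies $\bar\pi_*\circ\bar\iota_*=\id$, this is arranged so that $\bar\pi_*\circ\Phi=\Psi$; this identity is what lets me transport the chain-map property of $\Psi$ to $\Phi$.

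First I would note that $\Phi$ preserves the $\fR$-grading, since $\iota'$ is grading-preserving and every hybrid disk counted in $\Psi^{\hyb}(\sfg')$ has degree $0$, hence its label has the same degree as $\sfg'$. Then I would show $\Phi$ is a tame isomorphism by exploiting the action filtration $\cA'_\bullet$. By Assumption~\ref{assump:action} the positive-action generators are well-ordered by $z$, and by Definition~\ref{def:filtered graded algebra} the correction $\bar\iota_*\Psi^{\hyb}(\sfg'_i)$ lies in $\bar\iota_*(A_{[i-1]})$, a combination of words in generators of strictly smaller action. Thus $\Phi(\sfg'_i)=\iota'_*(\sfg'_i)+(\text{strictly lower filtration})$, so $\Phi$ is triangular for the increasing filtration and reduces to the isomorphism $\iota'_*$ on the bottom piece $\cA'_{[0]}$, where $\Psi^{\hyb}=0$ (cf.\ Proposition~\ref{proposition:commutativity}(1)). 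A triangular map of this shape is a level-by-level composition of elementary isomorphisms and is invertible by back-substitution, hence tame.

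The essential point is that $\Phi$ is a chain map, $\bar\partial\circ\Phi=\Phi\circ\partial'$, which I would check on generators after splitting $\bar\cA$ by the $\bar\sfc$-\emph{weight} (the number of letters among $\bar\sfc_1,\dots,\bar\sfc_m$ in a word). The differential $\bar\partial$ preserves this weight, because $\bar\partial$ restricted to $\sG$ is the weight-$0$ differential $\partial$, while each $\bar\partial\bar\sfc_k$ has weight $1$. The weight-$0$ component of the chain-map identity is formal: $\bar\pi_*$ is precisely the projection onto weight $0$, so applying it and using $\bar\pi_*\circ\Phi=\Psi$, that $\bar\pi_*$ and $\bar\iota_*$ are chain maps, and $\Psi\circ\partial'=\partial\circ\Psi$ from Proposition~\ref{proposition:commutativity}, yields equality of the weight-$0$ parts of $\bar\partial\Phi(\sfg')$ and $\Phi\partial'(\sfg')$.

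The weight-$\ge1$ component is the geometric crux and the step I expect to be the main obstacle. It requires, for $\sfg'=\sfc_k'$, matching the weight-$1$ part of $\Phi(\partial'\sfc_k')$ with
\[
\bar\partial\bar\sfc_k=(-1)^{|\bar\sfc_k|^{\fd}-1}\sum_{i=1}^{k-1}\bar\sfc_i\,\sfv_{i,k-i},
\]
and, for all other $\sfg'$, the vanishing of any weight-$\ge1$ contribution. The action estimate of Assumption~\ref{assump:action} forbids two new crossings $\sfc_i',\sfc_j'$ from occurring in a single term of $\partial'\sfc_k'$, so only weight $\le1$ can appear; the classification of infinitesimal and hybrid disks near $\sfv'$ then identifies the near-vertex terms of $\partial'\sfc_k'$ with exactly the right-hand side above, the vertex generators $\sfv_{i,k-i}$ playing the role of $\bar\rho_{i,k-i}$ under the pushout identification, just as in the internal differential~(\ref{eqn:differential_vertex}). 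Assembling the two weight components gives $\bar\partial\circ\Phi=\Phi\circ\partial'$, so $\Phi$ is the desired tame DGA isomorphism $\cA'\xrightarrow{\;\simeq\;}\bar\cA$.
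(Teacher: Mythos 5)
Your construction is the paper's: the same $\Phi=\iota'_*+\bar\iota_*\circ\Psi^{\hyb}$, the same tameness-by-triangularity with respect to the action filtration (which the paper treats as obvious), and your two main verifications coincide with the paper's two cases. Your weight-$0$ argument via $\bar\pi_*\circ\Phi=\Psi$ is an equivalent repackaging of the paper's use of the chain maps $\bar\iota_*$ and $\Psi_{[i]}$, and your treatment of $\sfg'=\sfc_k'$ (Assumption~\ref{assump:action} forbids two new crossings in one monomial, the fan disks near $\sfv'$ give $\partial'\sfc_k'=(-1)^{|\sfc_k'|-1}\bigl(\sfc_1'\sfv'_{1,k-1}+\cdots+\sfc_{k-1}'\sfv'_{k-1,1}\bigr)+d_k'$ with $d_k'\in A'_{[0]}$, and the weight-$1$ part matches $\bar\partial\bar\sfc_k$) is exactly the computation in the paper's proof.

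The gap is the step you yourself call the main obstacle and then never close: for generators $\sfg'$ \emph{not} among the $\sfc_j'$, you assert that the weight-$\ge1$ part of $\Phi(\partial'\sfg')$ must vanish, but your final paragraph only analyzes $\partial'\sfc_k'$ and then ``assembles'' the result as if the other case were done. This vanishing is not vacuous and does not follow from anything you have established: for a generator with action greater than $z(\sfc_m')$, the differential $\partial'\sfg'$ genuinely contains monomials with $\sfc_j'$ letters (for instance, disks whose boundary runs along $\sL_0'$ into the fan region and back out along a half-edge contribute words of the form $u\,\sfc_j'\,\sfv'_{j,k-j}\,w$), and each such monomial contributes a nonzero weight-$1$ word $(\cdots)\bar\sfc_j(\cdots)$ to $\Phi(\partial'\sfg')$; your identity $\bar\pi_*\circ\Phi=\Psi$ annihilates precisely these terms, so it can never detect whether they cancel, and Proposition~\ref{proposition:commutativity}(2) only encodes the disk counts after the substitution $\sfc_j'\mapsto\Psi^{\hyb}(\sfc_j')$, not with $\bar\sfc_j$ retained. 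The paper handles these generators by a different reduction: it observes that $\Phi(\sfg')=(\bar\iota_*\circ\Psi_{[i]})(\sfg')$ for every $\sfg'\ne\sfc_j'$, a composition of the chain maps of~(\ref{eq:bar iota}) and Proposition~\ref{proposition:commutativity}(2), and deduces the chain-map identity at those generators from that identification rather than from a weight-by-weight cancellation. To complete your argument you must either adopt that reduction (and justify that the identification of $\Phi$ with $\bar\iota_*\circ\Psi_{[i]}$ persists when both are applied to the polynomial $\partial'\sfg'$) or prove the cancellation of the $\bar\sfc_j$-terms directly; as written, the proposal stops exactly where the real work begins.
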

\begin{proof}
Let us define $\Phi:\cA'\to\bar\cA$ as
\[
\Phi(\sfg')\coloneqq(\iota'_*+(\bar\iota_*\circ\Psi^\hyb))(\sfg')=\bar\sfg+(\bar\iota_*\circ\Psi^\hyb)(\sfg').
\]
Then it is obvious that $\Phi$ is a tame isomorphism and therefore it suffices to prove that $\Phi$ commutes with differentials.

If $\sfg'\in\sG'$ different from any $\sfc_j'$, then $(\bar\iota_*\circ\pi_*)(\sfg')=\bar\sfg=\iota'_*(\sfg')$ and so
\begin{align}\label{eq:commutativity small action}
\Phi(\sfg')\coloneqq(\iota'_*+(\bar\iota_*\circ(\Psi_{[i]}-\pi_*))(\sfg')=(\bar\iota_*\circ\Psi_{[i]})(\sfg')
\end{align}
for some $i\ge0$ with $\sfg'\in\sG'_{[i]}$. Since both $\bar\iota_*$ and $\Psi_{[i]}$ commute with differentials by (\ref{eq:bar iota}) and Proposition~\ref{proposition:commutativity}(2), we are done.

Suppose that $\sfg'=\sfc'_i$. Then $\Psi^\hyb(\sfc'_i)=\Psi_{[i]}(\sfc'_i)$ by definition of $\Psi_{[i]}$ since $\pi_*(\sfc_i')=0$.
Therefore since $(\bar\iota_*\circ\Psi_{[i]})$ commutes with differentials, 
\[
(\bar\partial\circ\Phi)(\sfc_i')=\bar\partial(\iota'_*+(\bar\iota_*\circ\Psi_{[i]}))(\sfc_i')
=\bar\partial(\bar\sfc_i)+(\bar\iota_*\circ\Psi_{[i]})(\partial'\sfc_i').
\]
For further manipulations, we need to know $\partial'\sfc_i'$ more carefully.

By the area condition (\ref{eqn:action and area}) of Proposition~\ref{proposition:finiteness}, for a monomial $w$ of $\partial' \sfc_i'$, the actions of all generators involved in $w$ is strictly less than $\sfc_i'$. That is, if $w$ involves $\sfc_j'$ then $j<i$, and moreover, by Assumption~\ref{assump:action} that the difference $z(\sfc_i')-z(\sfc_j')$ is sufficiently small, all other generators come from vertices. Furthermore, these vertices can not be far from $\sfc_i'$ since the area of the immersed disk defining the given monomial $w$ is at most $z(\sfc_i')-z(\sfc_j')$. Therefore the only possibilities correspond to the triangular regions $C_j'\cup\cdots\cup C_{i-1}'$ for $1\le j<i$ which contribute the monomial $\sfc_j' \sfv'_{j,i-j}$ in $\partial' \sfc'_i$. 

If $w$ does not involve any $\sfc'_j$, then $w\in A'_{[0]}$ by the area condition again. Let us collect these monomials and denote the result by $d'_i$, then we have
\[
\partial' \sfc_i'=(-1)^{|\sfc'_i|-1}\left(\sfc_1' \sfv'_{1,i-1}+\cdots+\sfc'_{i-1}\sfv'_{i-1,1}\right)+d'_i
\]
for some $d'_i\in A'_{[0]}$. 
Note that
\[
(\bar\iota_*\circ\Psi_{[i]})(d_i')=(\bar\iota_*\circ\Psi_{[0]})(d_i')=\Phi(d_i'),
\]
where the last equality comes from (\ref{eq:commutativity small action}). Therefore
\begin{align*}
(\bar\partial\circ\Phi)(\sfc'_i)
&=\bar\partial(\bar\sfc_i)+(\bar\iota_*\circ\Psi_{[i]})(\partial'\sfc'_i)\\
&=(-1)^{|\bar\sfc_i|-1}(\bar\sfc_1\bar\sfv_{1,i-1}+\cdots+\bar\sfc_{i-1}\bar\sfv_{i-1,1})\\
&\mathrel{\hphantom{=}}+(-1)^{|\sfc'_i|-1}(\bar\iota_*\circ\Psi_{[i]})(\sfc'_1\sfv'_{1,i-1}+\cdots+\sfc'_{i-1}\sfv'_{i-1,1})+(\bar\iota_*\circ\Psi_{[i]})(d'_i)\\
&=(-1)^{|\bar\sfc_i|-1}(\iota'_*+(\bar\iota_*\circ\Psi_{[i]}))(\sfc_1'\sfv_{1,i-1}'+\cdots+\sfc_{i-1}'\sfv_{i-1,1})+(\bar\iota_*\circ\Psi_{[i]})(d_i')\\
&=\Phi((-1)^{|\sfc'_i|-1}(\sfc_1'\sfv_{1,i-1}'+\cdots+\sfc_{i-1}'\sfv_{i-1,1})+d_i')=\Phi(\partial'\sfc_i')
\end{align*}
as desired and this completes the proof.
\end{proof}

With the aid of this corollary, we can prove Theorem~\ref{theorem:invariance}.
\begin{proof}[Proof of Theorem~{\rm\ref{theorem:invariance}}]
Suppose that $\cL$ and $\cL'$ are related by $\rm(IV_a)$. Then we are done by Corollary~\ref{corollary:dga isomorphism}.

On the other hand, suppose that two Legendrian graphs with potentials $\cL$ and $\cL''$ only differ by Reidemeister move $\rm(IV_b)$. 
Let us denote their Lagrangian projections by $\sL$ and $\sL''$.
Then by Remark~\ref{remark:Legendrian mirror}, $\mu(\sL)$ and $\mu(\sL'')$ are related by Reidemeister move $\rm(IV_a)$ which implies that $\cA_{\mu(\cL'')}$ and $S_{\mu^*(\bp)}^{\fd+}\left(\mu^*(\cA_\cL)\right)$ are tame isomorphic. Now we conclude
\[
\begin{tikzcd}
\cA_{\cL''}\ar[d,"\mu_*"',"\simeq"]\ar[r,"\exists\tameisom"] & S_\bp^{\fd-}(\cA_{\cL})\ar[d,"\mu_*","\simeq"']\\
\mu_*(\cA_{\mu(\cL'')})\ar[r,"\tameisom"] &\mu_*(S_{\mu^*(\bp)}^{\fd+}(\mu^*(\cA_\cL))),
\end{tikzcd}
\]
where Lemma~\ref{lemma:Legendrian mirror anti-isomorphism} implies the left isomorphism, the right isomorphism is guaranteed by Remark~\ref{remark:anti-isomorphism and stabilizations} and
the bottom tame isomorphism comes from Corollary~\ref{corollary:dga isomorphism}. 

It is obvious that the composition of these three isomorphisms is a tame isomorphism, and this finally proves the theorem. 
\end{proof}

\subsection{Realizability on the other side}\label{section:obstruction}
We discuss the realizability of admissible disks for $\sL$ as admissible disks for $\sL'$, or {\it vice versa}. 
For convenience's sake, we assume the following:
\begin{align*}
f&\in\cM(\Lambda),&
f'&\in\cM(\Lambda'),&
\iota(\sfg)&=\sfg',&
\iota(\sfv_{i,\ell})=\sfv'_{i,\ell},
\end{align*}
for any $\sfg\in\sC_\sL$, $\sfv\in\sV_\sL$, $i\in\Zmod{\val(\sfv)}$ and $\ell\ge 1$.

\begin{lemma}\label{lemma:infinitesimal correspondence}
For any $\sfv_{i,\ell}\in\tilde\sV_\sL$ and $\sfv'_{i,\ell}\in\tilde\sV_{\sL'}$ with $\iota(\sfv_{i,\ell})=\sfv'_{i,\ell}$, there is a one-to-one correspondence 
$\iota_\cM:\cM(\sfv_{i,\ell})\to\cM(\sfv'_{i,\ell})$ such that
\[
\iota_*(\cP(f))=\cP(\iota_\cM(f)),
\]
where $\iota_*:A\to A'$ is a graded algebra morphism.
\end{lemma}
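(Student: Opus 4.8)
The plan is to exploit the locality of infinitesimal disks: an admissible disk whose outgoing generator is a vertex generator sees nothing of the move $\rm(IV_a)$ except the combinatorial data of the vertex itself. First I would observe that every $f\in\cM(\sfv_{i,\ell})$ is necessarily infinitesimal. Indeed, by Definition~\ref{def:Regular admissible disks} the unique positive vertex $\bv_0$ of a regular admissible disk carries a Reeb sign and is therefore convex or concave, forcing $f(\bv_0)\in\sC_\sL$; since here $\tilde f(\bv_0)=\sfv_{i,\ell}$ with $\sfv\in\sV_\sL$, no disk in $\cM(\sfv_{i,\ell})$ can be regular, and hence $\cM(\sfv_{i,\ell})=\cM^{\mathsf{inf}}(\sfv_{i,\ell})$. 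By condition (1) of Definition~\ref{definition:infinitesimal} such a disk maps $(\Pi,\bO_f)$ into $(\cl{\sU_\sfv},\sO_\sfv)$, and after shrinking the standard neighborhood so that it excludes the double points $\sfc_1',\dots,\sfc_m'$ produced by the move, the image of $f$ is confined to the local picture of the vertex alone.

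Next I would set up the bijection. The move $\rm(IV_a)$ replaces an external arc $\sL_0$ by $\sL_0'$ and fixes the vertex together with its half-edges, so $\val(\sfv)=\val(\sfv')=:m$ and the standard neighborhoods $\sU_\sfv$, $\sU_{\sfv'}$ have identical sector structures $\{\sector_{\sfv,i}\}$, $\{\sector_{\sfv',i}\}$. The classifications in Corollary~\ref{corollary:classification_of_infinitesimal_degree_1} and in \S\ref{subsubsection:infinitesimal disks} describe $\cM^{\mathsf{inf}}(\sfv_{i,\ell})$ purely in terms of $m$ and the discrete parameters $(i,\ell)$ together with the relevant partitions $\ell=\ell_1+\cdots$: each of $\cM^\infinitesimalmonogon$, $\cM^\infinitesimaltriangle$, $\cM^{\infinitesimalbigonleft}$, $\cM^{\infinitesimalbigonmiddle}$, $\cM^{\infinitesimalbigonright}$, $\cM^{\infinitesimalsquare}$ is enumerated combinatorially. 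I would therefore define $\iota_\cM$ type by type, sending a disk of a given combinatorial type and parameters at $\sfv$ to the unique disk of the same type and parameters at $\sfv'$; since both classifications are governed by the same integer $m$, this is a bijection.

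It then remains to verify $\iota_*(\cP(f))=\cP(\iota_\cM(f))$. On canonical labels this is automatic: if $\tilde f(\bv_j)=\sfv_{i_j,\ell_j}$, then $\tilde{\iota_\cM(f)}(\bv_j)=\sfv'_{i_j,\ell_j}=\iota(\tilde f(\bv_j))$, so the underlying words correspond under $\iota_*$. The only remaining point is that the signs agree, $\sgn(f)=\sgn(\iota_\cM(f))$. By Definition~\ref{definition:total label} the sign of an infinitesimal disk is $+1$ except on $\cM^\infinitesimaltriangle$ and $\cM^\infinitesimalsquare$, where it equals $(-1)^{|\tilde f(\bv_1)|-1}$ and $(-1)^{|\tilde f(\bv_2)|}$ respectively; thus it suffices that corresponding vertex generators have equal degree, $|\sfv_{i,\ell}|_\fP=|\sfv'_{i,\ell}|_{\fP'}$. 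This I would read off the construction of the induced potential: being neither $\rm(0_a)$ nor $\rm(0_b)$, the move carries $\fP$ to $\fP'$ by $\fP'(\sfh')=\fP(F_\sH^{-1}(\sfh'))$ as in the proof of Theorem~\ref{theorem:equivalence on Legendrian graphs with potentials}, and since it fixes the half-edges at the vertex it also preserves their left/right partition; hence the formulas (\ref{eq:Gr2_1})--(\ref{eq:Gr2_3}) and (\ref{eq:Gr3}) return the same value on $\sfv_{i,\ell}$ and $\sfv'_{i,\ell}$.

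The main obstacle is the locality claim that no admissible disk with outgoing generator $\sfv'_{i,\ell}$ can reach out to involve the new crossings $\sfc_j'$, for this is exactly what lets the correspondence be read off the combinatorial classification rather than reconstructed disk by disk. I would dispatch it through the reduction of the first paragraph -- any such disk is infinitesimal and so trapped inside the standard neighborhood -- but care is required to confirm that the neighborhood can be shrunk (compatibly with Assumption~\ref{assump:action} on the areas $\area(C_i')$) until the $\sfc_j'$ genuinely lie outside it, so that the enumeration of \S\ref{subsubsection:infinitesimal disks} applies verbatim on both sides and the two sign computations match term for term.
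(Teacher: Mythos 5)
Your proof is correct and follows essentially the same route as the paper's, whose entire argument is the observation that $\cM(\sfv_{i,\ell})$ is determined purely by local data (the valency and the indices $i,\ell$), so that a canonical degree- and label-preserving bijection exists; your three paragraphs simply make explicit the facts the paper declares obvious (vertex-generator output forces infinitesimality, infinitesimal disks are classified combinatorially, and the induced potential matches gradings at the vertex). The residual worry in your final paragraph is moot: by definition a standard neighborhood $\sU_{\sfv'}$ contains no special points other than $\sfv'$, so the crossings $\sfc_j'$ automatically lie outside it and no shrinking argument compatible with the area assumption is needed.
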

\begin{proof}
Since the set $\cM(\sfv_{i,\ell})$ is determined only by local information such as the valency and two indices $i$ and $\ell$, we have a canonical bijection $\iota_\cM:\cM(\sfv_{i,\ell})\to\cM(\sfv'_{i,\ell})$.
Moreover, it is obvious that $\iota_\cM$ preserves degrees and labels via $\iota_*:A\to A'$.
\end{proof}

Suppose that $f$ is regular. That is, $f\in\cM(\sfg)$ for some $\sfg\in\sC$.
Then the boundary $\partial f\coloneqq f|_{\partial\Pi}\subset \sL$ is a loop based at $\sfg$ which can be lifted {\em piecewise} to $\tilde{\partial f}\subset \Lambda_0\subset\RR^3$ via $\pi_L$.
Notice that vertical jumps occur only at double points of the projection.

Now let $\phi_t$ be a Legendrian isotopy from $\Lambda_0=\Lambda$ to $\Lambda_1=\Lambda'$ realizing Reidemeister move $\rm(IV_a)$ which is fixed outside of $\pi_L^{-1}(\sU)$. Then for each $0\le t\le 1$, the projection $\tilde{\partial f}_t\coloneqq\pi_L(\phi_t(\tilde{\partial f}))$ can be understood as a loop in $\sL_t\coloneqq\pi_L(\Lambda_t)$ because all jumps of $\tilde{\partial f}$ lie outside $\pi_L^{-1}(\sU)$ and remain fixed during the isotopy.

Conversely, let $f'\in\cM(\sfg')$ and assume that $\tilde f'$ misses all $\sfc'_j$'s.
Then all jumps of the piecewise lift $\tilde{\partial f'}$ avoid $\pi_L^{-1}(\sU')$ and therefore the one-parameter family of loops $\{\tilde{\partial f'}_t\subset \sL_t\mid 0\le t\le 1\}$ is well-defined.

The natural question is whether $\tilde{\partial f}_1$ or $\tilde{\partial f'}_0$ can be realized as the boundary of a regular admissible disk in $\cM(\Lambda')$ or $\cM(\Lambda)$.
We will give a complete answer for this question as follows:

Let $\beta$ and $\beta'$ be arcs from $\sfv$ and $\sfv'$ to points in $\sL_0$ and $\sL_0'$, respectively, such that they are disjoint from $\sL$ and $\sL'$ in their interior. See Figure~\ref{figure:gamma curves}.

\begin{figure}[ht]
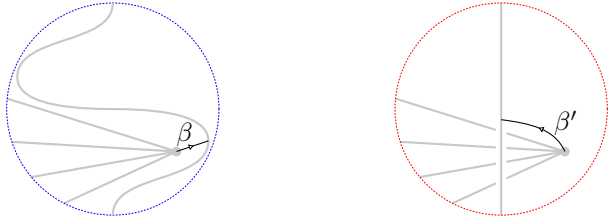

\begin{align*}
\vcenter{\hbox{\input{beta_input.tex}}}
&&
\vcenter{\hbox{\input{beta_prime_input.tex}}}
\end{align*}
\caption{Curves $\beta$ and $\beta'$ in $\sU$ and $\sU'$}
\label{figure:gamma curves}
\end{figure}

\begin{lemma}\label{lem:adm_obstruction}
Let $f\in\cM(\sfg)$ and $f'\in\cM(\sfg')$ for some $\sfg\in\sC$ and $\sfg'\in\sC'$ with $\iota(\sfg)=\sfg'$. Suppose that $\tilde f'$ misses all $\sfc_i'$'s.
Then the following holds:
\begin{enumerate}
\item $\tilde{\partial f}_1$ extends to an admissible disk $\iota_\cM(f)\in\cM(\sfg')$ if and only if there are no properly embedded components of $f^{-1}(\beta)$ in $\Pi$.
\item $\tilde{\partial f'}_0$ extends to an admissible disk $\pi_\cM(f')\in\cM(\sfg)$ if and only if there are no properly embedded components of $f'^{-1}(\beta')$ in $\Pi'$.
\end{enumerate}

In both cases, the labels are related via $\iota_*$ and $\pi_*$, respectively.
\begin{align*}
\iota_*(\cP(f))&=\cP(\iota_\cM(f)),&
\cP(\pi_\cM(f'))&=\pi_*(\cP(f')).
\end{align*}
\end{lemma}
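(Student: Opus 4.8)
The plan is to reduce everything to the supported region of the move and to read the obstruction directly off the intersection pattern of the disk with $\beta$ (resp.\ $\beta'$). Since the isotopy $\phi_t$ is fixed outside $\pi_L^{-1}(\sU)$, the restriction of $f$ to $\Pi\setminus f^{-1}(\sU)$ is transported rigidly, so $\tilde{\partial f}_1$ already bounds the corresponding piece of disk over the complement of $\sU'$; the only question is whether the part of $f$ lying over $\cl{\sU}$ can be replaced by an admissible piece over $\cl{\sU'}$ with the matching boundary. First I would describe the components of $f^{-1}(\cl{\sU})$: each is a subsurface of $\Pi$ mapping into $\cl{\sU}$, with boundary consisting of arcs on $\partial\Pi$ (over $\sL\cap\sU$) and arcs on $f^{-1}(\sO_\sfv)$. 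The arc $\beta$, running from $\sfv$ to the moving strand $\sL_0$ through the region that the strand sweeps during $\rm(IV_a)$, cuts these components, and the components of $f^{-1}(\beta)$ fall into two types: those incident to a neutral vertex (a preimage of $\sfv$), and those properly embedded with both endpoints on $\partial\Pi$ and avoiding all preimages of $\sfv$.

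For the forward implication of (1) I would show that a properly embedded component $\alpha$ of $f^{-1}(\beta)$ is a genuine obstruction. Such an $\alpha$ cuts a subdisk $\Pi_\alpha\subset\Pi$ whose image lies on the strand side of $\beta$ inside $\sU$, i.e.\ inside the region swept by $\sL_0$. Tracking the two endpoints of $\alpha$ along $\phi_t$, the transported boundary $\tilde{\partial f}_1$ is forced to run along the newly created double points $\sfc_1',\dots,\sfc_m'$ with a corner whose orientation (inherited from $f$ on $\Pi_\alpha$) is incompatible with any of the convex, concave, or neutral local models of Definition~\ref{def:Regular admissible disks}. Hence no admissible completion over $\cl{\sU'}$ exists. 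I expect this non-existence argument to be the crux, and I would make it rigorous by combining the orientation-sign analysis with an area/degree count in the spirit of Proposition~\ref{proposition:degree admissible disk}, so as to rule out \emph{all} possible reshufflings of folding edges and concave corners near $\sfv'$.

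For the converse, suppose no properly embedded component exists. Then every arc of $f^{-1}(\beta)$ terminates at a neutral vertex, so each component of $f^{-1}(\cl{\sU})$ is either an infinitesimal piece at $\sfv$ or a piece whose image avoids the swept region entirely. I would build $\iota_\cM(f)$ by leaving the latter untouched and replacing each infinitesimal piece by the corresponding configuration over $\cl{\sU'}$ provided by the move: a neutral vertex labelled $\sfv_{i,\ell}$ becomes the string of convex corners at $\sfc_1',\dots,\sfc_m'$ together with the neutral vertex $\sfv'_{i,\ell}$, exactly as in the local correspondence $\iota_\cM$ of Lemma~\ref{lemma:infinitesimal correspondence}. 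Gluing these replacements to the rigidly transported outer part yields a differentiable disk meeting all conditions of Definition~\ref{def:Regular admissible disks}, and the sign bookkeeping gives $\cP(\iota_\cM(f))=\iota_*(\cP(f))$ since $\iota$ sends $\sfc\mapsto\sfc'$, $\sfv_{i,\ell}\mapsto\sfv'_{i,\ell}$ and preserves both Reeb and orientation signs.

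Finally, part (2) is handled by running the same argument for the inverse isotopy $\phi_{1-t}$, with $\beta'$ in place of $\beta$ and $\pi_*$ in place of $\iota_*$; equivalently one may invoke the Legendrian-mirror symmetry of Remark~\ref{remark:Legendrian mirror on disk}. The hypothesis that $\tilde f'$ misses all $\sfc_i'$ is exactly what guarantees that the piecewise lift $\tilde{\partial f'}$ has no vertical jumps inside $\pi_L^{-1}(\sU')$, so the transport $\tilde{\partial f'}_0$ is well-defined and the argument applies verbatim. As noted, the main obstacle is the forward non-existence claim; the converse, by contrast, is a controlled construction together with routine sign tracking.
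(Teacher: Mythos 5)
Your overall architecture (localize to $\sU$, read the obstruction off components of $f^{-1}(\beta)$, build $\iota_\cM(f)$ by local replacement otherwise) is the same as the paper's, but two of your key steps are wrong. First, your dichotomy for the components of $f^{-1}(\beta)$ is not the right one, and as stated it is inconsistent: since the interior of $\beta$ is disjoint from $\sL$, each component of $f^{-1}(\beta)$ is an arc joining a preimage of $\sfv$ to a preimage of $q\coloneqq\beta\cap\sL_0$, so \emph{every} component has an end over $\sfv$. In particular a properly embedded component (both ends on $\partial\Pi$) has its $\sfv$-end \emph{at} a neutral vertex of $\Pi$; a component that is ``properly embedded and avoids all preimages of $\sfv$'' does not exist, while ``incident to a neutral vertex'' includes precisely the obstructing case you meant to exclude. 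The criterion the paper uses is whether the two ends lie on $\partial\Pi$ or in $\rPi$: this yields four local configurations (interior--interior, boundary--interior, interior--boundary, boundary--boundary), only the last of which (the properly embedded one) obstructs; in the other three cases the paper constructs $\iota_\cM(f)$ directly by modifying the inverse graph $f^{-1}(\sL)$ according to the corresponding local picture and declaring the modified graph to be the inverse graph of the new disk. With your classification, type interior--interior components fall into neither class and the obstructing components are declared harmless, so both implications of (1) would fail as argued.

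Second, your converse construction contradicts the label identity you then assert. You replace a neutral vertex labelled $\sfv_{i,\ell}$ by ``the string of convex corners at $\sfc_1',\dots,\sfc_m'$ together with the neutral vertex $\sfv'_{i,\ell}$''; if $\iota_\cM(f)$ had corners at the $\sfc_j'$, the word $\cP(\iota_\cM(f))$ would contain those generators, whereas $\iota_*(\cP(f))$ never does (the image of $\iota$ misses the $\sfc_j'$), so $\iota_*(\cP(f))=\cP(\iota_\cM(f))$ would be violated; note also that Lemma~\ref{lemma:infinitesimal correspondence}, which you cite, introduces no such corners. Geometrically, after the move the strand $\sL_0'$ merely passes through the region covered by the disk: its preimage is added to the inverse graph as arcs crossing the interior or crossing boundary edges transversally, and the corner structure --- hence $\cP$ --- is unchanged; this is exactly why part (2) carries the hypothesis that $\tilde f'$ misses all $\sfc_i'$'s. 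Two smaller points: the crux of your forward direction is deferred (``I would make it rigorous by\dots''), whereas the paper's mechanism is concrete --- under $\phi_t$ the neighborhoods of the two ends of the properly embedded arc overlap, creating an orientation-reversing region, so $\tilde{\partial f}_1$ cannot bound an orientation-preserving disk; and your appeal to the Legendrian mirror for part (2) is misplaced, since $\mu$ converts $\rm(IV_a)$ into $\rm(IV_b)$ rather than into the inverse of $\rm(IV_a)$ --- your alternative suggestion (run the same four-case analysis with $\beta'$ and the inverse isotopy) is the correct one and is what the paper does.
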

\begin{proof}
\noindent(1)~Let $\{\beta_1,\dots,\beta_r\}$ be the connected components of $f^{-1}(\beta)$, where each component $\beta_j$ is contained in a component $\bU(\beta_j)$ of $f^{-1}(\sU)$.
Then there are four cases according to whether the end points of $\beta_j$ are lying in the boundary $\partial \Pi$ or not, which correspond to the pictures on the left of Figures~\ref{figure:interior-interior}, \ref{figure:boundary-interior}, \ref{figure:interior-boundary A} and \ref{figure:improper overlap}, respectively.

Unless there is a component $\beta_j$ with $\partial\beta_j\subset\partial\Pi$, or equivalently, unless Figure~\ref{figure:improper overlap} happens, we can modify $\bO_f\coloneqq f^{-1}(\sL)$ as shown in the pictures on the right of the figures to obtain a new complex $\bO'\subset\Pi$. Then we can define an admissible disk $\iota_\cM(f)\in\cM(\sfg')$ on $\Pi$ by the condition
\[
\bO_{\iota_\cM(f)}=\bO'.
\]

Conversely, if there is a properly embedded component, then there are no corresponding admissible disks since the orientation-preserving property will be broken during the isotopy.
More precisely, as $t\to1$, neighborhoods of the two end points $\beta_j(0)$ and $\beta_j(1)$ will overlap via $\phi_t$ and an orientation-reversing region, the shaded triangle in Figure~\ref{figure:improper overlap}, is created and so $\tilde{\partial f}_1$ will never extend to a disk. 

\noindent(2)~The proof of (2) is the essentially the same as (1). There are four possibilities for configurations of the inverse image $\beta_j'$ depicted in Figures~\ref{figure:interior-interior}, \ref{figure:boundary-interior}, \ref{figure:interior-boundary B} and \ref{figure:improper overlap 2}. Among these configurations, the one and only obstruction arises when $\beta_j'$ is properly embedded and an orientation-reversing (shaded) region as seen in Figure~\ref{figure:improper overlap 2} is created again.
\end{proof}

\begin{figure}[ht]
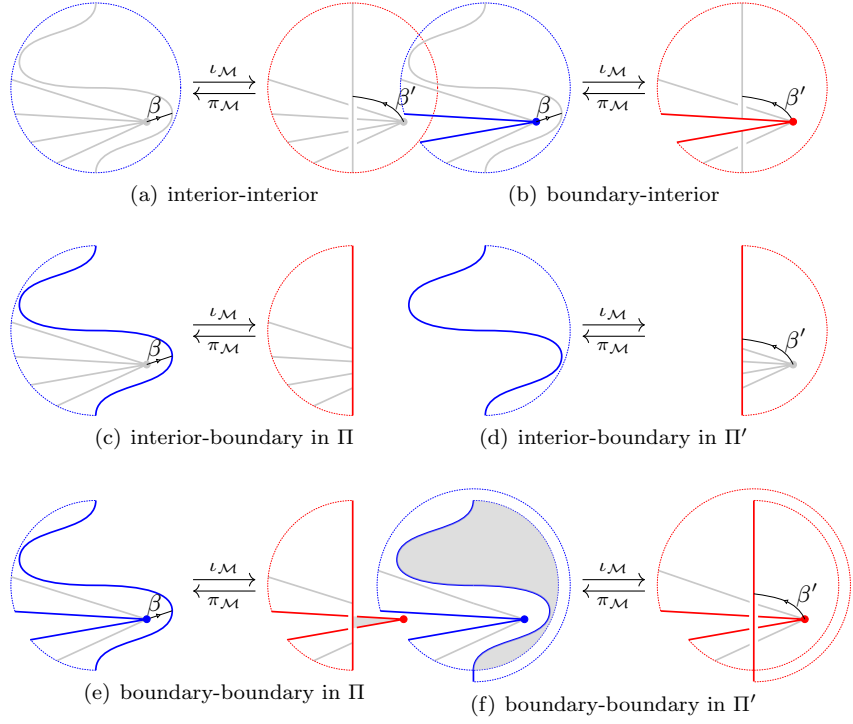

\subfigure[\label{figure:interior-interior}interior-interior]{\makebox[0.4\textwidth]{
$
\begin{tikzcd}[ampersand replacement=\&]
\vcenter{\hbox{\small\def\svgscale{0.8}\input{beta_input.tex}}}\ar[r,"\iota_\cM",shift left=.5ex]\&
\vcenter{\hbox{\small\def\svgscale{0.8}\input{beta_prime_input.tex}}}\ar[l,"\pi_\cM",shift left=.5ex]
\end{tikzcd}
$
}}
\subfigure[\label{figure:boundary-interior}boundary-interior]{\makebox[0.4\textwidth]{
$
\begin{tikzcd}[ampersand replacement=\&]
\vcenter{\hbox{\small\def\svgscale{0.8}\input{beta_3_input.tex}}}\ar[r,"\iota_\cM",shift left=.5ex]\&
\vcenter{\hbox{\small\def\svgscale{0.8}\input{beta_3_prime_input.tex}}}\ar[l,"\pi_\cM",shift left=.5ex]
\end{tikzcd}
$
}}
\subfigure[\label{figure:interior-boundary A}interior-boundary in $\Pi$]{\makebox[0.4\textwidth]{
$
\begin{tikzcd}[ampersand replacement=\&]
\vcenter{\hbox{\small\def\svgscale{0.8}\input{beta_2_input.tex}}}\ar[r,"\iota_\cM",shift left=.5ex]\&
\vcenter{\hbox{\small\def\svgscale{0.8}\input{beta_2_prime_input.tex}}}\ar[l,"\pi_\cM",shift left=.5ex]
\end{tikzcd}
$
}}
\subfigure[\label{figure:interior-boundary B}interior-boundary in $\Pi'$]{\makebox[0.4\textwidth]{
$
\begin{tikzcd}[ampersand replacement=\&]
\vcenter{\hbox{\small\def\svgscale{0.8}\input{beta_2-1_input.tex}}}\ar[r,"\iota_\cM",shift left=.5ex]\&
\vcenter{\hbox{\small\def\svgscale{0.8}\input{beta_2-1_prime_input.tex}}}\ar[l,"\pi_\cM",shift left=.5ex]
\end{tikzcd}
$
}}
\subfigure[\label{figure:improper overlap}boundary-boundary in $\Pi$]{\makebox[0.4\textwidth]{
$
\begin{tikzcd}[ampersand replacement=\&]
\vcenter{\hbox{\small\def\svgscale{0.8}\input{beta_4_input.tex}}}\ar[r,"\iota_\cM",shift left=.5ex]\&
\vcenter{\hbox{\small\def\svgscale{0.8}\input{beta_4_prime_input.tex}}}\ar[l,"\pi_\cM",shift left=.5ex]
\end{tikzcd}
$
}}
\subfigure[\label{figure:improper overlap 2}boundary-boundary in $\Pi'$]{\makebox[0.4\textwidth]{
$
\begin{tikzcd}[ampersand replacement=\&]
\vcenter{\hbox{\small\def\svgscale{0.8}\input{beta_5_input.tex}}}\ar[r,"\iota_\cM",shift left=.5ex]\&
\vcenter{\hbox{\small\def\svgscale{0.8}\input{beta_5_prime_input.tex}}}\ar[l,"\pi_\cM",shift left=.5ex]
\end{tikzcd}
$
}}
\caption{Inverse images $f^{-1}(\beta\cup \sL)$ and $f'^{-1}(\beta'\cup \sL')$ in components of $f^{-1}(\sU)\subset\Pi$ and $f'^{-1}(\sU')\subset\Pi'$}
\label{figure:beta_0}
\end{figure}

\begin{remark}\label{remark:small implies no obstructions}
The existence of a properly embedded component in $f'^{-1}(\beta')$ implies that $z(\sfg')>z(\sfc_m')$ and therefore $\sfg'\in\sG'_{>\sfc}$ by the area condition.

A similar result holds for $f$. That is, if there is a properly embedded component in $f^{-1}(\beta)$, then $\sfg\in\sG_{>\sfc}$.
Therefore, for $\sfg\in\sG_{<\sfc}$ and $\sfg'\in\sG_{<\sfc}'$, all $f\in\cM(\sfg)$ and $f'\in\cM(\sfg')$ satisfy the hypothesis of the above lemma.
\end{remark}

As a corollary of Lemma~\ref{lem:adm_obstruction}, we can prove Proposition~\ref{proposition:commutativity}(1).

\begin{proof}[Proof of Proposition~{\rm\ref{proposition:commutativity}(1)}]
Since $\iota_*$ and $\pi_*$ are isomorphisms between graded algebras and inverse to each other, it suffices to prove the commutativity with differentials only for $\iota_*$.

By Lemmas~\ref{lemma:infinitesimal correspondence} and \ref{lem:adm_obstruction}, there exists a bijection $\iota_\cM:\cM(\sfg)\to\cM(\sfg')$ preserving $\cP$. Therefore by the definition of the differential in (\ref{equation:differential another form})
\[
\iota_*(\partial(\sfg))=\sum_{f\in\cM(\sfg)}\iota_*(\cP(f))=\sum_{f\in\cM(\sfg)}\cP(\iota_\cM(f))=\sum_{f'\in\cM(\iota(\sfg))}\cP(f')=\partial'(\iota(\sfg)).\qedhere
\]
\end{proof}

\subsection{Hybrid admissible disks}\label{section:hybrid disks}
Let $\bar\Lambda\coloneqq \Lambda\cup\Lambda'$ be the union of $\Lambda$ and $\Lambda'$, and let its projection be $\bar\sL$.
Then the corresponding neighborhoods $\sU, \bar\sU$ and $\sU'$ of $\sL, \bar\sL$ and $\sL'$ look as follows:
\begin{figure}[ht]
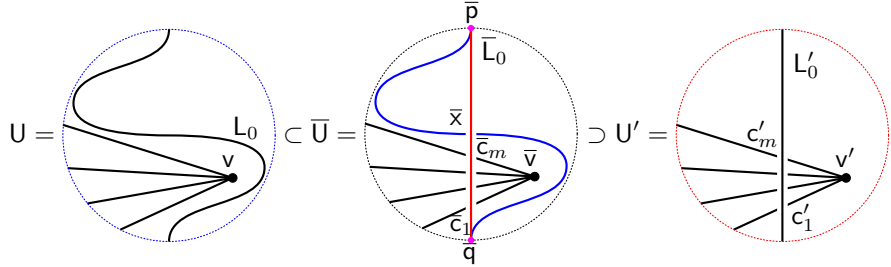
\label{fig:nbhd}
\[
\sU=\vcenter{\hbox{\input{nbhd_v_input.tex}}}\subset
\bar\sU=\vcenter{\hbox{\input{nbhd_bar_v_input.tex}}}
\supset\sU'=\vcenter{\hbox{\input{nbhd_v_prime_input.tex}}}
\]
\caption{The neighborhoods $\sU, \bar\sU$ and $\sU'$}\label{fig:hybrid disks}
\end{figure}

Notice that $\bar\sL$ in $\bar\sU$ has the following information (see Figure~\ref{fig:hybrid disks}):
\begin{enumerate}
\item double points $\{\bar\sfc_1,\dots,\bar\sfc_m\}$ coming from $\sL'$;
\item one additional double point $\bar\sfx$ at the center, which corresponds to the intersection of $\sL_0$ and $\sL_0'$;
\item two points $\bar \sfp$ (upper) and $\bar \sfq$ (lower) where $\sL_0$ and $\sL_0'$ are tangent.
\end{enumerate}

\begin{remark}
By declaring $\bar\sfp$ and $\bar\sfq$ to be trivalent vertices, we may regard $\bar\Lambda$ as a spatial graph whose edges are all Legendrians.
However, it is {\em not} a Legendrian graph in our definition since it violates the non-tangency condition of half-edges.

Instead, one may regard $\bar\sL$ as a Legendrian graph together with certain smoothing information.
\end{remark}

We denote the sets of vertices, double points, special points and generators by 
\begin{align*}
&\bar\sV,&
\bar\sC_{\bar\sfx}&\coloneqq\bar\sC\amalg\{\bar\sfx\},&
\bar\sS_{\bar\sfx}&\coloneqq\bar\sS\amalg\{\bar\sfx\},&
\bar\sG_{\bar\sfx}&\coloneqq\bar\sG\amalg\{\bar\sfx\}.
\end{align*}
Then $\bar\sV, \bar\sC, \bar\sS$ and $\bar\sG$ can be identified with $\sV',\sC', \sS'$ and $\sG'$.
Moreover, we may regard $\bar\sG$ as the generating set of the stabilized DGA $\bar\cA$.

Let $\bar A_{\bar\sfx}$ be the free associative unital graded algebra over $\ZZ$ generated by $\bar\sG_{\bar\sfx}$.
We define a grading $|\cdot|:\bar A_{\bar\sfx}\to\fR$, a differential $\bar\partial_{\bar\sfx}:\bar A_{\bar\sfx}\to \bar A_{\bar\sfx}$ and a projection $\pi_{\bar\sfx}:\bar A_{\bar\sfx}\to \bar A$ by
\begin{align*}
|\bar\sfg|&\coloneqq \begin{cases}
|\sfg'|&\bar\sfg\neq\bar\sfx;\\
0 & \bar\sfg=\bar\sfx,
\end{cases}
&
\bar\partial_{\bar\sfx}(\bar\sfg)&\coloneqq\begin{cases}
\bar\partial(\bar\sfg)&\bar\sfg\neq\bar\sfx;\\
0 & \bar\sfg=\bar\sfx,
\end{cases}
&
\pi_{\bar\sfx}(\bar\sfg)&\coloneqq\begin{cases}
\bar\sfg & \bar\sfg\neq\bar\sfx;\\
0 & \bar\sfg=\bar\sfx.
\end{cases}
\end{align*}
Then both the inclusion $\bar\cA\to\bar\cA_{\bar\sfx}$ and the projection $\pi_{\bar\sfx}:\bar\cA_{\bar\sfx}\to\bar\cA$ are DGA morphisms.

\begin{definition}
Let $f:(\Pi,\partial\Pi,\vPi)\to(\RR^2,\bar\sL,\bar\sS_{\bar\sfx})$ be a differentiable disk. We call $\be\in\ePi$
\begin{enumerate}
\item {\em $(-)$-folding} if $f|_{\be}$ has a unique critical point $\bx\in\be$ such that $f$ maps a neighborhood of $\bx$ as follows:
\[
\begin{tikzcd}
\vcenter{\hbox{\includegraphics{polygon_edge_singular_0.pdf}}}\ar[r,"f"]&
\vcenter{\hbox{\includegraphics{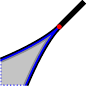}}}
\end{tikzcd}
\]
\item {\em $(+)$-folding} if $\be$ is $2$-folding in Definition~\ref{def:folding edges};
\item {\em $(\pm)$-folding} if $f|_{\be}$ has two critical points which are locally $(+)$-folding and $(-)$-folding, respectively.
\end{enumerate}
\end{definition}

A $(-)$-folding edge never occurs in our usual situation since there are no corners of angle 0 bounded by a Legendrian graph.
However, in our current situation, there are two such corners in $\bar\sL$, which are $\bar \sfp$ and $\bar \sfq$.
In other words, if $f$ has a $(-)$-folding edge $\be$ with critical point $\bx\in\be$, then $f(\bx)=\bar \sfp$ or $\bar \sfq$.

\begin{notation}
Let us identify the boundary $\partial \Pi_t$ of a $(t+1)$-gon with the quotient space of the interval $[0,t+1]$.
\[
\partial \Pi_t\simeq [0,t+1]/0\sim (t+1),
\]
and assume that the integers in $[0,t+1]$ correspond to the vertices.
Then for $a,b\in[0,t+1]$, we denote the open or closed subarc in $\partial\Pi_t$ by $(a,b)$ or $[a,b]$. Note that 
\begin{align*}
[0,t+1]&=\partial \Pi_t,&
[t+1,t+1]&=\bv_0.
\end{align*}
\end{notation}

\begin{definition}[hybrid admissible disk]\label{def:hybrid admissible disk}
We say that $f$ is a {\em hybrid admissible} disk if it satisfies the following:

\begin{enumerate}
\item $f$ is smooth and orientation-preserving on $\rPi$ which extends to $\ePi$;
\item All vertices are either convex, concave, or neutral;
\item All edges are either smooth, $(+)$-folding, $(-)$-folding, or $(\pm)$-folding; 
\item $\bv_0$ is positive;
\item There is a distinguished point $\separator\in[0,t+1]$, called the {\em separator}, such that 
\begin{align*}
f([\separator,t+1])&\subset\sL\subset\bar\sL,&
f([0,\separator])&\subset\sL'\subset\bar\sL.
\end{align*}
We call $f$ either
\begin{enumerate}
\item {\em vertex-separated} if $\separator\in\vPi$; or 
\item {\em edge-separated} if $\separator\in\ePi$ and $f(\separator)\in\{\bar\sfp,\bar\sfq\}$.
\end{enumerate}
\end{enumerate}
\end{definition}
\begin{figure}[ht]
\[
\begin{tikzcd}
\sL&\ar[l,"f"'] [\separator,t+1]=\left\{\vcenter{\hbox{\input{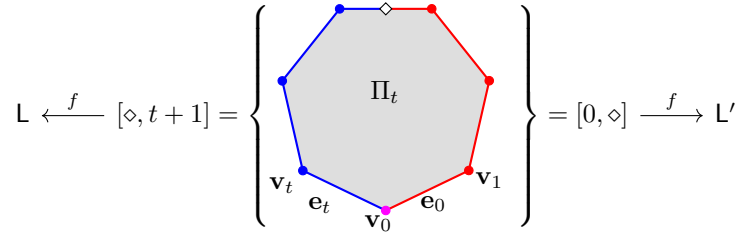}}}\right\}=[0,\separator]\ar[r,"f"]& \sL'
\end{tikzcd}
\]
\caption{A schematic picture of an edge-separated hybrid disk}
\end{figure}

\begin{remark}\label{rmk:separator}
Suppose that $f$ has a $(-)$-folding edge $\be$. Then its critical point $\bx\in\be$ must be sent to either $\bar \sfp$ or $\bar \sfq$ via $f$ and $\bx=\separator$. Therefore $f$ should be edge-separated.
\end{remark}

As before, the canonical label $\tilde f$ is well-defined so that it fits into the commutative diagram as follows:
\[
\begin{tikzcd}[column sep=3pc]
& \bar\sG_{\bar\sfx}\ar[d]\\
\vPi \ar[r,"f|_\vPi"']\ar[ru,"\exists\tilde f", dashed]&\bar\sS_{\bar\sfx}.
\end{tikzcd}
\]
Notice that $\tilde f$ may hit $\bar\sfx$ only if $f$ is vertex-separated. Otherwise one can always regard $\tilde f(\bv_i)$ as an element in $\sG'$ if $i\in(0,\separator)$ or in $\sG$ if $i\in(\separator,t+1)$, respectively.
As before, we assume that $\tilde f$ is extended freely to words on $\vPi$.

The $\ZZ$ and $\fR$-degrees for $f$ are defined as before,
\begin{align*}
[D(\partial f)]&=|f|_\ZZ\cdot 1_{\gr}\in\pi_1(\gr(1,\RR^2))\simeq\ZZ,&
|f|&\coloneqq |f|_\ZZ\cdot 1_\fR\in\fR.
\end{align*}

\begin{proposition}\label{prop:degree_hybrid}
The degree of a hybrid admissible disk $f$ is given as follows:
\[
|f|_\ZZ=1-\#((-)\text{-folding edges})+\#((+)\text{-folding edges})+\#(\text{concave vertices}).
\]
Moreover, 
\[
|f|=\left|\tilde f(\bv_0)\right|_\fP-\sum_{r=1}^t \left|\tilde f(\bv_r)\right|_\fP.
\]
\end{proposition}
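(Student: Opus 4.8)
The plan is to prove both formulas by transplanting the arguments of Proposition~\ref{proposition:degree admissible disk}(1) and Proposition~\ref{proposition:degree formula} to the hybrid setting; the only genuinely new inputs are the local winding contributed by a $(-)$-folding critical point and the bookkeeping forced by the separator $\separator$ (including the possible occurrence of the central crossing $\bar\sfx$ as a vertex label). Since a hybrid disk is \emph{regular-like}—smooth and orientation-preserving on $\rPi$, with a single positive vertex $\bv_0$ and only folding singularities on edges—the infinitesimal half of the earlier arguments is not needed.

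For the integral formula, first I would introduce the collar curve $\gamma_f$ bounding $\partial\Pi$ exactly as in Proposition~\ref{proposition:degree admissible disk}(1); since it is smooth and makes one full turn, $[D\gamma_f]=2\cdot 1_{\gr}$, and $D\gamma_f$ is homotopic to $D(\partial f)$ away from the neighborhoods $\bU_\bv$ and the edge critical points. The vertex discrepancies are identical to the regular case and sum to $-\#(\text{positive vertices})+\#(\text{concave vertices})=-1+\#(\text{concave vertices})$, since $\bv_0$ is the only positive vertex. A $(+)$-folding point has the same local model as a $2$-folding edge and contributes $+1$. The single new computation is that of a $(-)$-folding point, whose image is an angle-$0$ corner at $\bar\sfp$ or $\bar\sfq$: there $D\gamma_f$ performs a rotation opposite in sign to the $(+)$-folding case while $D(\partial f)$ remains constant, so the contribution is $-1$. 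A $(\pm)$-folding edge carries one critical point of each type, so its two contributions cancel and it correctly does not appear; adding $[D\gamma_f]=2$ to all discrepancies yields the stated expression.

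For the second formula I would repeat the reference-curve argument of Proposition~\ref{proposition:degree formula}. Writing $\sfg_r\coloneqq\tilde f(\bv_r)$, I choose oriented reference curves $\gamma^{\pm}_{\sfg_r}$ computing $|\sfg_r|_\fP$ via (\ref{eq:Gr1}) and satisfying the matching relations (\ref{eqn:reference curve}); the only change is that $\partial f$ now lies in $\sL'$ on $[0,\separator]$ and in $\sL$ on $[\separator,t+1]$, so the reference curves are taken inside $\bar\sL$ and their half-edge endpoints matched across the separator. Because $\sL$ and $\sL'$ coincide outside $\bar\sU$ and the potentials correspond by Theorem~\ref{theorem:equivalence on Legendrian graphs with potentials}, the half-edge identities (\ref{eqn:half edges}) persist, the $\fP$-dependent terms telescope, and one is left with $|\tilde f(\bv_0)|_\fP-\sum_{r=1}^t|\tilde f(\bv_r)|_\fP=n_{\sfg_0}\cdot 1_\fR-\sum_{r=1}^t n_{\sfg_r}\cdot 1_\fR$. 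It then remains to identify the right-hand side with $|f|=|f|_\Z\cdot 1_\fR$ by recognizing $n_{\sfg_0}-\sum_{r=1}^t n_{\sfg_r}$ as $[D(\partial f)]$, exactly the concatenation computation already carried out in the regular case; when some $\sfg_r=\bar\sfx$ (possible only in the vertex-separated case) one uses $|\bar\sfx|_\fP=0$ together with the fact that the transverse crossing $\bar\sfx$ contributes no net winding.

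The main obstacle will be the local sign analysis of the $(-)$-folding contribution at the angle-$0$ corners $\bar\sfp,\bar\sfq$, and, relatedly, checking that the reference curves can be chosen compatibly across an edge-separated separator so that the telescoping of (\ref{eqn:half edges}) still goes through unchanged. Once this single sign computation is pinned down and the separator bookkeeping—including the $\bar\sfx$ case—is verified, every remaining step is formally identical to the proofs already established for regular admissible disks.
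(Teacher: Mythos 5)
Your proposal is correct and follows essentially the same route as the paper: the paper's own proof simply declares the argument to be that of Propositions~\ref{proposition:degree admissible disk} and \ref{proposition:degree formula}, with the two new ingredients being exactly the ones you isolate — a $(-)$-folding critical point contributes $-1$ (a $(-\pi)$-rotation relative to the smooth/collar model at the tangency $\bar\sfp$ or $\bar\sfq$), and the two critical points of a $(\pm)$-folding edge cancel. Your only cosmetic deviation is bookkeeping: you place the extra $\pm\pi$ on the collar curve $D\gamma_f$ while the paper phrases it as a rotation of $[D(\partial f)]$, but both accountings yield the same discrepancy count, and your separator/reference-curve discussion for the second formula fills in details the paper leaves implicit.
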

\begin{proof}
The proof is essentially the same as the proofs of Propositions~\ref{proposition:degree formula} and \ref{proposition:degree admissible disk}. 
A new effect arises from $(-)$-folding edges which contribute an additional $(-\pi)$-rotation on $[D(\partial f)]$. Note that a $(\pm)$-folding edge has no contribution to $D(\partial f)$, since the effects from the two critical points on the edge cancel out. This completes the proof.
\end{proof}

\begin{remark}\label{remark:role of negative folding edge}
As seen in this formula, we may think that the negative folding edge plays the role of the element of degree $-1$.
\end{remark}

\begin{definition}[Equivalence of hybrid admissible disks]
Two hybrid admissible disks are {\em equivalent} if they are the same up to a one-parameter family preserving the combinatorial structure in the sense of Definition~\ref{definition:equivalence of admissible disks}.
\end{definition}

\begin{notation}\label{not:hybrid disks}
We denote the set of equivalence classes of all hybrid admissible disks of degree at most 1 by $\scrM$ and define the following sets:
\begin{align*}
\scrM_t&\coloneqq\{f\in\scrM\mid \domain(f)=\Pi_t\},&
\scrM_{t,s}&\coloneqq\{f\in\scrM_t\mid \separator\in[s,s+1)\};\\
\scrM_{(i)}&\coloneqq\{f\in\scrM\mid |f|_\ZZ=i\},&
\scrM(\bar\sfg)&\coloneqq\{f\in\scrM\mid \tilde f(\bv_0)=\bar\sfg\}.
\end{align*}

As before, their intersections will be denoted by using several decorations.
\end{notation}

\begin{definition}
A hybrid admissible disk $f\in\scrM_t$ is said to be
\begin{enumerate}
\item {\em leftmost} or {\em rightmost} if $\separator$ is $0$ or $(t+1)$, respectively\footnote{Here we regard $0$ and $(t+1)$ as different points. In other words, $0$ is the left-end and $(t+1)$ is the right-end of the boundary $\partial\Pi_t=[0,t+1]$.}; 
\item {\em left-rigid} if $f((\separator-\epsilon, \separator))\subset \sL_0'$ for some $\epsilon>0$;
\item {\em right-rigid} if $f((\separator, \separator+\epsilon))\subset \sL_0$ for some $\epsilon>0$;
\item {\em bi-rigid} if $f$ is both left- and right-rigid;
\item {\em flexible} if $\separator \neq 0$ and $f((\separator-\epsilon,\separator+\epsilon))\subset \sL\cap \sL'$ for some $\epsilon>0$.
\end{enumerate}
We also say that a left-rigid or right-rigid disk is {\em strict} if it is not bi-rigid.
\end{definition}

One can easily see that being -most, -rigid, and flexible form a trichotomy, and moreover, all edge-separated disks are left- or right-rigid. By using this observation, we can decompose $\scrM$ as follows.
\begin{cornot}[Decomposition of hybrid admissible disks]\label{corollary:classification of hyb disks}
The following holds:
\begin{enumerate}
\item $\scrM_{(0)}=\scrM^{\negativefolding}\coloneqq\{f\in\scrM\mid f\text{\rm\ has one $(-)$-folding edge only}\}$;
\item $\scrM_{(1)}$ can be decomposed into three subsets:
\begin{enumerate}
\item $\scrM^{\pmfolding}\coloneqq\{f\in\scrM\mid f\text{\rm\ has $(-)$- and $(+)$-folding edges or a $(\pm)$-folding edge}\}$;
\item $\scrM^{\negativefoldingconcave}\coloneqq\{f\in\scrM\mid f\text{\rm\ has a $(-)$-folding edge and a concave vertex}\}$;
\item $\scrM^{\emptyset}\coloneqq\{f\in\scrM\mid f\text{\rm\ has no folding edges and no concave vertices}\}$ which can be de\-com\-posed further into the following seven subsets:
\begin{enumerate}
\item $\scrM^{\leftrigid}\coloneqq\{f\in\scrM\mid f\text{\rm\ is edge-separated and strictly left-rigid}\}$;
\item $\scrM^{\rightrigid}\coloneqq\{f\in\scrM\mid f\text{\rm\ is edge-separated and strictly right-rigid}\}$;
\item $\scrM^{\leftmost}\coloneqq\{f\in\scrM\mid f\text{\rm\ is vertex-separated and leftmost}\}$;
\item $\scrM^{\rightmost}\coloneqq\{f\in\scrM\mid f\text{\rm\ is vertex-separated and rightmost}\}$;
\item $\scrM^{\vertexleftrigid}\coloneqq\{f\in\scrM\mid f\text{\rm\ is vertex-separated and strictly left-rigid}\}$;
\item $\scrM^{\vertexbirigid}\coloneqq\{f\in\scrM\mid f\text{\rm\ is vertex-separated and bi-rigid}\}$;
\item $\scrM^{\flexible}\coloneqq\{f\in\scrM\mid f\text{\rm\ is vertex-separated and flexible}\}$.
\end{enumerate} 
\end{enumerate}
\end{enumerate}
\end{cornot}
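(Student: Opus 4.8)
The plan is to read the decomposition off the degree formula of Proposition~\ref{prop:degree_hybrid} together with a single structural constraint on $(-)$-folding critical points. Write $a$, $b$, $p$, $c$ for the numbers of pure $(-)$-folding edges, pure $(+)$-folding edges, $(\pm)$-folding edges, and concave vertices of $f$, respectively, so that Proposition~\ref{prop:degree_hybrid} reads $|f|_\Z = 1 - a + b + c$, a $(\pm)$-folding edge contributing $0$. The key constraint is that $f$ has at most one $(-)$-folding critical point: by Remark~\ref{rmk:separator} every such point is sent to $\bar\sfp$ or $\bar\sfq$ and must coincide with the separator $\separator$, which is unique. Since a pure $(-)$-folding edge and a $(\pm)$-folding edge each carry exactly one $(-)$-folding critical point, this says $a + p \le 1$.

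First I would treat the degree-$0$ case. The equation $0 = 1 - a + b + c$ gives $a = 1 + b + c \ge 1$, while $a \le a + p \le 1$ forces $a = 1$ and hence $b = c = 0$ and $p = 0$. Thus $f$ has a single pure $(-)$-folding edge and nothing else, which is exactly $\scrM^{\negativefolding}$, proving (1). For degree $1$ the equation becomes $a = b + c$. If $a = 1$ then $p = 0$ and $b + c = 1$: either $(b,c)=(1,0)$, a pure $(-)$- together with a pure $(+)$-folding edge, or $(b,c)=(0,1)$, a pure $(-)$-folding edge with a concave vertex; these are part of $\scrM^{\pmfolding}$ and all of $\scrM^{\negativefolding \concavevertex}$. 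If $a = 0$ then $b = c = 0$ and $p \in \{0,1\}$: the case $p = 1$ is a single $(\pm)$-folding edge, which together with the previous $(b,c)=(1,0)$ case fills out $\scrM^{\pmfolding}$, and the case $p = 0$ is the folding-free, concave-free locus $\scrM^{\emptyset}$. This establishes (2a)–(2c).

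It remains to refine $\scrM^{\emptyset}$ into the seven pieces (i)–(vii). Here I would combine the trichotomy recorded just above the statement — every hybrid disk is exactly one of \emph{-most}, \emph{-rigid}, or \emph{flexible}, and every edge-separated disk is \emph{-rigid} — with the vertex/edge-separated dichotomy, and then eliminate the combinations that cannot occur inside $\scrM^{\emptyset}$ by a local analysis at the separator. The decisive exclusions are: an edge-separated \emph{bi-rigid} disk enters and leaves the separator along the two strands $\sL$ and $\sL'$, which are tangent at $\bar\sfp$ or $\bar\sfq$, creating an angle-$0$ corner, i.e.\ a $(-)$-folding edge, and is therefore not in $\scrM^{\emptyset}$; this leaves only the strictly left- and right-rigid edge-separated disks $\scrM^{\leftrigid}$, $\scrM^{\rightrigid}$. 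For vertex-separated disks one enumerates the admissible local pictures at $\separator$, recovering the leftmost, rightmost, strictly left-rigid, bi-rigid, and flexible types $\scrM^{\leftmost}$, $\scrM^{\rightmost}$, $\scrM^{\vertexleftrigid}$, $\scrM^{\vertexbirigid}$, $\scrM^{\flexible}$, and checking that the strictly right-rigid vertex-separated configuration is incompatible with the orientation-preserving condition and the positivity of $\bv_0$, so that it does not appear. This last enumeration is the main obstacle: it rests on the explicit local geometry of the $\rm(IV_a)$ move near $\bar\sfv$ (depicted by the seven schematic decorations), and care is needed to verify both that each listed type is realizable and that no further type survives.
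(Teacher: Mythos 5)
Your degree count is correct and in fact supplies the details the paper dismisses as ``easy to see'': the formula of Proposition~\ref{prop:degree_hybrid} written as $|f|_\Z=1-a+b+c$ (with a $(\pm)$-folding edge contributing $0$), together with the constraint $a+p\le 1$ coming from Remark~\ref{rmk:separator} (every $(-)$-folding critical point maps to $\bar\sfp$ or $\bar\sfq$ and must coincide with the unique separator), does yield parts (1) and (2a)--(2c) exactly as you argue. Your first exclusion also matches the paper: an edge-separated bi-rigid disk necessarily carries a $(-)$- or $(\pm)$-folding edge from the angle-zero corner at $\bar\sfp$ or $\bar\sfq$, so it cannot lie in $\scrM^{\emptyset}$.

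The gap is in the remaining exclusion, of vertex-separated \emph{strictly right-rigid} disks. You propose to rule these out by ``the orientation-preserving condition and the positivity of $\bv_0$,'' but neither condition is the operative obstruction, and you concede the enumeration is left open. The correct argument (and the one the paper uses) is an incidence argument about special points, not about orientations or signs: right-rigidity means $f((\separator,\separator+\epsilon))\subset\sL_0$, so a vertex-separator must be a special point of $\bar\sL$ lying on the strand $\bar\sL_0$ belonging to $\sL$ alone. Inspecting $\bar\sU$ in Figure~\ref{fig:hybrid disks}, the only such special point is $\bar\sfx$: the crossings $\bar\sfc_i$ lie on $\sL_0'$ and the half-edges, and $\bar\sfv$ lies on the common part $\sL\cap\sL'$. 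But a disk whose separator maps to $\bar\sfx$ is automatically \emph{bi-rigid}, because near $\bar\sfx$ the projection $\sL'$ consists only of $\sL_0'$ and $\sL$ only of $\sL_0$, so the boundary arcs immediately before and after the separator are forced into $\sL_0'$ and $\sL_0$, respectively; hence it is not strictly right-rigid. Without this incidence argument (or an equivalent one) your case analysis cannot be closed, so as written the proposal does not establish the seven-fold decomposition in (2c).
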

\begin{proof}
It is easy to see that the following decompositions hold:
\begin{align*}
\scrM&=\scrM_{(0)} \amalg \scrM_{(1)},&
\scrM_{(1)}&=\scrM^{\pmfolding} \amalg \scrM^{\negativefoldingconcave} \amalg \scrM^{\emptyset}.
\end{align*}

It is also obvious that (i)-(vii) are mutually exclusive subsets of $\scrM^{\emptyset}$. In order to verify that their union is $\scrM^{\emptyset}$, it remains to prove that
\begin{enumerate}
\item there are no edge-separated bi-rigid admissible disks in $\scrM^\emptyset$;
\item there are no vertex-separated strictly right-rigid admissible disks in $\scrM^\emptyset$.
\end{enumerate}

The first claim follows since an admissible disk is edge-separated and bi-rigid admissible if and only if it has either a $(-)$ or a $(\pm)$-folding edge and so is not contained in $\scrM^\emptyset$.

On the other hand, vertex-separated right-rigidity requires a special point involving $\bar\sL_0$ and the only such special point is precisely $\bar\sfx$. However, any disk hitting $\bar\sfx$ at the separator is bi-rigid and therefore there are no such special points which completes the proof of the second claim.
\end{proof}

\begin{remark}
Note that $f\in \scrM^{\vertexleftrigid}$ satisfies $f(\separator)=\bar\sfc_i$ for some $i=1,\dots,m$, while $f\in\scrM^{\vertexbirigid}$ satisfies $f(\separator)=\bar\sfx$.
\end{remark}

\begin{table}[ht]
\renewcommand{\arraystretch}{1.5}
\setlength{\tabcolsep}{6pt}
\begin{tabular}{c||c|c||c|c|c||c}
$\scrM$ &\multicolumn{2}{c||}{-most}& \multicolumn{3}{c||}{-rigid} & flexible\\
\cline{2-6}
& Left & Right & Str. left- & bi- & Str. right-\\
\hline\hline
Edge-sep. & \multicolumn{2}{c||}{$\emptyset$} & $\scrM^{\leftrigid}$ & $\scrM^{\negativefolding}\amalg\scrM^{\pmfolding}\amalg\scrM^{\negativefoldingconcave}$ & $\scrM^{\rightrigid}$ & $\emptyset$\\
\hline\hline
Vertex-sep. & $\scrM^{\leftmost}$ & $\scrM^{\rightmost}$ & $\scrM^{\vertexleftrigid}$ & $\scrM^{\vertexbirigid}$ & $\emptyset$ & $\scrM^{\flexible}$
\end{tabular}
\caption{A decomposition of $\scrM$}
\label{table:decomposition of hyb disks}
\end{table}

\begin{figure}[ht]
\subfigure[Local configurations of $\scrM^{\leftrigid}$\label{figure:left-rigid}]{\makebox[0.45\textwidth]{
$
\vcenter{\hbox{\includegraphics[scale=0.8]{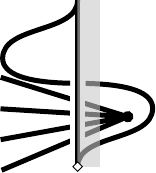}}}\quad
\vcenter{\hbox{\includegraphics[scale=0.8]{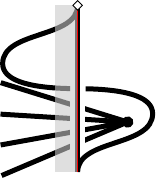}}}
$
}
}
\subfigure[Local configurations of $\scrM^{\rightrigid}$\label{figure:right-rigid}]{\makebox[0.45\textwidth]{
$
\vcenter{\hbox{\includegraphics[scale=0.8]{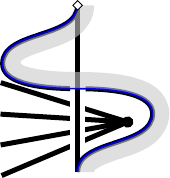}}}\quad
\vcenter{\hbox{\includegraphics[scale=0.8]{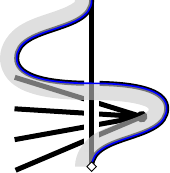}}}
$
}
}
\caption{Strict left-right or strict right-rigid disks}
\label{figure:half-rigid disks}
\end{figure}

\begin{lemma}
For each $\bar\sfg$, the set $\scrM(\bar\sfg)$ is finite.
\end{lemma}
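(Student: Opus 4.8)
The plan is to adapt the action/area argument of Proposition~\ref{proposition:finiteness} to the hybrid setting. First I would extend the action function $z$ from $\sG_\sL$ to $\bar\sG_{\bar\sfx}$ by the same recipe: for a double point (including $\bar\sfx$) set $z$ to be the absolute difference of the $z$-coordinates of its two lifts to $\bar\Lambda=\Lambda\cup\Lambda'$, and for a vertex generator $\bar\sfv_{i,\ell}$ set $z=0$. Since a hybrid admissible disk $f$ is orientation-preserving on $\rPi$ by condition (1) of Definition~\ref{def:hybrid admissible disk}, the pullback area $\int_{\Pi}f^{*}(dx\wedge dy)$ is non-negative and equals the sum, over the bounded components $\bar\sD_j$ of $\R^2\setminus\bar\sL$, of $\area(\bar\sD_j)$ weighted by the covering multiplicity. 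The two tangency corners $\bar\sfp,\bar\sfq$ and the $(-)$-folding edges carry no $z$-jump, so the piecewise lift of $\partial f$ to $\bar\Lambda$ is well-defined and Stokes' theorem yields
\[
z(\bar\sfg) = \sum_{r=1}^{t} z\bigl(\tilde f(\bv_r)\bigr) + \int_{\Pi} f^{*}(dx\wedge dy),
\]
the only modification coming from the small triangular regions $C_i'$ inside $\bar\sU$, whose total area is negligible by Assumption~\ref{assump:action}.

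Second, I would use this identity to bound the combinatorial complexity of $f$. Exactly as in Proposition~\ref{proposition:finiteness}, every negative corner at a double point forces $f$ to cover a bounded region of definite positive area, and every unit of winding $\ell$ at a vertex corner $\bar\sfv_{i,\ell}$ forces $f$ to cover $\ell$ distinct sector regions, each of area bounded below away from the $C_i'$. Since the right-hand side above is at most the fixed constant $z(\bar\sfg)+\sum_i\area(C_i')$, this bounds the number $t$ of corners and all winding numbers $\ell$ appearing in the canonical label $\tilde f$.

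Third, the degree hypothesis $|f|_\Z\le 1$ together with the degree formula of Proposition~\ref{prop:degree_hybrid} bounds the number of $(+)$- and $(-)$-folding edges and concave vertices; indeed Remark~\ref{rmk:separator} shows there is at most one $(-)$-folding behaviour, forced to sit at the separator. With $t$ bounded, the position of the separator, the sign data, and the combinatorial type of $\bL_f\cup\bO_f$ each taking only finitely many values, only finitely many equivalence classes survive, in agreement with the explicit decomposition of Corollary/Notation~\ref{corollary:classification of hyb disks}. The degenerate case $z(\bar\sfg)=0$, i.e.\ $\bar\sfg$ a vertex generator, must be handled separately: the displayed identity then forces $\int_{\Pi}f^{*}(dx\wedge dy)=0$, so $f$ is supported in an arbitrarily small neighbourhood of $\bar\sfv$ and reduces to the infinitesimal classification, which is already finite.

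The main obstacle I anticipate is the Stokes bookkeeping across the separator. One must verify that switching the boundary between $\sL$ and $\sL'$, passing through the angle-zero corners $\bar\sfp,\bar\sfq$ and across $(-)$-folding edges, introduces neither a positive $z$-jump nor a negative area contribution, so that the clean relation above genuinely holds up to the controllable $C_i'$ terms. This is precisely where the local geometry of the move $\rm(IV_a)$ and the smallness guaranteed by Assumption~\ref{assump:action} are essential; once it is in place, the finiteness follows by the same energy bound as in the non-hybrid case.
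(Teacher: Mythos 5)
Your proposal is correct and follows essentially the same route as the paper: the paper's own proof simply observes that the separator image is constrained to a special point or to $\{\bar\sfp,\bar\sfq\}$ (hence $\scrM(\bar\sfg)$ is discrete up to one-parameter families preserving combinatorial structure) and then invokes ``exactly the same argument'' as Proposition~\ref{proposition:finiteness}, which is precisely the action/area bookkeeping you carry out in detail. Your expansion, including the check that $\bar\sfp$, $\bar\sfq$ and the $(-)$-folding edges contribute no $z$-jump and the separate treatment of the degenerate vertex-generator case, is a faithful (and more explicit) rendering of the paper's argument.
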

\begin{proof}
As seen above, the image of the separator $f(\separator)$ is either a special point or in $\{\bar\sfp,\bar\sfq\}$.
Therefore up to one-parameter families preserving combinatorial structures, $\scrM(\bar\sfg)$ is discrete.
Finally, finiteness follows from the exactly same argument as for Proposition~\ref{proposition:finiteness}.
\end{proof}

\begin{remark}[Soundness of Definition~\ref{def:filtered graded algebra}]\label{remark:soundness of definition of Psi}
For each $\bar\sfg\in\bar\sG$, the set $\scrM(\bar\sfg)_{(0)}\subset\scrM(\bar\sfg)$ is finite by the previous lemma and can be decomposed into subsets $\scrM_{t,s}(\bar\sfg)_{(0)}$.
Since any $f\in\scrM_{t,s}(\bar\sfg)_{(0)}$ are edge-separated as discussed in Remark~\ref{rmk:separator}, its labels $\tilde f(\bv_i)$ miss $\bar\sfx$ and can be regarded as elements in $\sG'$ and $\sG$ as mentioned earlier.
\end{remark}

\begin{example}\label{ex:disk near x}
Let $\sfg=\bar\sfx$. Then $\scrM\left(\bar\sfx\right)$ consists of two elements $f_{\bar\sfx,\bar \sfp}$ and $f_{\bar\sfx,\bar \sfq}$, which are monogons $(t=0)$ of degree 0 contained in $\bar\sU$ having negative folding edges touching $\bar p$ and $\bar q$, respectively. See Figure~\ref{figure:hybrid disks of bar x}.
\[
\scrM(\bar\sfx)=\scrM_{0,0}(\bar\sfx)_{(0)}=\{f_{\bar\sfx,\bar \sfp},f_{\bar\sfx,\bar \sfq}\}.
\]
\end{example}
\begin{figure}[ht]
\[
\begin{tikzcd}
f_{\bar\sfx,\bar\sfp}=\vcenter{\hbox{\input{f_x_p_input.tex}}}
&\ar[l,"f_{\bar\sfx,\bar\sfp}"']\vcenter{\hbox{\input{monogon_hyb_input.tex}}}\ar[r,"f_{\bar\sfx,\bar\sfq}"] &
\vcenter{\hbox{\input{f_x_q_input.tex}}}=f_{\bar\sfx,\bar\sfq}
\end{tikzcd}
\]
\caption{Two hybrid disks $f_{\bar\sfx,\bar\sfp}$ and $f_{\bar\sfx,\bar\sfp}$ in $\scrM(\bar\sfx)$}
\label{figure:hybrid disks of bar x}
\end{figure}

\begin{lemma}\label{lemma:AB_into_hybrid}
Let $\sfg\in\sG$ and $\sfg'\in\sG'$. Then there are canonical bijections
\begin{align*}
\cM(\sfg)_{(1)}&\simeq \scrM^{\leftmost}(\bar\sfg),&
\cM(\sfg')_{(1)}&\simeq \scrM^{\rightmost}(\bar\sfg).
\end{align*}
\end{lemma}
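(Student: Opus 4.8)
The plan is to realize both bijections as the single operation of \emph{forgetting the separator}, exploiting the fact that a leftmost (resp.\ rightmost) hybrid disk has its entire boundary in $\sL$ (resp.\ $\sL'$) and is therefore nothing but an ordinary admissible disk on one side of the move. For $f\in\scrM^{\leftmost}(\bar\sfg)$ the separator sits at $\separator=0=\bv_0$, so Definition~\ref{def:hybrid admissible disk}(5) forces $f(\partial\Pi)=f([0,t+1])\subset\sL$; dually $f\in\scrM^{\rightmost}(\bar\sfg)$ has $\separator=t+1$ and lands in $\sL'$. By Corollary/Notation~\ref{corollary:classification of hyb disks} these disks lie in $\scrM^{\emptyset}$, so they carry no $(-)$-, $(+)$-, or $(\pm)$-folding edge forced by the tangency locus $\{\bar\sfp,\bar\sfq\}$ or the central crossing $\bar\sfx$, and only smooth/$2$-folding edges together with convex, concave, or neutral vertices survive. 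This is precisely the data of an admissible disk into $(\R^2,\sL,\sS_\sL)$ in the sense of Definition~\ref{def:Regular admissible disks}.

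First I would set up the forward map $\cM(\sfg)_{(1)}\to\scrM^{\leftmost}(\bar\sfg)$. Given an admissible $f$ with $\tilde f(\bv_0)=\sfg$, I declare $\separator\coloneqq0$ and regard $f$ as a disk into $(\R^2,\bar\sL,\bar\sS_{\bar\sfx})$ via $\sL\hookrightarrow\bar\sL$. Conditions (1)--(4) of Definition~\ref{def:hybrid admissible disk} are inherited verbatim from admissibility, and (5) holds with the leftmost separator because the image lies in $\sL$. Since $f$ then has no $(-)$-folding edge, Proposition~\ref{prop:degree_hybrid} gives $|f|_\Z=1+\#((+)\text{-folding})+\#(\text{concave})$, which agrees with Proposition~\ref{proposition:degree admissible disk}(1) after identifying $(+)$-folding with $2$-folding; hence $|f|_\Z=1$ is equivalent on the two sides, and the image lands in $\scrM^{\leftmost}(\bar\sfg)$. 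The map $\cM(\sfg')_{(1)}\to\scrM^{\rightmost}(\bar\sfg)$ is identical with $\separator\coloneqq t+1$ and $\sL'$ in place of $\sL$.

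Next I would construct the inverse by erasing the degenerate separator. A leftmost disk factors through $\sL$, so forgetting $\separator$ yields a differentiable disk $(\Pi,\partial\Pi,\vPi)\to(\R^2,\sL,\sS_\sL)$; membership in $\scrM^{\emptyset}$ and the degree comparison above show it is admissible with $|f|_\Z=1$, and the label $\tilde f(\bv_0)=\bar\sfg$ is carried to $\sfg$ under $\bar\sG\simeq\sG'\supset\iota(\sG)$. Placing and erasing the separator at $\bv_0$ (resp.\ at $t+1$) are mutually inverse and respect the canonical labels and the orientation signs $\sgn(f)$, so the correspondences are canonical bijections. When $\sfg=\sfv_{i,\ell}$ is a vertex generator the relevant disks are supported near the single vertex $\bar\sfv$, which is common to both sides; here the statement degenerates to the purely local identification already recorded in Lemma~\ref{lemma:infinitesimal correspondence}.

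The hard part will be confirming that ``one-sided'' is genuinely equivalent to ``leftmost/rightmost'', i.e.\ that the separator value is forced to be $0$ or $t+1$ exactly when the image avoids $\{\bar\sfp,\bar\sfq\}$ and $\bar\sfx$. Concretely, one must rule out that an admissible disk on $\sL$, once pushed into $\bar\sL$, acquires a separator in the interior of an edge or at an interior vertex, and conversely that a leftmost disk cannot secretly meet $\sL_0'$ away from $\bv_0$ and thereby escape $\scrM^{\emptyset}$. This is precisely the exclusion content of the trichotomy in Corollary/Notation~\ref{corollary:classification of hyb disks}, so the remaining work is to match the separator position against the folding data and to check that the zero-area vertex case produces no spurious hybrid disks.
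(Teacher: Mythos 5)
Your proposal is correct and takes essentially the same route as the paper: the paper's proof likewise defines both bijections by declaring the separator to be $0$ (resp.\ $t'+1$) on an admissible disk for $\sL$ (resp.\ $\sL'$), and conversely regards leftmost/rightmost hybrid disks, whose entire boundary lies in one of the two diagrams, as ordinary admissible disks. Your additional checks (degree compatibility via Proposition~\ref{prop:degree_hybrid} versus Proposition~\ref{proposition:degree admissible disk}, and the separate treatment of vertex generators through Lemma~\ref{lemma:infinitesimal correspondence}) only make explicit what the paper's two-sentence proof leaves implicit.
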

\begin{proof}
We first regard both $\cM(\sfg)_{(1)}$ and $\cM(\sfg')_{(1)}$ as subsets of $\scrM$ by choosing the separator $\separator$ as follows:
For $f\in\cM_t(\sfg)$ and $f'\in\cM_{t'}(\sfg')$, we choose separators $\separator$ as $0$ and $(t'+1)$, respectively. That is,
\[
\begin{tikzcd}[row sep=1pc]
\cM(\sfg)_{(1)}\ni f=\vcenter{\hbox{
\begingroup%
  \makeatletter%
  \providecommand\color[2][]{%
    \errmessage{(Inkscape) Color is used for the text in Inkscape, but the package 'color.sty' is not loaded}%
    \renewcommand\color[2][]{}%
  }%
  \providecommand\transparent[1]{%
    \errmessage{(Inkscape) Transparency is used (non-zero) for the text in Inkscape, but the package 'transparent.sty' is not loaded}%
    \renewcommand\transparent[1]{}%
  }%
  \providecommand\rotatebox[2]{#2}%
  \ifx\svgwidth\undefined%
    \setlength{\unitlength}{63.35678299bp}%
    \ifx\svgscale\undefined%
      \relax%
    \else%
      \setlength{\unitlength}{\unitlength * \real{\svgscale}}%
    \fi%
  \else%
    \setlength{\unitlength}{\svgwidth}%
  \fi%
  \global\let\svgwidth\undefined%
  \global\let\svgscale\undefined%
  \makeatother%
  \begin{picture}(1,0.35516263)%
    \put(0,0){\includegraphics[width=\unitlength,page=1]{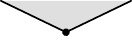}}%
    \put(0.37363852,0.01488963){\color[rgb]{0,0,0}\makebox(0,0)[lb]{\smash{$\bv_0$}}}%
  \end{picture}%
\endgroup%
}} \ar[r,mapsto]&
\vcenter{\hbox{
\begingroup%
  \makeatletter%
  \providecommand\color[2][]{%
    \errmessage{(Inkscape) Color is used for the text in Inkscape, but the package 'color.sty' is not loaded}%
    \renewcommand\color[2][]{}%
  }%
  \providecommand\transparent[1]{%
    \errmessage{(Inkscape) Transparency is used (non-zero) for the text in Inkscape, but the package 'transparent.sty' is not loaded}%
    \renewcommand\transparent[1]{}%
  }%
  \providecommand\rotatebox[2]{#2}%
  \ifx\svgwidth\undefined%
    \setlength{\unitlength}{63.35678299bp}%
    \ifx\svgscale\undefined%
      \relax%
    \else%
      \setlength{\unitlength}{\unitlength * \real{\svgscale}}%
    \fi%
  \else%
    \setlength{\unitlength}{\svgwidth}%
  \fi%
  \global\let\svgwidth\undefined%
  \global\let\svgscale\undefined%
  \makeatother%
  \begin{picture}(1,0.3551626)%
    \put(0,0){\includegraphics[width=\unitlength,page=1]{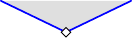}}%
    \put(0.37363851,0.01488963){\color[rgb]{0,0,0}\makebox(0,0)[lb]{\smash{$\bv_0$}}}%
  \end{picture}%
\endgroup%
}}=\bar f\in\scrM^{\leftmost}(\bar\sfg);\\
\cM(\sfg')_{(1)}\ni f'=\vcenter{\hbox{}} \ar[r,mapsto]&
\vcenter{\hbox{
\begingroup%
  \makeatletter%
  \providecommand\color[2][]{%
    \errmessage{(Inkscape) Color is used for the text in Inkscape, but the package 'color.sty' is not loaded}%
    \renewcommand\color[2][]{}%
  }%
  \providecommand\transparent[1]{%
    \errmessage{(Inkscape) Transparency is used (non-zero) for the text in Inkscape, but the package 'transparent.sty' is not loaded}%
    \renewcommand\transparent[1]{}%
  }%
  \providecommand\rotatebox[2]{#2}%
  \ifx\svgwidth\undefined%
    \setlength{\unitlength}{63.35678299bp}%
    \ifx\svgscale\undefined%
      \relax%
    \else%
      \setlength{\unitlength}{\unitlength * \real{\svgscale}}%
    \fi%
  \else%
    \setlength{\unitlength}{\svgwidth}%
  \fi%
  \global\let\svgwidth\undefined%
  \global\let\svgscale\undefined%
  \makeatother%
  \begin{picture}(1,0.35516263)%
    \put(0,0){\includegraphics[width=\unitlength,page=1]{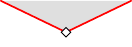}}%
    \put(0.37363852,0.01488963){\color[rgb]{0,0,0}\makebox(0,0)[lb]{\smash{$\bv_0$}}}%
  \end{picture}%
\endgroup%
}}=\bar f'\in\scrM^{\rightmost}(\bar\sfg).
\end{tikzcd}
\]

Conversely, it is obvious that we can regard all hybrid disks in $\scrM^{\leftmost}(\bar\sfg)$ and $\scrM^{\rightmost}(\bar\sfg)$ as admissible disks in $\cM(\sfg)_{(1)}$ and $\cM(\sfg')_{(1)}$, respectively. 
\end{proof}

\begin{remark}\label{remark:leftmost c_i empty}
The set $\scrM^{\leftmost}(\bar\sfc_i)=\emptyset$ for any $i$ since $\bar\sfc_i$ comes only from $\sL'$.
\end{remark}

\begin{remark}
We may reformulate Proposition~\ref{proposition:commutativity}(1) as follows: For $\sfg\in\sG_{[0]}$ and $\sfg'\in\sG'_{[0]}$ with $\sfg'=\iota(\sfg)$, we have bijections
\begin{equation}
\cM(\sfg)_{(1)}\simeq \scrM^{\leftmost}(\bar\sfg)\simeq\scrM^{\rightmost}(\bar\sfg)\simeq \cM'(\sfg')_{(1)}.
\end{equation}
Note that first and third bijections are established already by Lemma~\ref{lemma:AB_into_hybrid}.
The second bijection comes from the forward or backward move of the separator.
Roughly speaking, since there are no obstructions in the sense of Lemma~\ref{lem:adm_obstruction}, one can freely move a separator along the boundary $\partial\Pi$.
\end{remark}

Let us introduce a polynomial $\scrP(f)$ associated to a hybrid admissible disk $f$ as follows.
\begin{definition}\label{definition:polynomial of hybrid disk}
A map $\scrP:\scrM_{(0)}\to A$ is defined as for $f\in\scrM_{t,s}(\bar\sfg)$,
\begin{align*}
\scrP(f)&\coloneqq\sgn(f)\Psi\left(\tilde f(\bv_1\cdots\bv_s)\right) \tilde f(\bv_{s+1}\cdots \bv_t),&
\sgn(f)&\coloneqq\sgn(f,\bv_0\cdots\bv_t),
\end{align*}
where $\Psi=\Psi_{[m+n]}$ in Definition~\ref{def:filtered graded algebra}.
\end{definition}

Then it is straightforward that 
\begin{align}\label{eq:sum of hybrids}
\Psi^\hyb(\bar\sfg)=\sum_{f\in\scrM(\bar\sfg)_{(0)}} \scrP(f),
\end{align}
and the proof of Proposition~\ref{proposition:commutativity}(2) will be given in Appendix~\ref{appendix:hybrid admissible pairs}.

\section{A DGA for Legendrian tangles}\label{section:Legendrian tangle}
In this section, we define a DGA for {\em Legendrian tangles} and consider the operation given by replacing a Darboux neighborhood of a vertex with suitable Legendrian tangle, which yields a van Kampen type theorem for DGAs. To this end, we first consider Legendrian graphs embedded in $S^3$ rather than in $\RR^3$.

Let us assume that $S^3$ has the standard contact structure
\begin{align*}
S^3_{\std}&\coloneqq(S^3,\xi_{\std}\coloneqq \ker \alpha_\std),&
\alpha_\std&\coloneqq x_1dy_1-y_1dx_1+x_2dy_2-y_2dx_2,
\end{align*}
where $S^3$ is the unit sphere in $\CC^2$ with two coordinates $z_1=x_1+iy_1$ and $z_2=x_2+iy_2$. Then for a designated point $\infty\in S^3$, there is a well-known contactomorphism from $S^3_{\std}\setminus\{\infty\}$ to $\RR^3_{\rotation}$ given by
\begin{align}\label{eq:stereographic projection}
\setlength{\arraycolsep}{2pt}
\begin{array}{rccc}
\pi_\infty:&S^3_{\std}\setminus\{\infty\}&\to& \RR^3_{\rotation}\\
&(z_1,z_2)&\mapsto&\left(
\frac{z_1}{1+z_2}, \frac{y_2}{|1+z_2|}\right)
\end{array}
\end{align}
Here, we regard $\infty$ as $(0,-1)\in S^3$ and identify $\RR^3$ with $\CC\times\RR$.

\subsection{Legendrian tangles and their projections}
A Legendrian graph in $S^3_{\std}$ is a spatial graph in $S^3$, whose edges are Legendrian submanifolds in $S^3_{\std}$ which are non-tangent to each other at their ends.
Then a spatial graph $\Lambda\subset S^3_{\std}$ missing $\infty$ is Legendrian if and only if the image $\pi_{\infty}(\Lambda)$ is a Legendrian graph in the sense of Definition~\ref{definition:Legendrian graph}.

\begin{remark}
Indeed, any Legendrian graph in $S^3_{\std}$ can be isotoped to a Legendrian graph which avoids $\infty$, and two Legendrian graphs avoiding $\infty$ are isotopic in $S^3_{\std}$ if and only if their images under $\pi_{\infty}$ are isotopic in $\RR^3_{\rotation}$.

Therefore the sets of isotopy classes of Legendrian graphs in $S^3_{\std}$ and in $\RR^3_{\rotation}$ coincide. In this sense, there are no differences between Legendrian graph theory in $S^3_{\std}$ and in $\RR^3_{\rotation}$.
\end{remark}

On the other hand, if a Legendrian graph $\Lambda$ hits the point $\infty\in S^3$ especially at its vertex of valency $m$, then its projection defines a Legendrian $m$-tangle as follows:
\begin{definition}[Legendrian tangles]\label{definition:Legendrian tangle}
A {\em Legendrian $m$-tangle} $T$ in $\RR^3_{\rotation}$ is an image of a Legendrian graph $\Lambda$ under the stereographic projection $\pi_\infty$ for some $\Lambda\subset S^3_{\std}$ such that $\infty$ is a vertex of $\Lambda$ of valency $m$, called the {\em vertex at infinity},
\begin{align*}
T&\coloneqq \pi_\infty(\Lambda)\subset\RR^3_{\rotation},&
\Lambda&\subset S^3_{\std},&
\infty\in \cV_\Lambda.
\end{align*}

We call $\Lambda$ the {\em closure} of $T$, denote it by $\hat T$, and say that two Legendrian tangles $T_0$ and $T_1$ are {\em equivalent} if the pairs $(\hat T_0,\infty)$ and $(\hat T_1,\infty)$ are isotopic in $S^3_{\std}$.
\end{definition}

By definition, for any Legendrian $m$-tangle $T\subset \RR^3_{\rotation}$, its closure $\hat T\subset S^3_{\std}$ is well-defined.
We denote the sets of vertices and edges of $T$ by $\cV_T$ and $\cE_T$ which are the images of $\cV_{\hat T}$ and $\cE_{\hat T}$ under $\pi_\infty$ after removing $\infty$ from vertices and edges if they hit $\infty$ in ${\hat T}$.
However, the set of half-edges $\cH_T$ is the image of half-edges in $\cH_{\hat T}\setminus \cH_\infty$ which are non-adjacent to $\infty$.
\begin{align*}
\cV_T&\coloneqq\pi_\infty(\cV_{\hat T}\setminus\{\infty\}),&
\cE_T&\coloneqq\{\pi_\infty(e\setminus\{\infty\})\mid e\in \cE_{\hat T}\},&
\cH_T&\coloneqq\{\pi_\infty(h)\mid h\in\cH_{\hat T}\setminus \cH_\infty\}.
\end{align*}
We call half-edges in $\cH_\infty$ {\em half-edges at infinity} or simply {\em ends} of $T$.

Let $\cU_{\origin}\subset\RR^3_{\rotation}$ be the unit ball as before. We define $\scrU_{\origin}$ and $\scrU_\infty$ as the subsets of $S^3_{\std}$ 
\begin{align*}
\scrU_{\origin}&\coloneqq \pi_\infty^{-1}(\cU_{\origin}),&
\scrU_\infty&\coloneqq S^3\setminus \bar\scrU_{\origin}.
\end{align*}

Notice that the two subsets $\scrU_{\origin}$ and $\scrU_\infty$ are contactomorphic via the involution $\tau:(z_1,z_2)\mapsto(z_1,-z_2)$ on $\CC^2$.
We denote the image of $\scrU_\infty$ under the composition $(\pi_\infty\circ\tau)$ by $\cU_\infty\subset\RR_{\rotation}^3$
\[
(\pi_\infty\circ\tau):\scrU_\infty\subset S^3_\std \stackrel{\simeq}\longrightarrow \cU_\infty\subset\RR^3_{\rotation}.
\]
The subset $\cU_\infty$ is the same as $\cU_{\origin}$ but we use different notation to clarify where they are coming.

\begin{example}\label{example:symmetry of T_Theta}
Let $\Theta=\{\theta_1,\cdots,\theta_m\}$ be a subset of $[0,2\pi)$. Then the union $T_\Theta$ of Legendrian rays described in Theorem~\ref{theorem:Darboux} becomes a Legendrian tangle whose closure $\hat T_\Theta$ is invariant under the involution $\tau$. That is,
\[
\begin{tikzcd}
(\scrU_\origin,\scrU_\origin\cap \hat T_\Theta)\ar[r,"\simeq","\tau"'] & (\scrU_\infty,\scrU_\infty\cap\hat T_\Theta).
\end{tikzcd}
\]
\end{example}

Furthermore, for any Legendrian tangle $T$, the closure $\hat T$ is contactomorphic to $T_{\Theta(T)}$ near $\infty$ for some $\Theta(T)=\{\theta_1,\cdots,\theta_m\}$ due to Theorem~\ref{theorem:Darboux} at $\infty$.
Indeed, we assume the following.
\begin{assumption}\label{assumption:tangle}
For any Legendrian tangle $T$, the closure $\hat T$ coincides with $\hat T_{\Theta(T)}$ on $\scrU_\infty\subset S^3_{\std}$ for some $\Theta(T)$
\[
(\scrU_\infty, \scrU_\infty\cap \hat T) =
(\scrU_\infty, \scrU_\infty\cap \hat T_{\Theta(T)}),
\]
and we denote $T_{\Theta(T)}$ by $T_\infty$.
\end{assumption}

Now one can decompose $\hat T$ into two pieces via $\pi_\infty\coprod (\pi_\infty\circ\tau)$
\begin{align}\label{eq:decomposition of tangle closure}
(S^3_{\std},\hat T)&= 
(\bar\scrU_\origin,\bar\scrU_\origin\cap\hat T)\coprod_{\partial}(\bar\scrU_\infty,\bar\scrU_\infty\cap\hat T_\infty)
\simeq (\bar\cU_\origin,\bar\cU_\origin\cap T)\coprod_{\partial} (\bar\cU_\infty,\bar\cU_\infty\cap T_\infty)
\end{align}

Recall the Lagrangian projection $\pi_L:\RR^3\to\RR_{xy}^2$.
Then the composition $\pi_L\circ\pi_\infty$ extends to all of $S^3$ as follows:
\[
\begin{tikzcd}[column sep=3pc]
S^3\setminus\{\infty\}\ar[r,"\pi_L\circ\pi_\infty"]\ar[d,hook]&\RR_{xy}^2\ar[d,hook]\\
S^3\ar[r,"\pi_L"]& S_{xy}^2
\end{tikzcd}
\]
Here we use the same notation $\pi_L$ for the extension.\footnote{The map $\pi_L$ is the restriction of the canonical map $\CC^2\setminus\{\origin\}\to\CC P^1\simeq S^2_{xy}$.}
We denote the images of $T$ and $\hat T$ under $\pi_L$ by $\sT$ and $\hat \sT$, called the {\em Lagrangian projections}, 
\begin{align*}
\sT&\coloneqq\pi_L(T)\subset \RR_{xy}^2,&
\hat\sT&\coloneqq\pi_L(\hat T)\subset S_{xy}^2,&
(\RR_{xy}^2, \sT)&\simeq (S_{xy}^2\setminus\{\infty\},\hat\sT\setminus\{\infty\}),
\end{align*}
and define sets of vertices $\sV_{\bullet}$, edges $\sE_{\bullet}$ and half-edges $\sH_{\bullet}$ of $\sT$ and $\hat \sT$ in a similar manner
\begin{align*}
\sV_{\hat \sT}&=\sV_\sT\cup\{\infty\},&
\sE_{\hat \sT}&\simeq \sE_\sT,&
\sH_{\hat \sT}&=\sH_\sT\cup\sH_{\infty}=\sH_\sT\cup\{\sfh_{\infty,1},\cdots,\sfh_{\infty,m}\}.
\end{align*}

Since $\pi_L=\pi_L\circ\Phi$ as mentioned earlier, by (\ref{eq:decomposition of tangle closure})
\[
(S_{xy}^2,\hat\sT)\simeq(\bar\sU_\origin,\bar\sU_\origin\cap \sT)\coprod_{\partial}(\bar\sU_\infty,\bar\sU_\infty\cap \sT_\infty),
\]
where $\sT_\infty\coloneqq\pi_L(T_\infty)$ and $\sU_*=\pi_L(\cU_*)$ is the unit ball in $\RR_{xy}^2$ for each $*\in\{\origin,\infty\}$.

\begin{figure}[ht]
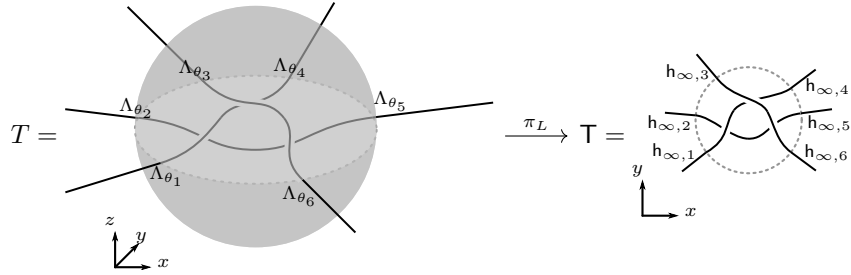

\[
\begin{tikzcd}
T=\vcenter{\hbox{\scriptsize\input{tangle_input.tex}}}\ar[r,"\pi_L"]& 
\sT=\vcenter{\hbox{\scriptsize\input{tangle_lag_input.tex}}}
\end{tikzcd}
\]
\caption{A Legendrian tangle $T$ and its Lagrangian projection $\sT$}
\label{fig:Legendrian tangle}
\end{figure}

\begin{definition}
Let $T$ be a Legendrian tangle. We say that $T$ is {\em in general position} if the conditions in Definition~\ref{defn:regular projection} hold for both $\sT$ and $\sT_\infty$.

We denote the set of all Legendrian tangles in general position by $\cLT$.
\end{definition}

Let $\sT_0$ and $\sT_1$ be the Lagrangian projections of $T_0$ and $T_1$, respectively. Then roughly speaking, $T_0$ and $T_1$  are equivalent only if there exists a sequence of Reidemeister moves as depicted in Figure~\ref{fig:RM} that transforms $\sT_0$ to $\sT_1$ up to planar isotopy on $\overline\sU_\origin$ and rotation in $\RR^2_{xy}$.
It is not hard to check that this definition agrees with the equivalence in Definition~\ref{definition:Legendrian tangle}, and we omit the proof.

\subsection{Capping paths and gradings}
One can equip a Legendrian $m$-tangle $T\in\cLT$ with an $\fR$-valued potential $\fP$ on $T$ which is a function
\[
\fP:\sH_\sT\to\fR
\]
such that for each edge $\sfe\in \sE_\sT\simeq \sE_\sT$, the function $\fP$ satisfies the vanishing condition
\[
\fP(\sfh_{s(\sfe)})-\fP(\sfh_{t(\sfe)})-n_\sfe\cdot 1=0,
\]
where $n_\sfe$ is the number defined in \S~\ref{sec:potential}.

\begin{corollary}
For any Legendrian $m$-tangle with potential $\cT=(T,\fP)$, there is a Legendrian $m$-tangle with the inherited potential $\cT_\infty=(T_\infty,\fP_\infty)$. Moreover, $(\cT_\infty)_\infty=\cT_\infty$.
\end{corollary}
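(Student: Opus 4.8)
The plan is to establish the three assertions of the corollary in turn: that $T_\infty$ is a Legendrian $m$-tangle lying in general position, that the given potential $\fP$ induces a canonical potential $\fP_\infty$ on it, and that the assignment $\cT\mapsto\cT_\infty$ is idempotent. First I would check that $T_\infty$ is a Legendrian $m$-tangle in general position. By Assumption~\ref{assumption:tangle} we have $T_\infty=T_\Theta$ with $\Theta=\Theta(T)$, where $T_\Theta$ is the standard union of Legendrian rays supplied by Theorem~\ref{theorem:Darboux}. Its closure $\hat T_\Theta\subset S^3_{\std}$ is the Legendrian graph obtained by joining the two $m$-valent vertices $\origin$ (the common endpoint of the rays) and $\infty$ by the $m$ rays, so $\infty$ is a vertex of valency $m$ and $T_\infty$ is a Legendrian $m$-tangle in the sense of Definition~\ref{definition:Legendrian tangle}. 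General position is automatic: the rays lie in the plane $\{z=0\}$, so their Lagrangian projection $\sT_\infty$ consists of straight segments meeting only at $\origin$, and after the stretching map of \S\ref{sec:potential} none of them is parallel to the $y$-axis; hence the conditions of Definition~\ref{defn:regular projection} hold for both $\sT_\infty$ and $(\sT_\infty)_\infty$.

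Next I would construct $\fP_\infty$ using the decomposition (\ref{eq:decomposition of tangle closure}), which exhibits $\hat T$ as $T$ glued to $T_\infty$ along the boundary sphere, identifying the $m$ ends of $T$ with the $m$ ends of $T_\infty$. The potential $\fP$, defined on $\sH_\sT$, propagates to each end: if the edge $\sfe$ of $T$ runs from an interior half-edge $\sfh\in\sH_\sT$ to the $i$-th end, then the vanishing condition forces the boundary value $\fP(\sfh)-n_\sfe\cdot 1\in\fR$ at $\sfh_{\infty,i}$. These boundary values are shared with $T_\infty$ through the gluing, and I would define $\fP_\infty$ on the half-edges $\sfh_{\origin,i}$ at the central vertex of $T_\infty$ by imposing the vanishing condition along each ray. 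By Remark~\ref{remark:potentials_not_depending_on_orientation} the integers $n_\sfe$, and hence the resulting $\fP_\infty$, are independent of the chosen orientations, so $\fP_\infty$ is a well-defined $\fR$-valued potential.

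Finally, idempotence rests on the $\tau$-symmetry noted before Assumption~\ref{assumption:tangle}: since $\tau$ preserves $\hat T_\Theta$ for every $\Theta$, the germ of $T_\infty$ near $\infty$ is carried by $\pi_\infty\circ\tau$ to its germ near $\origin$, which is again the standard model $T_\Theta$. Hence $\Theta(T_\infty)=\Theta(T)$ and $(T_\infty)_\infty=T_{\Theta(T_\infty)}=T_\Theta=T_\infty$. As $\fP_\infty$ already satisfies every edgewise vanishing relation on $T_\infty$, rerunning the propagation of the previous paragraph on $\cT_\infty$ returns the same boundary values and the same values at $\origin$, giving $(\fP_\infty)_\infty=\fP_\infty$ and therefore $(\cT_\infty)_\infty=\cT_\infty$.

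The hard part will be the well-definedness in the second step: I must verify that propagating $\fP$ out to the ends and back in along the rays of $T_\infty$ yields a single function meeting all vanishing conditions at once, with the degrees of freedom counted in \S\ref{sec:potential} correctly matched across the gluing. I expect this to reduce to the statement that a Maslov potential on the closure $\hat T$ restricts compatibly to both pieces of (\ref{eq:decomposition of tangle closure}), which is where I would concentrate the argument.
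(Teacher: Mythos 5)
Your proposal is correct and, at its core, takes the same route as the paper: extend $\fP$ to the half-edges at infinity using the vanishing condition (possible precisely because $n_\sfe$ is defined even for edges hitting $\infty$), take the resulting boundary values as $\fP_\infty$, and deduce idempotence from the $\tau$-invariance of the standard model $T_\Theta$. Two caveats are worth recording. First, your general-position step is both unnecessary and incorrectly argued: regularity of $\sT_\infty$ is built into the definition of $T$ being in general position (the paper demands the regularity conditions for \emph{both} $\sT$ and $\sT_\infty$), and it is not automatic from the geometry of rays --- a ray with $\theta_i=\pi/2$ projects onto the $y$-axis, and the stretching map $f_M$ does not cure this, since a vertical ray is carried to a vertical ray by $f_M$; what rules this out is the hypothesis $T\in\cLT$, not the shape of $T_\Theta$. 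Second, the step you flag as the hard part is in fact vacuous: each end of $T$ lies on a unique edge, and each half-edge at the central vertex of $T_\infty$ lies on a unique ray, so the propagation never produces two competing constraints on the same half-edge --- this is why the paper's proof occupies two sentences. Finally, the one computational fact the paper does record, namely $n_\sfe=0$ for the straight rays of $\sT_\infty$, is exactly what makes your extra propagation along the rays agree with the paper's construction, which simply restricts the extension $\hat\fP$ to $\sH_\infty$; your cancellation argument for $(\fP_\infty)_\infty=\fP_\infty$ is internally consistent without it, but observing $n_\sfe=0$ once collapses both your construction and your idempotence step to the paper's.
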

\begin{proof}
Since the number $n_\sfe$ is defined even for edges hitting $\infty$, the potential $\fP$ extends to $\hat\fP$ on $\sH_{\hat\sT}$ naturally by using the same vanishing condition.
Then the restriction of $\hat\fP$ to $\sH_\infty\subset \sH_{\hat\sT}$ defines a potential $\fP_\infty$ for $T_\infty$.

Since $T_\infty=T_\Theta$ is the union of Legendrian rays for some $\Theta=\Theta(T)$ and is invariant under the involution $\tau$, we have $(T_\infty)_\infty=T_\infty$. Finally we have $(\fP_\infty)_\infty=\fP_\infty$ since $n_\sfe=0$ for all edges $\sfe\in \sE_{\sT_\infty}$.
\end{proof}

As before, we denote the set of all Legendrian tangles in general position with $\fR$-valued potentials by $\cLT_\fR$. 
\begin{theorem}
The Legendrian tangle equivalence on $\cLT$ lifts to $\cLT_\fR$.
\end{theorem}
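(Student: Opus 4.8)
The plan is to follow the proof of Theorem~\ref{theorem:equivalence on Legendrian graphs with potentials} almost verbatim, treating the vertex at infinity as an additional but essentially passive piece of data. As in that proof, it suffices to show that whenever $\sT$ and $\sT'$ are Lagrangian projections of equivalent Legendrian $m$-tangles differing by a single generating move, the sets $\fG(\sT;\fR)$ and $\fG(\sT';\fR)$ of potentials up to translation are canonically affine isomorphic. Recall that such a generating move is of one of three kinds: a Reidemeister move from Figure~\ref{fig:RM} supported in the bounded region $\overline{\sU}_1$, a planar isotopy supported in $\overline{\sU}_1$, or a global rotation of $\R^2_{xy}$ reflecting the rotational ambiguity of the standard model $T_\Theta$ near infinity. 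Throughout I would use the preceding corollary to pass freely between a tangle potential $\fP$ on $\sH_\sT$ and its unique extension $\hat\fP$ to the closure $\hat\sT$, so that the standard model near $\infty$ is made explicit.

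First I would dispose of the moves supported in $\overline{\sU}_1$. Such a move never touches the half-edges at infinity $\sH_\infty$ nor the region where Assumption~\ref{assumption:tangle} pins down the model $T_\Theta$; it induces bijections $F_\sE$ and $F_\sH$ on the finite edges and half-edges exactly as in the graph case, fixing every end. The vanishing condition $\fP(\sfh_{s(\sfe)})-\fP(\sfh_{t(\sfe)})-n_\sfe\cdot 1=0$ involves only finite data for every finite edge $\sfe$, so the formula $\fP'(\sfh')\coloneqq\fP(F_\sH^{-1}(\sfh'))$ transports potentials for all moves except $(0_a)$ and $(0_b)$, while for these two the same $\mp 1_\fR$ correction as in the proof of Theorem~\ref{theorem:equivalence on Legendrian graphs with potentials} is applied to the single half-edge that crosses the vertical line. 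This yields the required affine isomorphism for every move of the first two kinds.

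The main obstacle is the third kind of move, the global rotation, which is genuinely new because the grading datum $n_\sfe$ is defined relative to the $x$-axis via the stretching map $f_M$. Here I would argue through the closure: a rotation is realized by a Legendrian isotopy of $(\hat T,\infty)$ in $S^3_{\std}$ fixing $\infty$ as a vertex and carrying $T_\Theta$ to a cyclically rotated model $T_{\Theta'}$, so on the level of projections it cyclically relabels the ends $\sfh_{\infty,1},\dots,\sfh_{\infty,m}$. Since $\infty$ is fixed, the induced bijection on $\sH_{\hat\sT}$ preserves the splitting $\sH_{\hat\sT}=\sH_\sT\amalg\sH_\infty$, and each finite edge whose far end is rotated past the distinguished direction has its integer $n_\sfe$ changed by a controlled amount; as in the $(0_a)$/$(0_b)$ analysis this alters the extended potential $\hat\fP$ only by a global shift in $1_\fR$ together with the cyclic relabeling of ends. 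Restricting the resulting canonical affine isomorphism of closure potentials back to $\sH_\sT$ gives $\fG(\sT;\fR)\cong\fG(\sT';\fR)$. The point requiring the most care is the bookkeeping of the $n_\sfe$-corrections under rotation: one must check that the total correction is consistent with the vanishing condition on the closure, equivalently with the identity $(\fP_\infty)_\infty=\fP_\infty$, so that the transported function is again a genuine potential. Granting this, assembling the three cases and quotienting by the translation action of $\fR$ proves that the equivalence on $\cLT$ lifts to $\cLT_\fR$.
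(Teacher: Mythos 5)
Your proposal is correct and takes essentially the same route as the paper: the paper's entire proof is the single remark that the argument is ``essentially the same as Theorem~\ref{theorem:equivalence on Legendrian graphs with potentials}'' (with all details omitted), and your argument is precisely that graph-case transport of potentials, carried out move by move, including the $\mp 1_\fR$ correction for the moves $\mathrm{(0_a)}$ and $\mathrm{(0_b)}$. Your explicit handling of the tail-rotation move---passing to the closure and reducing the rotation to iterated $\mathrm{(0_a)}$/$\mathrm{(0_b)}$-type corrections of the integers $n_\sfe$ as half-edges cross the vertical direction---is a detail the paper does not record but that its omitted proof would need, so it completes rather than diverges from the intended argument.
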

\begin{proof}
This is essentially the same as Theorem~\ref{theorem:equivalence on Legendrian graphs with potentials} and we omit the proof.
\end{proof}

From now on, we fix a Legendrian $m$-tangle with $\fR$-valued potential $\cT=(T,\fP)\in\cLT_\fR$ whose Lagrangian projections are $\sT$ and $\hat\sT$.

We define the sets $\sC_\sT, \sS_\sT, \tilde \sV_\sT$ and $\sG_\sT$ as before
\begin{align*}
\sC_\sT&\coloneqq\{\sfc\in \sT\mid \sfc\text{ is a double point}\},&
\sS_\sT&\coloneqq \sC_\sT\amalg \sV_\sT,\\
\tilde\sV_\sT&\coloneqq\{\sfv_{i,\ell}\mid \sfv\in\sV_\sT, i\in\Zmod{\val(\sfv)}, \ell\ge 1\},&
\sG_\sT&\coloneqq \sC_\sT\amalg \tilde\sV_\sT,
\end{align*}
and we define $(\cdot)_{\hat \sT}\coloneqq(\cdot)_\sT\cup (\cdot)_{\sT_\infty}$. Then $\sC_{\sT_\infty}=\emptyset$ and $\tilde\sV_{\sT_\infty}=\{\infty_{i,\ell}\mid i\in\Zmod{m},\ell\ge 1\}=\sG_{\sT_\infty}$ and therefore
\begin{align*}
\sC_{\hat\sT}&\coloneqq\sC_\sT\cup\sC_{\sT_\infty}=\sC_\sT\\
\tilde\sV_{\hat\sT}&\coloneqq\tilde\sV_\sT\cup\tilde\sV_{\sT_\infty}=\tilde\sV_\sT\cup\{\infty_{i,\ell}\mid i\in\Zmod{m},\ell\ge 1\}\\
\sG_{\hat\sT}&\coloneqq \sG_\sT\cup\sG_{\sT_\infty}=\sC_{\hat\sT}\cup\tilde\sV_{\hat\sT}.
\end{align*}

As seen in Proposition~\ref{proposition:degree well defined}, there exists a {\em $\fR$-valued grading}
\[
|\cdot|_\fP:\sG_\sT\to\fR,
\]
which is uniquely determined by the conditions {\rm(\ref{eq:Gr1})}, {\rm(\ref{eq:Gr2_1})}, {\rm(\ref{eq:Gr2_2})}, {\rm(\ref{eq:Gr2_3})} and {\rm(\ref{eq:Gr3})}.

\subsection{Admissible disks and DGAs}\label{sec:Admissible disks hitting the vertex at infinity}
We also define the admissibility for a differentiable disk between triples
\[
f:(\Pi,\partial\Pi,\vPi)\to(\RR_{xy}^2,\sT,\sS_\sT)
\]
as before. For an admissible disk, the canonical label $\tilde f:\vPi\to\sG_\sT$, the notion of being regular or infinitesimal, the equivalence relation, and degree $|f|\in\fR$ are well-defined as in \S~\ref{section:admissible disks}.
In particular, the same formulas as in Propositions~\ref{proposition:degree formula} and \ref{proposition:degree admissible disk} hold.

We will use similar notation as in Notation~\ref{notation:admissible disks} to denote admissible disks. For each $\sfg\in\sG_\sT$,
\begin{align*}
\cM(\sfg)_{(d)}\coloneqq\left\{f:(\Pi,\partial\Pi,\vPi)\to(\RR_{xy}^2,\sT,\sS_\sT)\mid f\text{ is admissible}, \tilde f(\bv_0)=\sfg, |f|_\ZZ=d\right\}.
\end{align*}

\begin{proposition}
For each Legendrian tangle with $\fR$-valued potential $\cT=(T,\fR)\in\cLT_\fR$, there is a free, unital, associative and $\fR$-graded DGA $\cA_\cT=(A_T, |\cdot|_\fP, \partial)$, where $A_T=\ZZ\langle\sG_\sT \rangle$ and the differential $\partial$ is given as follows: for each $\sfg\in\sG_\sT$,
\[
\partial \sfg\coloneqq 
\sum_{f\in\cM(\sfg)_{(1)}} \cP(f),
\]
where $\cP(f)=\sgn(f)\tilde f(\bv_1)\cdots \tilde f(\bv_t)\in A_T$ if $f$ is defined on $\Pi_t$.
\end{proposition}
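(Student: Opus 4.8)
The plan is to treat this Proposition as the Legendrian tangle counterpart of Theorem~\ref{theorem:DGA} together with Theorem~\ref{thm:differential}, and to exploit the fact that almost every ingredient is local and therefore transfers verbatim from the closed graph case. Concretely, I would verify in turn: (i) that $|\cdot|_\fP$ is a well-defined $\fR$-grading on $\sG_\sT$; (ii) that $\partial\sfg$ is a finite sum for every $\sfg\in\sG_\sT$, so that $\partial$ is a well-defined map; (iii) that $\partial$ lowers degree by $1_\fR$; and (iv) that $\partial^2=0$. Since $A_T=\Z\langle\sG_\sT\rangle$ is by construction free, unital and associative, establishing (i)--(iv) yields the claimed $\fR$-graded DGA.

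For (i), the grading is defined by exactly the same local conditions (\ref{eq:Gr1}), (\ref{eq:Gr2_1}), (\ref{eq:Gr2_2}), (\ref{eq:Gr2_3}) and (\ref{eq:Gr3}) as in the graph case, and these involve only the potential values at half-edges near a single special point together with homotopy classes of capping paths in $\gr(1,\R^2)$. Hence Proposition~\ref{proposition:degree well defined} applies word for word to $\sG_\sT$, giving a well-defined $|\cdot|_\fP$ independent of edge orientations. For the vertex generators, the classification of degree-one infinitesimal disks in Corollary~\ref{corollary:classification_of_infinitesimal_degree_1} is purely local, so $\partial\sfv_{i,\ell}$ is given by the finite formula of Proposition~\ref{prop:boundary of vertex}; in particular it is automatically a finite sum. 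Finally, (iii) is immediate from the degree formula of Proposition~\ref{proposition:degree formula}, which, as noted just before the statement, holds for tangles with the same proof as Proposition~\ref{proposition:degree admissible disk}.

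The one point requiring a genuinely new argument is the finiteness in (ii) for crossing generators $\sfg\in\sC_\sT$, since $\sT$ is non-compact: the ends of $T$ produce unbounded complementary regions, so the area bound used in Proposition~\ref{proposition:finiteness} is not a priori available. The plan here is to show that every admissible disk with $\tilde f(\bv_0)=\sfg\in\sC_\sT$ has image contained in a fixed compact subset of $\R^2_{xy}$ depending only on $\sfg$. By Assumption~\ref{assumption:tangle} the diagram $\sT$ agrees with the straight-ray model $\sT_\infty$ outside $\sU_1$, so the complementary regions adjacent to the ends are unbounded sectors of infinite $dx\wedge dy$-area. On the other hand, Stokes' theorem in the form (\ref{eqn:action and area}) bounds the total area of $\im f$, counted with multiplicity, by $z(\sfg)$ minus the nonnegative actions of the negative corners, which is finite. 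Consequently $f$ cannot cover any of the unbounded end-regions, and $\im f$ lies in a compact region on which $\sT$ looks exactly like the Lagrangian projection of an honest Legendrian graph. There the counting argument of Proposition~\ref{proposition:finiteness} applies verbatim, proving $\cM(\sfg)_{(1)}$ is finite. I expect this confinement-to-a-compact-set step to be the crux; the remainder is bookkeeping.

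For (iv), I would observe that $\partial^2=0$ is proved in Appendix~\ref{section:manipulation of disks} by local surgeries on broken configurations of two disks sharing a negative corner, and that these manipulations take place in arbitrarily small neighborhoods of special points in $\sS_\sT$. Since the disks contributing to $\partial\sfg$ for $\sfg\in\sG_\sT$ have all corners in $\sS_\sT$ and, by the compactness step above, avoid the ends, no new broken configurations can appear near the vertex at infinity; hence the pairing argument of the appendix carries over without change. Combining (i)--(iv) produces the desired DGA $\cA_\cT=(A_T,|\cdot|_\fP,\partial)$, completing the proof.
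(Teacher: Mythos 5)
Your proposal is correct and follows essentially the same route as the paper, whose entire proof is the one-line observation that the arguments of Proposition~\ref{proposition:degree well defined} and Theorem~\ref{thm:differential} apply verbatim to $\sT$. Your extra confinement step --- that the area relation (\ref{eqn:action and area}) forces an admissible disk, which covers the full area of any complementary region it touches, to avoid the unbounded end-regions, so that the finiteness argument of Proposition~\ref{proposition:finiteness} carries over --- is precisely the detail the paper leaves implicit, and it is sound.
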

\begin{proof}
Exactly the same proofs as Proposition~\ref{proposition:degree well defined} and Theorem~\ref{thm:differential} hold and therefore the DGA $\cA_\cT$ is well-defined.
\end{proof}

On the other hand, we also consider an admissible disk hitting the vertex at infinity
\begin{align*}
f:(\Pi,\partial\Pi,\vPi)&\to(S_{xy}^2,\hat\sT, \sS_{\hat\sT}),
\end{align*}
which satisfies the following conditions:
\begin{enumerate}
\item the disk $f$ is smooth and orientation-preserving on $\rPi$ and extends to $\partial\Pi$;
\item all edges are smooth or 2-folding;
\item the disk $f$ hits $\infty$ at $\bx\in\Pi$ if and only if $\bx=\bv_0$;
\item all vertices $\bv_i$ with $i\ge 1$ are either convex negative, concave negative, or neutral.
\end{enumerate}

Notice that these conditions are similar to the conditions for a regular admissible disk in Definition~\ref{def:Regular admissible disks}. However, this can be also thought of as the conditions for an infinitesimal admissible disk as follows:
let $p:S^2_{xy}\simeq \bar\sU_0\cup\bar\sU_\infty\to \bar\sU_\infty\subset\RR^2_{xy}$ be the map identifying $\bar\sU_0$ and $\bar\sU_\infty$. Then we can see the alternative conditions for $p\circ f$ below.
\begin{enumerate}
\item [($1'$)] the image of $p\circ f$ is contained in $\bar\sU_\infty$;
\item [($2'$)] the composition $p\circ f$ is orientation-reversing only on $\bU_{\bv_0}\coloneqq f^{-1}(\sU_\infty)$;
\item [($3'$)] the critical graph $\bO_{p\circ f}$ is the union of $f^{-1}(\sO_\infty)$ and points in the middle of 2-folding edges of $f$.
\end{enumerate}

In this viewpoint, admissibility for $f$ is almost the same as infinitesimal admissibility for $p\circ f$.
The only condition that might be violated is the connectedness of the critical graph $\bO_f$, which is condition (3) in Definition~\ref{definition:infinitesimal}. Hence $f$ has both characteristics of regular and infinitesimal admissible disks simultaneously.

\begin{figure}[ht]
\[
\includegraphics{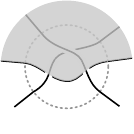}\qquad\qquad
\includegraphics{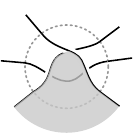}
\]
\caption{Admissible disks hitting the vertex at infinity}
\label{figure:admissible disk at infinity}
\end{figure}

Then as before, the canonical label $\tilde f:\vPi\to\sG_{\hat\sT}$ is well-defined so that $\tilde f(\bv_0)=\infty_{i,\ell}$ for some $i\in\Zmod{m}$ and $\ell\ge 1$. We can define equivalence between two such disks in a similar manner as in Definition~\ref{definition:equivalence of admissible disks}, and the degree $|f|_\ZZ$ as $|p\circ f|_\ZZ$ in the sense of Definition~\ref{definition:degree of admissible disk}. That is,
\[
[D\partial(p\circ f)] = |f|_\ZZ\cdot 1_{\gr} \in \pi_1(\gr(1,\RR_{xy}^2)).
\]
Then the analog of Proposition~\ref{proposition:degree formula} holds as well. However, we have the following proposition corresponding to Proposition~\ref{proposition:degree admissible disk}.
\begin{proposition}\label{prop:degree hitting the infinity}
The $\ZZ$-degree $|f|_\ZZ$ of an admissible disk $f$ onto $S^2_{xy}$ can be computed as follows:
\begin{align*}
|f|_\ZZ=\#(2\text{-folding edges})+\#(\text{concave vertices}).
\end{align*}

In particular, $f$ is of degree 0 if and only if there are neither 2-folding edges nor concave vertices.
\end{proposition}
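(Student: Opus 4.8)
The plan is to reduce the computation to the folded map $p\circ f$ and to evaluate $|f|_\Z=|p\circ f|_\Z=[D\partial(p\circ f)]/1_{\gr}$ by the same winding-number bookkeeping used to prove Proposition~\ref{proposition:degree admissible disk}, now blending its regular part~(1) with its infinitesimal part~(2). By the alternative description $(1')$--$(3')$, the map $p\circ f$ takes values in $\bar\sU_\infty$, is orientation-reversing exactly on $\bU_{\bv_0}=f^{-1}(\sU_\infty)$, and has critical graph $\bO_{p\circ f}=f^{-1}(\sO_\infty)\cup\{\text{midpoints of }2\text{-folding edges}\}$. Thus $p\circ f$ carries the ``source at infinity'' structure of an infinitesimal disk near $\bv_0$, while away from $\bv_0$ it retains the regular-type data of $f$: its edges are smooth or $2$-folding and, by condition $(4)$, its boundary vertices $\bv_1,\dots,\bv_t$ are convex negative, concave negative, or neutral. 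In particular there are no positive vertices among $\bv_1,\dots,\bv_t$, and every concave vertex is negative.

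First I would run the collar-curve comparison of Proposition~\ref{proposition:degree admissible disk}(1) for $p\circ f$: let $\gamma$ be the inner boundary of a collar of $\partial\Pi$ and compare $D\gamma$ with $D\partial(p\circ f)$ locally at each vertex and each critical point. Away from $\bv_0$ the local models are identical to the regular case, so the contributions are exactly those recorded there: a convex negative or neutral vertex contributes $0$, a (necessarily negative) concave vertex contributes $+1$, and each $2$-folding edge contributes $+1$, while no cancellation from positive vertices occurs. Collecting these gives
\[
|f|_\Z = C_{\bv_0}+\#(2\text{-folding edges})+\#(\text{concave vertices}),
\]
where $C_{\bv_0}$ denotes the combined contribution of the total turning of $\gamma$ and of the vertex at infinity $\bv_0$. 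It therefore remains to prove $C_{\bv_0}=0$.

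The evaluation of $C_{\bv_0}$ is the crux. Here $D\partial(p\circ f)$ traverses the capping path $-\tilde f(\bv_0)=-\infty_{i,\ell_0}$ with $\ell_0=\ell_f(\bv_0)$, while $p\circ f$ wraps orientation-reversingly around the full reference circle $\sO_\infty$. The key mechanism, exactly as in the cancellation ``$\ell_1,\dots,\ell_t$ with $\ell_0-m\cdot\val(\infty)$'' of Proposition~\ref{proposition:degree admissible disk}(2), is that every intermediate full turn incurred by covering the $\ell_0$ sectors at $\bv_0$ is matched by the boundary edge-turns, so that $C_{\bv_0}$ is independent of $\ell_0$. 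I expect the residual net turning at $\bv_0$ to be a single full clockwise turn, i.e. the vertex at infinity contributes $-2\cdot 1_{\gr}$ rather than the $-1\cdot 1_{\gr}$ of an ordinary convex positive vertex; together with $[D\gamma]=2\cdot 1_{\gr}$ this yields $C_{\bv_0}=0$, so that the regular formula's constant $1$ is replaced by $0$. The main obstacle is precisely making this step rigorous: one must track how the orientation-reversing identification $p$ (equivalently the involution $\tau$ relating $\bar\sU_0$ and $\bar\sU_\infty$) acts on the boundary tangent direction as $\partial\Pi$ sweeps a full neighborhood of $\infty$, and verify the $-2\cdot 1_{\gr}$ contribution uniformly in $\ell_0$. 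Granting $C_{\bv_0}=0$, the displayed formula follows; and since both counts on the right-hand side are nonnegative, $|f|_\Z=0$ holds if and only if there are neither $2$-folding edges nor concave vertices, which is the ``in particular'' statement.
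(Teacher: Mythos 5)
Your setup is the same as the paper's: pass to $p\circ f$, run the collar-curve comparison from Proposition~\ref{proposition:degree admissible disk}(1), and observe that away from $\bv_0$ the local contributions are unchanged (convex negative and neutral vertices give $0$, concave vertices and $2$-folding edges give $+1$ each, and no positive vertices occur among $\bv_1,\dots,\bv_t$). That part is fine. The problem is that you then isolate the entire content of the proposition into the single unknown $C_{\bv_0}$ and leave it as an expectation: you write that you ``expect'' the net contribution at $\bv_0$ to be $-2\cdot 1_{\gr}$ and that ``granting $C_{\bv_0}=0$'' the formula follows. Since everything else is verbatim the regular-case bookkeeping, the evaluation $C_{\bv_0}=0$ \emph{is} the proposition; declaring it the ``main obstacle'' and not carrying it out is a genuine gap, not a detail.

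Moreover, your guessed mechanism for the $-2$ is not the correct decomposition, and this matters for closing the gap. The paper's accounting is: the capping path $\tilde f(\bv_0)=\infty_{i,\ell_0}$ contributes \emph{nothing}, because $\bv_0$ is a neutral vertex (its image is the vertex $\infty$), so it is not a ``positive vertex with an extra full clockwise turn'' as you suggest. The $-2$ comes instead from the two edges $\be_0$ and $\be_t$ adjacent to $\bv_0$: by condition $(2')$ the composition $p\circ f$ is orientation-reversing exactly on $\bU_{\bv_0}$ and orientation-preserving outside it, so each of these two edges acquires a $1/2$-folding critical point in the sense of Definition~\ref{def:folding edges}, and each $1/2$-folding edge contributes $-1$ to the turning number --- exactly the phenomenon already handled for $\be_0$ and $\be_t$ in the proof of Proposition~\ref{proposition:degree admissible disk}(2), where such orientation-changing edges are treated separately from the ordinary folding edges. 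With that local model the identity becomes
\[
|f|_\Z=2+\#(2\text{-folding edges})+\#(\text{concave vertices})+0-2,
\]
with no hidden dependence on $\ell_0$ to worry about. So the gap is fillable with tools the paper has already set up, but your proposal neither fills it nor points at the right local model ($1/2$-folding edges at $\be_0,\be_t$ plus neutrality of $\bv_0$) that does.
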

\begin{proof}
The proof is essentially the same as the proof of Proposition~\ref{proposition:degree admissible disk}(1) except for the following:
\begin{enumerate}
\item the capping path $\tilde f(\bv_0)$ has no contributions to $|f|_\ZZ$ since it is neutral,
\item the composition $p\circ f$ has two $1/2$-folding edges $\be_0$ and $\be_t$, which contribute $-2$ to $|f|_\ZZ$.
\end{enumerate}
Therefore
\[
|f|_\ZZ=2+\#(2\text{-folding edges})+\#(\text{concave vertices})-2,
\]
where the first $2$ corresponds to the case when a differentiable disk is smooth and orientation-preserving, and the last $-2$ comes from $\be_0$ and $\be_t$ as above.
\end{proof}

For any $\infty_{i,\ell}\in \sG_{\sT_\infty}$, we denote the set of admissible disks by $\cM(\infty_{i,\ell})_{(d)}$
\[
\cM(\infty_{i,\ell})_{(d)}\coloneqq\left\{
f:(\Pi,\partial\Pi,\vPi)\to(S_{xy}^2,\hat\sT, \sS_{\hat\sT})\mid f\text{ is admisible}, \tilde f(\bv_0)=\infty_{i,\ell}, |f|_\ZZ=d
\right\}.
\]

\subsection{Canonical peripheral structures for Legendrian tangles}
Similar to the Legendrian graph cases, one can consider the collection of canonical peripheral structures $\cP_\cT$ which consists of DGA morphisms
\[
\cP_\cT\coloneqq\{\bp_\emptyset,\bp_\infty\}\cup \{\bp_v:\cI_v\to \cA_\cT\mid v\in \cV_\cT\},
\]
where
\[
\bp_\infty:\cI_\infty\to \cA_\cT
\]
is called the {\em peripheral structure at infinity} and defined as follows:
\begin{enumerate}
\item $\cI_\infty\coloneqq \cA_{\cT_\infty}$ for $\cT_\infty=(T_\infty,\fP_\infty)$,
\item $\bp_\infty(\infty_{i,\ell})$ is defined as
\[
\bp_\infty(\infty_{i,\ell})\coloneqq
\sum_{f\in\cM(\infty_{i,\ell})_{(0)}} \cP(f).
\]
\end{enumerate}

\begin{lemma}
The map $\bp_\infty$ is a DGA morphism.
\end{lemma}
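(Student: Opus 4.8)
The plan is to verify the two substantive properties of a DGA morphism, since $\bp_\infty$ is an algebra map by multiplicative extension: that it preserves the grading, and that it intertwines the differentials, $\partial\circ\bp_\infty=\bp_\infty\circ\partial_\infty$, where $\partial_\infty$ is the internal differential of $\cI_\infty=\cA_{\cT_\infty}$ supplied by Proposition~\ref{prop:boundary of vertex} applied at the vertex at infinity. Well-definedness of the defining sum follows from a finiteness argument parallel to Proposition~\ref{proposition:finiteness}, using that every vertex label $\tilde f(\bv_r)$ with $r\ge 1$ of a disk $f\in\cM(\infty_{i,\ell})_{(0)}$ lies in the finite part $\sG_\sT$. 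The grading statement is then immediate from the degree formula: the analogue of Proposition~\ref{proposition:degree formula} for disks hitting infinity gives $|\cP(f)|=|\infty_{i,\ell}|_{\hat\fP}$ whenever $|f|=0$, and since $\fP_\infty$ is the restriction of the extended potential $\hat\fP$, this is exactly the degree of $\infty_{i,\ell}$ in $\cI_\infty$, so every monomial of $\bp_\infty(\infty_{i,\ell})$ has the correct degree.

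The heart of the matter is the chain-map identity, which I would establish by analyzing the one-dimensional moduli space $\cM(\infty_{i,\ell})_{(1)}$ of degree-$1$ admissible disks hitting the vertex at infinity. By Proposition~\ref{prop:degree hitting the infinity} such a disk carries exactly one extra piece of structure, namely a single $2$-folding edge or a single concave vertex, and therefore occurs in one-parameter families, precisely as the degree-$2$ regular disks did in the finite setting. The strategy is the standard one for the combinatorial chain maps of the paper: compactify these families and count their boundary points with sign, the total signed count over a compact $1$-manifold being zero.

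The boundary of $\overline{\cM(\infty_{i,\ell})_{(1)}}$ splits into two families of broken configurations, reflecting the dual regular/infinitesimal nature of disks at infinity noted in \S\ref{sec:Admissible disks hitting the vertex at infinity}. In the first family the folding edge or concave vertex degenerates away from infinity, and the disk splits off a degree-$1$ regular disk $g\in\cM(\sfg)_{(1)}$ glued to a degree-$0$ disk at infinity along one of its vertices $\bv_r$ with $\tilde f(\bv_r)=\sfg$; summing these contributions and invoking the graded Leibniz rule reassembles exactly $\partial\bigl(\bp_\infty(\infty_{i,\ell})\bigr)$. In the second family the disk pinches near infinity, splitting the capping region at $\infty$ into two degree-$0$ disks labeled $\infty_{i,\ell_1}$ and $\infty_{i+\ell_1,\ell_2}$ with $\ell_1+\ell_2=\ell$, together with the closed-up full-turn monogon that accounts for the $\delta_{\ell,m}$ term; applying $\bp_\infty$ multiplicatively, these reproduce $\bp_\infty(\partial_\infty\infty_{i,\ell})$. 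Matching the two boundary contributions yields $\partial\circ\bp_\infty=\bp_\infty\circ\partial_\infty$.

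I expect the main obstacle to be the sign bookkeeping together with the precise splitting correspondence at infinity: in particular, confirming that the pinching degenerations are in bijection with the monomials of $\partial_\infty$, including the origin of the unit term $\delta_{\ell,m}$ from the full-turn monogon, and that the orientation signs $\sgn(f)$ assemble into exactly the signs prescribed in Proposition~\ref{prop:boundary of vertex}. This is the same type of manipulation carried out in Appendix~\ref{section:manipulation of disks} for $\partial^2=0$, and I would reduce the verification to that analysis by treating the degree-$0$ disk at infinity as the outer factor and the finite and infinitesimal-at-infinity breakings as the two families of ends.
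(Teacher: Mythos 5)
Your proposal takes essentially the same route as the paper: the paper's proof builds the moduli graph $\bG_{\infty_{i,\ell}}$ whose interior (bivalent) vertices are exactly your one-parameter families of degree-$1$ disks hitting the vertex at infinity, and whose exterior vertices are the two broken configurations you describe — a finite degree-$1$ disk glued at a vertex to a degree-$0$ disk at infinity (assembling $\partial\circ\bp_\infty$ via Leibniz), and a pair of degree-$0$ disks at infinity obtained by cutting a $2$-folding edge whose critical point lies on a half-edge at infinity (assembling $\bp_\infty\circ\partial_\infty$, including the unit term). The paper's appeal to the handshaking lemma is precisely your signed count of boundary points of the compactified one-dimensional families, so the two arguments coincide.
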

\begin{proof}
This can be shown by introducing suitable admissible pairs and their manipulations as described in Appendix~\ref{section:manipulation of disks}. The only possible manipulations occurring only in Legendrian tangle cases are either 
\begin{enumerate}
\item to glue two admissible disks of degree 0 to obtain an admissible disk of degree 1 having only one 2-folding edge, or
\item to cut a disk of degree 1 having only one 2-folding edge in which the image of the critical point lies on a half-edge at infinity.
\end{enumerate}
These two operations are inverses to each other, and one can build the moduli graph $\bG_{\infty_{i,\ell}}$, which is bivalent at every internal vertex and has the same label at every edge. Then the handshaking lemma completes the proof as before.
\end{proof}

\begin{theorem}
Let $\cT=(T,\fP)\in\cLT_\fR$ be a Legendrian tangle with potential. Then there is a pair $(\cA_\cT, \cP_\cT)$ consisting of a DGA $\cA_\cT\coloneqq(A_T,|\cdot|_\fP,\partial)$ and a canonical peripheral structure $\cP_\cT$, which is invariant under Reidemeister moves and tail rotations up to generalized stable-tame isomorphisms.
In particular the induced homology $H_*(\cA_\cT,\partial)$ is an invariant.
\end{theorem}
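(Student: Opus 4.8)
The pair $(\cA_\cT,\cP_\cT)$ itself requires no new construction: the graded algebra $\cA_\cT=(A_T,|\cdot|_\fP,\partial)$ is exactly the one built in the preceding proposition, its grading is well defined by the tangle analogue of Proposition~\ref{proposition:degree well defined}, and $\partial^2=0$ by the tangle analogue of Theorem~\ref{thm:differential}; the collection $\cP_\cT=\{\bp_\emptyset,\bp_\infty\}\cup\{\bp_v:v\in\cV_\cT\}$ consists of honest DGA morphisms, the finite ones as in Definition~\ref{definition:Canonical peripheral structures} and $\bp_\infty$ by the lemma just proved. Thus the content of the theorem is the \emph{invariance} statement, and the plan is to treat the two kinds of moves separately and then assemble them into a generalized stable-tame isomorphism of pairs.

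First I would dispose of the Reidemeister moves. By Assumption~\ref{assumption:tangle} the closure $\hat T$ agrees with the standard model $\hat T_\infty$ on $\scrU_\infty$, so every Reidemeister move and every planar isotopy on $\bar\sU_1$ takes place in the compact region, disjoint from $\sU_\infty$. Consequently the proof of Theorem~\ref{theorem:invariance} applies verbatim: the moves $\rm(0_a),\rm(0_b)$ only change the potential (Theorem~\ref{theorem:equivalence on Legendrian graphs with potentials}), the moves $\rm(II),\rm(III)$ are the classical ones, and $\rm(IV_a),\rm(IV_b)$ produce the generalized stable-tame isomorphism of Corollary~\ref{corollary:dga isomorphism}. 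The one extra point to check is that this isomorphism is a homomorphism of \emph{pairs}, i.e.\ that it carries $\cP_\cT$ to $\cP_{\cT'}$. For the finite vertices $\bp_v$ this is exactly the graph case; for $\bp_\infty$ it holds because the hybrid and admissible disks realizing the isomorphism are supported in the compact region and never enter $\sU_\infty$, so they do not interfere with the degree-$0$ disks hitting the vertex at infinity that define $\bp_\infty$.

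The genuinely new move is the \emph{tail rotation}, a rotation in $\R^2_{xy}$; I would first note that such a rotation lifts to a contactomorphism of $\R^3_{\rotation}$ because the form $x\,dy-y\,dx$ is rotation-invariant, so it does produce an equivalent tangle. The plan is to decompose a tail rotation into elementary steps of three types: a generic rotation, which is combinatorially trivial and induces an identity-on-generators relabeling; the passage of a half-edge at infinity through the vertical direction, which I would treat as a $\rm(0_a)/\rm(0_b)$ move at infinity, shifting gradings by the potential-change formula of Theorem~\ref{theorem:equivalence on Legendrian graphs with potentials}; and the sweep of a tail across the standard model at infinity. For the last and decisive step I would regard the vertex at infinity as an interior vertex and use the admissible disks hitting it from \S\ref{sec:Admissible disks hitting the vertex at infinity}, which by Proposition~\ref{prop:degree hitting the infinity} simultaneously carry regular and infinitesimal features, as the substitutes for the infinitesimal disks in the finite $\rm(IV)$ analysis. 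Running the hybrid-disk argument of Corollary~\ref{corollary:dga isomorphism} with these disks yields a generalized stable-tame isomorphism whose stabilization is performed along $\bp_\infty$ and which cyclically relabels the generators $\infty_{i,\ell}$; this is precisely a homomorphism of pairs sending $\bp_\infty$ to the rotated peripheral structure at infinity.

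The main obstacle will be this tail-sweep step: one must verify that the disk-counting bijections, the orientation-sign bookkeeping, and the filtration-by-action argument of \S\ref{section:Hybrid disks and the invariance theorem} all go through when the distinguished vertex lies at infinity, where a single disk displays both the regular and the infinitesimal behaviour, and that the resulting stabilization is compatible with the morphism $\bp_\infty$ already in hand. Granting this, composing the isomorphisms attached to the three elementary move-types shows $(\cA_\cT,\cP_\cT)\stabletameisom(\cA_{\cT'},\cP_{\cT'})$ for any equivalent tangles $\cT,\cT'$. Finally, since generalized stabilizations preserve homology by Propositions~\ref{prop:isom_p_md} and \ref{proposition:stabilization preserves homology} and tame isomorphisms are in particular quasi-isomorphisms, the induced homology $H_*(\cA_\cT,\partial)$ is an invariant, which completes the plan.
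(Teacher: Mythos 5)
Your handling of well-definedness and of the Reidemeister moves matches the paper: the pair $(\cA_\cT,\cP_\cT)$ is already constructed by the preceding proposition and lemma, and invariance under the moves $\rm(0_a)$--$\rm(IV_b)$ is quoted from Theorem~\ref{theorem:invariance}. Your additional remark that the resulting isomorphism must carry $\bp_\infty$ to the corresponding peripheral structure is a point the paper leaves implicit, and your locality sketch for it is reasonable, though only a sketch.

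The gap is in your treatment of tail rotations, which is where you diverge from the paper's argument. A tail rotation is a rigid rotation of the Lagrangian projection in $\R^2_{xy}$ (equivalently, a simultaneous rotation of the asymptotic directions of all tails, which differs from a rigid rotation only by a planar isotopy shearing all tails together inside an annulus, creating no crossings). Under such a rotation the tails and the standard model at infinity move \emph{together}: no strand ever sweeps across the vertex at infinity, no crossing is created or destroyed, and the cyclic configuration of half-edges at infinity is rigidly preserved. Hence your third elementary step --- ``the sweep of a tail across the standard model at infinity,'' which you single out as decisive, propose to handle by a hybrid-disk argument with a generalized stabilization along $\bp_\infty$, and then leave conditional (``granting this'') --- corresponds to no event that actually occurs during a tail rotation, and it is precisely the step you do not prove; as written the argument is incomplete at its self-declared crucial point. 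What actually happens is far less: a rotation is an orientation-preserving symmetry of the plane, so crossings, vertex generators, and admissible disks of every kind (regular, infinitesimal, and those hitting the vertex at infinity) are canonically identified, and with them the signs, the differential, and all peripheral structures including $\bp_\infty$; the only bookkeeping is that half-edges, at finite vertices or at infinity, may cross the vertical direction, which shifts the numbers $n_\sfe$ and hence the potential exactly as in the moves $\rm(0_a)$, $\rm(0_b)$ --- your second step. This canonical identification is the content of the paper's one-line proof (``tail rotations do not change any data used to define the pair''); no stabilization is needed at all. You should therefore delete the phantom third step and verify instead that your first two steps exhaust the move; keeping it, your proposal rests on an unproven claim about a move that never happens.
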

\begin{proof}
The well-definedness of $(\cA_\cT,\cP_\cT)$ has been shown, and the invariance under Reidemeister moves up to generalized stable-tame isomorphisms is the same as in Theorem~\ref{theorem:invariance}.
Since tail rotations do not change any data used to define the pair $(\cA_\cT,\cP_\cT)$, we are done.
\end{proof}

\subsection{Tangle replacements and van Kampen theorem for DGAs}
Let $\cL=(\Lambda,\fP_\Lambda)$ be a Legendrian graph (or tangle) with potential having a vertex $v$ of $val(v)=m$, 
and $\cT=(T,\fP_T)$ be a Legendrian $m$-tangle with potential.
Then by Theorem~\ref{theorem:Darboux}, one can identify $(\bar\cU_\infty, \bar\cU_\infty\cap T_\infty)$ with $(\bar\cU_v, \bar\cU_v\cap\Lambda)$ via $\bar\phi_v$.
Then it induces a diffeomorphism 
\[
\partial\bar\phi_v:(\partial\bar\cU_\infty,\partial\bar\cU_\infty\cap T_\infty) \to (\partial\bar\cU_v,\partial\bar\cU_v\cap \Lambda)
\]
on pairs preserving the orientation of spheres, the cyclic orders at the two vertices, and the potential on corresponding Legendrian arcs.

\begin{definition}\label{def:tangle_replacement}
Let $((\cL=(\Lambda,\fP),v),\cT=(T,\fP_T))$ be a pair of a Legendrian graph $\cL$ with $m$-valent vertex $v$ and a Legendrian $m$-tangle $\cT$. A {\em tangle replacement} of $((\cL,v),\cT)$ is a Legendrian graph with potential
\[
\cL\tangleReplacement\cT\coloneqq(\Lambda\tangleReplacement T, \fP\tangleReplacement \fP_T)
\]
defined to be
\begin{align*}
\Lambda\tangleReplacement T&\coloneqq(\RR^3\setminus \cU_v,\Lambda\cap(\RR^3\setminus \cU_v))\coprod_{\partial\bar\phi_v} (S^3\setminus \cU_\infty,T_\infty\cap(S^3\setminus \cU_\infty))\\
\fP\tangleReplacement\fP_T&\coloneqq \begin{cases}
\fP(\sfh) \in \fR' & \sfh\in\sH_\sL;\\
\fP_T(\sfh) \in\fR' & \sfh\in\sH_\sT,
\end{cases}
\end{align*}
where 
\[
\fR'=\fR/\langle \fP(\sfh_{\sfv,i})-\fP_T(\sfh_{\infty,i})\mid i\in\Zmod{\val(v)} \rangle.
\]
\end{definition}

\begin{lemma}\label{lem:tangle_Lagrangian_projection}
The tangle replacement induces a replacement in the Lagrangian projection.
\end{lemma}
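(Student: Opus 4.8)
The plan is to show that the abstract tangle replacement of Definition~\ref{def:tangle_replacement}, which is defined by gluing the two bordered contact manifolds along their boundary spheres via $\Phi_v$, descends under the Lagrangian projection $\pi_L$ to a well-defined gluing of the planar diagrams $\sL$ and $\sT_\infty$ along the reference circles $\sO_\sfv$ and $\sO_\infty$. The key point is that $\Phi_v$ was required to preserve the orientation of the spheres, the cyclic order at the two vertices, and the potential, and these are precisely the data that survive projection. First I would recall that by Theorem~\ref{theorem:Darboux} (and Assumption~\ref{assumption:tangle} for the tangle side) both neighborhoods $\cU_v$ and $\cU_\infty$ admit the standard model $T_\Theta$, so near the boundary spheres the Legendrian arcs are the standard Legendrian rays $\Lambda_{\theta_1},\dots,\Lambda_{\theta_m}$. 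Hence both $\Lambda\cap S_v$ and $T_\infty\cap S_\infty$ consist of exactly $m$ points arranged cyclically, and $\Phi_v$ matches them up respecting the cyclic order.

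Next I would pass to the projections. Using the identities $(S^2_{xy},\hat\sT)\simeq(\bar\sU_1,\bar\sU_1\cap\sT)\coprod_\partial(\bar\sU_\infty,\bar\sU_\infty\cap\sT_\infty)$ and the analogous decomposition $(\cU_v,\cU_v\cap\Lambda)\xrightarrow{\pi_L}(\sU_\sfv,\sU_\sfv\cap\sL)$ established earlier, I would observe that $\pi_L$ carries $S_v$ and $S_\infty$ to the reference circles $\sO_\sfv$ and $\sO_\infty$, and carries the $m$ puncture points to the $m$ half-edge endpoints $\sfh_{\sfv,i}$ and $\sfh_{\infty,i}$. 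The diffeomorphism $\Phi_v$ then induces a diffeomorphism $\bar\Phi_v:(\sO_\infty,\sT_\infty\cap\sO_\infty)\to(\sO_\sfv,\sL\cap\sO_\sfv)$ of the boundary circles matching the $m$ endpoints in cyclic order; this is exactly the data needed to glue the planar diagrams. I would define the replaced diagram $\sL\amalg_{\bar\Phi_v}\sT_\infty$ as $(\R^2_{xy}\setminus\sU_\sfv)\cup_{\bar\Phi_v}(\bar\sU_\infty\setminus\text{pt})$, extending $\sL$ outside $\sU_\sfv$ by $\sT_\infty$ inside, and check that the result is again the Lagrangian projection of a Legendrian graph (or tangle) in general position, with the inherited potential well-defined by the matching condition on $\fP$.

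The main verification—and the step I expect to be the only genuine obstacle—is checking that the glued object is again \emph{Legendrian}, i.e.\ that the area/lift condition is compatible across the seam. Projection forgets the $z$-coordinate, so a priori two arcs that match in the plane need not lift to matching Legendrian arcs; one must use that $\Phi_v$ is a contactomorphism on pairs (preserving co-orientation), together with the area-preserving realizability condition for Reidemeister-type moves discussed after Figure~\ref{fig:RM}, to arrange the heights on the two sides to agree up to an isotopy supported near $\sO_\sfv$. Concretely I would note that since both sides are standard near the gluing sphere, the $z$-values of the $m$ strands are determined by the radial coordinate alone, so after a small isotopy (shrinking $\delta_v$ as in \S\ref{sec:geometric model}) the lifts glue smoothly; the tameness Assumption guarantees the vertex models match.

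\begin{proof}[Sketch]
By Theorem~\ref{theorem:Darboux} and Assumption~\ref{assumption:tangle}, near $S_v$ and $S_\infty$ both Legendrians coincide with the standard model $T_\Theta$, so $\Lambda\cap S_v$ and $T_\infty\cap S_\infty$ are $m$ cyclically ordered points. Applying $\pi_L$ and the decompositions $(\cU_v,\cU_v\cap\Lambda)\to(\sU_\sfv,\sU_\sfv\cap\sL)$ and $(S^2_{xy},\hat\sT)\simeq(\bar\sU_1,\bar\sU_1\cap\sT)\coprod_\partial(\bar\sU_\infty,\bar\sU_\infty\cap\sT_\infty)$, the map $\Phi_v$ induces a boundary diffeomorphism
\[
\bar\Phi_v:(\sO_\infty,\sT_\infty\cap\sO_\infty)\xrightarrow{\ \simeq\ }(\sO_\sfv,\sL\cap\sO_\sfv)
\]
matching the $m$ endpoints in cyclic order and respecting $\fP$. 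Gluing $\sL|_{\R^2\setminus\sU_\sfv}$ to $\sT_\infty|_{\bar\sU_\infty}$ along $\bar\Phi_v$ yields a planar diagram; since $\Phi_v$ preserves co-orientation and both sides are standard across the seam, a small isotopy supported near $\sO_\sfv$ makes the $z$-lifts agree, so the result is the Lagrangian projection of the Legendrian $\cL\#_{\Phi_v}\cT$ in general position, with inherited potential. This is the asserted replacement.
\end{proof}
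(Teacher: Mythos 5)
Your overall strategy is the correct one, and it is the paper's: since both sides are standard near the gluing spheres (Theorem~\ref{theorem:Darboux} on the graph side, Assumption~\ref{assumption:tangle} on the tangle side), $\Phi_v$ descends under $\pi_L$ to a cyclic-order-preserving diffeomorphism of circles with $m$ marked points, and the replacement in the projection is obtained by gluing the two projected pieces along this circle map. The paper's proof consists of exactly this observation, producing $\phi_\sfv:(\sS_\infty,\sT\cap\sS_\infty)\to(\sS_\sfv,\sL\cap\sS_\sfv)$ and declaring the glued diagram to be
\[
(\R^2_{xy}\setminus \sU_\sfv,\ \sL\cap(\R^2_{xy}\setminus \sU_\sfv))\ \amalg_{\phi_\sfv}\ (\R^2_{xy}\setminus \sU_\infty,\ \sT\cap(\R^2_{xy}\setminus \sU_\infty)),
\]
where $\sU_\infty$ is the projected neighborhood of the vertex at infinity, so that $\R^2_{xy}\setminus\sU_\infty$ is a large closed disk containing the nontrivial tangle projection.

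There is, however, a genuine error in which piece you glue in. In the notation you yourself quote, from the decomposition $(S^2_{xy},\hat\sT)\simeq(\bar\sU_1,\bar\sU_1\cap\sT)\coprod_{\partial}(\bar\sU_\infty,\bar\sU_\infty\cap\sT_\infty)$, the essential tangle data $\sT$ lives in the unit disk $\bar\sU_1$, while $\bar\sU_\infty$ is the neighborhood of the point at infinity carrying only the standard rays $\sT_\infty$. Definition~\ref{def:tangle_replacement} deletes the neighborhood of the vertex at infinity and keeps the complementary part of the tangle; hence the piece to be glued into $\R^2_{xy}\setminus\sU_\sfv$ is the disk containing $\sT$, not $(\bar\sU_\infty\setminus\{\infty\},\ \sT_\infty)$. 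As written, your glued object $(\R^2_{xy}\setminus\sU_\sfv)\cup_{\bar\Phi_v}(\bar\sU_\infty\setminus\{\infty\})$ is a punctured plane whose diagram consists of $m$ standard rays running into the puncture: the tangle $\sT$ never enters, and filling the puncture back in recovers a diagram isotopic to $\sL$ itself rather than $\sL\#_{\phi_\sfv}\sT$. A secondary point: your verification of the ``area/lift'' compatibility assumes $\Phi_v$ is a contactomorphism, but Definition~\ref{def:tangle_replacement} only requires a diffeomorphism of pairs preserving orientation, cyclic order, and potential; no lift-matching needs to be re-proved in the projection, because the glued Legendrian already exists upstairs by Definition~\ref{def:tangle_replacement}, is standard on both sides of the seam, and the lemma merely records that its Lagrangian projection is the glued diagram.
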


\begin{proof}
Let $\sL$ and $\sT$ be the Lagrangian projections of $\Lambda$ and $T$, respectively.
Here $T$ is the corresponding Legendrian $m$-tangle in $\RR^3_{\rotation}$.
Then it is straightforward to check that $\partial\bar\phi_v$ induces a cyclic order preserving diffeomorphism $\partial\bar\phi_\sfv:(\sS_\infty,\sT \cap \sS_\infty) \to (\sS_\sfv,\sL\cap \sS_\sfv)$ on pairs of circles with points, where $\sS_\infty$ and $\sS_\sfv$ are the $xy$-projections of the equator of $\partial\cU_\infty$ and $\partial\cU_v$, respectively.
Then the resulting Lagrangian projection of the tangle replacement is
\[
(\RR^2_{xy}\setminus \sU_\sfv,\sL\cap (\RR^2_{xy}\setminus \sU_\sfv))\coprod_{\partial\bar\phi_\sfv}(\RR^2_{xy}\setminus \sU_\infty,\sT\cap (\RR^2_{xy}\setminus \sU_\infty)).\qedhere
\]
\end{proof}

Let us denote the Lagrangian projection of the tangle replacement by $\sL\tangleReplacementLag\sT$. We denote DGAs for Legendrian graph $\Lambda$ and tangle $\cT$ by
\begin{align*}
\cA_\Lambda&=(A_\sL=\ZZ\langle\sG_\sL\rangle, |\cdot|_\sL, \partial_\sL),&
\cA_\cT&=(A_T=\ZZ\langle\sG_\sT\rangle, |\cdot|_\sT, \partial_\sT),
\end{align*}
respectively.
Then the DGA $\cA_{\cL\tangleReplacement\cT}=(A_{\sL\tangleReplacementLag\sT},|\cdot|_{\sL\tangleReplacementLag\sT},\partial_{\sL\tangleReplacementLag\sT})$ for $\cL\tangleReplacement\cT$ is generated by
\begin{align*}
\sG_{\sL\tangleReplacementLag\sT}&=(\sG_\sL\setminus\{\sfv_{i,\ell}\})\amalg \sG_\sT,&
A_{\sL\tangleReplacementLag\sT}&=\ZZ\langle\sG_{\sL\tangleReplacementLag\sT}\rangle,
\end{align*}
and the gradings are inherited from those for $\sL$ and $\sT$
\[
|\sfg|_{\sL\tangleReplacementLag\sT}\coloneqq\begin{cases}
|\sfg|_\sL & \sfg\in\sG_\sL;\\
|\sfg|_\sT & \sfg\in\sG_\sT.
\end{cases}
\]

For differentials, we need to argue that the admissible disks in $\sL$, $\sT$, and $\sL\#_\sfv\sT$ are compatible with tangle replacement. We first define a {\em replacement} for a certain pair of admissible disks $f^1$ and $f^0$ in $\sL$ and $\sT$, respectively.

\begin{definition}\label{def:disk_gluing_tangle}
Let $f^0$ and $f^1$ be admissible disks such that
\begin{align*}
f^0&:(\Pi_u,\partial\Pi_u,\vPi_u)\to(S_{xy}^2,\hat\sT, \sS_{\hat\sT}), & \tilde f^0(\bv_0)&=\infty_{i,\ell},\\
f^1&:(\Pi_t,\partial\Pi_t,\vPi_t)\to(\RR^2,\sL,\sS_\sL),& \tilde f^1(\bv_{k})&=\sfv_{i,\ell}.
\end{align*}
Then a {\em replacement} $f^1\tangleReplacementLag f^0$ of the pair $(f^1,f^0)$ is defined by restricting the tangle replacement process to the pair $(f^1,f^0)$.
More preciesly,
\[
f^1\tangleReplacementLag f^0 : (\Pi_{t+u-1},\partial \Pi_{t+u-1},\vPi_{t+u-1}) \to (\RR^2, \sL\tangleReplacementLag\sT,\sS_{\sL\tangleReplacementLag\sT})
\]
is obtained as
\[
(f^1|_{\RR^2\setminus \sU_\sfv}) \coprod_{\partial\bar\phi_\sfv} (f^0|_{S^2_{xy}\setminus \sU_\infty}).
\]
\end{definition}

\begin{lemma}\label{lem:disk_gluing_tangle}
Let $f:(\Pi,\partial\Pi,\vPi)\to(\RR^2,\sL,\sS_\sL)$ be a regular admissible disk with 
\begin{align*}
f^{-1}(\sfv)\cap \vPi&=\{\bv_{k_1},\dots,\bv_{k_n}\},&
\tilde f(\bv_{k_1})&=\sfv_{i_1,\ell_1},\dots,\tilde f(\bv_{k_n})=\sfv_{i_n,\ell_n}.
\end{align*}
For $j=1,\dots,n$, let $g_j:(\Pi_{t_j},\partial\Pi_{t_j},\vPi_{t_j})\to(S_{xy}^2,\hat\sT, \sS_{\hat\sT})$ be admissible disks that satisfy
\[\tilde g_j(\bw_0)=\infty_{i_j,\ell_j}.\]
Then the replacement, $f\tangleReplacementLag(g_1,\dots,g_n)$, 
for $f$ and $(g_1,\dots,g_n)$ along $(\bv_{k_1},\dots,\bv_{k_n})$ is a regular admissible disk in the tangle replacement.
Moreover,
\[
|f\tangleReplacementLag(g_1,\dots,g_n)|_\ZZ=|f|_\ZZ+\sum_{j=1}^n|g_j|_\ZZ.
\]
\end{lemma}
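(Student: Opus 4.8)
The plan is to realize the simultaneous replacement as a collection of independent \emph{local} gluings in the sense of Definition~\ref{def:disk_gluing_tangle}, performed in the pairwise disjoint neighborhoods of $\bv_{k_1},\dots,\bv_{k_n}$, and then to check admissibility and degree additivity one vertex at a time. First I would record the local structure of $f$ near $\sfv$: since $f$ is a regular admissible disk, $f^{-1}(\sfv)\subseteq\vPi$, so $f^{-1}(\sU_\sfv)$ is a disjoint union of standard neighborhoods $\bU_{\bv_{k_1}},\dots,\bU_{\bv_{k_n}}$, one for each vertex mapping to $\sfv$. Consequently $f|_{\R^2\setminus\sU_\sfv}$ is obtained from $\Pi$ by excising $n$ disjoint neighborhoods, and $f\#_{\phi_\sfv}(g_1,\dots,g_n)$ is built by gluing in, along the $n$ resulting boundary arcs on $\partial\sU_\sfv$, the pieces $g_j|_{S^2_{xy}\setminus\sU_\infty}$ via $\phi_\sfv$. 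Because the gluings occur in disjoint regions it suffices to analyse a single gluing $f\#_{\phi_\sfv}g_j$ at $\bv_{k_j}$ and then reassemble.

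Next I would verify the four conditions of Definition~\ref{def:Regular admissible disks} for the glued map. The map $\phi_\sfv$ preserves cyclic order, orientation, and potential, and the label condition $\tilde f(\bv_{k_j})=\sfv_{i_j,\ell_j}=\tilde g_j(\bw_0)$ guarantees that the index $i_j$ and the number $\ell_j$ of covered sectors agree on the two sides of the $j$-th gluing arc; hence the two half-edge-arcs of $f$ meeting $\partial\sU_\sfv$ at $\bv_{k_j}$ match smoothly with those of $g_j$ meeting $\partial\sU_\infty$ at $\bw_0$, and each edge crossing the gluing arc is a smooth concatenation whose interior critical points, if any, stay away from the arc. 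For orientation, both $f|_{\R^2\setminus\sU_\sfv}$ and each $g_j|_{S^2_{xy}\setminus\sU_\infty}$ are orientation-preserving on their interiors (condition~(1) for disks hitting infinity forbids orientation-reversal, and the excised piece $g_j^{-1}(\sU_\infty)$ is precisely a neighborhood of the positive vertex $\bw_0$), so since $\phi_\sfv$ is orientation-preserving the orientations agree across each gluing arc and the glued disk is orientation-preserving throughout. All vertices away from the gluing loci keep their convex/concave/neutral type — those of $g_j$ other than $\bw_0$ are convex negative, concave negative, or neutral — and all edges stay smooth or $2$-folding. Finally, the unique positive vertex $\bv_0$ of $f$ survives, because it maps to a crossing in $\sC_\sL$ (a positive vertex must be convex or concave) and hence $\bv_0\notin\{\bv_{k_j}\}$, while each positive vertex $\bw_0$ of $g_j$ is removed; thus $\bv_0$ is the only positive vertex and the glued disk is regular admissible.

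Degree additivity then follows from the combinatorial index formulas. By Proposition~\ref{proposition:degree admissible disk}(1),
\[
|f|_\Z=1+\#(2\text{-folding edges of }f)+\#(\text{concave vertices of }f),
\]
and by Proposition~\ref{prop:degree hitting the infinity},
\[
|g_j|_\Z=\#(2\text{-folding edges of }g_j)+\#(\text{concave vertices of }g_j).
\]
The gluing neither creates nor destroys $2$-folding edges or concave vertices: the removed vertices $\bv_{k_j}$ are neutral and each removed $\bw_0$ is the neutral vertex at infinity, so none of them is counted, while every $2$-folding edge and concave vertex of $f$ and of the $g_j$ lies away from the gluing arcs and persists. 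Applying Proposition~\ref{proposition:degree admissible disk}(1) to the now-established regular admissible disk $f\#_{\phi_\sfv}(g_1,\dots,g_n)$ and summing the two displays gives
\[
|f\#_{\phi_\sfv}(g_1,\dots,g_n)|_\Z
=1+\sum_{\text{all}}\#(2\text{-folding})+\sum_{\text{all}}\#(\text{concave})
=|f|_\Z+\sum_{j=1}^n|g_j|_\Z .
\]

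I expect the main obstacle to be the admissibility verification across the gluing loci, rather than the degree count. The delicate point is to make precise that the two excised pieces fit together into a genuine differentiable disk with the correct orientation behaviour — that the piece of each $g_j$ over $\R^2\setminus\sU_\infty$ glues orientation-coherently onto $f|_{\R^2\setminus\sU_\sfv}$ — and that the cyclic-order, orientation, and potential compatibility of $\phi_\sfv$, together with the label matching $\tilde f(\bv_{k_j})=\tilde g_j(\bw_0)$, genuinely produce smooth (or $2$-folding) edges and correctly typed vertices across each arc. Once this locality of the fold-and-concavity data and the orientation matching are in hand, the index computation is pure bookkeeping.
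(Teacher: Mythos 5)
Your overall route is exactly the paper's: admissibility of the glued disk is checked directly against Definition~\ref{def:Regular admissible disks} and the conditions of \S\ref{sec:Admissible disks hitting the vertex at infinity}, and the degree identity follows from Proposition~\ref{proposition:degree admissible disk}(1) applied to $f$ and to the glued disk, Proposition~\ref{prop:degree hitting the infinity} applied to the $g_j$, and the observation that the replacement preserves the numbers of $2$-folding edges and concave vertices. That bookkeeping, including the role of the constant $1$ and the fact that the removed vertices $\bv_{k_j}$ and $\bw_0$ are neutral and the surviving positive vertex is $\bv_0$ (which indeed maps to a crossing), is correct and is precisely the content of the paper's two-sentence proof.

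There is, however, one load-bearing claim in your localization step that does not follow from regular admissibility: you assert that $f^{-1}(\sfv)\subseteq\vPi$ and hence that $f^{-1}(\sU_\sfv)$ is exactly the disjoint union $\bU_{\bv_{k_1}}\amalg\cdots\amalg\bU_{\bv_{k_n}}$. Definition~\ref{def:Regular admissible disks} constrains only the boundary behaviour of $f$: the interior of a regular admissible disk is merely an orientation-preserving immersion into $\R^2$ and may freely pass over $\sL$, in particular over $\sU_\sfv$ and over the vertex $\sfv$ itself; likewise a $2$-folding edge may have its fold point inside $\sU_\sfv$ on a half-edge of $\sfv$. For such components of $f^{-1}(\sU_\sfv)$ your recipe --- excise all of $f^{-1}(\sU_\sfv)$ and glue in only the $g_j$ --- does not return a disk, so as written the construction fails on those inputs. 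The repair is short: perform the surgery only on the components of $f^{-1}(\sU_\sfv)$ containing the $\bv_{k_j}$, and on every other component leave the map unchanged; this is legitimate because the tangle replacement modifies only the $1$-complex inside $\sU_\sfv$ and not the ambient plane, and admissibility places no condition on how the interior of the disk (or the image of a fold point) meets that $1$-complex. With this correction your verification of conditions (1)--(4) and the degree count go through unchanged. (To be fair, the paper's own proof and its Definition~\ref{def:disk_gluing_tangle} gloss over the same point, so making it explicit --- correctly --- is a genuine improvement.)
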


\begin{proof}
It is straightforward from Definition~\ref{def:Regular admissible disks}, the definition of admissible disks for tangles in \S~\ref{sec:Admissible disks hitting the vertex at infinity}, and Definition~\ref{def:disk_gluing_tangle} that the gluing $f\tangleReplacementLag(g_1,\dots,g_n)$ is an admissible disk for $\sL\tangleReplacementLag \sT$.
Since the gluing process preserves the numbers of 2-folding edges and concave vertices,
Propositions~\ref{proposition:degree admissible disk} and \ref{prop:degree hitting the infinity} imply the above degree equality.
\end{proof}

Conversely, when we have a regular admissible disk in the tangle replacement $\sL\tangleReplacementLag\sT$ we decompose the disks in $\sL$ and $\sT$ as follows.
We shall omit the proof, since it is the reverse procedure to that of Lemma~\ref{lem:disk_gluing_tangle}.

\begin{lemma}\label{lem:tangel_decomposition}
Let $f:(\Pi,\partial\Pi,\vPi)\to(\RR^2,\sL\tangleReplacementLag\sT,\sS_{\sL\tangleReplacementLag\sT})$ be a regular admissible disk. The map $f$ can be decomposed along the properly embedded arcs $f^{-1}(\sS_\sfv)$, where $\sS_\sfv$ is the circle in $\RR^2_{xy}$ which is used in Lemma~\ref{lem:tangle_Lagrangian_projection}.
Then a connected component of the decomposition canonically induces a regular admissible disk for $\sL$ and an admissible disk for $\sT$ hitting the vertex at infinity when the component has nontrivial image in $\RR^2\setminus \sU_\sfv$ and $\sU_\sfv$, respectively.
\end{lemma}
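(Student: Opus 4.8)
The plan is to run the gluing construction of Lemma~\ref{lem:disk_gluing_tangle} backwards. First I would put $f$ in general position with respect to the separating circle $\sS_\sfv$ of Lemma~\ref{lem:tangle_Lagrangian_projection}: after a perturbation within the equivalence class of $f$ (Definition~\ref{definition:equivalence of admissible disks}), arrange that $f$ is transverse to $\sS_\sfv$ away from $\crit(f)$. Since $\sS_\sfv$ carries no special point of $\sL\#_{\phi_\sfv}\sT$ and meets the diagram transversally in its $m$ strands, the preimage $f^{-1}(\sS_\sfv)$ is a properly embedded $1$-manifold in $\Pi$, a finite disjoint union of arcs with endpoints on $\partial\Pi$ (lying over the strand crossings) together with circles in $\rPi$. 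These curves separate $\Pi$ into the open set $f^{-1}(\sU_\sfv)$, whose components I call the \emph{inside} pieces, and $f^{-1}(\R^2_{xy}\setminus\cl{\sU_\sfv})$, whose components I call the \emph{outside} pieces; on each piece the image lies entirely in $\cl{\sU_\sfv}$ or in the closure of its complement.

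Next I would cap each piece. For an outside component $P$, its boundary alternates between subarcs of $\partial\Pi$ mapping into $\sL\cap(\R^2_{xy}\setminus\sU_\sfv)$ and cut curves of $f^{-1}(\sS_\sfv)$. Using the standard Darboux model at the vertex $\sfv$ (Theorem~\ref{theorem:Darboux}) I would cone off each cut curve to $\sfv$: a boundary arc becomes a new boundary vertex and a bounding circle a collapse onto $\sfv$, in each case a neutral vertex $\bv$ carrying the label $\sfv_{i,\ell}$, where $i$ and $\ell$ are read off from the half-edge swept first and the number of sectors the cut curve covers. The resulting disk inherits smoothness and orientation-preservation on its interior and the smooth-or-$2$-folding property of its surviving edges from $f$, and all its vertices are convex, concave, or neutral; since the unique positive vertex $\bv_0$ of $f$ lies in exactly one outside component, that component yields a regular admissible disk for $\sL$ in the sense of Definition~\ref{def:Regular admissible disks}. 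Dually, for an inside component $Q$ I would cone its $\sS_\sfv=\sS_\infty$ boundary to the vertex at infinity via the standard model near $\infty$; this produces the distinguished vertex mapping to $\infty$ together with its two adjacent half-folding edges, exhibiting $Q$ as an admissible disk for $\sT$ hitting the vertex at infinity as in \S\ref{sec:Admissible disks hitting the vertex at infinity}, with the remaining vertices convex-negative, concave-negative, or neutral and with the label of its distinguished vertex equal to the value $\sfv_{i,\ell}$ attached to the matching cap of the adjacent outside piece.

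Finally I would check that the cut data matches the gluing data of Definition~\ref{def:disk_gluing_tangle} strand by strand, so that the capped pieces reassemble to $f$ under $\#_{\phi_\sfv}$, and that the degree adds up: because cutting and capping change neither the number of $2$-folding edges nor the number of concave vertices, Propositions~\ref{proposition:degree admissible disk} and~\ref{prop:degree hitting the infinity} express $|f|_\Z$ as the sum of the degrees of the pieces, confirming that the decomposition is inverse to Lemma~\ref{lem:disk_gluing_tangle}.

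I expect the main obstacle to be the verification in the capping step that the coning reproduces exactly the standard local models, so that the decomposition is canonical. Concretely, one must show that the combinatorial data recorded along a cut curve, namely the first half-edge met, the sweep count $\ell$, and the orientation behaviour of $f$ on the two sides of the curve, forces a unique admissible cap and a single consistent label shared by the inside and outside caps, and that any $(\pm)$- or $2$-folding edges and critical points lying over $\sS_\sfv$ do not obstruct the transversality arranged in the first step. These are precisely the computations in the proof of Lemma~\ref{lem:disk_gluing_tangle} read in reverse, so I would organise the argument to quote that lemma's local analysis rather than repeat it.
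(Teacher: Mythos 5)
Your strategy---running Lemma~\ref{lem:disk_gluing_tangle} backwards by cutting along $f^{-1}(\sS_\sfv)$ and capping each piece---is exactly the route the paper intends: the paper in fact omits the proof, saying only that it is the reverse of the gluing procedure. But your write-up contains a genuine gap at the capping step, and it is not a technicality. You implicitly assume that every inside component of $\Pi\setminus f^{-1}(\sS_\sfv)$ meets the cut locus in a \emph{single} arc (your capping produces ``the distinguished vertex mapping to $\infty$,'' singular), and correspondingly that the outside part is connected, so that the one component containing $\bv_0$ accounts for all outside pieces. Neither is automatic, and both can fail. Concretely, let $\sT$ be the trivial $2$-strand tangle and let $\sL$ have a $4$-valent vertex whose two left half-edges close up through a single crossing $a$ into a kink, while the two right half-edges pass through crossings $b$ and then $c$ before closing up in a kink. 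In the glued diagram the bigon with corners $a$ and $b$ runs straight through $\sU_\sfv$. The two kink regions, being monogons, force (via the action--area relation of Proposition~\ref{proposition:finiteness}) the quadrants of $a$ and $c$ facing them to be positive, and one can choose the remaining over/under data and areas consistently so that $a$ is the unique positive corner and $b$ a negative corner of this bigon; it is then a degree-$1$ regular admissible disk. Its preimage of $\sS_\sfv$ consists of two arcs, its single inside component is a band with \emph{two} boundary arcs on $\sS_\sfv$, and its outside part has two components, only one containing $\bv_0$.

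For such a disk the capping fails on both sides. Coning the two cut arcs of the band yields a map hitting $\infty$ at two boundary points, violating condition (3) of \S\ref{sec:Admissible disks hitting the vertex at infinity}, so the inside piece does not induce an admissible disk for $\sT$ hitting the vertex at infinity; and the outside component containing only $b$ caps to a disk with one negative convex vertex, one neutral vertex, and no positive vertex, so it is not a regular admissible disk in the sense of Definition~\ref{def:Regular admissible disks}. In particular the decomposition is not, as written, inverse to Lemma~\ref{lem:disk_gluing_tangle}, whose image consists precisely of glued disks with connected outside part. Your closing remark that the obstacle is ``the verification that the coning reproduces the standard local models'' misses this: the obstacle is that the combinatorics of $f^{-1}(\sS_\sfv)$ can be more complicated than the gluing construction ever produces. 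A complete proof must either rule out such pass-through configurations (the example above suggests this needs a real argument and extra hypotheses, not just transversality) or enlarge the objects counted on the tangle side so that inside components with several cut arcs are recorded---in the spirit of Sivek's type $D$ disks, which log each excursion across the dividing locus as a chord $\rho_{ij}$. Note that this is also the step on which the disk-counting argument in Theorem~\ref{thm:van Kampen} leans, so the gap cannot be quarantined inside this lemma.
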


Now we are ready to state our van Kampen type theorem for DGAs:
\begin{theorem}\label{thm:van Kampen}
Let $((\cL,v),\cT)$ be a pair as before and $\cL\tangleReplacement\cT$ be its tangle replacement. Then we have the following commutative diagram of DGAs:
\[
\begin{tikzcd}[column sep=3pc]
\cI_m=(I_m,|\cdot|_m,\partial_m)\ar[r,"\bp_\infty"]\ar[d,"\bp_v"',hook]& \cA_\cT=(A_\sT,|\cdot|_{\fP_\sT},\partial_\sT) \ar[d,hook]\\
\cA_\cL=(A_\sL,|\cdot|_{\fP_\sL},\partial_\sL)\ar[r,"\bw_v"]& \cA_{\cL\tangleReplacement\cT}=(A_{\sL\tangleReplacementLag\sT},|\cdot|_{\sL\tangleReplacementLag\sT},\partial_{\sL\tangleReplacementLag\sT})
\end{tikzcd}
\]
Here ${\bw}_v$ is defined by evaluating $\sfv_{i,\ell}$ with $\bp_\infty(\sfv_{i,\ell})$.
\end{theorem}

\begin{proof}
Let us first show that $\bw_v$ is a DGA map. By its construction, it is a degree preserving algebra map.
Since $\bp_\infty$ is a DGA map, we have
\[
\partial_{\sL\tangleReplacementLag\sT} (\bw_v(\sfv_{i,\ell}))=\partial_\sT(\bp_{\infty}(\sfv_{i,\ell}))=\bp_\infty(\partial_m(\sfv_{i,\ell}))=\bw_v(\partial_\sL(\sfv_{i,\ell})).
\]

It remains to show $\partial_{\sL\tangleReplacementLag\sT} (\bw_v(\sfg))=\bw_v(\partial_\sL(\sfg))$ for $a\in \sS(\sL)\setminus\{\sfv_{i,\ell}\}$. Now consider
\[
\bw_v(\partial_\sL(\sfg))=\bw_v\left( \sum_{f\in \cM(\sfg;\sL)_{(1)}} \cP(f) \right)=\sum_{f\in \cM(\sfg;\sL)_{(1)}} \bw_v(\cP(f)).
\]
Then each monomial of the above expression corresponds to an admissible disk $f$ for $\sL$ and $n$-pair of admissible disks $(g_1,\dots, g_n)$ for $\hat\sT$ satisfying
\begin{align*}
f^{-1}(\sfv)\cap \vPi&=\{\bv_{k_1},\dots,\bv_{k_n}\},&
\tilde f(\bv_{k_j})&=\tilde g_j(\bw_0)\quad\text{ for }j=1,\dots,n.
\end{align*}
Then by the construction of Lemma~\ref{lem:disk_gluing_tangle} we recover the chosen monomial
\[
\cP(f\tangleReplacementLag(g_1,\dots,g_n)).
\]
Moreover, $f\tangleReplacementLag(g_1,\dots,g_n)\in \cM(\sfg;\sL\tangleReplacementLag\sT)_{(1)}$.

On the other hand, let us consider 
\[
\partial_{\sL\tangleReplacementLag\sT}(\bw_v(\sfg))=\partial_{\sL\tangleReplacementLag\sT}(\sfg)=\sum_{h\in \cM(\sfg;\sL\tangleReplacementLag\sT)_{(1)}} \cP(h).
\]
Lemma~\ref{lem:tangel_decomposition} implies that every admissible disk $h$ in $\cM(\sfg;\sL\tangleReplacementLag\sT)_{(1)}$ can be decomposed into disks in $\sL$ and $\sT$. This proves $\partial_{\sL\tangleReplacementLag\sT}(\bw_v(\sfg))=\bw_v(\partial_\sL(\sfg))$ and commutativity follows directly from the construction of the diagram.
\end{proof}

\section{Applications}\label{section:Applications}
In this section, we will recover several known DGA invariants for Legendrian links in $\RR^3$ due to Chekanov--Eliashberg \cite{Chekanov2002,Eliashberg2000} and Etnyre--Ng--Sabloff \cite{ENS2002}, for bordered Legendrians due to Sivek \cite{Sivek2011}, and furthermore for Legendrian links in $\#^m (S^2\times S^1)$ due to Ekholm--Ng \cite{EN2015}.

Moreover, we will consider the relationship between our DGA for Legendrian links and partially wrapped Floer homology for certain Weinstein domains.

\subsection{A trivial \texorpdfstring{$(1,1)$}{(1,1)}-tangle and Chekanov--Eliashberg's DGA for Legendrian links}
The simplest example is the trivial $(1,1)$-tangle as shown below
\[
I=\vcenter{\hbox{\includegraphics{RM_0_0_1.pdf}}}
\]
whose DGA is obviously the same as the base ring $\ZZ$ generated by $1$ of degree 0 with zero differential
\[
\cA_I=(\ZZ, 0, 0),
\]
and the peripheral structure at infinity $\bp_\infty:\cI_2\to\cA_I$ is defined as 
\[
\bp_\infty(\infty_{i,\ell})\coloneqq \begin{cases}
1 & \ell=1;\\
0 & \ell>1.
\end{cases}
\]

Let $\cL=(\Lambda,\fP_\Lambda)\in \cL\cG_\fR$ be a Legendrian graph with $\fR$-valued potential. Suppose that $\cL$ has a bivalent vertex $v$. Then the tangle replacement of $\cL$ with $I$ has an effect which removes the designated bivalent vertex and makes two half-edges into a smooth arc. We call this operation a {\em smoothing} at $v$.

\begin{remark}
If we start with a Legendrian graph homeomorphic to a disjoint union of circles, then we eventually get a Legendrian link by replacing all vertices with the trivial $(1,1)$-tangle $I$.
\end{remark}

Suppose that two half-edges of $v$ are {\em opposite} in the sense that they are lying on the opposite side with respect to $y$-axis. We further assume that they have the same potentials. Then the degrees of $\sfv_{1,1}$ and $\sfv_{2,1}$ in $\cA_\cL$ are zero and we have the following push-out diagram
\[
\begin{tikzcd}
\cI_2\ar[r,"\bp_\infty"]\ar[d,"\bp_v"]& I \ar[d]\\
\cA_\cL\ar[r,"\bw_v"]& \cA_{\cL}^{\sm}(v),
\end{tikzcd}
\]
where $\cA_{\cL}^{\sm}(v)\coloneqq \cA_{\cL\amalg_{\Phi_v} I}$ and the induced map $\bw_v$ is a quotient map sending $\sfv_{i,\ell}$ to $1$ if $\ell=1$ or $0$ otherwise.
Here, the superscript `$\sm$' stands for ``smoothing''.

\begin{remark}\label{rmk:potential_smoothing}
For simplicity, suppose that the underlying graph is a circle consisting of one vertex $v$ and one edge.
Then by smoothing at $v$, we obtain a Legendrian knot $\cK$. Initially, it has only one vanishing condition for Maslov potential $\fP$ so that 
\[
|t_v|_{\fP_T}=\fP(\sfh_{\sfv,1}-\sfh_{\sfv,2})=\pm2\rot(\cK)\cdot 1_\fR\in\fR,
\]
where $\rot(\cK)\in\ZZ$ is the rotation number of $\cK$. However, the smoothing condition for the potential implies that 
\[
2\rot(\cK)\cdot 1_\fR = 0\in\fR.
\]

Therefore whenever $\fR=\Zmod{2r'}$ for some $r'$ dividing $\rot(\cK)$, we have a Maslov potential $\fP$. In particular, if $\fR=\Zmod{2\rot(\cK)}$, then this construction perfectly matches with the usual construction of Maslov potentials on Legendrian knots. 
\end{remark}

\begin{theorem}\label{theorem:smoothing}
Let $\cK=(\Lambda,\fP)$ be a Legendrian circle with $(\Zmod{2\rot(\cK)})$-valued potential consisting of one bivalent vertex $v$ and one edge.
Suppose that two half-edges are opposite and have the same potential. Then there is a DGA isomorphism
\[
\cA_\cK^{\sm}(v)\otimes_\ZZ(\Zmod{2})\to\cA^{\CE}_\cK,
\]
where $\cA^{\CE}_\cK$ is the Chekanov--Eliashberg DGA over $\Zmod{2}$ for the Legendrian knot obtained from $\cK$.
\end{theorem}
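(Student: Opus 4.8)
The plan is to realize the isomorphism as the identity on double points, after verifying that smoothing leaves the crossing generators and their grading intact and turns the graph differential into the Chekanov--Eliashberg differential modulo $2$. First I would pin down the generators. Smoothing at $v$ is the tangle replacement with the trivial $(1,1)$-tangle $I$, which modifies the diagram only inside the Darboux neighborhood of $\sfv$; since the two half-edges are opposite and that neighborhood contains no double point, no crossing is created or destroyed, so the double point set $\sC_\sL$ is unchanged. As $v$ is the only vertex, the smoothing eliminates all vertex generators, and the push-out description preceding the theorem gives $\cA_\cK^{\sm}(v)=\Z\langle \sC_\sL\rangle$ with the crossings $\sC_\sL$ now the crossings of the knot $\cK^{\sm}$. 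These are exactly the generators of $\cA^{\CE}_\cK$, so after $\otimes_\Z(\Zmod{2})$ the two underlying algebras agree under the identity on $\sC_\sL$.

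Next I would match gradings. Both before and after smoothing every $\sfc\in\sC_\sL$ is a self-intersection of the single edge of the circle, so by the remark following (\ref{eq:Gr1}) the degree $|\sfc|_\fP$ reduces to the classical capping-path grading. The hypothesis that the half-edges are opposite with equal potential forces $|\sfv_{1,1}|_\fP=|\sfv_{2,1}|_\fP=0$ and, by Remark~\ref{rmk:potential_smoothing}, $2\rot(\cK)\cdot 1_\fR=0$ in $\fR=\Zmod{2\rot(\cK)}$; hence $\fP$ descends to a genuine $(\Zmod{2\rot(\cK)})$-valued Maslov potential on $\cK^{\sm}$ whose grading group coincides with that of the Chekanov--Eliashberg algebra.

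The core step is the differential. By the van Kampen theorem (Theorem~\ref{thm:van Kampen}), $\bw_v$ is a DGA morphism, so for $\sfc\in\sC_\sL$ one has $\partial^{\sm}\sfc=\bw_v(\partial\sfc)$ with $\bw_v(\sfv_{i,1})=\bp_\infty(\sfv_{i,1})=1$ and $\bw_v(\sfv_{i,\ell})=0$ for $\ell\ge 2$. I would then establish a bijection between the degree-$1$ regular admissible disks counted by $\partial\sfc$ whose neutral corners are all single-sector ($\ell=1$) and the immersed disks defining $\partial^{\CE}\sfc$. This is precisely the $\cT=I$ instance of Lemmas~\ref{lem:disk_gluing_tangle} and \ref{lem:tangel_decomposition}: a single-sector corner $\sfv_{i,1}$, which has degree $0$, records a passage of the disk boundary straight through the former vertex (angle $\pi$), and smoothing turns each such passage into a smooth traversal of the arc; conversely, un-smoothing a Chekanov--Eliashberg disk re-inserts exactly these single-sector corners. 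A disk carrying a corner $\sfv_{i,\ell}$ with $\ell\ge 2$ cannot descend from a Chekanov--Eliashberg disk, since a smooth point of the arc admits no wrap-around corner covering several sectors, and $\bw_v$ annihilates it in any case. Over $\Zmod{2}$ the signs $\sgn(f)$ are trivial, so applying $\bw_v$ to $\cP(f)=\tilde f(\bv_1\cdots\bv_t)$ deletes the single-sector neutral labels and leaves the ordered product of crossing labels, namely the Chekanov--Eliashberg monomial; thus $\bw_v(\partial\sfc)=\partial^{\CE}\sfc$ in $\Zmod{2}$.

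The main obstacle is this disk bijection: making rigorous that ``single-sector corner'' is interchangeable with ``smooth traversal'' and that no Chekanov--Eliashberg disk covertly demands a multi-sector corner. I expect to discharge it by invoking the tangle-replacement gluing and decomposition lemmas specialized to $\cT=I$, for which the only internal disks hitting the vertex at infinity are the single-sector degree-$0$ disks already encoded by $\bp_\infty$. Combining the three steps, the identity on $\sC_\sL$ is simultaneously grading- and differential-preserving, yielding the desired DGA isomorphism $\cA_\cK^{\sm}(v)\otimes_\Z(\Zmod{2})\to\cA^{\CE}_\cK$.
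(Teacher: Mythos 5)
Your proposal is correct and follows essentially the same route as the paper: identify the generators of the smoothed algebra with the crossings $\sC_\sL$, match the grading via Remark~\ref{rmk:potential_smoothing}, and identify the disks computing the differential with Chekanov's admissible immersions. The only cosmetic difference is that you derive the smoothed differential as $\bw_v\circ\partial$ via Theorem~\ref{thm:van Kampen} and then match disk counts through the tangle gluing/decomposition lemmas specialized to $\cT=I$, whereas the paper observes directly that the degree-one regular admissible disks of the smoothed (vertex-free) diagram are precisely Chekanov's admissible immersions; since the van Kampen theorem is itself proved by that gluing/decomposition, the two arguments rest on the same disk correspondence.
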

\begin{proof}
It is obvious that the algebra $\cA_\cK(v)$ is generated only by the crossing~$\sC_{\sL}$.
The grading $|\cdot|_{\fP}$ is the usual grading for the Legendrian knot $\Lambda$ as stated in Remark~\ref{rmk:potential_smoothing}.
The regular admissible disks $\cM_{(1)}$ of degree one in Definition~\ref{def:Regular admissible disks} with Proposition~\ref{proposition:degree admissible disk}(1) recover the {\em admissible immersions} in \cite[\S 3.2]{Chekanov2002}, and hence the differential $\partial$ coincides with the differential of the Chekanov--Eliashberg algebra for Legendrian links.
\end{proof}

\subsection{Bivalent Legendrian graphs and DGAs over \texorpdfstring{$\ZZ[\sft_1^{\pm1},\cdots,\sft_m^{\pm1}]$}{the multivariable Laurent polynomial rings}}
\label{sec:Based Legendrian}
In this section, we recover DGAs for Legendrian $m$-components links defined over the multivariable Laurent polynomial ring $\ZZ[\sft_1^{\pm1},\dots,\sft_m^{\pm1}]$ due to Etnyre--Ng--Sabloff \cite{ENS2002} as follows.

Throughout this section, our Legendrian graph $\Lambda$ is the union of $m$-circles, and each circle $\Lambda_i$ has a bivalent vertex $v^i$.
We consider the group ring $\ZZ[H_1(\Lambda;\RR)]$ which is isomorphic to the Laurent polynomial ring
\[
\ZZ[H_1(\Lambda;\RR)] \simeq \ZZ[H_1(\Lambda,\Lambda\setminus\cV_\Lambda)] \simeq \ZZ[\sft_1^{\pm1},\cdots, \sft_m^{\pm1}].
\]

Now we define a DGA $\cA_\cL^{\sm}=(A_\Lambda^{\sm},|\cdot|_\fP,\partial)$ over $\ZZ[\sft_1^{\pm1},\cdots,\sft_m^{\pm1}]$ as follows: First, take the quotient $A_\Lambda\to \bar{A}_\Lambda^{\sm}$ defined by $\sfc\mapsto\sfc$ and 
\begin{align*}
\sfv_{k,\ell}^i&\mapsto \begin{cases}
\sft_i &  k=1, \ell=1;\\
\sft_i^{-1} & k=2, \ell=1;\\
0 & \ell>1
\end{cases}
\end{align*}
for $v^i\in\cV_\Lambda$. 
The grading and differential for $\bar{\cA}_\cL^{\sm}$ are inherited from $\cA_\cL$ and so $\partial \sft_i=0$.

Next, take another quotient to obtain $A_\Lambda^{\sm}$ so that all $\sft_i$'s are central elements. Namely, $A_\Lambda^{\sm}$ is a quotient of $\bar{A}_\Lambda^{\sm}$ by the subalgebra generated by elements of the form $\sft_i\sfm - \sfm\sft_i$
\begin{align*}
A_\Lambda^{\sm} &\coloneqq \bar{A}_\Lambda^{\sm} / I, &
I&\coloneqq \ZZ\langle
\sft_i \sfc - \sfc \sft_i \mid \sfc\in \bar{A}_\Lambda^{\sm}, 1\le i\le m
\rangle.
\end{align*}
Then one can naturally regard $\cA_\cL^{\sm}$ as a DGA over $\ZZ[\sft_1^{\pm1},\cdots,\sft_m^{\pm 1}]$.

In summary, we have the following theorem.
\begin{theorem}\label{theorem:smoothing 2}
Let $\cL=(\Lambda,\fP)$ be a Legendrian graph with $\ZZ$-valued potential whose underlying graph is a disjoint union of circles. Suppose that each component has only one bivalent vertex whose two half-edges are opposite and have the same potential.
Then there is a DGA isomorphism
\[
\cA_\cL^{\sm}\to\cA^{\Ng}_\cL,
\]
where $\cA^{\Ng}_\cL$ is the $\ZZ$-graded Chekanov-Eliashberg DGA over $\ZZ[\sft_1^{\pm1},\cdots, \sft_m^{\pm1}]$ for the Legendrian $m$-component link $\cL$ generalized by Etnyre, Ng and Sabloff.
\end{theorem}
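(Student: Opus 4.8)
The plan is to deduce the statement as a corollary of the general construction of $\cA_\cL^{\sm}$ established just above, refining the identification of Theorem~\ref{theorem:smoothing} to the multi-component setting while tracking the coefficient variables. First I would verify that such an $\cL$ is automatically based in the sense of Definition~\ref{def:based}: each component is a circle carrying a single bivalent vertex, so deleting $\cV^{(2)}_\Lambda$ together with the open edges adjacent to it turns every component into a contractible arc. Hence $H_0(\Lambda\setminus\cV^{(2)}_\Lambda;\R)\simeq H_0(\Lambda;\R)$ and $H_1(\Lambda\setminus\cV^{(2)}_\Lambda;\R)=0$, and $m=\#\cV^{(2)}_\Lambda=\rk H_1(\Lambda;\R)$ equals the number of link components. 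The preceding theorem then produces a DGA $\cA_\cL^{\sm}$ over $\Z[H_1(\Lambda;\R)]\simeq\Z[\sft_1^{\pm1},\dots,\sft_m^{\pm1}]$, and since each circle has exactly one edge the smoothing quotient eliminates every vertex generator, leaving $A_\Lambda^{\sm}=\Z[\sft_1^{\pm1},\dots,\sft_m^{\pm1}]\langle\sC_\sL\rangle$ generated by the crossings alone.

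Next I would match the two DGAs directly. On the level of graded algebras, the crossings $\sC_\sL$ are identified with the crossings of the Lagrangian projection of the underlying Legendrian $m$-component link, and I would check that the grading $|\cdot|_\fP$ agrees with Ng's $\Z$-grading. By the case $a=b=1$ of the rules (\ref{eq:Gr2_1}), the opposite and equal-potential hypotheses force $|\sfv^i_{1,1}|_\fP=|\sfv^i_{2,1}|_\fP=0$; by Remark~\ref{rmk:potential_smoothing} this is exactly the condition that each component have rotation number $0$, so that $\sft_i$ is of degree $0$ as in Ng's convention, and the crossing grading reduces to the usual rotation-number normalization just as in the proof of Theorem~\ref{theorem:smoothing}. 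The remaining and principal task is the differential. By Definition~\ref{def:Regular admissible disks} together with Proposition~\ref{proposition:degree admissible disk}(1), the degree-one regular admissible disks are precisely those with smooth edges and only convex or neutral corners; forgetting the neutral corners recovers Chekanov's admissible immersions, so the underlying disk count already coincides with that of \cite{Chekanov2002, Ng2003}.

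The main obstacle is to verify that the coefficient bookkeeping produced by the smoothing quotient reproduces Ng's base-point rule exactly. Under the quotient a neutral corner at $\sfv^i$ covering a single sector ($\ell=1$) contributes $\sft_i$ or $\sft_i^{-1}$ according to which half-edge is the upper strand, while any corner with $\ell>1$ is sent to $0$. I would argue geometrically that in the smoothed picture the vertex $\sfv^i$ becomes an ordinary immersed point of the link, that a disk boundary running straight over it corresponds to exactly one $\ell=1$ neutral corner, picking up one factor $\sft_i^{\pm1}$ with the sign dictated by the orientation of the traversal, and that a boundary passing over the base point $k$ times yields $k$ such corners, hence $\sft_i^{\,\pm k}$; centrality of the $\sft_i$ in $A_\Lambda^{\sm}$ makes the resulting monomial independent of the order in which these corners occur, matching Ng's convention. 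The one point demanding care is the vanishing of the $\ell>1$ terms: a neutral corner covering both sectors would force the boundary to double back at $\sfv^i$, which is not a smooth traversal of the immersed link and therefore corresponds to no disk counted by $\cA^{\Ng}_\cL$, so sending it to $0$ is the correct normalization. Assembling the algebra, grading, and differential identifications then yields the desired DGA isomorphism $\cA_\cL^{\sm}\to\cA^{\Ng}_\cL$.
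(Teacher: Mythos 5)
Your proposal is correct and takes essentially the same approach as the paper, which states this theorem as an immediate corollary of the preceding theorem on based Legendrian graphs (the paper's entire proof reads ``This is obvious''); your verification of based-ness, the identification of $A_\Lambda^{\sm}$ with the crossing-generated algebra over $\Z[\sft_1^{\pm1},\dots,\sft_m^{\pm1}]$, the rotation-number-zero grading match, and the $\sft_i^{\pm1}$ base-point bookkeeping (with $\ell>1$ vertex generators killed) are precisely the details that make it obvious. One harmless slip: Definition~\ref{def:based} removes only the bivalent vertices (leaving open arcs), not also the adjacent open edges as you wrote---deleting those too would leave the empty set in this one-vertex-one-edge situation---but this does not affect your conclusion.
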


\subsection{Bordered Legendrians and Sivek's DGAs}
The van Kampen type theorem for Legendrian DGAs is already discussed in \cite{Sivek2011}. Let us briefly recall his construction. 

Let $F\subset \RR^2_{xz}$ be a ({\em simple}) front diagram\footnote{A Legendrian front is {\em simple} if all of its right cusps have the same $x$-coordinate.} of a Legendrian knot $\Lambda$ in $(\RR^3,\xi_0)$. Now consider a vertical line $V=\{x=a\}$ of $\RR^2_{xz}$ avoiding cusps and crossings of $F$. Suppose that $V\cap F$ consists of $m$ points. Let $F^L$ and $F^R$ be the left and right halves of $F$ with respect to $V$.

The type $D$ algebra $D(F^R)$ for $F^R$ is generated by crossings, right cusps, and $\{\rho_{ij}\}_{1\leq i < j \leq m}$, 
while the type $A$ algebra $A(F^L)$ for $F^L$ is generated only by crossings.
The grading for the generators is inherited from the Maslov potential $\mu$ on $F$. 
The differential on $D(F^R)$ and $A(F^L)$ are given by counting ({\em half}) admissible disks of $F^R$ and $F^L$, respectively, see \cite[Definitions 2.5 and 2.9]{Sivek2011}.
Moreover, there is a DGA map \[w:J_m\coloneqq\langle \rho_{ij} \rangle\to A(F^L)\] defined by counting half admissible disks of $F^L$, see \cite[Definition 2.6]{Sivek2011} which makes the following diagram commute: 
\[
\begin{tikzcd}
J_m\ar[r,"w"]\ar[d,hook]& A(F^L) \ar[d,hook]\\
D(F^R)\ar[r,"w'"]& A(F)
\end{tikzcd}
\]
Now we want to interpret the DGAs $J_m, D(F^R), A(F^L),$ and $A(F)$ as DG-subalgebras of $\cI_m, \cA_\cL, \cA_\cT$ and $\cA_{\cL\amalg_{\Phi_v}\cT}$, respectively, for a certain choice of $\cL$ and $\cT$.

Let us construct a Legendrian graph $\cL$ and a Legendrian tangle $\cT$ from $F^R$ and $F^L$, respectively.
By Ng's resolution in \cite{Ng2003} we have corresponding Lagrangian projections $\sL^R$, and $\sL^L$ in $\RR^2_{xy}$ with the induced vertical line $\{x=a\}\subset \RR^2_{xy}$.
We label $\sL^R\cap \{x=a\}$ by $1,2,\dots, m$ from the top to the bottom with respect to the $y$-coordinate.

The Legendrian graph $\cL$ is given by shrinking $\{x\leq a\}$ to a point. The graph $\cL$ has a vertex $v$ of valency $m$ and note that
the infinite region of the vertical line $\{x=a\}\subset \RR^2_{xy}$ corresponds to the sector between $h_{v,m}$ and $h_{v,1}$.
Note that the Maslov potential $\mu$ determines the grading for the crossings and vertex generators.
In a similar manner we consider the Legendrian tangle $\cT$ obtained by removing $\{x \geq a\}$. We also have a vertex at $\infty$ of valency $m$ with one corresponding sector between $h_{\infty,m}$ and $h_{\infty,1}$. 

Now consider the inclusion $\iota_v:J_m\to I_m$ defined by $\rho_{ij}\mapsto \sfv_{i,j-i}$. 
The grading and the differential 
\begin{align*}
|\rho_{ij}|&=\mu(i)-\mu(j)-1\\
\partial \rho_{ij}&=\sum_{i<k<j}\rho_{ik}\rho_{kj}
\end{align*}
are compatible with (\ref{eq:Gr2_1}) and (\ref{eqn:differential_vertex}). Thus the inclusion $\iota_v$ induces a DGA map.
By extending $\iota_v$ we obtain $\iota_R:D(F^R)\to \cA_\cL$. The grading convention (\ref{eq:Gr1}) and degree 1 regular admissible disks in Definition~\ref{def:Regular admissible disks} for $\cL$ recover the grading and differential of crossings and right cusps in $F^R$. 
In other words, $\iota_R$ is a DGA map. A similar statement holds for $\iota_L:A(F^L)\to \cA_\cT$ and $\iota:A(F)\to \cA_{\cL\amalg_{\Phi_v}\cT}$.
Moreover $w:J_m\to A(F^L)$ is compatible with $\bp_{\infty}:\cI_m\to \cA_\cT$ as follows:
\[
\begin{tikzcd}
J_m\ar[r,"w"]\ar[d,"\iota_v"]& A(F^L) \ar[d,"\iota_L"]\\
\cI_m\ar[r,"\bp_\infty"]& \cA_\cT
\end{tikzcd}
\]

In summary we have
\begin{theorem}\label{thm:inclusion of DGA diagram}
There is a canonical inclusion from Sivek's DGA diagram to the DGA diagram in Theorem~\ref{thm:van Kampen} as follows:
\[
\begin{tikzcd}
& J_m \ar[dl,"\iota_v"'] \ar[rr,"w"] \ar[dd,hook] & & A(F^L) \ar[dl,"\iota_L"] \ar[dd,hook] \\
\cI_m \ar[rr,"\bp_\infty" near end, crossing over] \ar[dd,hook] & & \cA_\cT  \\
& D(F^R) \ar[dl,"\iota_R"] \ar[rr,"w'" near start] & & A(F) \ar[dl,"\iota_{L\amalg R}"] \\
\cA_\cL \ar[rr,"\bw_v"] & & \cA_{\cL\amalg\cT}\ar[from=uu,hook, crossing over]
\end{tikzcd}
\]
\end{theorem}

\subsection{Legendrian links in \texorpdfstring{$\#^N(S^1\times S^2)$}{the connected sum of the products of the sphere and the circle}}
In this section, we will discuss how to obtain the Legendrian contact homology algebra associated to a Legendrian link in a connected sum $\#^N (S^1\times S^2)$ as defined by Ekholm and Ng \cite{EN2015}.

Let $\cL=(\Lambda,\fP)$ be a $m$-component Legendrian link with $\ZZ$-valued potential in $\#^N(S^1\times S^2)$, where the ambient space will be regarded as the boundary of the subcritical Weinstein manifold $W_N$ which is the standard $4$-ball with $N$ Weinstein one-handles attached as discussed in \S~\ref{sec:geometric model}.
We assume that each component $\Lambda_i$ has a bivalent vertex $v^i$.

Let us denote the cocore spheres corresponding to the $N$ one-handles by $\{S^2_1,\cdots, S^2_N\}$.
In other words, there is a canonical contactomorphism between the complement of the spheres $\{S^2_i\}$ in $\#^N(S^1\times S^2)$ with the complement $M$ of $(2N)$ three-balls $\{B_i^+, B_i^-\}$ in the standard $S^3$.
\[
\phi:\#^N(S^2\times S^1)\setminus \coprod_{i=1}^N S^2_i \to M\coloneqq S^3\setminus \coprod_{i=1}^N (B_i^+\cup B_i^-)
\]
More precisely, we may assume that the two boundary spheres of $B_i^\pm$ correspond to the sphere $S^2_i$.

Now, we consider the bordered Legendrian $\Lambda\cap M$ in $M$ and obtain the Legendrian graph $\bar\Lambda\subset S^3$, called the {\em completion}, from $\Lambda\cap M$ by filling up the trivial Legendrian $n_i$-tangles for both $B_i^+$ and $B_i^-$ if $\Lambda\cap M$ has $n_i$ ends near $B_i^+$ (or $B_i^-$).
Let us denote the pairs of vertices corresponding to the three-balls $B_i^\pm$ by $v_i^\pm$, respectively.
Then the set of vertices for $\bar\Lambda$ is the union
\begin{align*}
\cV_{\bar\Lambda}&= \cV^{(2)}_{\bar\Lambda}\amalg \cV^\ess_{\bar\Lambda},&
\cV^{(2)}_{\bar\Lambda}&\coloneqq\{v^1,\cdots, v^m\}, &
\cV_{\bar\Lambda}^\ess&\coloneqq\{w_1^+,w_1^-,\cdots,w_N^+,w_N^-\}.
\end{align*}

It is obvious that the potential $\bar\fP$ for $\bar\Lambda$ is inherited from that $\fP$ for $\Lambda$ and so we can define the DGA for the pair $\bar\cL\coloneqq(\bar\Lambda,\bar\fP)$. 
Moreover, for each $i$, the domains of the two canonical peripheral structures $\bp_{\sfw_i^+}$ and $\bp_{\sfw_i^-}$ for $\sfw_i^+$ and $\sfw_i^-$ are isomorphic since the two vertices $w_i^+$ and $w_i^-$ have the same local structures, i.e., valency, potentials, and so on. Indeed, there exists a DGA $\cI_{w_i}=(I_{n_i},|\cdot|,\partial)$ such that
\[
\begin{tikzcd}
\cI_{w_i} \ar[r,"\bp_{\sfw_i^+}",shift left=.5ex]\ar[r,"\bp_{\sfw_i^-}"', shift right=.5ex] & \cA_{\bar\cL}.
\end{tikzcd}
\]

Now we obtain a new DGA $\cA_\cL$ in two steps as follows:
\begin{enumerate}
\item Apply smoothing at each vertex in $\cV^{(2)}_{\bar\Lambda}$ to make the DGA over $\ZZ[\sft_1^{\pm},\cdots, \sft_k^{\pm}]$
\[
\begin{tikzcd}[column sep=4pc]
\cA_{\bar\cL}\ar[r,"\text{smoothing}"]& \cA_{\bar\cL}^\sm\left(\cV^{(2)}_{\bar\Lambda}\right).
\end{tikzcd}
\]
\item Take the coequalizer of $\bp_{\sfw_i^+}$ and $\bp_{\sfw_i^-}$ for each $i$ to define $\cA_\cL$
\begin{align*}
\cA_\cL&\coloneqq \cA_{\bar\cL}^\sm\left(\cV^{(2)}_{\bar\Lambda}\right)\bigg/\sim,&
(\sfw_i^+)_{j,\ell} &\sim (\sfw_i^-)_{j,\ell} \quad\forall j\in\Zmod{n_i}, \ell\ge 1.
\end{align*}
\end{enumerate}

\begin{theorem}\label{theorem:relation to EN}
The DGA $\cA_\cL$ is generalized stable-tame isomorphic to the DGA $\cA_\cL^{\EN}$ associated to the Legendrian $\cL$ in $\#^m(S^1\times S^2)$ defined by Ekholm and Ng.

In other words, we have commutativity of the following diagram.
\[
\begin{tikzcd}[column sep=4pc]
\cL\subset \#^m(S^1\times S^2) \ar[d,"\bar{(\cdot)}"']\ar[rrr,"\cA^{\EN}"] & & & \cA_{\cL}^{\EN}\ar[d,"\stabletameisom"]\\
\bar\cL\subset S^3 \ar[r,"\cA"] & \cA_{\bar\cL} \ar[r,"\text{\rm smoothing}"] &\cA_{\bar\cL}^\sm\left(\cV^{(2)}_{\bar\Lambda}\right) \ar[r,"/\sim"] & \cA_\cL
\end{tikzcd}
\]
\end{theorem}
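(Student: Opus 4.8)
The plan is to establish the claimed generalized stable-tame isomorphism by matching generators, gradings, and differentials of $\cA_\cL$ and $\cA^{\EN}_\cL$ one class at a time, using the geometric model of \S\ref{sec:geometric model} as the dictionary between the two constructions and the van Kampen theorem (Theorem~\ref{thm:van Kampen}) to organize the gluing. First I would recall the Ekholm--Ng construction: to the Legendrian link $\cL\subset\#^N(S^1\times S^2)$ one associates external generators, coming from the crossings of its Lagrangian projection in the handlebody presentation, together with, for each Weinstein one-handle, a countable family of internal generators coming from the closed Reeb orbit in the corresponding belt sphere, governed by an internal DGA. On our side, cutting each one-handle along its belt sphere $S^2_i$ splits it into two feet attached to the balls $B_i^\pm$, producing the completion $\bar\Lambda\subset S^3$ with essential vertices $w_i^\pm$ and base-point vertices $v^j$. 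The point is that the local geometric model attached to each vertex in \S\ref{sec:geometric model} is exactly one half of the Ekholm--Ng standard model for the boundary of a one-handle, so that the internal Reeb chords of the $i$-th handle are recovered, after the coequalizer identification $(\sfw_i^+)_{j,\ell}\sim(\sfw_i^-)_{j,\ell}$, from the vertex generators $\{(\sfw_i^\pm)_{j,\ell}\}$.

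Next I would match the underlying graded algebras. The crossings of $\bar\Lambda$ away from the vertices are in canonical bijection with the external generators of $\cA^{\EN}_\cL$, and their gradings agree because both are computed from the inherited Maslov potential via the rotation-number formula \eqref{eq:Gr1} and Proposition~\ref{proposition:degree formula}. The smoothing at the bivalent base points $\cV^{(2)}$, carried out as in \S\ref{sec:Based Legendrian}, turns $\cA_{\bar\cL}$ into a DGA over $\Z[\sft_1^{\pm 1},\dots,\sft_m^{\pm 1}]\simeq\Z[H_1(\Lambda;\R)]$ and, by Theorem~\ref{theorem:smoothing 2}, reproduces exactly the base-point group-ring bookkeeping of the Ekholm--Ng algebra. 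After the coequalizer the two families $\{(\sfw_i^+)_{j,\ell}\}$ and $\{(\sfw_i^-)_{j,\ell}\}$ collapse to a single internal family for the $i$-th handle, whose grading matches the Ekholm--Ng internal grading by \eqref{eq:Gr2_1}--\eqref{eq:Gr3}; this identifies the generating sets and the $\fR$-gradings.

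Then I would compare the differentials. On internal generators the two differentials already coincide: by the Remark following Proposition~\ref{prop:boundary of vertex}, the vertex differential \eqref{eqn:differential_vertex} is literally the Ekholm--Ng internal differential, and the coequalizer respects it since $\bp_{\sfw_i^+}$ and $\bp_{\sfw_i^-}$ have isomorphic domains $\cI_{w_i}$. For the external generators the content is a cutting and gluing statement for admissible disks along the belt spheres: an Ekholm--Ng disk that traverses the $i$-th handle decomposes, upon cutting along $S^2_i$, into admissible pieces of $\bar\Lambda$ meeting the feet $w_i^+$ and $w_i^-$ with matching internal boundary data, and conversely such pieces glue back, so that under the coequalizer relation these decompositions assemble to the correct monomials. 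This is precisely the self-gluing analogue of the tangle-replacement van Kampen theorem (Theorem~\ref{thm:van Kampen}), in which the tangle inserted at each $w_i^\pm$ is the standard half-handle model, and I would deduce the differential match by applying that theorem simultaneously at all $2N$ vertices and then passing to the coequalizer.

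Finally, assembling the above gives a degree-preserving algebra isomorphism commuting with differentials after the smoothing and coequalizer, that is, the bottom row of the stated diagram, and identifies it with $\cA^{\EN}_\cL$ up to the auxiliary choices (the completion into trivial tangles and the choice of half-model) that differ only by stabilizations; these are absorbed by passing to generalized stable-tame isomorphism, yielding the right-hand vertical $\stabletameisom$. The main obstacle I expect is the disk cutting and gluing correspondence for the external differential: one must show that cutting an admissible disk along a belt sphere produces genuinely admissible, regular or infinitesimal, pieces with compatible folding data and orientation signs, and that the combinatorial count is preserved under the coequalizer, a verification in the spirit of the hybrid-disk manipulations of \S\ref{section:Hybrid disks and the invariance theorem} and Appendix~\ref{appendix:hybrid admissible pairs}, complicated here by the fact that a single handle is cut into two interacting feet rather than replaced by one tangle.
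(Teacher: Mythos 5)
Your overall skeleton (match generators and gradings, match the internal differentials via \eqref{eqn:differential_vertex}, then match the external differentials, and absorb residual choices into generalized stable-tame equivalence) has the right shape, and your first two steps agree with what makes the paper's proof---which is simply omitted as ``straightforward from the definition''---work. The problem is your third step. You posit that ``an Ekholm--Ng disk that traverses the $i$-th handle'' must be cut along the belt sphere $S^2_i$ and the pieces reassembled by a ``self-gluing analogue'' of Theorem~\ref{thm:van Kampen}. But the object being compared is Ekholm--Ng's \emph{combinatorial} DGA: in their definition the admissible disks are confined to the planar diagram (the projection of $S^3$ minus the attaching balls) and never traverse a handle; whenever a disk boundary reaches a dashed handle-edge it simply has a corner there labeled by an internal generator. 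Configurations that would geometrically cross a handle arise only as broken buildings relevant to $\partial^2=0$, not as contributions to $\partial$. Consequently the external-differential comparison requires no cutting or gluing at all: a disk of $\cA^{\EN}_\cL$ with corners on the two edges of handle $i$ is literally the same immersed polygon as a regular admissible disk for $\bar\sL$ with neutral corners at the vertices $w_i^\pm$, and the coequalizer relation $(\sfw_i^+)_{j,\ell}\sim(\sfw_i^-)_{j,\ell}$ is exactly what implements Ekholm--Ng's convention that both edges of handle $i$ carry the same internal generators. This direct bijection of disks, together with your steps on generators, gradings, smoothing, and the internal DGA (the Remark after Proposition~\ref{prop:boundary of vertex}), is the entire content of the theorem.

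As written, your plan would instead have you prove a decomposition lemma for disks that do not exist in the combinatorial model, or---if you intend the geometric, holomorphic picture---redo Ekholm--Ng's neck-stretching analysis, which is analytic work far beyond this statement. Moreover, the tool you invoke is not available: Theorem~\ref{thm:van Kampen} concerns replacing the Darboux neighborhood of a \emph{single} vertex by a tangle, and its proof (decomposition of disks along one circle $\sS_\sfv$, Lemmas~\ref{lem:disk_gluing_tangle} and \ref{lem:tangel_decomposition}) does not extend to identifying two vertices of the same diagram; self-gluing would permit disks meeting the identification sphere arbitrarily many times, which is precisely the phenomenon the vertex generators are designed to encode rather than resolve. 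You flag this difficulty yourself at the end, but it is not a technical complication to be overcome---it is a sign that the comparison should be made directly between the two already-combinatorial disk counts, where it reduces to a relabeling.
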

\begin{proof}
The proof is straightforward from the definition of $\cA^{\EN}_\cL$ and we omit it.
\end{proof}
\begin{remark}
Compared to Ekholm--Ng's construction, ours has the benefit that it works for arbitrary representatives for $\cL$. That is, the Legendrian representative $\bar\cL$ need {\em not} be in {\em Gompf standard form}.
\end{remark}

\section{Relation to partially wrapped Floer homology}\label{section:partially wrapped Floer homology}

For a Legendrian graph $\Lambda$, we will associate a Weinstein domain $\cW_\cR$ whose Liouville vector field is adapted to the Legendrian ribbon $\cR$ of $\Lambda$. 
For Legendrian links and Legendrian tori, the construction and the statement in this section are already discussed in \cite{EL2017,GPS2017} and in the arXiv version of \cite{ENS2018}.

\subsection{Weinstein pairs and domains}\label{sec:Weinstein pair}

\begin{definition}\cite{Eliashberg2017}\label{def:Weinstein hypersurface}
Let $(Y,\xi)$ be a contact manifold. A codimension one submanifold $i:R\hookrightarrow Y$ with boundary is called a {\em Weinstein hypersurface} if 
\begin{enumerate}
\item There exists a contact form $\lambda$ for $\xi$ and a Lyapunov function $\phi_R:R\to \RR$ such that $\cR=(R, i^*\lambda,\phi_R)$ is a Weinstein domain.
\item The Reeb vector field for $\lambda$ is transverse to $R$ and $\partial R$ is a contact submanifold of $(Y,\xi)$. 
\end{enumerate}

A {\em Weinstein pair} $(\cW,\cR)$ consists of a Weinstein domain $\cW=(W,\lambda,\phi)$ and a Weinstein hypersurface $\cR$ in $(\partial W,\ker\lambda)$.
\end{definition}

\begin{remark}\label{rmk:weinstein pair}
Weinstein hypersurfaces are the special cases of the Liouville hypersurfaces introduced by Avdek in \cite{Avdek2012}.
\end{remark}

For each Weinstein pair $(\cW=(W,\lambda_W,\phi_W),\cR=(R,\lambda_R=i^*\lambda_W,\phi_R))$, one can construct a new {\em Weinstein domain} which is relevant to `{\em stops}' of Sylvan \cite{Sylvan2016} and `{\em Liouville sectors}' of Ganatra--Pardon--Shende \cite{GPS2017} $\cW_\cR=(W_R,\lambda_{W_R},\phi_{W_R})$ as follows:
Let us consider a {\em cotangent cone} $\cC_\cR=(C_R,\lambda_{C_R},\phi_{C_R})$ along $\cR$ defined as
\begin{align*}
C_R&\coloneqq R \times D^\epsilon T^*([0,+\infty)),&
\lambda_{C_R}&\coloneqq \lambda_R + f(q_2)dp_2 - p_2 d q_2,&
\phi_{C_R}&\coloneqq \phi_R + \frac{1}{2}f(q_2)^2 + \frac{1}{2}p_2^2,
\end{align*}
where $(q_2,p_2)$ are coordinates for 
\[
D^\epsilon T^*([0,+\infty))=\{(q_2,p_2) \mid q_2\in[0,+\infty), |p_2|\leq \epsilon\},
\]
and $f:[0,+\infty)\to \RR$ is a smooth cut-off function satisfying
\begin{align*}
f(0)&=1, &f(q_2)&=0 \quad\text{ for }q_2\geq 2;\\
f'(0)&=0, &-1< f'(q_2)&\leq 0 \quad\text{ for }q_2\geq 0.
\end{align*}
Then it is easy to check that the Liouville vector field $Z_{\cC_\cR}$ for $\cC_\cR$ is given as
\[
Z_{\cC_\cR} = Z_\cR +\frac{f(q_2)}{f'(q_2)+1}\partial_{q_2}+\frac{p_2}{f'(q_2)+1}\partial_{p_2},
\]
where $Z_\cR$ is the Liouville vector field for $\cR$. Therefore we have
\[
Z_{\cC_\cR}=\begin{cases}
Z_\cR+\partial_{q_2}+p_2\partial_{p_2} & q_2=0;\\
Z_\cR+p_2\partial_{p_2} & q_2\ge 2.
\end{cases}
\]

Let us consider embeddings
\[
\setlength{\arraycolsep}{1pt}
\begin{array}{rcccc}
i_1&:& R \times (-\epsilon,\epsilon) & \hookrightarrow& C_R\\
&&(x,t)&\mapsto& (x,(0,t)),
\end{array}\qquad
\begin{array}{rcccc}
i_2&:& R \times (-\epsilon,\epsilon) &\hookrightarrow& W\\
&&(x,t)&\mapsto& \varphi^t_{Reeb}(x),
\end{array}
\]
where $\varphi^t_{Reeb}$ is a time-$t$ Reeb flow of $\cW$. 
Note that $i_2$ is well-defined because of the condition (2) in Definition~\ref{def:Weinstein hypersurface}.
Then $W_R$ is obtained by gluing $W$ and $C_R$ along $R\times(-\epsilon,\epsilon)$ via $i_1$ and $i_2$.

It is direct to check that the Liouville vector field $Z_{\cC_\cR}$ of $\cC_\cR$ transversely indicates inward direction near $i_1(R\times(-\epsilon,\epsilon))$.
By the contact property of $\partial W$ the Liouville vector field $Z_\cW$ of $\cW$ transversely points outward near $i_2(R \times (-\epsilon,\epsilon))$.
Now we deform the Weinstein structure $\cW=(W,\lambda_W,\phi_W)$ into $\cW'=(W,\lambda'_W,\phi'_W)$ near a neighborhood $U\subset W$ of $i_2(R \times (-\epsilon,\epsilon))$ so that the deformed one smoothly coincides with $\cC$ near the image of $i_2$.
Hence the glued Weinstein structure $\cW_\cR=(W_R,\lambda_{W_R},\phi_{W_R})$ is well-defined up to Weinstein homotopy.

Then the Liouville vector field $Z_{\cW_\cR}$ of $\cW_\cR$ is given by interpolating Liouville vector fields $Z_\cW$ and $Z_{\cC_\cR}$. More precisely,
\[
Z_{\cW_\cR}(w)=
\begin{cases}
Z_\cW& w\in W\setminus U;\\
Z_{\cC_\cR} &w\in C_R.
\end{cases}
\]
The vectors pointing outward in Figure~\ref{fig:new Weinstein} indicate the Liouville vector field $Z_{\cW_\cR}$.

\begin{figure}[ht]\label{fig:new Weinstein}
\[
\vcenter{\hbox{\input{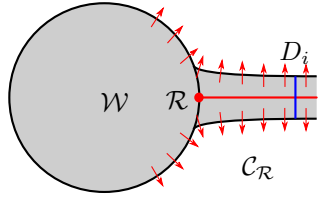}}}
\]
\caption{A schematic picture for the Weinstein domain $\cW_\cR$.}
\end{figure}

We now introduce two important Lagrangian subspaces of the Weinstein domain $\cW_\cR$ which are symplectic dual to each other.
For simplicity, we focus on the 4-dimensional case and assume the Liouville vector field $Z_\cR$ and the corresponding Lyapunov function $\phi_R$ are Morse-Smale which can be achieved generically.

The first one is the {\em Lagrangian skeleton}
\begin{align*}
L_{\cW_\cR}\coloneqq\core(\cW_\cR)&=\bigcap_{t \geq 0}\varphi_{Z}^{-t}(W_R)
\simeq\core(\cW)\cup \bigcup_{t\geq 0}\varphi_Z^{-t}(\core(\cR)),
\end{align*}
where $\varphi_{Z}^t$ is the time $t$ flow of $Z=Z_{\cW_\cR}$.

For the other Lagrangian, let $\{c_1,\cdots,c_m\}$ be a set of index one critical points of the Morse-Smale function $\phi_R$, which is not necessarily exhaustive.
Then we define the stable manifold $I_i\subset R$ corresponding to $c_i$ and their union $I$
\begin{align*}
I_i&\coloneqq\{y\in R\mid \varphi_{Z_\cR}^t(y)\to c_i\text{ as }t\to-\infty\},&
I&\coloneqq\coprod_{i=1}^m I_i.
\end{align*}
Here $\varphi_{Z_\cR}^t$ is the time $t$ flow of the Liouville vector field $Z_\cR$ and note that each $I_i$ intersects $\partial R$ at two points. 
The {\em Lagrangian cocore disk} $D_i$ corresponding to the critical point $c_i$ is 
\begin{align*}
D_i\coloneqq I_i \times D^\epsilon T^*_{\ell_i}([0,+\infty)),
\end{align*}
for some $\ell_i>0$.
We denote the union of all $D_i\coloneqq D_{c_i}$'s by $D_{\cW_\cR}$.
\begin{remark}
The Lagrangian $D_{\cW_\cR}$ does depend on the choice of the set of critical points $\{c_1,\cdots, c_m\}$.
\end{remark}

\begin{definition}
We say that a pair $(\cR,I)$ is a {\em based Weinstein hypersurface} if the complement of $I$ is acyclic and has the same number of components as $R$
\begin{align*}
H_0(R\setminus I;\RR)&\simeq H_0(R),&
H_1(R\setminus I;\RR)&=0,
\end{align*}
and its skeleton will be defined as the pair
\[
\core(\cR,I)\coloneqq (\core(\cR), \{c_1,\cdots,c_m\}).
\]
\end{definition}
For a based Weinstein pair $(\cW,(\cR,I))$, the Lagrangian submanifold $D_{\cW_\cR}$ is what we want to regard as the symplectic dual to the skeleton $L_{\cW_\cR}$.

In the rest of this section, we focus on the endomorphism algebra $CW^*(D_{\cW_\cR},D_{\cW_\cR})$ of $D_{\cW_\cR}$
\begin{align*}
CW^*(D_{\cW_\cR},D_{\cW_\cR})&=\bigoplus_{1\le i,j\le m}CW^*(D_i,D_j).
\end{align*}

Let us briefly recall the construction of $CW^*(D_{\cW_\cR},D_{\cW_\cR})$ from \cite[Appendix B]{EL2017}. 
The algebra is generated by Reeb chords with respect to $(\partial W_R, \lambda_{W_R}|_{\partial W_R})$ starting and ending at $\partial D_{\cW_\cR}=\coprod_i \partial D_i$ and intersections between $D_{\cW_\cR}$ and its push-off. The $A_\infty$ structure is given by counting {\em partial holomorphic buildings} which consist of one primary disk in the symplectization $\partial W_R\times \RR$ with a distinguished negative puncture and several secondary disks of the partial building in $W_R$ on each additional negative punctures of the primary disk.

There are generators $\gamma_\ell$ of $CW^*(D_{\cW_\cR}, D_{\cW_\cR})$, the Reeb chords from $\partial D_{\cW_\cR}$ to itself with respect to $(\partial W_{\cW_\cR}, \lambda_{W_{\cW_\cR}}|_{\partial W_{\cW_\cR}})$, whose trajectories are contained in the cotangent cone $\cC_{\cW_\cR}$.
Then its images under the projection 
\[
\pi_R: C_R=R \times D^\epsilon T^*([0,+\infty))\to R
\] 
travel along the foliation in Figure~\ref{figure:foliation on ribbon}.
In other words, these generators can be identified with (oriented) boundary arcs of $R$ starting and ending at $I$.

\begin{figure}[ht]
\begin{align*}
R&=\vcenter{\hbox{\includegraphics[scale=1.5]{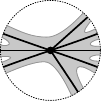}}}&
&\vcenter{\hbox{\includegraphics[scale=1.5]{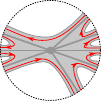}}}
\end{align*}
\caption{A Weinstein hypersurface $R$ and a Reeb foliation on $R$}
\label{figure:foliation on ribbon}
\end{figure}

Moreover, for generators $\gamma_\ell \in CW^*(D_{i_{\ell}},D_{i_{\ell-1}})$, the $A_\infty$ product $m_k(\gamma_k,\cdots, \gamma_1)$ vanishes unless the concatenation
\begin{align*}
\gamma_1\cdot I_{i_1} \cdot\gamma_2\cdot\cdots\cdot I_{i_{k-1}}\cdot\gamma_k\cdot I_{i_k}
\end{align*}
is contractible in the Weinstein hypersurface $R$. See Figure~\ref{figure:m_k_product}.

\begin{figure}[ht]
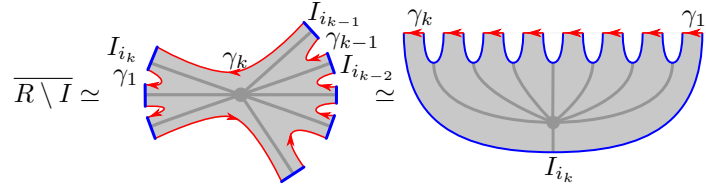

\begin{align*}
\overline{R\setminus I}\simeq\vcenter{\hbox{\def\svgscale{1.5}\input{ribbon_curves_disks_input.tex}}} &\simeq 
\vcenter{\hbox{\input{ribbon_m_k_input.tex}}}
\end{align*}
\caption{The Weinstein hypersurface $R$ contributes to a holomorphic disk.}
\label{figure:m_k_product}
\end{figure}

\begin{remark}\label{rem:quasi-isom}
One expected statement from the above is that there is a quasi-isomorphism between the endomorphism algebra $L_{\cW_\cR}$ in the infinitesimal Fukaya category of $\cW_\cR$ and the endomorphism algebra $D_{\cW_\cR}$ in the partially wrapped Fukaya category of $\cW_\cR$.
See \cite{EL2017} and the references therein for the Legendrian link case.
\end{remark}

\begin{remark}\label{remark:weinstein domain up to deformation}
Up to Weinstein homotopies, the construction of $\cW_\cR$ is well-defined under the deformation of $\cR$.
Moreover, Weinstein homotopy yields the equivalence of the endomorphism algebra and Fukaya categories above.
\end{remark}

\subsection{Based Legendrians and Legendrian bouquets}
Let us consider the standard Weinstein four-ball $\cB$
\[
\cB\coloneqq\left(B^4,\lambda_{B^4}\coloneqq\sum_{i=1,2}(x_i dy_i-y_i dx_i),\phi_{B^4}\coloneqq\frac{1}{2}\sum_{i=1,2}(x_i^2+y_i^2)\right).
\]

From now on, we suppose that $\cW=\cB$ and its contact boundary $S^3_\std$ is equipped with the contact form $\alpha_\std=\lambda_{B^4}|_{S^3}$.

\begin{definition}
A pair $(\Lambda, B)$ of a Legendrian graph with a subset $B\subset \Lambda\setminus\cV_\Lambda$ is called a {\em based Legendrian graph} if the complement of $B$ is a maximal forest of $\Lambda$. That is,
\begin{align*}
H_0\left(\Lambda\setminus B;\RR\right)&\simeq H_0(\Lambda;\RR),&
H_1\left(\Lambda\setminus B;\RR\right)&= 0.
\end{align*}
\end{definition}

For a based Legendrian, each base point $b^i\subset B=\{b^1,\cdots,b^m\}$ with $m=\rk H_1(\Lambda;\RR)$ determines a piecewise smooth Legendrian cycle, say $\Lambda_i$. 
Hence there are $m$-corresponding cycles $\{\Lambda_1,\cdots, \Lambda_m\}$.

Let $(\cR,I)$ be a based Weinstein hypersurface in $S^3_\std$ and $\varphi_{Z_\cR}$ be the flow of the corresponding Liouville vector field. Then the core $\Lambda_\cR\coloneqq\core(\cR)$ becomes a Legendrian graph (possibly with isolated vertices) since $\phi_R$ is Morse-Smale.
Indeed, its vertices are given by the set of index two critical points in $\Lambda$ and each edge contains exactly one critical point of index 1.
Then we regard the set of critical points corresponding to components of $I$ as the set $B_\cR$ of base points.

\begin{remark}
The surface $\cR$ is convex with respect to the Reeb vector field which is a contact vector field, and the characteristic foliation $\cR_\xi$ is the same as the singular foliation given by the Liouville vector field $Z_\cR$. Then the union of singular leaves is precisely $\Lambda_\cR$ so that elliptic singularities are regarded as vertices.
\end{remark}

\begin{figure}[ht]
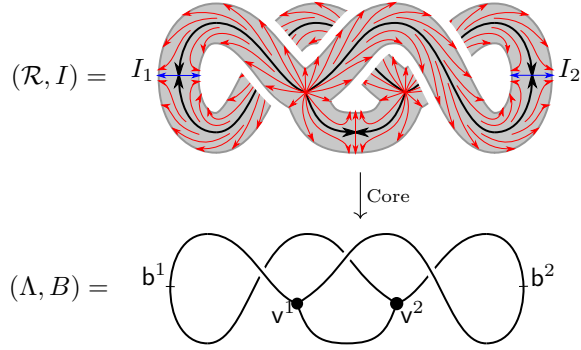

\[
\begin{tikzcd}[column sep=0pc]
(\cR, I)=&\vcenter{\hbox{\input{Theta_ribbon_input.tex}}}\ar[d,"\core"]\\
(\Lambda,B)=&\vcenter{\hbox{\input{Theta_base_input.tex}}}
\end{tikzcd}
\]
\caption{A Liouville vector field on a based Weinstein hypersurface}
\end{figure}

\begin{definition}\cite{Shende2018}\label{def:Legendrian ribbon}
A {\em (based) Legendrian ribbon} of a (based) Legendrian graph $\Lambda$ (or $(\Lambda,B)$) in $S^3_\std$ is a (based) Weinstein hypersurface $\cR$ (or $(\cR, I)$) whose skeleton is $\Lambda$ (or $(\Lambda,B)$).
\end{definition}

Conversely, one can construct a ribbon for a Legendrian graph $\Lambda$ as follows:
Firstly, for each vertex $v \in \cV_\Lambda$, consider a 2-dimensional disk (or a 0-handle) with the standard Weinstein structure 
\[\cR_v=\left(\BB^2,xdy-ydx,\frac{1}{2}(x^2+y^2)\right),\]
and for each edge $e\in \cE_\Lambda$, we attach a Weinstein 1-handle
\[
\cR_e=\left([-2,2]\times \left[-\frac{1}{4},\frac{1}{4}\right], 2xdy+ydx, 2x^2-y^2\right)
\] 
along $\{\cR_v\mid v\in \partial e\}$ respecting the cyclic order at the vertices.
Note that there is a unique critical point $c_e$ of index one for each handle $\cR_e$ and the resulting Weinstein manifold $\cR_\Lambda$ can be realized as a Weinstein hypersurface in $S^3_\std$ whose skeleton is precisely $\Lambda$. Namely, $\cR_\Lambda$ is a ribbon of $\Lambda$.
Moreover, if $\Lambda$ is based at $B$, then we define $I$ as the union of stable manifolds corresponding to critical points of edges which contain a base point.

\begin{remark}\label{rem:ribbon and CW}
For a given Legendrian graph $\Lambda$, there may exist two non-isotopic ribbons $\cR_\Lambda$ and $\cR_\Lambda'$.
However, they are homotopic to each other as Weinstein hypersurfaces in $S^3_\std$, which induce the Weinstein homotopy between Weinstein domains $\cW_{\cR_\Lambda}$ and $\cW_{\cR_\Lambda'}$ as mentioned in Remark~\ref{remark:weinstein domain up to deformation}.

For higher dimensional cases, this may not true but all ribbons still define the objects having no Floer theoretic differences.
See \cite[Lemma 3.8]{Shende2018}.
\end{remark}

For a based Legendrian graph, there is a canonical way to obtain (a union of) Legendrian bouquets---connected Legendrian graphs with one vertex at each component--- by contracting edges without a base point.
See, Figure~\ref{fig:based Legendrian and Legendrian bouquet} for the induced Legendrian bouquet.
Note that each edge in the Legendrian bouquet corresponds to a Legendrian cycle.

\begin{figure}[ht]
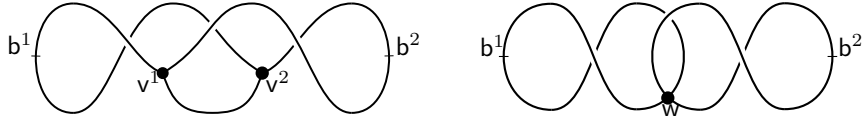

\begin{align*}
\vcenter{\hbox{\input{Theta_base_input.tex}}}
&&
\vcenter{\hbox{\input{Theta_bouquet_input.tex}}}
\end{align*}
\caption{The based Legendrian $\Theta$ and its Legendrian bouquet}
\label{fig:based Legendrian and Legendrian bouquet}
\end{figure}

\begin{lemma}\label{lemma:based ribbon}
Let us consider two based Legendrian $(\Lambda_0,B_0)$ and $(\Lambda_1,B_1)$ which differ by Legendrian Reidemeister moves or contraction of a {\em non-based} edge. 
Then the based Legendrian ribbons $(\cR_i,I_i)$ corresponding to $(\Lambda_i,B_i)$ are homotopic.
\end{lemma}

\begin{proof}
There is nothing to prove for the Legendrian Reidemeister moves. For a non-based edge contraction, the corresponding Weinstein hypersurfaces differ by the handle cancellation between Weinstein $0$- and $1$-handles.
\end{proof}
\begin{remark}
One can rely on the convex surface theory of contact manifolds for the alternative proof. That is, the handle cancellation is nothing but the cancellation of elliptic-hyperbolic pair of singularities in the characteristic foliation on the ribbon which can be realized as a perturbation of the ribbon.
\end{remark}

\begin{definition}For a based Legendrian graph, we mean by a {\em based ribbon isotopy} a sequence of Reidemeister moves, non-based edge contractions and inverse operations.
\end{definition}

Therefore by Remark~\ref{rem:ribbon and CW} and Lemma~\ref{lemma:based ribbon}, the based Legendrian $(\Lambda,B)$ up to based ribbon isotopy defines a based Weinstein hypersurface $(\cR_\Lambda,I_\Lambda)$ uniquely up to homotopy.
Furthermore, by the construction in \S~\ref{sec:Weinstein pair} and Remark~\ref{remark:weinstein domain up to deformation}, this based hypersurface determines uniquely $\cW_{\cR_\Lambda}$ up to Weinstein homotopy and so $CW(D_{\cW_{\cR_\Lambda}},D_{\cW_{\cR_\Lambda}})$ up to equivalence.

\begin{notation}
From now on, we simply use $\cC_\Lambda$, $\cW_\Lambda$ and $D_\Lambda$ instead of $\cC_{\cR_\Lambda}$, $\cW_{\cR_\Lambda}$ and $D_{\cW_{\cR_\Lambda}}$, respectively, to denote the cotangent cone, the Weinstein domain and the distinguished Lagrangian corresponding to the based Legendrian graph $(\Lambda,B)$.
\end{notation}

Without loss of generality, we may assume in the rest of this article that our based Legendrians are Legendrian bouquets.

\subsection{DGAs of composable words}\label{sec:Composable DGAs}
Let $\cL=((\Lambda,B),\fP)$ be a based Legendrian bouquet with potential.
From now on, we will denote each edge $e_i\in\cE_\Lambda$ and its projection $\sfe_i\in\sE_\sL$ simply by $\lambda_i$ unless any ambiguity occurs.

Then each edge $\lambda_i$ of $\Lambda$ has the base point $b^i$ and hence corresponds to the Legendrian cycle $\Lambda_i$. In order to define $\cA_{\cL}(\Lambda,\Lambda)$ we first consider a DGA of composable words as follows:
\begin{definition}
A {\em composable DGA} of the based Legendrian $\cL=((\Lambda,B),\fP)$ is a pair \begin{align*}
\cA_\cL^\co&\coloneqq(A_\Lambda^\co, |\cdot|_{\fP}, \partial^\co),&
\cP_\cL^\co&\coloneqq\{\bp_\lambda^\co \mid \lambda\in\cE_\Lambda\}\amalg\{\bp_v^\co \mid v\in\cV_\Lambda\}
\end{align*}
 of a DGA and an associated collection of extended peripheral structures satisfying
\begin{enumerate}
\item The algebra $A_\Lambda^\co$ is an associative algebra over $\ZZ$ generated by the union $\sG_\sL\amalg\sE_\sL$ 
\[
A_\Lambda^\co\coloneqq \ZZ\langle \sG_\sL\amalg\sE_\sL\rangle \big/ \sim
\]
satisfying the following relations:
\begin{align}\label{eqn:composable relation}
\begin{cases}
\lambda_i \sfg \sim \sfg \sim \sfg \lambda_j &\quad \forall\sfg\in \sG_\sL(\Lambda_i,\Lambda_j);\\
\lambda_i \lambda_j \sim \delta_{ij}\lambda_i &\quad \lambda_i, \lambda_j \in \sE_\sL,
\end{cases}
\end{align}
where $\sG_\sL(\Lambda_i,\Lambda_j)$ is the subset of $\sG_\sL$ consisting of generators whose corresponding capping paths start and end at the cycles $\Lambda_i$ and $\Lambda_j$, respectively.
\item The grading is extended by assigning $|\lambda|_\fP=0$ for all $\lambda\in \sE_\sL$.
\item The differential is defined by counting the admissible disks as before except the following:
Each admissible disk 
\[
f:(\Pi_t,\partial\Pi_t,\vPi_t)\to(\RR^2,\sL,\sS_\sL),
\]
contributes
\[
[f(\be_0)]\tilde f(\bv_1)[f(\be_1)]\cdots [f(\be_{t-1})]\tilde f(\bv_t)[f(\be_{t})]
\]
to $\partial^\co(\tilde f(\bv_0))$. Here we define $[f(\be_r)]$ to be $\lambda_k$ when $f(\be_r) \subset \sL_k=\pi_L(\Lambda_k)$. Let us define 
\begin{align*}
\partial^\co(\lambda)&\coloneqq0,\quad\forall\lambda\in\sE_\sL.
\end{align*}
\item The peripheral structure $\bp_*^\co$ for $*\in\cV_\Lambda\amalg\cE_\Lambda$ is a DGA morphism
\begin{align*}
\bp_*^\co&:\cI_*^\co\to \cA_\cL^\co,
\end{align*}
where $\cI_v^\co$ and $\cI_\lambda^\co$ are non-unital DGAs extended from $\cI_v$ and $\cI_\emptyset$ by adding the generating set $\sE_\sL$ 
and the defining relations coming from (\ref{eqn:composable relation}), and $\bp_*^\co$ maps each $\lambda\in \cI_*$ to $\lambda\in\cA_\cL^\co$ itself. 
\end{enumerate}
\end{definition}

Due to the defining relation (\ref{eqn:composable relation}), all the generators $\lambda_k$ coming from edges will be absorbed into one of the adjacent generators coming from vertices unless $f$ is a monogon. 
In other words, two corresponding monomials in $\partial(\sfg)$ and $\partial^\co(\sfg)$ are the same as words except for the case when $\partial(\sfg)$ contains a monomial $1$ corresponding to a monogon. 
In this case, the corresponding term in $\partial^\co(\sfg)$ is precisely $\lambda_k$, where $\lambda_k$ is the edge containing the boundary of the disk corresponding to $1$ in $\partial(\sfg)$.
For example, we have
\begin{align*}
\partial^\co \sfv_{i,\ell}&\coloneqq \delta_{\ell,\val(\sfv)}\lambda_k+\sum_{\ell_1+\ell_2=\ell}(-1)^{|\sfv_{i,\ell_1}|-1}\sfv_{i,\ell_1}\sfv_{i+\ell_1, \ell_2},& \sfv_{i,\ell}&\in\sG_\sL(\Lambda_k,\Lambda_j).
\end{align*}
Therefore it is straightforward that $\partial^\co \circ \partial^\co =0$ and its generalized-stable tame isomorphism class is invariant under Legendrian isotopies.

The following theorem summarizes the above discussion and we will omit the proof.
\begin{theorem}\label{theorem:non-unital generalization}
For each based Legendrian $\cL$ with potential, there exists a pair $(\cA_\cL^\co, \cP_\cL^\co)$ consisting of an associative, differential graded algebra $\cA_\cL^\co$ over $\ZZ$ generated by $\sG_\sL\amalg \sE_\sL$ and a collection of extended peripheral structures $\cP_\cL^\co$. Its generalized-stable tame isomorphism class is a Legendrian invariant.
\end{theorem}

\begin{definition}
A {\em composable chain complex} $\cA_\cL(\Lambda_i,\Lambda_j)$ is a sub-chain complex of $\cA^{\co}_\cL=(A^\co_\Lambda,|\cdot|_\fP,\partial^\co)$ generated by composable word of the form $\lambda_i \sfm \lambda_j$ for some $\sfm\in A^\co_\Lambda$, or equivalently,
\[
\cA_\cL(\Lambda_i,\Lambda_j)\coloneqq \lambda_i \cdot \cA^\co_\cL \cdot \lambda_j.
\]
In particular, $\cA_\cL(\Lambda_i,\Lambda_i)$ for any $\lambda_i\in\sE_\sL$ becomes a DG-subalgebra of $\cA^{\co}_\cL$.
Let us define
\[
\cA_{\cL}(\Lambda,\Lambda)\coloneqq\bigoplus_{1\le i,j\le m} \cA_{\cL}(\Lambda_i,\Lambda_j).
\]
\end{definition}

Notice that one can regard $\cA_\cL(\Lambda,\Lambda)$ as a DG-category whose objects are $\Lambda_1,\cdots,\Lambda_m$ and morphisms between $\Lambda_i$ and $\Lambda_j$ are given as $\cA_\cL(\Lambda_i,\Lambda_j)$. Then the element $\lambda_i\in
\cA_\cL(\Lambda_i,\Lambda_i)$ for each $\Lambda_i$ plays the role of the identity morphism. One can further regard $\cA_\cL(\Lambda,\Lambda)$ as an $A_\infty$-category with trivial higher compositions.

As an extension of \cite[Theorem2 and Conjecture 3]{EL2017} we state the following conjecture.
\begin{conjecture}\label{conjecture:relation with CW}
There is an $A_\infty$ quasi-isomorphism between $CW^*(D_{\Lambda},D_{\Lambda})$ and $\cA_{\cL}(\Lambda,\Lambda)$ which extends a quasi-isomorphism between the chain complexes $CW^*(D_i,D_j)$ and $\cA_{\cL}(\Lambda_i,\Lambda_j)$.
\end{conjecture}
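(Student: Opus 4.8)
The plan is to build the quasi-isomorphism by hand, matching the two sides generator-by-generator and disk-by-disk using the dictionary already assembled in Remark~\ref{remark:relation to CW}, and then to upgrade the generatorwise match to a global $A_\infty$ statement through the cocore-disk summands together with the ribbon combinatorics. First I would define a map $\cA_{\cL_\cS}\to CW^*(D_{\Lambda_\cS},D_{\Lambda_\cS})$ on generators by sending each boundary generator $\sfw_{k,1}$ to the Reeb chord whose image under the ribbon projection $\pi_R$ is the arc $\gamma(\sfw_{k,1})$, each $\sft_i$ and $\lambda_i$ to the corresponding relative $1$-cycle in $H_1(\Lambda,\cV^{(2)}_\Lambda)$ realized by an intersection of $D_i$ with its pushoff, and each crossing generator to the Reeb chord of the underlying Legendrian it names. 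Crucially, the relations (\ref{eqn:relation}) are engineered to hold on the Floer side: the vanishing $\sfw_{k,1}\sfw'_{k',1}\sim 0$ is precisely the contractibility obstruction for the concatenated boundary path in $R_{\Lambda_\cS}$ recorded in Remark~\ref{remark:relation to CW} and depicted in Figure~\ref{figure:m_k_product}, so the assignment descends to a well-defined map out of the augmentation algebra $\cE_{\cL_\cS}$.

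The geometric heart is to show this assignment intertwines the combinatorial differential and higher products with the counts of partial holomorphic buildings defining the $A_\infty$ structure on $CW^*(D_{\Lambda_\cS},D_{\Lambda_\cS})$. I would set up a neck-stretching degeneration along the contact boundary $\partial R_{\Lambda_\cS}$ (equivalently along the co-core spheres bounding the attached cotangent cone) and apply SFT compactness. In the limit a rigid building splits into a primary disk in the symplectization, whose asymptotics are Reeb chords of $\Lambda_\cS$, together with secondary disks trapped in the cone over the ribbon. The primary disk should be identified with an admissible disk counted by the Chekanov--Eliashberg-type differential of Definition~\ref{definition:differential}, while each secondary disk projects under $\pi_R$ to a contractible polygon in $R_{\Lambda_\cS}$, reproducing exactly the ribbon contribution of Figure~\ref{figure:m_k_product}. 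Matching orientations and filtering by the action $z$ introduced in the proof of Proposition~\ref{proposition:finiteness} should then give a chain-level bijection in the rigid case, and a gluing/handshaking argument of the kind used to prove $\partial^2=0$ would promote this bijection to the statement that the map is an $A_\infty$ morphism.

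To turn the $A_\infty$ morphism into a quasi-isomorphism I would work one summand at a time, establishing the underlying chain equivalences $CW^*(D_i,D_j)\simeq\cA_{\cL_\cS}(\Lambda_i,\Lambda_j)$ first. When $\Lambda$ is a disjoint union of circles this reduces to the cocore computation of \cite{EL2017, GPS2017}, which identifies the wrapped endomorphism algebra of the cocore disks with the Legendrian contact homology of the attaching sphere; by Theorem~\ref{theorem:smoothing} and Theorem~\ref{theorem:smoothing 2} the latter is our $\cA_\cL^{\sm}$, which matches $\cA_{\cL_\cS}$ after smoothing. For a general Legendrian graph I would build the identification locally near each vertex and glue, playing the van Kampen theorem for DGAs (Theorem~\ref{thm:van Kampen}) on the combinatorial side against the descent property of partially wrapped Floer cohomology for Weinstein pairs from \cite{GPS2017}; the ribbon-equivalence invariance of Theorem~\ref{theorem:homology invariant of ribbon equivalence} guarantees that the combinatorial side is insensitive to the choices of smoothing and contraction made in this local-to-global procedure, which is what lets the two gluing patterns be compared.

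The hard part will be the transversality and compactness package for the partial holomorphic buildings in the presence of the singular Lagrangian skeleton $L_{\Lambda_\cS}$ and the cone singularities of the ribbon, together with the precise identification of secondary disks with the composable-word relations defining $\cE_{\cL_\cS}$; controlling the higher operations $m_k$ and verifying that no buildings appear beyond those predicted by the ribbon picture is the step most likely to demand genuinely new analytic input rather than the formal bookkeeping above. A cleaner but more ambitious alternative, which I would pursue in parallel, is to avoid explicit curve counts by invoking the generation and descent results of \cite{GPS2017} to express $CW^*(D_{\Lambda_\cS},D_{\Lambda_\cS})$ as a homotopy colimit of local models indexed by the graph, and then to identify that colimit with the pushout defining $\cA_{\cL_\cS}$, using the singular-chain interpretation of the augmentation in Remark~\ref{remark:relation to CW} and the quasi-isomorphism anticipated in Remark~\ref{rem:quasi-isom} to bridge the two descriptions.
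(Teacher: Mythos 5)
There is a fundamental issue here that precedes any line-by-line assessment: the statement you are trying to prove is stated in the paper as a \emph{conjecture} (it is Conjecture G in the introduction, proposed ``as an extension of \cite[Theorem 2, Conjecture 3]{EL2017}''), and the paper offers no proof of it at all. The only supporting material in the paper is heuristic: Remark~\ref{remark:relation to CW} explains why the defining relations (\ref{eqn:relation}) of $\cE_{\cL_\cS}$ mirror the expected vanishing of products of Reeb-chord generators on the Floer side, and Remark~\ref{rem:quasi-isom} records the expected (not proved) equivalence between the skeleton and cocore endomorphism algebras. So there is no ``paper proof'' to compare against, and your text cannot be judged as a correct alternative proof; at best it is a research program for attacking the conjecture.

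Judged on its own terms, your proposal contains genuine gaps, and you name the largest one yourself: the entire analytic package --- transversality, SFT compactness, and gluing for the partial holomorphic buildings defining $CW^*(D_{\Lambda_\cS},D_{\Lambda_\cS})$ in the presence of the singular Lagrangian skeleton $L_{\Lambda_\cS}$ and the cone over the ribbon --- is asserted, not established, and this is exactly the content that makes the statement a conjecture rather than a theorem. Two further steps are also unproven claims rather than deductions: (i) the identification of the primary limit disk with an admissible disk of Definition~\ref{definition:differential} and of the secondary disks with the ribbon polygons of Figure~\ref{figure:m_k_product} is the chain-level heart of the conjecture, and no argument is given that \emph{only} such configurations arise in the limit or that the signs and action filtration match; (ii) your ``cleaner alternative'' requires identifying a homotopy colimit of partially wrapped categories (via descent in the sense of \cite{GPS2017}) with the algebraic pushout defining $\cA_{\cL_\cS}$ --- but the paper's van Kampen theorem (Theorem~\ref{thm:van Kampen}) is a statement about its combinatorial DGAs only, and no comparison functor between the two gluing patterns is constructed. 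Note also that even the base case you invoke for links leans on \cite[Conjecture 3]{EL2017}, which was itself open, so the induction has no proved starting point. In short: the strategy is sensible and consistent with the evidence the authors assemble, but it is an outline of what would need to be done, not a proof of the statement.
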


\subsection{Concrete examples and computation}\label{sec:A concrete example}
In this section, we provide evidence for Conjecture~\ref{conjecture:relation with CW} by computing the homology of the composable Legendrian DGAs for certain Legendrian graphs.

\subsubsection{The splitting case}
Suppose that $\cL^\infty$ is a Legendrian bouquet of two circles whose ribbon is homeomorphic to a pair of pants as depicted in Figure~\ref{figure:example of smoothing}.
\begin{figure}[ht]
\[
\cL^\infty=\vcenter{\hbox{\input{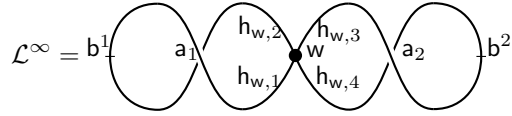}}}
\]
\caption{A based Legendrian bouquet}
\label{figure:example of smoothing}
\end{figure}

There are two base points $\sfb^1$ and $\sfb^2$ marked as small bars, the vertex $\sfw$ corresponding to the dot, two edges $\lambda_1,\lambda_2$ which correspond to cycles $\Lambda_1, \Lambda_2$, and two crossings $\sfa_1$ and $\sfa_2$.

First, let us list $\sG_{\sL^\infty}(\Lambda_i,\Lambda_j)$ for $i,j=1,2$ as follows:
\begin{align*}
\sG_{\sL^\infty}(\Lambda_1,\Lambda_1)&=\{ \lambda_1, \sfa_1,\sfw_{i,4k},\sfw_{1,4k-3},\sfw_{2,4k-1} \mid i=1,2, k\in\NN \};\\
\sG_{\sL^\infty}(\Lambda_2,\Lambda_2)&=\{ \lambda_2, \sfa_2,\sfw_{i,4k},\sfw_{3,4k-3},\sfw_{4,4k-1} \mid i=3,4, k\in\NN \};\\
\sG_{\sL^\infty}(\Lambda_1,\Lambda_2)&=\{ \sfw_{1,4k-2}, \sfw_{1,4k-1},\sfw_{2,4k-3},\sfw_{2,4k-1} \mid k\in \NN\};\\
\sG_{\sL^\infty}(\Lambda_2,\Lambda_1)&=\{ \sfw_{3,4k-2}, \sfw_{3,4k-1},\sfw_{4,4k-3},\sfw_{4,4k-1} \mid k\in \NN\}.
\end{align*}
Note that the chain complexes $\cA_{\cL^\infty}(\Lambda_i,\Lambda_j)=(A_\Lambda(\Lambda_i,\Lambda_j),|\cdot|,\partial^\co)$ are generated by composable words of arbitrary length from $\Lambda_i$ to $\Lambda_j$.
We take a Maslov potential which induces the following degree:
\[
|\lambda_i|=0,\quad |\sfa_i|=1,\quad |\sfw_{k,1}|=0 \text{ for }k=1,3,\quad |\sfw_{k,1}|=-1 \text{ for }k=2,4.
\]
The differentials for $\sfa_i$ are as follows:
\begin{align*}
\partial^\co(\sfa_1)=\lambda_1 + \sfw_{1,1},\quad \partial^\co(\sfa_2)=\lambda_2 + \sfw_{3,1},
\end{align*}
and these imply
\begin{align*}
[\lambda_1]=-[\sfw_{1,1}]&\in H_0(\cA_{\cL^\infty}(\Lambda_1,\Lambda_1));\\
[\lambda_2]=-[\sfw_{3,1}]&\in H_0(\cA_{\cL^\infty}(\Lambda_2,\Lambda_2)).
\end{align*}
Other cycles of a single generator in $H_*(\cA_{\cL^\infty}(\Lambda_i,\Lambda_j))$ are 
\begin{align*}
[\sfw_{2,1}]\in H_{-1}(\cA_{\cL^\infty}(\Lambda_1,\Lambda_2)),\qquad
[\sfw_{4,1}]\in H_{-1}(\cA_{\cL^\infty}(\Lambda_2,\Lambda_1)).
\end{align*}
By the following differentials
\begin{align*}
\partial^\co(\sfa_1 \sfw_{2,1} + \sfw_{1,2}) &= (\lambda_1+\sfw_{1,1})\sfw_{2,1} - \sfw_{1,1} \sfw_{2,1} = \sfw_{2,1};\\
\partial^\co(\sfa_2 \sfw_{4,1} + \sfw_{3,2}) &= (\lambda_2+\sfw_{3,1})\sfw_{4,1} - \sfw_{3,1} \sfw_{4,1} = \sfw_{4,1},
\end{align*}
we conclude that $[\sfw_{2,1}], [\sfw_{4,1}]$ are trivial cycles.

We expect that $[\lambda_1],[\lambda_2]$ are the only nontrivial homology cycles. This expectation coincides with the homology computation of partially wrapped Floer homology using the sheaf theoretic method, which can be seen as a representation of the quiver `$\bullet \quad \bullet$' without any arrow. 
We will discuss it carefully in a future work.

\subsubsection{$A_2$-case}
Let us recall the based Legendrian bouquet of the Legendrian $\Theta$.
The base points, the vertex, and the crossings are marked as before in Figure~\ref{fig:based Legendrian bouquet of theta}.

\begin{figure}[ht]
\begin{align*}
\cL^2=\vcenter{\hbox{\input{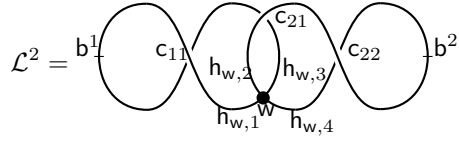}}}
\end{align*}
\caption{The Legendrian bouquet of Legendrian $\Theta$}
\label{fig:based Legendrian bouquet of theta}
\end{figure}

Here we also have two cycles $\Lambda_1$ and $\Lambda_2$ which correspond to $\sfb^1$ and $\sfb^2$, respectively.
Now list $\sG_{\sL^2}(\Lambda_i,\Lambda_j)$ for $i,j=1,2$ as follows:
\begin{align*}
\sG_{\sL^2}(\Lambda_1,\Lambda_1)&=\{ \lambda_1, \sfc_{11},\sfw_{i,4k},\sfw_{1,4k-2},\sfw_{3,4k-2} \mid i=1,3, k\in\NN \};\\
\sG_{\sL^2}(\Lambda_2,\Lambda_2)&=\{ \lambda_2, \sfc_{22},\sfw_{i,4k},\sfw_{2,4k-2},\sfw_{4,4k-2} \mid i=2,4, k\in\NN \};\\
\sG_{\sL^2}(\Lambda_1,\Lambda_2)&=\{ \sfw_{1,4k-3}, \sfw_{1,4k-1},\sfw_{3,4k-3},\sfw_{3,4k-1} \mid k\in \NN\};\\
\sG_{\sL^2}(\Lambda_2,\Lambda_1)&=\{ \sfc_{21}, \sfw_{2,4k-3}, \sfw_{2,4k-1},\sfw_{4,4k-3},\sfw_{4,4k-1} \mid k\in \NN\}.
\end{align*}
The chain complexes $\cA_{\cL^2}(\Lambda_i,\Lambda_j)=(A_\Lambda(\Lambda_i,\Lambda_j),|\cdot|,\partial^\co)$ are generated by composable words of arbitrary length from $\Lambda_i$ to $\Lambda_j$.
Here the degree is given as follows:
\[
|\lambda_i|=1, \quad |\sfc_{11}|=|\sfc_{22}|=1,\quad |\sfc_{21}|=0,\quad |\sfw_{k,1}|=0 \text{ for }k=1,3,\quad |\sfw_{k,1}|=-1 \text{ for }k=2,4.
\]
The differentials for $\sfc_{ij}$ are given by
\begin{align*}
\partial^\co(\sfc_{11})&=\lambda_1 + \sfw_{1,2} + \sfw_{1,1}\sfc_{21};\\
\partial^\co(\sfc_{22})&=\lambda_2 + \sfw_{2,2} - \sfc_{21} \sfw_{3,1};\\
\partial^\co(\sfc_{21})&=-\sfw_{2,1}.
\end{align*}

Here is the list of cycles of a single generator:
\begin{align*}
[\lambda_1]&\in H_0(\cA_{\cL^2}(\Lambda_1,\Lambda_1));\\
[\lambda_2]&\in H_0(\cA_{\cL^2}(\Lambda_2,\Lambda_2));\\
[\sfw_{1,1}],[\sfw_{3,1}]&\in H_{0}(\cA_{\cL^2}(\Lambda_1,\Lambda_2));\\
[\sfw_{2,1}],[\sfw_{4,1}]&\in H_{-1}(\cA_{\cL^2}(\Lambda_2,\Lambda_1)).
\end{align*}
Now examine the following differentials:
\begingroup\allowdisplaybreaks
\begin{align*}
\partial^\co(\sfc_{11}\sfw_{3,1}+\sfw_{1,3}+\sfw_{1,1}\sfc_{22})&=(\sfw_{3,1}+\sfw_{1,2}\sfw_{3,1}+\sfw_{1,1}\sfc_{2,1}\sfw_{3,1})\\
&\mathrel{\phantom{=}}+(-\sfw_{1,2}\sfw_{3,1}-\sfw_{1,1}\sfw_{2,2})\\
&\mathrel{\phantom{=}}+(\sfw_{1,1}+\sfw_{1,1}\sfw_{2,2}-\sfw_{1,1}\sfc_{2,1}\sfw_{3,1})\\
&=\sfw_{1,1}+\sfw_{3,1};\\
\partial^\co(\sfc_{2,2}\sfw_{4,1}+\sfc_{2,1}\sfw_{3,2}+\sfw_{2,3})&=(\sfw_{4,1}+\sfw_{2,2}\sfw_{4,1}-\sfc_{21}\sfw_{3,1}\sfw_{4,1})\\
&\mathrel{\phantom{=}}+(-\sfw_{2,1}\sfw_{3,2}+\sfc_{2,1}\sfw_{3,1}\sfw_{4,1})\\
&\mathrel{\phantom{=}}+(\sfw_{2,1}\sfw_{3,2}-\sfw_{2,2}\sfw_{4,1})\\
&=\sfw_{4,1}.
\end{align*}
\endgroup
In the end, we have
\begin{align*}
[\lambda_1]&\in H_0(\cA_{\cL^2}(\Lambda_1,\Lambda_1));\\
[\lambda_2]&\in H_0(\cA_{\cL^2}(\Lambda_2,\Lambda_2));\\
[\sfw_{1,1}]=-[\sfw_{3,1}]&\in H_{0}(\cA_{\cL^2}(\Lambda_1,\Lambda_2)).
\end{align*}

Even though general cycles of the chain complexes $\cA_{\cL^2}(\Lambda_i,\Lambda_j)$ are complicated,
we guess that the above are the only non-trivial cycles.

\begin{remark}\label{rmk:Legendrian theta and A2}
Let us consider a Weinstein domain $\cW_{\cL^2}$.
Since $\cL^2$ is ribbon isotopic to the Legendrian theta graph $\Theta$, there is a Weinstein isotopy between $\cW_{\cL^2}$ and $\cW_\Theta$.
The skeleton of $\cW_\Theta$ is homeomorphic to the cone of $\Theta$ which is nothing but an arboreal singularity of type $A_2$ in Nadler's list.

Moreover, the expected homology $H_*(\cA_{\cL^2}(\Lambda,\Lambda))$ can be seen as a representation of the $A_2$-quiver `$\bullet \to \bullet$'.
\end{remark}

\subsubsection{$A_3$-case}
Let us consider a based Legendrian 4-complete graph $\cK$ depicted in Figure~\ref{figure:K_4_a}.
This example was suggested by Tobias Ekholm.

It is not hard to see that $\cK$ is ribbon equivalent to a based Legendrian bouquet $\cL^3$ in Figure~\ref{figure:K_4_b}.
The based points $\sfb^1, \sfb^2$ and $\sfb^3$ correspond to Legendrian circles $\Lambda_1,\Lambda_2$ and $\Lambda_3$, respectively.
There are six crossings $\sfc_{ij}$ satisfying $1\leq j \leq i \leq 3$ which are Reeb chords from $\Lambda_j$ to $\Lambda_i$.
The six half edges near the vertex $\sfw$ are labeled $\sfh_{\sfw,1}$, $\sfh_{\sfw,2}$,\dots, $\sfh_{\sfw,6}$ in a clockwise direction.
For simplicity, we only label $\sfh_{\sfw,1}$ in Figure~\ref{figure:K_4_b}.

\begin{figure}[ht]
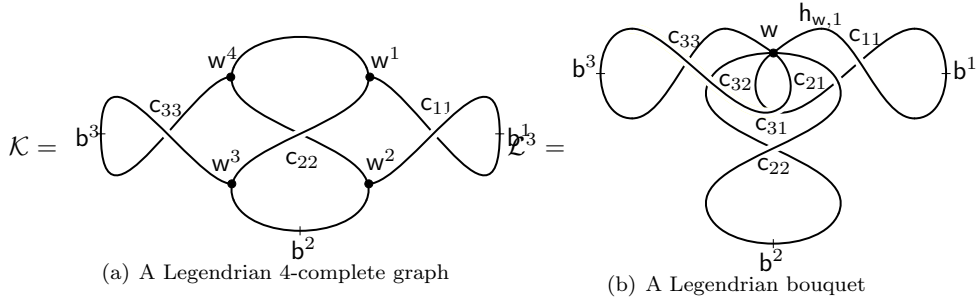

\subfigure[A Legendrian 4-complete graph\label{figure:K_4_a}]{\makebox[0.5\textwidth]{$
\cK=\vcenter{\hbox{\input{K_4_input.tex}}}
$}}
\subfigure[A Legendrian bouquet\label{figure:K_4_b}]{\makebox[0.45\textwidth]{$
\cL^3=\vcenter{\hbox{\input{K_4_2_input.tex}}}
$
}}
\caption{A Legendrian $K_4$ and a Legendrian bouquet $\Lambda$}
\label{figure:K_4}
\end{figure}
The generators are as follows:
\begingroup\allowdisplaybreaks
\begin{align*}
\sG_{\sL^3}(\Lambda_1,\Lambda_1)&=\{ \lambda_1, \sfc_{11},\sfw_{i,6k},\sfw_{1,6k-3},\sfw_{4,6k-3} \mid i=1,4, k\in\NN \};\\
\sG_{\sL^3}(\Lambda_1,\Lambda_2)&=\{ \sfw_{1,6k-5}, \sfw_{1,6k-2},\sfw_{4,6k-5},\sfw_{4,6k-2} \mid k\in \NN\};\\
\sG_{\sL^3}(\Lambda_1,\Lambda_3)&=\{ \sfw_{1,6k-4}, \sfw_{1,6k-1},\sfw_{4,6k-4},\sfw_{4,6k-1} \mid k\in \NN\};\\
\sG_{\sL^3}(\Lambda_2,\Lambda_1)&=\{ \sfc_{21}, \sfw_{2,6k-4}, \sfw_{2,6k-1},\sfw_{5,6k-4},\sfw_{5,6k-1} \mid k\in \NN\};\\
\sG_{\sL^3}(\Lambda_2,\Lambda_2)&=\{ \lambda_2, \sfc_{22},\sfw_{i,6k},\sfw_{2,6k-3},\sfw_{5,6k-3} \mid i=2,5, k\in\NN \};\\
\sG_{\sL^3}(\Lambda_2,\Lambda_3)&=\{ \sfw_{2,6k-5}, \sfw_{2,6k-2},\sfw_{5,6k-5},\sfw_{5,6k-2} \mid k\in \NN\};\\
\sG_{\sL^3}(\Lambda_3,\Lambda_1)&=\{ \sfc_{31}, \sfw_{3,6k-5}, \sfw_{3,6k-2},\sfw_{6,6k-5},\sfw_{6,6k-2} \mid k\in \NN\};\\
\sG_{\sL^3}(\Lambda_3,\Lambda_2)&=\{ \sfc_{32}, \sfw_{3,6k-4}, \sfw_{3,6k-1},\sfw_{6,6k-4},\sfw_{6,6k-1} \mid k\in \NN\};\\
\sG_{\sL^3}(\Lambda_3,\Lambda_3)&=\{ \lambda_3, \sfc_{33},\sfw_{i,6k},\sfw_{3,6k-3},\sfw_{6,6k-3} \mid i=3,6, k\in\NN \}.
\end{align*}
\endgroup
The chain complexes $\cA_{\cL^3}(\Lambda_i,\Lambda_j)$ are $\ZZ$-modules generated by composable words from $\Lambda_i$ to $\Lambda_j$ of arbitrary length.
Here we take a Maslov potential on $\cL^3$ which induces
\[
|\lambda_i|=0,\quad |\sfc_{ij}|=1,\quad |\sfw_{k,1}|=0 \text{ for } k=3,6, \quad |\sfw_{k,1}|=-1 \text{ for } k=1,2,4,5.
\]
The differentials for $\sfc_{ij}$'s are as follows:
\begin{align*}
&\begin{cases}
\partial^\co \sfc_{11}=\lambda_1+\sfw_{1,1}\sfc_{21}+\sfw_{1,2}\sfc_{31}+\sfw_{1,3},\\
\partial^\co \sfc_{22}=\lambda_2+\sfw_{2,1}\sfc_{32}+\sfw_{2,3}+\sfc_{21}\sfw_{4,1},\\
\partial^\co \sfc_{33}=\lambda_3+\sfw_{3,3}+\sfc_{31}\sfw_{4,2}+\sfc_{32}\sfw_{5,1},
\end{cases}&
&\begin{cases}
\partial^\co \sfc_{21}=\sfw_{2,1}\sfc_{31}+\sfw_{2,2},\\
\partial^\co \sfc_{32}=\sfc_{31}\sfw_{4,1}+\sfw_{3,2},\\
\partial^\co \sfc_{31}=\sfw_{3,1}.
\end{cases}
\end{align*}

As in the previous examples, let us focus on cycles of a single generator as follows:
\begin{align*}
[\lambda_i]&\in H_0(\cA_{\cL^3}(\Lambda_i,\Lambda_i)) \text{ for } i=1,2,3;\\
[\sfw_{1,1}],[\sfw_{4,1}]&\in H_{-1}(\cA_{\cL^3}(\Lambda_1,\Lambda_2));\\
[\sfw_{2,1}],[\sfw_{5,1}]&\in H_{-1}(\cA_{\cL^3}(\Lambda_2,\Lambda_3));\\
[\sfw_{3,1}],[\sfw_{6,1}]&\in H_0(\cA_{\cL^3}(\Lambda_3,\Lambda_1)).
\end{align*}

By direct computations, we obtain
\begin{align*}
\partial^\co(\sfw_{1,1}\sfc_{22} + \sfw_{1,2}\sfc_{32} + \sfw_{1,4} + \sfc_{11} \sfw_{4,1})&=\sfw_{4,1} - \sfw_{1,1};\\
\partial^\co(\sfw_{2,1}\sfc_{33} + \sfc_{21}\sfw_{4,2} + \sfc_{22}\sfw_{5,1} + \sfw_{2,4})&=\sfw_{5,1} - \sfw_{2,1};\\
\partial^\co(\sfc_{33}\sfw_{6,1}+\sfc_{32}\sfw_{5,2}+\sfc_{31}\sfw_{4,3}+\sfw_{3,4})&=\sfw_{6,1},
\end{align*}
which induce
\begin{align*}
[\lambda_i]&\in H_0(\cA_{\cL^3}(\Lambda_i,\Lambda_i)) \text{ for } i=1,2,3;\\
[\sfw_{1,1}]=[\sfw_{4,1}]&\in H_{-1}(\cA_{\cL^3}(\Lambda_1,\Lambda_2));\\
[\sfw_{2,1}]=[\sfw_{5,1}]&\in H_{-1}(\cA_{\cL^3}(\Lambda_2,\Lambda_3));\\
0=[\sfw_{3,1}]=[\sfw_{6,1}]&\in H_0(\cA_{\cL^3}(\Lambda_3,\Lambda_1)).
\end{align*}
We expect that the above are the only homology classes for $H_*(\cA_{\cL^3}(\Lambda,\Lambda))$.

\begin{remark}
By the similar argument in Remark~\ref{rmk:Legendrian theta and A2}, the Weinstein domain $\cW_{\cL^3}$ is Weinstein isotopic to $\cW_\cK$ whose skeleton $\cone(\cK)$ is an arboreal singularity of type $A_3$, see \cite{Nadler2017}. The expectation of homologies for $H_*(\cA_{\cL^3}(\Lambda,\Lambda))$ can be interpreted as a representation of the $A_3$-quiver `$\bullet \to \bullet \to \bullet$'.
\end{remark}

\appendix
\section{Admissible pairs and proof of Theorem~\ref{thm:differential}}\label{section:manipulation of disks}
We introduce the notion of {\em admissible pairs} as follows:
\begin{definition}[Admissible pairs]\label{def:admissible pair}
Let $f^1, f^0\in \cM_{(1)}$ be admissible disks on $\Pi_t$ and $\Pi_u$ of degree 1 whose vertices are $\{\bv_0,\cdots,\bv_t\}$ and $\{\bw_0,\cdots,\bw_u\}$, respectively.
\begin{enumerate}
\item An {\em admissible pair $f$} is a triple $(f^0,f^1,r)$ of admissible disks $f^0, f^1\in\cM_{(1)}$ and $1\le r\le t$ such that $\tilde f^0(\bw_0)=\tilde f^1(\bv_r)$, and we write $f\coloneqq(f^0\hybto{r} f^1)$.
\item We define $\cM^\pair(\sfg)$ to be the set of all admissible pairs 
\[
\cM^\pair(\sfg)\coloneqq\left\{(f^0\hybto{r}f^1)\,\middle\vert\,\tilde f^1(\bv_0)=\sfg, |f^0|_\ZZ=|f^1|_\ZZ=1\right\}
\]
and a function\footnote{Here we abuse the notation $\cP$.} $\cP:\cM^\pair(\sfg)\to A_\Lambda$ as 
\begin{align*}
\cP(f)&\coloneqq \sgn(f)\tilde f^1(\bv_1\cdots\bv_{r-1})\tilde f^0(\bw_1\cdots\bw_u) \tilde f^1(\bv_{r+1} \cdots \bv_t)
\in A_\Lambda,
\end{align*}
where 
\[
\sgn(f)\coloneqq\sgn(f^1)\sgn(f^0) (-1)^{|\tilde f^1(\bv_1\cdots\bv_{r-1})|}.
\]
\end{enumerate}
\end{definition}

Accoding to the classification of admissible disks of degree 1, we can separate $\cM^\pair$ into
\begin{align*}
\cM^\pair=\cM^{\reg\hybto{}\reg}\amalg
\cM^{\infinitesimalmonogon\hybto{}\reg}\amalg
\cM^{\infinitesimaltriangle\hybto{}\reg}\amalg
\cM^{\infinitesimaltriangle\hybto{}\infinitesimaltriangle}\amalg
\cM^{\infinitesimalmonogon\hybto{}\infinitesimaltriangle},
\end{align*}
where the meaning of each superscript is obvious
\[
f=(f^0\hybto{r} f^1)\in\cM^{A\hybto{}B}\Longleftrightarrow f^0\in\cM^{A}, f^1\in\cM^{B}.
\]

We introduce several operations on admissible disks and pairs which will be used in the proof of Proposition~\ref{proposition:differential2}.

\subsection{Manipulations of admissible pairs.}\label{sec:Manipulations of admissible pairs}
\subsubsection{Gluing of pairs}\label{sec:Gluing of pairs}
Let $f=(f^0\hybto{r}f^1)\in\cM^\pair$ such that $f^1$ and $f^0$ are admissible disks on $\Pi_t$ and $\Pi_u$ with vertices $\{\bv_0,\cdots,\bv_t\}$ and $\{\bw_0,\cdots,\bw_u\}$, respectively, as before.

\begin{itemize}
\item [($\Gl$-1)] {\em regular ancestor pairs} Suppose that $f^1\in\cM^{\reg}$.  Then $f^0$ is either a regular disk, an infinitesimal monogon or an infinitesimal triangle.
We define 
\begin{align*}
\Gl_+&:\cM^{\reg\hybto{}\reg}\amalg\cM^{\infinitesimalmonogon\hybto{}\reg}
\to\cM^\positivefolding,\\
\Gl_-&:\cM^{\infinitesimaltriangle\hybto{}\reg}\to\cM^\positivefolding
\end{align*}
as follows: If $f\in \cM^{\reg\hybto{}\reg}$, then either 
\begin{itemize}
\item [(1)] $f^1(\bh_{\bv_r}^-)=f^0(\bh_{\bw_0}^+)$ and $f^0(\bh_{\bw_0}^-)\cup f^1(\bh_{\bv_r}^+)$ forms a smooth arc; or
\item [(2)] $f^1(\bh_{\bv_r}^+)=f^0(\bh_{\bw_0}^-)$ and $f^1(\bh_{\bv_r}^-)\cup f^0(\bh_{\bw_0}^+)$ forms a smooth arc.
\end{itemize}

We identify the two half-edges whose images coincide and making one smooth edge from the two half-edges whose image form a smooth arc.
Then we have a glued domain $\Pi$, a $(t+u-1)$-gon,
\[
\Pi\simeq \begin{cases}
\Pi_{t}\amalg\Pi_{u}\big/\left\{\bh_{\bv_r}^-\sim \bh_{\bw_0}^+\right\}, & \bh_{\bw_0}^-\cup\bh_{\bv_r}^+\subset\be_{r+u-1};\\
\Pi_{t}\amalg\Pi_{u}\big/\left\{\bh_{\bv_r}^+\sim \bh_{\bw_0}^-\right\}, &\bh_{\bv_r}^-\cup\bh_{\bw_0}^+\subset\be_{r-1},
\end{cases}
\]
and the disk $g$ is defined as the union of $f^0$ and $f^1$ on this domain. 
\[
g\coloneqq f^0\amalg f^1:(\Pi,\partial\Pi,\vPi)\to(\RR^2, \sL, \sS_\sL).
\]

Obviously, this produces exactly one $2$-folding edge coming from $\be_{r-1}=\bh_{\bv_{r-1}}^+\cup \bh_{\bw_1}^-$ or $\be_{r+u-1}=\bh_{\bw_u}^+\cup \bh_{\bv_{r+1}}^-$. 
Therefore all vertices for $g$ are convex and there is exactly one 2-folding edge, and so 
\begin{align*}
|g|_\ZZ&=1+\#(2\text{-folding edges})+\#(\text{concave vertices})=1+1+0=2.
\end{align*}
\[
\begin{tikzcd}[row sep=-2.5pc, column sep=1pc]
&\hspace{-.7cm}\vcenter{\hbox{\def\svgscale{1}
\begingroup%
  \makeatletter%
  \providecommand\color[2][]{%
    \errmessage{(Inkscape) Color is used for the text in Inkscape, but the package 'color.sty' is not loaded}%
    \renewcommand\color[2][]{}%
  }%
  \providecommand\transparent[1]{%
    \errmessage{(Inkscape) Transparency is used (non-zero) for the text in Inkscape, but the package 'transparent.sty' is not loaded}%
    \renewcommand\transparent[1]{}%
  }%
  \providecommand\rotatebox[2]{#2}%
  \ifx\svgwidth\undefined%
    \setlength{\unitlength}{85.04839463bp}%
    \ifx\svgscale\undefined%
      \relax%
    \else%
      \setlength{\unitlength}{\unitlength * \real{\svgscale}}%
    \fi%
  \else%
    \setlength{\unitlength}{\svgwidth}%
  \fi%
  \global\let\svgwidth\undefined%
  \global\let\svgscale\undefined%
  \makeatother%
  \begin{picture}(1,0.99999991)%
    \put(0,0){\includegraphics[width=\unitlength,page=1]{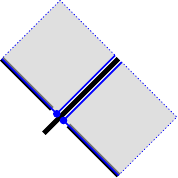}}%
    \put(0.59423391,0.28461113){\color[rgb]{0,0,0}\makebox(0,0)[lb]{\smash{$f^1$}}}%
    \put(0.25857498,0.64406985){\color[rgb]{0,0,0}\makebox(0,0)[lb]{\smash{$f^0$}}}%
    \put(0.42585738,0.2959908){\color[rgb]{0,0,0}\makebox(0,0)[lb]{\smash{$\bv_r$}}}%
  \end{picture}%
\endgroup%
}}\!\!\!\!\ar[rr,"\Gl_+"]&& g=\vcenter{\hbox{\def\svgscale{1}
\begingroup%
  \makeatletter%
  \providecommand\color[2][]{%
    \errmessage{(Inkscape) Color is used for the text in Inkscape, but the package 'color.sty' is not loaded}%
    \renewcommand\color[2][]{}%
  }%
  \providecommand\transparent[1]{%
    \errmessage{(Inkscape) Transparency is used (non-zero) for the text in Inkscape, but the package 'transparent.sty' is not loaded}%
    \renewcommand\transparent[1]{}%
  }%
  \providecommand\rotatebox[2]{#2}%
  \ifx\svgwidth\undefined%
    \setlength{\unitlength}{85.04839463bp}%
    \ifx\svgscale\undefined%
      \relax%
    \else%
      \setlength{\unitlength}{\unitlength * \real{\svgscale}}%
    \fi%
  \else%
    \setlength{\unitlength}{\svgwidth}%
  \fi%
  \global\let\svgwidth\undefined%
  \global\let\svgscale\undefined%
  \makeatother%
  \begin{picture}(1,1)%
    \put(0,0){\includegraphics[width=\unitlength,page=1]{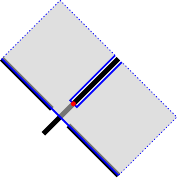}}%
    \put(0.56777839,0.42906645){\color[rgb]{0,0,0}\makebox(0,0)[lb]{\smash{$\be_{r-1}$}}}%
    \put(0.56260354,0.15989181){\color[rgb]{0,0,0}\makebox(0,0)[lb]{\smash{$\be_{r+u-1}$}}}%
  \end{picture}%
\endgroup%
}}\\
f=\vcenter{\hbox{\def\svgscale{1}
\begingroup%
  \makeatletter%
  \providecommand\color[2][]{%
    \errmessage{(Inkscape) Color is used for the text in Inkscape, but the package 'color.sty' is not loaded}%
    \renewcommand\color[2][]{}%
  }%
  \providecommand\transparent[1]{%
    \errmessage{(Inkscape) Transparency is used (non-zero) for the text in Inkscape, but the package 'transparent.sty' is not loaded}%
    \renewcommand\transparent[1]{}%
  }%
  \providecommand\rotatebox[2]{#2}%
  \ifx\svgwidth\undefined%
    \setlength{\unitlength}{132.39973309bp}%
    \ifx\svgscale\undefined%
      \relax%
    \else%
      \setlength{\unitlength}{\unitlength * \real{\svgscale}}%
    \fi%
  \else%
    \setlength{\unitlength}{\svgwidth}%
  \fi%
  \global\let\svgwidth\undefined%
  \global\let\svgscale\undefined%
  \makeatother%
  \begin{picture}(1,0.43577925)%
    \put(0,0){\includegraphics[width=\unitlength,page=1]{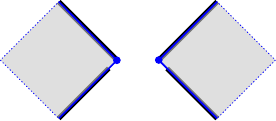}}%
    \put(0.54594389,0.27831281){\color[rgb]{0,0,0}\makebox(0,0)[lb]{\smash{$\bv_r$}}}%
    \put(0.78763622,0.18767819){\color[rgb]{0,0,0}\makebox(0,0)[lb]{\smash{$f^1$}}}%
    \put(0.46870946,0.21033684){\color[rgb]{0,0,0}\makebox(0,0)[lb]{\smash{$\hybto{r}$}}}%
    \put(0.17954187,0.18894462){\color[rgb]{0,0,0}\makebox(0,0)[lb]{\smash{$f^0$}}}%
  \end{picture}%
\endgroup%
}}\hspace{-.5cm}\ar[ru,equal]\ar[rd,equal]\\
&\hspace{-.7cm}\vcenter{\hbox{\def\svgscale{1}
\begingroup%
  \makeatletter%
  \providecommand\color[2][]{%
    \errmessage{(Inkscape) Color is used for the text in Inkscape, but the package 'color.sty' is not loaded}%
    \renewcommand\color[2][]{}%
  }%
  \providecommand\transparent[1]{%
    \errmessage{(Inkscape) Transparency is used (non-zero) for the text in Inkscape, but the package 'transparent.sty' is not loaded}%
    \renewcommand\transparent[1]{}%
  }%
  \providecommand\rotatebox[2]{#2}%
  \ifx\svgwidth\undefined%
    \setlength{\unitlength}{85.04839463bp}%
    \ifx\svgscale\undefined%
      \relax%
    \else%
      \setlength{\unitlength}{\unitlength * \real{\svgscale}}%
    \fi%
  \else%
    \setlength{\unitlength}{\svgwidth}%
  \fi%
  \global\let\svgwidth\undefined%
  \global\let\svgscale\undefined%
  \makeatother%
  \begin{picture}(1,1)%
    \put(0,0){\includegraphics[width=\unitlength,page=1]{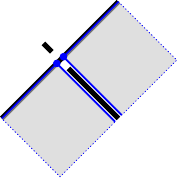}}%
    \put(0.61622765,0.64232188){\color[rgb]{0,0,0}\makebox(0,0)[lb]{\smash{$f^1$}}}%
    \put(0.25941478,0.30994793){\color[rgb]{0,0,0}\makebox(0,0)[lb]{\smash{$f^0$}}}%
    \put(0.43032591,0.64289945){\color[rgb]{0,0,0}\makebox(0,0)[lb]{\smash{$\bv_r$}}}%
  \end{picture}%
\endgroup%
}}\!\!\!\!\ar[rr,"\Gl_+"]&& g=\vcenter{\hbox{\def\svgscale{1}
\begingroup%
  \makeatletter%
  \providecommand\color[2][]{%
    \errmessage{(Inkscape) Color is used for the text in Inkscape, but the package 'color.sty' is not loaded}%
    \renewcommand\color[2][]{}%
  }%
  \providecommand\transparent[1]{%
    \errmessage{(Inkscape) Transparency is used (non-zero) for the text in Inkscape, but the package 'transparent.sty' is not loaded}%
    \renewcommand\transparent[1]{}%
  }%
  \providecommand\rotatebox[2]{#2}%
  \ifx\svgwidth\undefined%
    \setlength{\unitlength}{85.04839463bp}%
    \ifx\svgscale\undefined%
      \relax%
    \else%
      \setlength{\unitlength}{\unitlength * \real{\svgscale}}%
    \fi%
  \else%
    \setlength{\unitlength}{\svgwidth}%
  \fi%
  \global\let\svgwidth\undefined%
  \global\let\svgscale\undefined%
  \makeatother%
  \begin{picture}(1,1)%
    \put(0,0){\includegraphics[width=\unitlength,page=1]{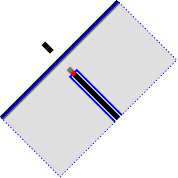}}%
    \put(0.58205593,0.77837895){\color[rgb]{0,0,0}\makebox(0,0)[lb]{\smash{$\be_{r-1}$}}}%
    \put(0.58024056,0.49030767){\color[rgb]{0,0,0}\makebox(0,0)[lb]{\smash{$\be_{r+u-1}$}}}%
  \end{picture}%
\endgroup%
}}
\end{tikzcd}
\]

If $f\in\cM^{\infinitesimalmonogon\hybto{}\reg}$, then $\tilde f^0(\bw_0)=\tilde f^1(\bv_r)=\sfv_{i,\ell}$ for some $i$ with $\ell=\val(\sfv)$. 
We remove neighborhoods $\bU_{\bw_0}$ and $\bU_{\bv_r}$ of $\bw_0$ and $\bv_r$ from $\Pi_{0}$ and $\Pi_{t}$, respectively, and glue the two complementary regions to obtain an admissible disk $g$ on $\Pi_{t-1}$. 
Then gluing the infinitesimal monogon replaces the vertex $\bv_r$ and the two adjacent edges $\be_{r-1}, \be_r$ with a single 2-folding edge $\be$ so that $|g|_\ZZ=2$.
\[
\begin{tikzcd}[column sep=2pc]
f=\vcenter{\hbox{
\begingroup%
  \makeatletter%
  \providecommand\color[2][]{%
    \errmessage{(Inkscape) Color is used for the text in Inkscape, but the package 'color.sty' is not loaded}%
    \renewcommand\color[2][]{}%
  }%
  \providecommand\transparent[1]{%
    \errmessage{(Inkscape) Transparency is used (non-zero) for the text in Inkscape, but the package 'transparent.sty' is not loaded}%
    \renewcommand\transparent[1]{}%
  }%
  \providecommand\rotatebox[2]{#2}%
  \ifx\svgwidth\undefined%
    \setlength{\unitlength}{122.60569651bp}%
    \ifx\svgscale\undefined%
      \relax%
    \else%
      \setlength{\unitlength}{\unitlength * \real{\svgscale}}%
    \fi%
  \else%
    \setlength{\unitlength}{\svgwidth}%
  \fi%
  \global\let\svgwidth\undefined%
  \global\let\svgscale\undefined%
  \makeatother%
  \begin{picture}(1,0.47633728)%
    \put(0,0){\includegraphics[width=\unitlength,page=1]{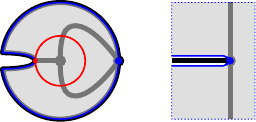}}%
    \put(0.81124664,0.28976872){\color[rgb]{0,0,0}\makebox(0,0)[lb]{\smash{$\bv_r$}}}%
    \put(0.53980953,0.22630156){\color[rgb]{0,0,0}\makebox(0,0)[lb]{\smash{$\hybto{r}$}}}%
    \put(0.73737065,0.07504438){\color[rgb]{0,0,0}\makebox(0,0)[lb]{\smash{$f^1$}}}%
    \put(0.11749734,0.07504438){\color[rgb]{0,0,0}\makebox(0,0)[lb]{\smash{$f^0$}}}%
  \end{picture}%
\endgroup%
}}\ar[r,"\Gl_+"]&
g=\vcenter{\hbox{
\begingroup%
  \makeatletter%
  \providecommand\color[2][]{%
    \errmessage{(Inkscape) Color is used for the text in Inkscape, but the package 'color.sty' is not loaded}%
    \renewcommand\color[2][]{}%
  }%
  \providecommand\transparent[1]{%
    \errmessage{(Inkscape) Transparency is used (non-zero) for the text in Inkscape, but the package 'transparent.sty' is not loaded}%
    \renewcommand\transparent[1]{}%
  }%
  \providecommand\rotatebox[2]{#2}%
  \ifx\svgwidth\undefined%
    \setlength{\unitlength}{74.99985386bp}%
    \ifx\svgscale\undefined%
      \relax%
    \else%
      \setlength{\unitlength}{\unitlength * \real{\svgscale}}%
    \fi%
  \else%
    \setlength{\unitlength}{\svgwidth}%
  \fi%
  \global\let\svgwidth\undefined%
  \global\let\svgscale\undefined%
  \makeatother%
  \begin{picture}(1,0.75199844)%
    \put(0,0){\includegraphics[width=\unitlength,page=1]{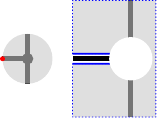}}%
    \put(0.34666734,0.36533429){\color[rgb]{0,0,0}\makebox(0,0)[lb]{\smash{$\sqcup$}}}%
    \put(0.48933429,0.47266791){\color[rgb]{0,0,0}\makebox(0,0)[lb]{\smash{$\be_{r-1}$}}}%
    \put(0.48933429,0.22266677){\color[rgb]{0,0,0}\makebox(0,0)[lb]{\smash{$\be_r$}}}%
  \end{picture}%
\endgroup%
}}=
\vcenter{\hbox{
\begingroup%
  \makeatletter%
  \providecommand\color[2][]{%
    \errmessage{(Inkscape) Color is used for the text in Inkscape, but the package 'color.sty' is not loaded}%
    \renewcommand\color[2][]{}%
  }%
  \providecommand\transparent[1]{%
    \errmessage{(Inkscape) Transparency is used (non-zero) for the text in Inkscape, but the package 'transparent.sty' is not loaded}%
    \renewcommand\transparent[1]{}%
  }%
  \providecommand\rotatebox[2]{#2}%
  \ifx\svgwidth\undefined%
    \setlength{\unitlength}{40.39971789bp}%
    \ifx\svgscale\undefined%
      \relax%
    \else%
      \setlength{\unitlength}{\unitlength * \real{\svgscale}}%
    \fi%
  \else%
    \setlength{\unitlength}{\svgwidth}%
  \fi%
  \global\let\svgwidth\undefined%
  \global\let\svgscale\undefined%
  \makeatother%
  \begin{picture}(1,1.39604373)%
    \put(0,0){\includegraphics[width=\unitlength,page=1]{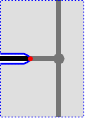}}%
    \put(0.05197719,0.82178851){\color[rgb]{0,0,0}\makebox(0,0)[lb]{\smash{$\be$}}}%
  \end{picture}%
\endgroup%
}}
\end{tikzcd}
\]

If $f\in \cM^{\infinitesimaltriangle\hybto{}\reg}$, then we remove $\bU_{\bw_0}$ and $\bU_{\bv_r}$ from $\Pi_{2}$ and from $\Pi_{t}$ and glue them to obtain $g$ on $\Pi_{t+1}$. 
Then $g$ is very similar to $f^1$ except that $g$ has one 2-folding edge and no concave vertices.
\[
\begin{tikzcd}
f=\vcenter{\hbox{
\begingroup%
  \makeatletter%
  \providecommand\color[2][]{%
    \errmessage{(Inkscape) Color is used for the text in Inkscape, but the package 'color.sty' is not loaded}%
    \renewcommand\color[2][]{}%
  }%
  \providecommand\transparent[1]{%
    \errmessage{(Inkscape) Transparency is used (non-zero) for the text in Inkscape, but the package 'transparent.sty' is not loaded}%
    \renewcommand\transparent[1]{}%
  }%
  \providecommand\rotatebox[2]{#2}%
  \ifx\svgwidth\undefined%
    \setlength{\unitlength}{142.07289079bp}%
    \ifx\svgscale\undefined%
      \relax%
    \else%
      \setlength{\unitlength}{\unitlength * \real{\svgscale}}%
    \fi%
  \else%
    \setlength{\unitlength}{\svgwidth}%
  \fi%
  \global\let\svgwidth\undefined%
  \global\let\svgscale\undefined%
  \makeatother%
  \begin{picture}(1,0.41010865)%
    \put(0,0){\includegraphics[width=\unitlength,page=1]{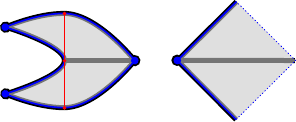}}%
    \put(0.57089741,0.26192902){\color[rgb]{0,0,0}\makebox(0,0)[lb]{\smash{$\bv_r$}}}%
    \put(0.79613391,0.11147808){\color[rgb]{0,0,0}\makebox(0,0)[lb]{\smash{$f^1$}}}%
    \put(0.24470463,0.09300165){\color[rgb]{0,0,0}\makebox(0,0)[lb]{\smash{$f^0$}}}%
    \put(0.49638779,0.19310741){\color[rgb]{0,0,0}\makebox(0,0)[lb]{\smash{$\hybto{r}$}}}%
  \end{picture}%
\endgroup%
}}\ar[r,"\Gl_-"]&
g=\vcenter{\hbox{
\begingroup%
  \makeatletter%
  \providecommand\color[2][]{%
    \errmessage{(Inkscape) Color is used for the text in Inkscape, but the package 'color.sty' is not loaded}%
    \renewcommand\color[2][]{}%
  }%
  \providecommand\transparent[1]{%
    \errmessage{(Inkscape) Transparency is used (non-zero) for the text in Inkscape, but the package 'transparent.sty' is not loaded}%
    \renewcommand\transparent[1]{}%
  }%
  \providecommand\rotatebox[2]{#2}%
  \ifx\svgwidth\undefined%
    \setlength{\unitlength}{72.30041861bp}%
    \ifx\svgscale\undefined%
      \relax%
    \else%
      \setlength{\unitlength}{\unitlength * \real{\svgscale}}%
    \fi%
  \else%
    \setlength{\unitlength}{\svgwidth}%
  \fi%
  \global\let\svgwidth\undefined%
  \global\let\svgscale\undefined%
  \makeatother%
  \begin{picture}(1,0.80565331)%
    \put(0,0){\includegraphics[width=\unitlength,page=1]{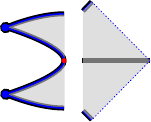}}%
    \put(0.44511254,0.36502219){\color[rgb]{0,0,0}\makebox(0,0)[lb]{\smash{$\sqcup$}}}%
  \end{picture}%
\endgroup%
}}=\vcenter{\hbox{\includegraphics{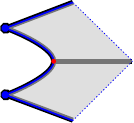}}}
\end{tikzcd}
\]

\item [($\Gl$-2)] {\em infinitesimal ancestor pairs.}
Suppose that $f^1\in\cM^{\infinitesimaltriangle}$. Then $r$ is either $1$ or $2$, and either $f^0\in\cM^{\infinitesimalmonogon}$ or $f^0\in\cM^\infinitesimaltriangle$.
We remove neighborhoods $\bU_{\bw_0}$ and $\bU_{\bv_r}$ of $\bw_0$ and $\bv_r$ from $\Pi_{0}$ and $\Pi_{t}$, respectively, and glue them to obtain an admissible disk $g$ as before. 

According to $r$ which is either 1 or 2 since $f^1$ is a triangle, we denote the gluing map by $\Gl_+$ if $r=1$ and $\Gl_-$ if $r=2$.
Then we have
\begin{align*}
\Gl_+&:\cM^{\infinitesimalmonogon\hybto{1}\infinitesimaltriangle}\to\cM^{\infinitesimalbigonright},&
\Gl_-&:\cM^{\infinitesimalmonogon\hybto{2}\infinitesimaltriangle}\to\cM^{\infinitesimalbigonleft},\\
\Gl_+&:\cM^{\infinitesimaltriangle\hybto{1}\infinitesimaltriangle}\to\cM^{\infinitesimalsquare},&
\Gl_-&:\cM^{\infinitesimaltriangle\hybto{2}\infinitesimaltriangle}\to\cM^{\infinitesimalsquare}.
\end{align*}

If $f^0\in\cM^{\infinitesimalmonogon}$, then $\tilde f^0(\bw_0)=\tilde f^1(\bv_r)=\sfv_{i,\ell}$ for some $i$ with $\ell=\val(\sfv)$. 
\[
\begin{tikzcd}
f=\vcenter{\hbox{\def\svgscale{0.95}
\begingroup%
  \makeatletter%
  \providecommand\color[2][]{%
    \errmessage{(Inkscape) Color is used for the text in Inkscape, but the package 'color.sty' is not loaded}%
    \renewcommand\color[2][]{}%
  }%
  \providecommand\transparent[1]{%
    \errmessage{(Inkscape) Transparency is used (non-zero) for the text in Inkscape, but the package 'transparent.sty' is not loaded}%
    \renewcommand\transparent[1]{}%
  }%
  \providecommand\rotatebox[2]{#2}%
  \ifx\svgwidth\undefined%
    \setlength{\unitlength}{146.35186549bp}%
    \ifx\svgscale\undefined%
      \relax%
    \else%
      \setlength{\unitlength}{\unitlength * \real{\svgscale}}%
    \fi%
  \else%
    \setlength{\unitlength}{\svgwidth}%
  \fi%
  \global\let\svgwidth\undefined%
  \global\let\svgscale\undefined%
  \makeatother%
  \begin{picture}(1,0.47357052)%
    \put(0,0){\includegraphics[width=\unitlength,page=1]{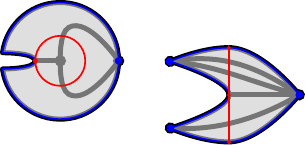}}%
    \put(0.53113702,0.31441802){\color[rgb]{0,0,0}\makebox(0,0)[lb]{\smash{$\bv_1$}}}%
    \put(0.4522233,0.26410408){\color[rgb]{0,0,0}\makebox(0,0)[lb]{\smash{$\hybto{1}$}}}%
    \put(0.09843293,0.13738897){\color[rgb]{0,0,0}\makebox(0,0)[lb]{\smash{$f^0$}}}%
    \put(0.78905433,0.07742515){\color[rgb]{0,0,0}\makebox(0,0)[lb]{\smash{$f^1$}}}%
  \end{picture}%
\endgroup%
}}\ar[r,"\Gl_+"] &
g=\vcenter{\hbox{\def\svgscale{0.95}
\begingroup%
  \makeatletter%
  \providecommand\color[2][]{%
    \errmessage{(Inkscape) Color is used for the text in Inkscape, but the package 'color.sty' is not loaded}%
    \renewcommand\color[2][]{}%
  }%
  \providecommand\transparent[1]{%
    \errmessage{(Inkscape) Transparency is used (non-zero) for the text in Inkscape, but the package 'transparent.sty' is not loaded}%
    \renewcommand\transparent[1]{}%
  }%
  \providecommand\rotatebox[2]{#2}%
  \ifx\svgwidth\undefined%
    \setlength{\unitlength}{71.70258294bp}%
    \ifx\svgscale\undefined%
      \relax%
    \else%
      \setlength{\unitlength}{\unitlength * \real{\svgscale}}%
    \fi%
  \else%
    \setlength{\unitlength}{\svgwidth}%
  \fi%
  \global\let\svgwidth\undefined%
  \global\let\svgscale\undefined%
  \makeatother%
  \begin{picture}(1,0.73512275)%
    \put(0,0){\includegraphics[width=\unitlength,page=1]{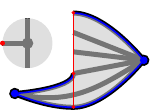}}%
    \put(0.36260897,0.43476433){\color[rgb]{0,0,0}\makebox(0,0)[lb]{\smash{$\sqcup$}}}%
    \put(0.51377867,0.69273847){\color[rgb]{0,0,0}\makebox(0,0)[lb]{\smash{$\be_0$}}}%
    \put(0.20877119,0.17868609){\color[rgb]{0,0,0}\makebox(0,0)[lb]{\smash{$\be_1$}}}%
  \end{picture}%
\endgroup%
}}=
\vcenter{\hbox{\def\svgscale{0.95}
\begingroup%
  \makeatletter%
  \providecommand\color[2][]{%
    \errmessage{(Inkscape) Color is used for the text in Inkscape, but the package 'color.sty' is not loaded}%
    \renewcommand\color[2][]{}%
  }%
  \providecommand\transparent[1]{%
    \errmessage{(Inkscape) Transparency is used (non-zero) for the text in Inkscape, but the package 'transparent.sty' is not loaded}%
    \renewcommand\transparent[1]{}%
  }%
  \providecommand\rotatebox[2]{#2}%
  \ifx\svgwidth\undefined%
    \setlength{\unitlength}{58.40839883bp}%
    \ifx\svgscale\undefined%
      \relax%
    \else%
      \setlength{\unitlength}{\unitlength * \real{\svgscale}}%
    \fi%
  \else%
    \setlength{\unitlength}{\svgwidth}%
  \fi%
  \global\let\svgwidth\undefined%
  \global\let\svgscale\undefined%
  \makeatother%
  \begin{picture}(1,1.23854206)%
    \put(0,0){\includegraphics[width=\unitlength,page=1]{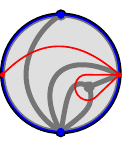}}%
    \put(0.41120877,0.01207152){\color[rgb]{0,0,0}\makebox(0,0)[lb]{\smash{$\bv_0$}}}%
    \put(0.41120877,1.18584202){\color[rgb]{0,0,0}\makebox(0,0)[lb]{\smash{$\bv_1$}}}%
  \end{picture}%
\endgroup%
}}
\end{tikzcd}
\]

If $f^0\in\cM^{\infinitesimaltriangle}$, then $g$ is an infinitesimal quadrilateral. 
\[
\begin{tikzcd}
f=\vcenter{\hbox{\def\svgscale{0.95}
\begingroup%
  \makeatletter%
  \providecommand\color[2][]{%
    \errmessage{(Inkscape) Color is used for the text in Inkscape, but the package 'color.sty' is not loaded}%
    \renewcommand\color[2][]{}%
  }%
  \providecommand\transparent[1]{%
    \errmessage{(Inkscape) Transparency is used (non-zero) for the text in Inkscape, but the package 'transparent.sty' is not loaded}%
    \renewcommand\transparent[1]{}%
  }%
  \providecommand\rotatebox[2]{#2}%
  \ifx\svgwidth\undefined%
    \setlength{\unitlength}{152.51820414bp}%
    \ifx\svgscale\undefined%
      \relax%
    \else%
      \setlength{\unitlength}{\unitlength * \real{\svgscale}}%
    \fi%
  \else%
    \setlength{\unitlength}{\svgwidth}%
  \fi%
  \global\let\svgwidth\undefined%
  \global\let\svgscale\undefined%
  \makeatother%
  \begin{picture}(1,0.41735865)%
    \put(0,0){\includegraphics[width=\unitlength,page=1]{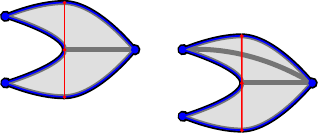}}%
    \put(0.53179911,0.31359203){\color[rgb]{0,0,0}\makebox(0,0)[lb]{\smash{$\bv_1$}}}%
    \put(0.79517516,0.07246143){\color[rgb]{0,0,0}\makebox(0,0)[lb]{\smash{$f^1$}}}%
    \put(0.22794586,0.15623377){\color[rgb]{0,0,0}\makebox(0,0)[lb]{\smash{$f^0$}}}%
    \put(0.46239233,0.24948372){\color[rgb]{0,0,0}\makebox(0,0)[lb]{\smash{$\hybto{1}$}}}%
  \end{picture}%
\endgroup%
}}\ar[r,"\Gl_+"]&
g=\vcenter{\hbox{\def\svgscale{0.95}
\begingroup%
  \makeatletter%
  \providecommand\color[2][]{%
    \errmessage{(Inkscape) Color is used for the text in Inkscape, but the package 'color.sty' is not loaded}%
    \renewcommand\color[2][]{}%
  }%
  \providecommand\transparent[1]{%
    \errmessage{(Inkscape) Transparency is used (non-zero) for the text in Inkscape, but the package 'transparent.sty' is not loaded}%
    \renewcommand\transparent[1]{}%
  }%
  \providecommand\rotatebox[2]{#2}%
  \ifx\svgwidth\undefined%
    \setlength{\unitlength}{78.09959942bp}%
    \ifx\svgscale\undefined%
      \relax%
    \else%
      \setlength{\unitlength}{\unitlength * \real{\svgscale}}%
    \fi%
  \else%
    \setlength{\unitlength}{\svgwidth}%
  \fi%
  \global\let\svgwidth\undefined%
  \global\let\svgscale\undefined%
  \makeatother%
  \begin{picture}(1,0.61017972)%
    \put(0,0){\includegraphics[width=\unitlength,page=1]{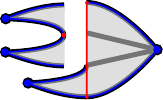}}%
    \put(0.40995765,0.35900665){\color[rgb]{0,0,0}\makebox(0,0)[lb]{\smash{$\sqcup$}}}%
  \end{picture}%
\endgroup%
}}=
\vcenter{\hbox{\def\svgscale{0.95}\includegraphics{quadrilateral_degree_2_2.pdf}}}
\end{tikzcd}
\]
\end{itemize}

By definition of $\sgn(f)$, it is easy to check that $\Gl_\pm$ preserves $\cP$ up to sign
\[
\cP(\Gl_\pm(f))=\pm\cP(f).
\]

\subsubsection{Shrinking the 2-folding edge}\label{section:stretching and shrinking}
Let $f\in \cM^\positivefolding$ be a regular admissible disk of degree 2 with the unique 2-folding edge $\be=\be_{r+1}$ whose ending vertices are $\{\bv_r, \bv_{r+1}\}$.

\begin{itemize}
\item [($\Sh$-1)] {\em Shrink to pairs.}
Suppose that $f(\bv_r)=f(\bv_{r+1})$. Then they should be the same vertex $\sfv\in\sV_\sL$, and by shrinking $\be$, its image eventually converges to the point $f(\bv_r)=f(\bv_{r+1})$.
We define $\Sh_+(f)\in\cM^{\infinitesimaltriangle\hybto{}\reg}$ as the admissible pair $g=(f^0\hybto{r}f^1)$ defined as follows.
The disk $f^1$ is obtained from $f$ by shrinking $\be$ to a point so that it has no folding edges, and $f^0$ is an infinitesimal admissible triangle such that
\begin{align*}
f^0(\be_0)&\subset f(\be_r),&
f^0(\be_1)&\subset f(\be_{r+1}),&
f^0(\be_2)&\subset f(\be_{r+2}).
\end{align*}
\[
\begin{tikzcd}
f=\vcenter{\hbox{\includegraphics{pair_admissible_inf_regular_glued.pdf}}}\ar[r,"Shrink"]&
\vcenter{\hbox{\includegraphics{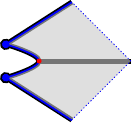}}}\ar[r,"\Sh_-"]&
g=\vcenter{\hbox{}}
\end{tikzcd}
\]

\item [($\Sh$-2)] {\em Shrink to concave vertices.}
Suppose that $f(\bv_r)\neq f(\bv_{r+1})$. Then one of $f(\bv_r)$ or $f(\bv_{r+1})$ is in $\sC_\sL$ and by shrinking $\be$, we define $\Sh_\pm(f)\in\cM^{\concavevertex}$ to be the corresponding admissible disk $g$ having a unique concave (not necessarily negative) vertex $\bv$ without any folding edges. Here we use the subscripts $\pm$ to distinguish two cases as follows:
\[
\begin{tikzcd}[row sep=0pc]
f=\vcenter{\hbox{\includegraphics{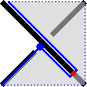}}}\ar[r,"\Sh_-"]&
g=\vcenter{\hbox{\includegraphics{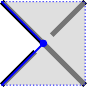}}}&
f=\vcenter{\hbox{\includegraphics{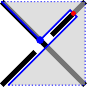}}}\ar[l,"\Sh_+"']
\\
f=\vcenter{\hbox{\includegraphics{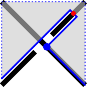}}}\ar[r,"\Sh_-"]&
g=\vcenter{\hbox{\includegraphics{crossing_sign_concave_positive.pdf}}}&
f=\vcenter{\hbox{\includegraphics{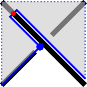}}}\ar[l,"\Sh_+"']
\end{tikzcd}
\]

By the convention of the orientation signs as described in Figure~\ref{figure:convex and concave vertices}, we have
\begin{align*}
\sgn(\Sh_\pm(f))&=\pm\sgn(f),&\cP(\Sh_\pm(f))&=\pm\cP(f).
\end{align*}
\end{itemize}
This is the inverse operation of $\Gl_+$ on $\cM^{\infinitesimaltriangle\hybto{}\reg}$, and therefore $\cP$ is preserved
\[
\cP(\Sh_+(f))=\cP(f).
\]

\subsubsection{Stretching folding edges}\label{section:stretching}
On the other hand, for a regular admissible disk $f$, the stretching along $\be$ is defined by enlarging the image of the folding edge $\be$ as long as it is combinatorially the same as the original disk $f$. 
In this case, the result becomes always an admissible pair. 
\[
\begin{tikzcd}[column sep=3pc]
\vcenter{\hbox{\includegraphics{folding_edge_2_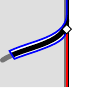}}}\ar[r,"Stretch"]&
\vcenter{\hbox{\includegraphics{folding_edge_2.pdf}}}\ar[r,"Stretch"]&
\vcenter{\hbox{\includegraphics{folding_edge_2_sh_2.pdf}}}\ar[r,"Stretch"]&
\cdots
\end{tikzcd}
\]

We eventually get the following three cases and define $\St(f)$ to be the admissible pair $g$ depicted as follows:
\begin{itemize} 
\item [($\St$-1)] 
The singular point on $\be$ hits $\be$ itself. 
\[
\begin{tikzcd}[row sep=0pc]
f=\vcenter{\hbox{\includegraphics{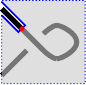}}}\ar[r,"Stretch"]&
\vcenter{\hbox{\includegraphics{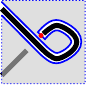}}}\ar[r,"\St"]&
g=\vcenter{\hbox{
\begingroup%
  \makeatletter%
  \providecommand\color[2][]{%
    \errmessage{(Inkscape) Color is used for the text in Inkscape, but the package 'color.sty' is not loaded}%
    \renewcommand\color[2][]{}%
  }%
  \providecommand\transparent[1]{%
    \errmessage{(Inkscape) Transparency is used (non-zero) for the text in Inkscape, but the package 'transparent.sty' is not loaded}%
    \renewcommand\transparent[1]{}%
  }%
  \providecommand\rotatebox[2]{#2}%
  \ifx\svgwidth\undefined%
    \setlength{\unitlength}{41.04840303bp}%
    \ifx\svgscale\undefined%
      \relax%
    \else%
      \setlength{\unitlength}{\unitlength * \real{\svgscale}}%
    \fi%
  \else%
    \setlength{\unitlength}{\svgwidth}%
  \fi%
  \global\let\svgwidth\undefined%
  \global\let\svgscale\undefined%
  \makeatother%
  \begin{picture}(1,0.98419785)%
    \put(0,0){\includegraphics[width=\unitlength,page=1]{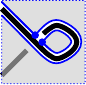}}%
    \put(0.29354177,0.74919985){\color[rgb]{0,0,0}\makebox(0,0)[lb]{\smash{$\bv_r$}}}%
  \end{picture}%
\endgroup%
}}=
\vcenter{\hbox{
\begingroup%
  \makeatletter%
  \providecommand\color[2][]{%
    \errmessage{(Inkscape) Color is used for the text in Inkscape, but the package 'color.sty' is not loaded}%
    \renewcommand\color[2][]{}%
  }%
  \providecommand\transparent[1]{%
    \errmessage{(Inkscape) Transparency is used (non-zero) for the text in Inkscape, but the package 'transparent.sty' is not loaded}%
    \renewcommand\transparent[1]{}%
  }%
  \providecommand\rotatebox[2]{#2}%
  \ifx\svgwidth\undefined%
    \setlength{\unitlength}{80.22829351bp}%
    \ifx\svgscale\undefined%
      \relax%
    \else%
      \setlength{\unitlength}{\unitlength * \real{\svgscale}}%
    \fi%
  \else%
    \setlength{\unitlength}{\svgwidth}%
  \fi%
  \global\let\svgwidth\undefined%
  \global\let\svgscale\undefined%
  \makeatother%
  \begin{picture}(1,0.51973006)%
    \put(0,0){\includegraphics[width=\unitlength,page=1]{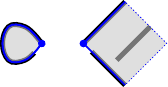}}%
    \put(0.451358,0.35455593){\color[rgb]{0,0,0}\makebox(0,0)[lb]{\smash{$\bv_r$}}}%
    \put(0.31965893,0.24627484){\color[rgb]{0,0,0}\makebox(0,0)[lb]{\smash{$\hybto{r}$}}}%
  \end{picture}%
\endgroup%
}}\\
f=\vcenter{\hbox{\includegraphics{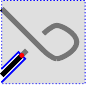}}}\ar[r,"Stretch"]&
\vcenter{\hbox{\includegraphics{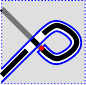}}}\ar[r,"\St"]&
g=\vcenter{\hbox{
\begingroup%
  \makeatletter%
  \providecommand\color[2][]{%
    \errmessage{(Inkscape) Color is used for the text in Inkscape, but the package 'color.sty' is not loaded}%
    \renewcommand\color[2][]{}%
  }%
  \providecommand\transparent[1]{%
    \errmessage{(Inkscape) Transparency is used (non-zero) for the text in Inkscape, but the package 'transparent.sty' is not loaded}%
    \renewcommand\transparent[1]{}%
  }%
  \providecommand\rotatebox[2]{#2}%
  \ifx\svgwidth\undefined%
    \setlength{\unitlength}{41.04838643bp}%
    \ifx\svgscale\undefined%
      \relax%
    \else%
      \setlength{\unitlength}{\unitlength * \real{\svgscale}}%
    \fi%
  \else%
    \setlength{\unitlength}{\svgwidth}%
  \fi%
  \global\let\svgwidth\undefined%
  \global\let\svgscale\undefined%
  \makeatother%
  \begin{picture}(1,0.98419825)%
    \put(0,0){\includegraphics[width=\unitlength,page=1]{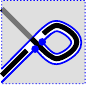}}%
    \put(0.29354303,0.17719009){\color[rgb]{0,0,0}\makebox(0,0)[lb]{\smash{$\bv_r$}}}%
  \end{picture}%
\endgroup%
}}=
\vcenter{\hbox{
\begingroup%
  \makeatletter%
  \providecommand\color[2][]{%
    \errmessage{(Inkscape) Color is used for the text in Inkscape, but the package 'color.sty' is not loaded}%
    \renewcommand\color[2][]{}%
  }%
  \providecommand\transparent[1]{%
    \errmessage{(Inkscape) Transparency is used (non-zero) for the text in Inkscape, but the package 'transparent.sty' is not loaded}%
    \renewcommand\transparent[1]{}%
  }%
  \providecommand\rotatebox[2]{#2}%
  \ifx\svgwidth\undefined%
    \setlength{\unitlength}{80.22829351bp}%
    \ifx\svgscale\undefined%
      \relax%
    \else%
      \setlength{\unitlength}{\unitlength * \real{\svgscale}}%
    \fi%
  \else%
    \setlength{\unitlength}{\svgwidth}%
  \fi%
  \global\let\svgwidth\undefined%
  \global\let\svgscale\undefined%
  \makeatother%
  \begin{picture}(1,0.51973006)%
    \put(0,0){\includegraphics[width=\unitlength,page=1]{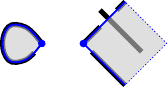}}%
    \put(0.451358,0.35455593){\color[rgb]{0,0,0}\makebox(0,0)[lb]{\smash{$\bv_r$}}}%
    \put(0.31965893,0.24627484){\color[rgb]{0,0,0}\makebox(0,0)[lb]{\smash{$\hybto{r}$}}}%
  \end{picture}%
\endgroup%
}}
\end{tikzcd}
\]

\item [($\St$-2)]
The singular point on $\be$ hits another edge of $\partial\Pi$. Then 
\[
\begin{tikzcd}[row sep=-2.5pc,column sep=1pc]
f=\vcenter{\hbox{}}\hspace{-0.2cm}\ar[r,"\St"]&\hspace{-0.2cm}
\vcenter{\hbox{}}\ar[rd,equal]\\
& &\hspace{-0.4cm}g=\vcenter{\hbox{}}\\
f=\vcenter{\hbox{}}\hspace{-0.2cm}\ar[r,"\St"]&\hspace{-0.2cm}
\vcenter{\hbox{}}\ar[ru,equal]
\end{tikzcd}
\]
\item [($\St$-3)]
The singular point on $\be$ hits a point $\bx\in \rPi$ so that $f(\bx)$ is a vertex.
\[
\begin{tikzcd}
f=\vcenter{\hbox{}}\ar[r,"\St"]&
g=\vcenter{\hbox{}}
\end{tikzcd}
\]
\end{itemize}

The above operations are inverses of gluing operations which preserve labels and therefore
\begin{align*}
\cP(\St(f))=\cP(f).
\end{align*}

\subsubsection{Cutting disks}\label{sec:Cutting concave vertices in two ways}
There are three kinds of cutting disks, which are inverses of certain gluing operations.

\begin{itemize}
\item [($\Cut$-1)] {\em Cutting concave vertices in two ways.} Let us define
\[
\Cut_\pm:\cM^{\concavevertex}\to\cM^\positivefolding.
\]

Let $f\in \cM^{\concavevertex}$ be a non-infinitesimal admissible disk with unique concave vertex $\bv_r$ without any folding edge.
Then $\Cut_{\pm}(f)$ is defined by creating a folding edge at $\bv_r$ in two ways as depicted as follows:

By changing one of edges $\be_r$ and $\be_{r+1}$ with a folding edge, we obtain an admissible disk $g\in\cM^\positivefolding$.
\[
\begin{tikzcd}
g=\vcenter{\hbox{\includegraphics{concave_cut_1.pdf}}}
&\ar[l,"\Cut_-"']f=\vcenter{\hbox{\includegraphics{crossing_sign_concave_negative_2.pdf}}}\ar[r,"\Cut_+"]&
g=\vcenter{\hbox{\includegraphics{concave_cut_2.pdf}}}\\
g=\vcenter{\hbox{\includegraphics{concave_cut_positive_1.pdf}}}
&\ar[l,"\Cut_-"']f=\vcenter{\hbox{\includegraphics{crossing_sign_concave_positive.pdf}}}\ar[r,"\Cut_+"]&
g=\vcenter{\hbox{\includegraphics{concave_cut_positive_2.pdf}}}
\end{tikzcd}
\]

These are inverses of the operations $\Sh_\pm$ on $\cM^{\concavevertex}$ and so we have
\begin{align*}
\cP(\Cut_\pm(f))&=\pm\cP(f).
\end{align*}

\item [($\Cut$-2)] {\em Cutting along the arc in $\bO_f$.}
We define functions
\begin{align*}
\Cut_+&:\cM^{\infinitesimalbigonright}\to\cM^{\infinitesimalmonogon\hybto{1}\infinitesimaltriangle},&
\Cut_-&:\cM^{\infinitesimalbigonleft}\to\cM^{\infinitesimalmonogon\hybto{2}\infinitesimaltriangle}\\
\Cut_+&:\cM^{\infinitesimalsquare}\to\cM^{\infinitesimaltriangle\hybto{1}\infinitesimaltriangle},&
\Cut_-&:\cM^{\infinitesimalsquare}\to\cM^{\infinitesimaltriangle\hybto{2}\infinitesimaltriangle}.
\end{align*}

For $f\in\cM^{\infinitesimalbigonleft}\amalg\cM^{\infinitesimalbigonright}$, we decompose $f$ into two parts cutting along the unique circuit in $\bO_f$.
From the component containing $\bv_0$, there is a unique way of extending to an infinitesimal admissible triangle, say $f^1$.
There is also a unique way to obtain an infinitesimal admissible monogon from the other component, say $f^0$.
Then $\Cut_\pm(f)$ is defined to be the admissible pair $g=(f^0\hybto{r}f^1)$.
\[
\begin{tikzcd}
f=\vcenter{\hbox{}} = 
\vcenter{\hbox{}}\ar[r,"\Cut_+"]&
g=\vcenter{\hbox{}}
\end{tikzcd}
\]

If $f\in\cM^{\infinitesimalsquare}$ is an infinitesimal quadrilateral, then we can decompose $f$ into a pair of infinitesimal triangles in two ways as depicted below.
For each case, there is a unique way of assigning an admissible pair $g=(f^0\hybto{r}f^1)$ and we define them to be $\Cut_\pm(f)$, respectively:
\[
\begin{tikzcd}[row sep=-2pc,column sep=1.5pc]
& \vcenter{\hbox{}}\ar[r,"\Cut_+"] & g=\vcenter{\hbox{}}\\
f=\vcenter{\hbox{\includegraphics{quadrilateral_degree_2_2.pdf}}}\hspace{-.7cm}\ar[ru,equal]\ar[rd,equal]\\
& \vcenter{\hbox{
\begingroup%
  \makeatletter%
  \providecommand\color[2][]{%
    \errmessage{(Inkscape) Color is used for the text in Inkscape, but the package 'color.sty' is not loaded}%
    \renewcommand\color[2][]{}%
  }%
  \providecommand\transparent[1]{%
    \errmessage{(Inkscape) Transparency is used (non-zero) for the text in Inkscape, but the package 'transparent.sty' is not loaded}%
    \renewcommand\transparent[1]{}%
  }%
  \providecommand\rotatebox[2]{#2}%
  \ifx\svgwidth\undefined%
    \setlength{\unitlength}{78.09959942bp}%
    \ifx\svgscale\undefined%
      \relax%
    \else%
      \setlength{\unitlength}{\unitlength * \real{\svgscale}}%
    \fi%
  \else%
    \setlength{\unitlength}{\svgwidth}%
  \fi%
  \global\let\svgwidth\undefined%
  \global\let\svgscale\undefined%
  \makeatother%
  \begin{picture}(1,0.61017972)%
    \put(0,0){\includegraphics[width=\unitlength,page=1]{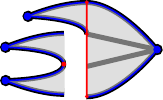}}%
    \put(0.40995765,0.22291366){\color[rgb]{0,0,0}\makebox(0,0)[lb]{\smash{$\sqcup$}}}%
  \end{picture}%
\endgroup%
}}\ar[r,"\Cut_-"] & g=\vcenter{\hbox{
\begingroup%
  \makeatletter%
  \providecommand\color[2][]{%
    \errmessage{(Inkscape) Color is used for the text in Inkscape, but the package 'color.sty' is not loaded}%
    \renewcommand\color[2][]{}%
  }%
  \providecommand\transparent[1]{%
    \errmessage{(Inkscape) Transparency is used (non-zero) for the text in Inkscape, but the package 'transparent.sty' is not loaded}%
    \renewcommand\transparent[1]{}%
  }%
  \providecommand\rotatebox[2]{#2}%
  \ifx\svgwidth\undefined%
    \setlength{\unitlength}{152.51820414bp}%
    \ifx\svgscale\undefined%
      \relax%
    \else%
      \setlength{\unitlength}{\unitlength * \real{\svgscale}}%
    \fi%
  \else%
    \setlength{\unitlength}{\svgwidth}%
  \fi%
  \global\let\svgwidth\undefined%
  \global\let\svgscale\undefined%
  \makeatother%
  \begin{picture}(1,0.41737255)%
    \put(0,0){\includegraphics[width=\unitlength,page=1]{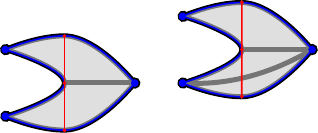}}%
    \put(0.53179911,0.0900397){\color[rgb]{0,0,0}\makebox(0,0)[lb]{\smash{$\bv_2$}}}%
    \put(0.79517516,0.30772824){\color[rgb]{0,0,0}\makebox(0,0)[lb]{\smash{$f^1$}}}%
    \put(0.22794586,0.05132826){\color[rgb]{0,0,0}\makebox(0,0)[lb]{\smash{$f^0$}}}%
    \put(0.46239233,0.14457821){\color[rgb]{0,0,0}\makebox(0,0)[lb]{\smash{$\hybto{2}$}}}%
  \end{picture}%
\endgroup%
}}
\end{tikzcd}
\]

Since these are the inverse operations of gluing maps described in ($\Gl$-2), we have
\begin{align*}
\cP(\Cut_\pm(f))&=\pm\cP(f).
\end{align*}
\end{itemize}

\subsubsection{Rolling the circuit of $\bO_f$}
Finally, we define
\begin{align*}
\Rol^{\sL}&:\cM^{\infinitesimalbigonleft}\amalg\cM^{\infinitesimalbigonmiddle}\to\cM^{\infinitesimalbigonmiddle}\amalg\cM^{\infinitesimalbigonright},&
\Rol^{\sR}&:\cM^{\infinitesimalbigonmiddle}\amalg\cM^{\infinitesimalbigonright}\to\cM^{\infinitesimalbigonleft}\amalg\cM^{\infinitesimalbigonmiddle}.
\end{align*}

\begin{itemize}
\item [($\Rol$)]
Let $f$ be an infinitesimal admissible bigon of degree 2, that is, $f\in\cM^{\infinitesimalbigonleft}\amalg\cM^{\infinitesimalbigonmiddle}\amalg\cM^{\infinitesimalbigonright}$.
Then $\Rol^*(f)$ is defined by rolling the unique circuit in $\bO_f$ to the right and left, respectively, along $\bO_f$ with respect to $\bv_0$. 

\[
\begin{tikzcd}[column sep=2.5pc]
\vcenter{\hbox{
\begingroup%
  \makeatletter%
  \providecommand\color[2][]{%
    \errmessage{(Inkscape) Color is used for the text in Inkscape, but the package 'color.sty' is not loaded}%
    \renewcommand\color[2][]{}%
  }%
  \providecommand\transparent[1]{%
    \errmessage{(Inkscape) Transparency is used (non-zero) for the text in Inkscape, but the package 'transparent.sty' is not loaded}%
    \renewcommand\transparent[1]{}%
  }%
  \providecommand\rotatebox[2]{#2}%
  \ifx\svgwidth\undefined%
    \setlength{\unitlength}{58.40840098bp}%
    \ifx\svgscale\undefined%
      \relax%
    \else%
      \setlength{\unitlength}{\unitlength * \real{\svgscale}}%
    \fi%
  \else%
    \setlength{\unitlength}{\svgwidth}%
  \fi%
  \global\let\svgwidth\undefined%
  \global\let\svgscale\undefined%
  \makeatother%
  \begin{picture}(1,1.23854202)%
    \put(0,0){\includegraphics[width=\unitlength,page=1]{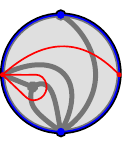}}%
    \put(0.41120876,0.01207152){\color[rgb]{0,0,0}\makebox(0,0)[lb]{\smash{$\bv_0$}}}%
    \put(0.41120876,1.18584198){\color[rgb]{0,0,0}\makebox(0,0)[lb]{\smash{$\bv_1$}}}%
  \end{picture}%
\endgroup%
}}\in\cM^{\infinitesimalbigonleft}\ar[r,"\Rol^\sL",shift left=.5ex]\ &\ 
\vcenter{\hbox{}}\in\cM^{\infinitesimalbigonmiddle}\ar[r,"\Rol^\sL",shift left=.5ex]\ar[l,"\Rol^\sR",shift left=.5ex]\ &\ 
\vcenter{\hbox{}}\in\cM^{\infinitesimalbigonright}\ar[l,"\Rol^\sR",shift left=.5ex]
\end{tikzcd}
\]
\end{itemize}

Since all disks have the same sign, the operations $\Rol^*$ preserve $\cP$
\begin{align*}
\cP(\Rol^\sL(f))&=\cP(f),&
\cP(\Rol^\sR(f))&=\cP(f).
\end{align*}

\subsection{Proof of Theorem~\ref{thm:differential}}\label{sec:The proof of differential}
In summary, we have the following lemma.
\begin{lemma}\label{lemma:operations preserve labels}
All operations preserve $\cP$ up to sign. Moreover, the only operations which reverse the sign of $\cP$ are operations having the subscript $(-)$.
\end{lemma}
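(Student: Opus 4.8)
The plan is to observe that Lemma~\ref{lemma:operations preserve labels} is essentially a summary statement: for each of the operations $\Gl_\pm$, $\Sh_\pm$, $\St$, $\Cut_\pm$, $\Rol^\sL$ and $\Rol^\sR$ introduced in \S\ref{sec:Manipulations of admissible pairs}, the effect on the polynomial $\cP$ was already recorded at the moment the operation was defined. Thus the proof consists of collecting these individual identities and checking that the announced pattern holds. I would organize the verification by the subscript decoration of each operation rather than by its geometric type, so that the two assertions of the lemma can be read off directly.

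First I would dispose of all sign-preserving operations, namely those carrying no subscript ($\St$, $\Rol^\sL$, $\Rol^\sR$) together with the $(+)$-subscripted ones ($\Gl_+$, $\Sh_+$, $\Cut_+$). For $\St$ the identity $\cP(\St(f))=\cP(f)$ holds because $\St$ is, in each of the cases ($\St$-1)--($\St$-3), the inverse of a gluing operation preserving labels; for $\Rol^\sL$ and $\Rol^\sR$ the identities $\cP(\Rol^\sL(f))=\cP(f)$ and $\cP(\Rol^\sR(f))=\cP(f)$ follow since the classes $\cM^{\infinitesimalbigonleft}$, $\cM^{\infinitesimalbigonmiddle}$, $\cM^{\infinitesimalbigonright}$ all carry the sign $+1$. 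For $\Gl_+$ I would unwind the definition of $\sgn(f)$ for an admissible pair in Definition~\ref{def:admissible pair}, in particular the Koszul factor $(-1)^{|\tilde f^1(\bv_1\cdots\bv_{r-1})|}$, and compare it against the orientation sign of the single $2$-folding edge (or concave vertex) created in the glued disk $\Gl_+(f)$ as dictated by Figure~\ref{figure:convex and concave vertices} and Definition~\ref{definition:total label}, yielding $\cP(\Gl_+(f))=\cP(f)$; the identities $\cP(\Sh_+(f))=\cP(f)$ and $\cP(\Cut_+(f))=\cP(f)$ then follow because $\Sh_+$ and $\Cut_+$ are inverses of the relevant $\Gl_+$ maps.

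Next I would treat the $(-)$-subscripted operations $\Gl_-$, $\Sh_-$ and $\Cut_-$, showing each reverses the sign of $\cP$. The source of the extra minus sign in every case is the orientation-sign convention for a concave vertex combined with the $(-1)^{|\tilde f^1(\cdots)|}$ factor in $\sgn(f)$: forming the glued disk (for $\Gl_-$) or creating the folding edge in the opposite of the two admissible directions (for $\Cut_-$) flips the dominant region, and hence the orientation sign, relative to the $(+)$-case. This gives $\cP(\Gl_-(f))=-\cP(f)$, and dually $\cP(\Sh_-(f))=-\cP(f)$ and $\cP(\Cut_-(f))=-\cP(f)$. Since $\Gl_\pm$, $\Sh_\pm$ and $\Cut_\pm$ exhaust all operations carrying a subscript, this simultaneously establishes both assertions of the lemma.

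The main obstacle, and the only genuinely nonroutine step, will be the sign bookkeeping for the gluing maps $\Gl_\pm$: one must track precisely how the Koszul-type sign $(-1)^{|\tilde f^1(\bv_1\cdots\bv_{r-1})|}$ in $\sgn(f)$ interacts with the orientation sign assigned to the newly created $2$-folding edge or concave vertex, so that the total sign on $\cP(\Gl_\pm(f))$ comes out to exactly $\pm 1$ rather than some other unit. Once this single computation is carried out carefully, every remaining identity is obtained for free by invoking the inverse-operation relationships already recorded in \S\ref{sec:Manipulations of admissible pairs}.
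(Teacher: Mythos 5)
Your proposal is correct and is essentially the paper's own argument: the paper proves this lemma with a single sentence declaring it a summary of the discussion in \S\ref{sec:Manipulations of admissible pairs}, i.e., of the sign identities $\cP(\Gl_\pm(f))=\pm\cP(f)$, $\cP(\Sh_\pm(f))=\pm\cP(f)$, $\cP(\St(f))=\cP(f)$, $\cP(\Cut_\pm(f))=\pm\cP(f)$, $\cP(\Rol^\sL(f))=\cP(f)=\cP(\Rol^\sR(f))$ recorded at the definition of each operation, which is precisely what you collect. The only slip is one of attribution: $\Sh_\pm$ in ($\Sh$-2) and its inverse $\Cut_\pm$ in ($\Cut$-1) are disk-to-disk moves (trading a $2$-folding edge for a concave vertex), not inverses of any $\Gl_+$, so the identity $\cP(\Sh_+(f))=\cP(f)$ (and dually for $\Cut_+$) comes from the concave-vertex orientation-sign convention of Figure~\ref{figure:convex and concave vertices} --- the same mechanism you correctly invoke for $\Sh_-$ and $\Cut_-$ --- rather than from the gluing maps.
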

\begin{proof}
This is the summary of the above discussion and we omit the proof.
\end{proof}

We define a graph whose vertices are admissible disks and pairs of degree 2, called the {\em moduli graph}. 

\begin{definition}\label{def:moduli graph for non-hybrid disks}
For a given $\sfg\in \sG_\sL$, let us construct a simple, undirected, and edge-labeled graph $\mathbf G_\sfg$ such that
\begin{align*}
\mathbf{VG}_\sfg&\coloneqq\cM(\sfg)_{(2)}\amalg\cM^\pair(\sfg)\\
&=\cM^\concavevertex(\sfg) \amalg \cM^\positivefolding(\sfg) \amalg \cM^{\infinitesimalbigonleft}(\sfg) \amalg \cM^{\infinitesimalbigonmiddle}(\sfg) \amalg \cM^{\infinitesimalbigonright}(\sfg)\amalg \cM^\infinitesimalsquare\amalg \cM^\pair(\sfg).
\end{align*}

For two of vertices $f,g$ in $\bV\bG_\sfg$, we assign an edge $\{f,g\}\in \bE\bG_\sfg$ if there is a map 
\[
\mathbf{Q}\in \{\Gl_\pm, \Sh_\pm, \St, \Cut_\pm, \Rol^\sL,\Rol^\sR\}
\]
such that $g=\mathbf Q(f)$. A label $\mathbf P:\mathbf{EG}_\sfg\to \{+1,-1\}$ is defined when $g=\mathbf Q(f)$ by
\begin{align*}
\mathbf P(\{f,g\})&=\epsilon,&
\cP(f)&=\epsilon\cdot \cP(g).
\end{align*} 
\end{definition}

Notice that the moduli graph $\bG_\sfg$ has no cycles. We write $\bV_\ext\bG_\sfg$ for the set of {\em exterior vertices} of $\mathbf G_\sfg$, i.e., vertices whose valency is one, and write $\bV_\intr\bG_\sfg$ for vertices of valency (at least) two.
As seen in Table~\ref{tab:subsets and operations for nonhybrid}, we have that
\begin{align*}
\bV_\ext\bG_\sfg&=\cM^\pair(\sfg),&
\bV_\intr\bG_\sfg&=\cM_{(2)}(\sfg).
\end{align*}

\begin{table}[ht]
\[\def\arraystretch{1.5}
\begin{array}{c||c|c|c|c|c|c|c||c}
& \Gl_\pm & \Sh_\pm & \St & \Cut_+ & \Cut_- & \Rol_+ & \Rol_- & \val_{\bG_\sfg}\\
\hline\hline
\cM^\pair & \bigcirc & & & & & & & 1\\
\hline
\cM^{\positivefolding} & & \bigcirc & \bigcirc & & & & & 2\\
\hline
\cM^{\concavevertex} & & & & \bigcirc & \bigcirc & & & 2\\
\hline
\cM^{\infinitesimalsquare} & & & & \bigcirc & \bigcirc & & & 2\\
\hline
\cM^{\infinitesimalbigonleft} & & & & \bigcirc & & \bigcirc & & 2\\
\hline
\cM^{\infinitesimalbigonright} & & & & & \bigcirc & & \bigcirc & 2\\
\hline
\cM^{\infinitesimalbigonmiddle} & & & & & & \bigcirc & \bigcirc & 2
\end{array}
\]
\caption{Subsets and operations}
\label{tab:subsets and operations for nonhybrid}
\end{table}

\begin{proposition}\label{proposition:differential2}
For $\sfg\in\sG_\sL$, we have
\[
\sum_{f\in\cM^\pair(\sfg)} \cP(f)=0.
\]
\end{proposition}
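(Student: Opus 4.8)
The plan is to prove the identity by a handshaking argument on the moduli graph $\bG_\sfg$ of Definition~\ref{def:moduli graph for non-hybrid disks}, exploiting the fact that its combinatorics force it to be a disjoint union of intervals whose two endpoints are exactly the admissible pairs. First I would record the structural input already assembled in Table~\ref{tab:subsets and operations for nonhybrid}: every vertex of $\bG_\sfg$ has valency one if it lies in $\cM^\pair(\sfg)$ and valency two if it lies in $\cM_{(2)}(\sfg)$, and $\bG_\sfg$ contains no cycles. A finite graph with all valencies in $\{1,2\}$ and no cycles is a disjoint union of finite simple paths; since the valency-one vertices are precisely the pairs and the valency-two vertices are precisely the degree-$2$ disks, each connected component $P$ of $\bG_\sfg$ is a path whose two endpoints $f,f'$ lie in $\cM^\pair(\sfg)$ and whose interior vertices lie in $\cM_{(2)}(\sfg)$.

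The next step is to propagate $\cP$ along each path. By Lemma~\ref{lemma:operations preserve labels} every operation preserves $\cP$ up to sign, which is exactly the content of the edge labeling $\mathbf P$: for each edge $e=\{g,h\}$ one has $\cP(g)=\mathbf P(e)\,\cP(h)$. Consequently all vertices of a fixed path $P$ carry the same underlying word, and chaining the edge relations along $P$ gives $\cP(f)=\left(\prod_{e\in P}\mathbf P(e)\right)\cP(f')$ for its two endpoints. Thus the proposition reduces to the single sign statement that $\prod_{e\in P}\mathbf P(e)=-1$ for every component $P$, equivalently that an odd number of sign-reversing operations --- those carrying the subscript $(-)$ by Lemma~\ref{lemma:operations preserve labels} --- occur along $P$.

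I would establish this parity by a short case analysis of the possible path shapes, read off from Table~\ref{tab:subsets and operations for nonhybrid}. Each interior type admits exactly two operations, so the finitely many ways of chaining them produce only a handful of path patterns, and in each one I would count the subscript-$(-)$ edges. For instance a component with vertex sequence $\cM^\pair,\cM^\positivefolding,\cM^\concavevertex,\cM^\positivefolding,\cM^\pair$, whose two central edges are realized by $\Cut_+$ and $\Cut_-$ at the concave vertex and whose two outer edges are realized by $\St$, carries exactly one flip (from $\Cut_-$); the length-two components ending in a shrink-to-pair carry a single flip coming from $\Gl_-$, equivalently $\Sh_-$; the bigon components alternate $\Rol^\sL,\Rol^\sR$ with $\Cut_\pm$ so that exactly one $\Cut_-$ appears; and the quadrilateral components behave analogously. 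The hard part will be precisely this sign bookkeeping: I must confirm that the subscript-$(-)$ assignments recorded in Section~\ref{sec:Manipulations of admissible pairs} are consistent with the orientation-sign conventions of Definition~\ref{definition:signs_of_vertices}, so that every complete path genuinely realizes odd parity rather than merely most of them.

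Finally, granting $\prod_{e\in P}\mathbf P(e)=-1$, each component satisfies $\cP(f)+\cP(f')=(1+(-1))\,\cP(f')=0$. Since $\bG_\sfg$ is the disjoint union of its components and every element of $\cM^\pair(\sfg)$, having valency one, is an endpoint of exactly one of them, summing over components yields $\sum_{f\in\cM^\pair(\sfg)}\cP(f)=\sum_{P}\bigl(\cP(f)+\cP(f')\bigr)=0$, as claimed.
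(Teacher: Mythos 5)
Your proposal follows the paper's own proof essentially verbatim: both decompose the acyclic moduli graph $\bG_\sfg$ into paths whose two endpoints are exactly the elements of $\cM^\pair(\sfg)$, propagate $\cP$ along edges using Lemma~\ref{lemma:operations preserve labels}, and reduce everything to the claim that each path carries an odd number (in the paper, exactly one) of sign-reversing edges. The paper likewise leaves that final sign count as a case check (``it is not hard to check''), so your argument is correct and matches the original in both structure and level of detail.
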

\begin{proof}
Since $\bG_\sfg$ has no cycles and no vertices of valency 3 or higher, each connected component of $\bG_\sfg$ is a path joining two elements in $\cM^\pair(\sfg)$ 
\begin{align*}
&\begin{tikzcd}[ampersand replacement=\&]
f_1\ar[r,"\epsilon_1",dash]\&f_2\ar[r,"\epsilon_2",dash]\&\cdots \ar[r,"\epsilon_m",dash]\& f_m\subset\bG_\sfg,
\end{tikzcd}&
\cP(f_1)&=(\epsilon_1\cdots\epsilon_m) \cP(f_m)\in\cA_\Lambda.
\end{align*}
However, it is not hard to check that there exists exactly one edge with label $(-1)$ at every path and therefore the values of $\cP$ for both ends have the opposite sign. This completes the proof.
\end{proof}

\begin{proof}[Proof of Theorem~{\rm\ref{thm:differential}}]
It is easy to see that 
\begin{align}\label{equation:differential}
\partial^2(\sfg)=\sum_{f\in\cM^\pair(\sfg)} \cP(f)
\end{align}
and by Proposition~\ref{proposition:differential2} it is zero as claimed.
\end{proof}

\section{Hybrid admissible pairs and proof of Proposition~\ref{proposition:commutativity}}
\label{appendix:hybrid admissible pairs}
We first define the labels for all types of hybrid disks similarly as follows: for $f\in\scrM_{t,s}$,
\begin{align*}
\scrP(f)&\coloneqq \begin{cases}
0 & f\in \scrM^{\vertexbirigid};\\
\sgn(f)\Psi\left(\tilde f(\bv_1\cdots\bv_s)\right) \tilde f(\bv_{s+1}\cdots \bv_t) & \text{otherwise},
\end{cases}\\
\sgn(f)&\coloneqq\begin{cases}
\sgn(f,\bv_0\cdots\bv_t)(-1)^{|\tilde f(\bv_1\cdots\bv_r)|-1} & r<\separator;\\
-\sgn(f,\bv_0\cdots\bv_t)(-1)^{|\tilde f(\bv_1\cdots\bv_r)|-1} & r>\separator,
\end{cases}
\end{align*}
where the index $r$ is the position of the positive folding edge, the concave vertex if exists, or $0$ otherwise.

\begin{definition}[Hybrid admissible pairs]\label{definition:hybrid admissible pair}
Let $f^1, f^0\in \scrM$ be hybrid admissible disks on $\Pi_t$ and $\Pi_u$ with separator $\separator^i$. Denote the vertices of $\Pi_t$ and $\Pi_u$ by $\{\bv_i\}$ and $\{\bw_i\}$, respectively.

A {\em hybrid admissible pair $f$} is a triple $(f^0,f^1,r)$, denoted by $f\coloneqq(f^0\hybto{r} f^1)$, of hybrid admissible disks $f^0, f^1\in\scrM$ and $1\le r\le t$ such that $\tilde f^0(\bw_0)=\tilde f^1(\bv_r)$ and one of the following holds:
\begin{enumerate}
\item $r<\separator^1$ and $(f^0,f^1)\in\scrM^{\rightmost}\times \scrM^{\negativefolding}$;
\item $r>\separator^1$ and $(f^0,f^1)\in\scrM^{\leftmost}\times \scrM^{\negativefolding}$;
\item $r=\separator^1$.
\end{enumerate}

The {\em $\ZZ$-degree} and {\em $\fR$-degree} of the pair $f$ are the sums of corresponding degrees of $f^0$ and $f^1$
\begin{align*}
|f|_\ZZ&\coloneqq |f^0|_\ZZ+|f^1|_\ZZ,&
|f|&\coloneqq |f^0|+|f^1| = |f|_\ZZ\cdot 1_{\fR}\in\fR.
\end{align*}
\end{definition}

\begin{notation}
We denote the sets of all hybrid admissible pairs by $\scrM^\pair$ of $\ZZ$-degree 1, and 
\begin{align*}
\scrM^\pair(\bar\sfg)&\coloneqq \left\{f\in\scrM^\pair\,\middle\vert\,\tilde f^1(\bv_0)=\bar\sfg\right\},&
\scrM_{(d^0\hybto{}d^1)}^{\pair}&\coloneqq \left\{f\in\scrM^\pair\,\middle\vert\,|f^i|_\ZZ=d^i\right\}.
\end{align*}
\end{notation}

\begin{remark}It is obvious that hybrid admissible pairs are contained in $\scrM^\pair_{(1\hybto{}0)}$ if and only if they satisfy the condition (1) or (2) in Definition~\ref{definition:hybrid admissible pair}. Therefore all hybrid admissible pairs in $\scrM^\pair_{(0\hybto{}1)}$ satisfy the condition (3) and {\it vice versa}.
\end{remark}

\begin{definition}\label{definition:polynomial}
Suppose that a hybrid admissible pair $f=(f^0\hybto{r} f^1)\in\scrM^\pair$ is given for some $f^1\in\scrM_{t,s}$. Let us define functions
\[
\sgn(f)\coloneqq 
\sgn(f^1)\sgn(f^0) (-1)^{|\tilde f^1(\bv_1\cdots\bv_{r-1})|}
\]
and $\scrP:\scrM^\pair\to A$ as follows:
\begin{enumerate}
\item If $r<\separator^1$, then
\[
\scrP(f)\coloneqq \sgn(f)
\Psi\left(\tilde f^1(\bv_1\cdots\bv_{r-1}) \tilde f^0(\bw_1\cdots\bw_u) \tilde f^1(\bv_{r+1}\cdots\bv_s)\right)
\tilde f^1(\bv_{s+1}\cdots\bv_t).
\]
\item if $\separator^1<r$, then
\[
\scrP(f)\coloneqq -\sgn(f)
\Psi\left(\tilde f^1(\bv_1\cdots\bv_s)\right)
\tilde f^1(\bv_{s+1}\cdots\bv_{r-1}) \tilde f^0(\bw_1\cdots\bw_u) \tilde f^1(\bv_{r+1}\cdots\bv_t).
\]
\item if $\separator^1=r$, then
\[
\scrP(f)\coloneqq \sgn(f)
\Psi\left(\tilde f^1(\bv_1\cdots\bv_{s-1})\right)
\tilde f^0(\bw_1\cdots\bw_u)\tilde f^1(\bv_{s+1}\cdots\bv_t).
\]
\end{enumerate}
\end{definition}

With the above definitions, the hybrid disks and pair with the map $\scrP$, we now restate Proposition~\ref{proposition:commutativity}(2).
By the induction hypothesis, the left hand side of (\ref{eqn:Psi commutes differential}) is
\begin{align}
(\partial\circ\Psi_{[i]})(\sfg_i')&=\begin{cases}\label{eqn:(LHS)}
\partial(\sfg_i)+(\partial\circ\Psi^\hyb)(\sfg_i') & \sfg_i'\neq\sfc_i';\\
(\partial\circ\Psi^\hyb)(\sfc_i') & \sfg_i'=\sfc_i',
\end{cases}
\end{align}
where
\begin{align*}
\partial(\sfg_i)&=\sum_{\substack{t\ge0\\f\in\cM_t(\sfg_i)_{(1)}}} \sgn(f)\tilde f(\bv_1\cdots \bv_t);\\
(\partial\circ\Psi^\hyb)(\sfg_i')&=\sum_{\substack {t\ge s\ge 0\\f\in\scrM_{t,s}(\bar\sfg_i)_{(0)}}} \sgn(f)\Psi_{[i-1]}\left(\partial'\left(\tilde f(\bv_1\cdots\bv_s)\right)\right) \tilde f(\bv_{s+1}\cdots\bv_t)\\
&\mathrel{\hphantom{=}}+\sum_{\substack {t\ge s\ge 0\\f\in\scrM_{t,s}(\bar\sfg_i)_{(0)}}} \sgn(f)
(-1)^{|\tilde f(\bv_1\cdots\bv_s)|}
\Psi_{[i-1]}\left(\tilde f(\bv_1\cdots\bv_s)\right)\partial\left(\tilde f(\bv_{s+1}\cdots\bv_t)\right).
\end{align*}

By Lemma~\ref{lemma:AB_into_hybrid}, Definition~\ref{definition:polynomial of hybrid disk}, and Definition~\ref{definition:polynomial}, we have
\begin{align*}
\partial(\sfg_i)&=\sum_{f\in\scrM^{\leftmost}(\bar\sfg_i)}\scrP(f),&
(\partial\circ\Psi^\hyb)(\sfg_i')&=\sum_{f\in\scrM^\pair_{(1\hybto{}0)}(\bar\sfg_i)} \scrP(f).
\end{align*}
Since $\scrM^{\leftmost}(\bar\sfc_i)=\emptyset$ by Remark~\ref{remark:leftmost c_i empty}, we need not separate the cases as in (\ref{eqn:(LHS)}).

On the other hand, the right hand side of (\ref{eqn:Psi commutes differential}) becomes
\begin{align}\label{eqn:(RHS)}
\Psi_{[i]}(\partial' \sfg_i')=\Psi_{[i-1]}(\partial' \sfg_i')=\sum_{\substack{t\ge 0\\f\in\cM(\sfg_i')_{(1)}}} \Psi_{[i-1]}\left(\tilde f(\bv_1\cdots\bv_t)\right)=\sum_{f\in\scrM^{\rightmost}(\bar\sfg_i)}\scrP(f).
\end{align}
The first equaility comes from the fact that $\partial'\sfg_i'\in A_{[i-1]}$.

Now we have a new proposition which is equivalent to Proposition~\ref{proposition:commutativity}(2) as follows:

\begin{proposition}\label{proposition:new commutativity}
Suppose that $\Psi_{[i-1]}$ commutes with differentials. Then
\[
\sum_{f\in\scrM^{\leftmost}(\bar\sfg_i)}\scrP(f)+
\sum_{\substack{f\in\scrM^\pair_{(1\hybto{}0)}(\bar\sfg_i)\\ r<\separator^1}} \scrP(f)
-
\sum_{\substack{f\in\scrM^\pair_{(1\hybto{}0)}(\bar\sfg_i)\\ \separator^1<r}} \scrP(f)
-\sum_{f\in\scrM^{\rightmost}(\bar\sfg_i)}\scrP(f)=0.
\]
\end{proposition}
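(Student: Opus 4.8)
The plan is to reproduce the proof of Proposition~\ref{proposition:differential2} in the hybrid setting, replacing ordinary admissible disks and pairs by their hybrid counterparts. First I would assemble the hybrid analogues of the manipulations of \S\ref{sec:Manipulations of admissible pairs} — gluing $\Gl_\pm$, shrinking and stretching $\Sh_\pm,\St$, cutting $\Cut_\pm$, and rolling $\Rol^{\sL},\Rol^{\sR}$ — each now adapted to disks that may carry a $(-)$-folding edge, together with one genuinely new operation: sliding the separator $\separator$ along $\partial\Pi$. Each operation must be shown to preserve the polynomial $\scrP$ of Definition~\ref{definition:polynomial} up to an explicit sign, exactly as in Lemma~\ref{lemma:operations preserve labels}. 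The only new sign verifications are for the separator-slide and for the gluings that create or absorb a $(-)$-folding edge; these are controlled by the degree formula of Proposition~\ref{prop:degree_hybrid} and the principle of Remark~\ref{remark:role of negative folding edge} that a negative folding edge behaves as an element of degree $-1$.

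Next I would build the hybrid moduli graph $\bG_{\bar\sfg_i}$: a simple, edge-labeled graph whose vertices are the degree-$1$ hybrid disks together with the hybrid pairs of total degree $1$, and with an edge $\{f,g\}$ labeled $\epsilon\in\{\pm1\}$ whenever $g=\mathbf{Q}(f)$ for one of the operations above and $\scrP(f)=\epsilon\,\scrP(g)$. Using the decomposition of $\scrM_{(1)}$ in Corollary/Notation~\ref{corollary:classification of hyb disks}, and of the pairs into $\scrM^\pair_{(1\hybto{}0)}\amalg\scrM^\pair_{(0\hybto{}1)}$, I would check case by case through the list of disk types that the interior vertices (valency $2$) are exactly the degree-$1$ disks with a folding edge or concave vertex ($\scrM^{\pmfolding}$, $\scrM^{\negativefolding\concavevertex}$), the rigid and flexible disks ($\scrM^{\leftrigid}$, $\scrM^{\rightrigid}$, $\scrM^{\vertexleftrigid}$, $\scrM^{\vertexbirigid}$, $\scrM^{\flexible}$), and the pairs of type (3), whereas the exterior vertices (valency $1$) are precisely the four families $\scrM^{\leftmost}(\bar\sfg_i)$, $\scrM^{\rightmost}(\bar\sfg_i)$, and the two halves of $\scrM^\pair_{(1\hybto{}0)}(\bar\sfg_i)$ singled out in the statement. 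Leftmost (resp.\ rightmost) disks admit only a rightward (resp.\ leftward) slide, and a pair in $\scrM^\pair_{(1\hybto{}0)}$ can only be un-glued, which is why these are exterior.

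As in Appendix~\ref{section:manipulation of disks}, $\bG_{\bar\sfg_i}$ has no cycles and no vertex of valency $\ge 3$, so each connected component is a path joining two exterior vertices. The handshaking lemma then reduces the claim to the assertion that, along each such path, the two endpoint contributions — weighted by the signs $+,+,-,-$ prescribed in the statement — cancel; equivalently, that the parity of the number of sign-reversing edges on a path matches which two of the four families its endpoints belong to. Equivalently, I would fold the prescribed signs into a modified polynomial $\scrP'$ and verify that every edge of $\bG_{\bar\sfg_i}$ preserves $\scrP'$ up to the correct sign, so that the four sums collapse to a telescoping cancellation over the components.

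The hard part will be this sign-and-separator bookkeeping, which is strictly more delicate than in the closed case for two reasons. First, $\scrP$ is given by three different formulas according to whether $r<\separator^1$, $r>\separator^1$, or $r=\separator^1$, with an extra minus sign in the middle regime (Definition~\ref{definition:polynomial}); hence as the separator slides across a path I must track how the governing formula, and its sign, changes. Second, the bi-rigid disks satisfy $\scrP=0$, so the edge label $\epsilon$ is \emph{a priori} undefined at them. I would resolve this by noting that a bi-rigid disk has its separator at $\bar\sfx$ and cuts in exactly two ways into the degree-$0$ monogons $f_{\bar\sfx,\bar\sfp}$ and $f_{\bar\sfx,\bar\sfq}$ of Example~\ref{ex:disk near x}; thus it has valency $2$, joining precisely the two type-(3) pairs produced at $\bar\sfp$ and $\bar\sfq$, and the vanishing $\scrP=0$ is consistent with these two neighbors being directly comparable across it. The explicit pictures carrying out the valency-$2$ verification for every interior type and fixing all edge signs would be assembled in parallel with \S\ref{sec:Manipulations of admissible pairs}, and only the sign conventions at the separator and at $\bar\sfx$ require genuinely new argument.
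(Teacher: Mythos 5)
There is a genuine gap. Your plan hinges on transplanting the path-decomposition argument of Proposition~\ref{proposition:differential2}: you assert that the hybrid moduli graph ``has no cycles and no vertex of valency $\ge 3$, so each connected component is a path joining two exterior vertices,'' and that every operation preserves $\scrP$ up to sign, so the result follows by telescoping along paths. Both assertions fail in the hybrid setting, for the same underlying reason: the map $\Psi$ appearing in $\scrP$ is defined \emph{recursively}, $\Psi_{[i]}(\sfg'_i)=\pi_*(\sfg'_i)+\Psi^{\hyb}(\sfg'_i)$ with $\Psi^{\hyb}(\sfg'_i)=\sum_{g\in\scrM(\bar\sfg_i)_{(0)}}\scrP(g)$. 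Consequently, sliding the separator leftward past a boundary vertex does \emph{not} change $\scrP$ by a sign; it splits it into a sum. This is precisely Lemma~\ref{lem:label sum at vertex}: for a flexible disk $f$ one has $\scrP(f)=\scrP(\Dr^{\mathsf{L}}(f))+\sum_{g\in\tilde{\Dr}^{\mathsf{L}}(f)}\scrP(g)$, where $\tilde{\Dr}^{\mathsf{L}}(f)$ is the set of \emph{all} hybrid pairs $(f^0\hybto{r}f)$ with ancestor $f$ — and this set can be arbitrarily large (one pair for each degree-$0$ hybrid disk $f^0$ whose label matches $\tilde f(\bv_r)$). Hence interior vertices of type $\scrM^{\flexible}$ and $\scrM^{\vertexleftrigid}$ have unbounded valency (see Table~\ref{tab:subsets and operations}), the graph is not a union of paths, and no modified polynomial $\scrP'$ can make every single edge sign-preserving, since a single flexible vertex genuinely distributes its label over many neighbors.

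The argument the paper actually runs (and which your plan would need to be rebuilt around) replaces ``one sign flip per path'' by a flow-type conservation law on a \emph{directed} graph: edges are oriented by a separator index $s(f)$ (and by the ancestor/descendant relations $\tilde{\Dr}^{\mathsf{R}},\tilde{\Dr}^{\mathsf{L}}$), labels $\bP$ are placed on edges rather than read off endpoints, and one proves that at every interior vertex the sum of incoming labels equals the sum of outgoing labels (Lemma~\ref{lemma:handshaking}, whose flexible/vertex-rigid cases are exactly Lemma~\ref{lem:label sum at vertex}). Summing $\bP(\be)-\bP(\be)=0$ over all edges and cancelling the interior contributions then leaves precisely the four exterior families $\scrM^{\leftmost}$, $\scrM^{\rightmost}$, and the two halves of $\scrM^{\pair}_{(1\hybto{}0)}$ with the signs in the statement; this works irrespective of cycles or high valency. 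Two smaller points: the rolling maps $\Rol^{\sL},\Rol^{\sR}$ you list play no role here (they only concern degree-$2$ infinitesimal bigons in the closed case), whereas your treatment of bi-rigid disks — valency $2$, joined to the two pairs built from $f_{\bar\sfx,\bar\sfp}$ and $f_{\bar\sfx,\bar\sfq}$, with $\scrP=0$ consistent with the opposite signs of those two neighbors — is correct and agrees with the paper.
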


\subsection{Manipulations of hybrid disks and pairs}\label{sec:Manipulations of hybrid disks and pairs}
Let $f^1, f^0\in \scrM$ be hybrid admissible disks on $\Pi_t$ and $\Pi_u$ with separator $\separator^i$ as before.

\subsubsection{Gluing of pairs}
Let us define 
\[
\Gl_+:\scrM^\pair_{(1\hybto{}0)}\amalg\scrM^\pair_{(0\hybto{}1)} \to \scrM^{\pmfolding}.
\]

For $f=(f^0\hybto{r} f^1)\in\scrM^\pair_{(1\hybto{}0)}$, we define $\Gl_+(f)$ by applying the gluing process ($\Gl$-1) of the admissible pairs, described in \S~\ref{sec:Gluing of pairs}, near $\bv_r\in \Pi_t$ and $\bw_0\in \Pi_u$. The separator of $\Gl_+(f)$ is defined to be $\separator^1$.
\begin{itemize}
\item [($\Gl$-3)] 
If $f=(f^0\hybto{r} f^1)\in\scrM^\pair_{(0\hybto{}1)}$, then $f^0$ is hybrid and we obtain a glued disk $g$ by exactly the same process as ($\Gl$-1) in \S~\ref{sec:Gluing of pairs} again.
But in this case, the separator of $g$ is given by $\separator^0$. Note that $f$ has one more $(+)$-folding region compared to $f^1$ and hence $f\in\scrM^{\pmfolding}$.
\[
\begin{tikzcd}[row sep=-2.5pc, column sep=1pc]
&\hspace{-.7cm}\vcenter{\hbox{\def\svgscale{1}
\begingroup%
  \makeatletter%
  \providecommand\color[2][]{%
    \errmessage{(Inkscape) Color is used for the text in Inkscape, but the package 'color.sty' is not loaded}%
    \renewcommand\color[2][]{}%
  }%
  \providecommand\transparent[1]{%
    \errmessage{(Inkscape) Transparency is used (non-zero) for the text in Inkscape, but the package 'transparent.sty' is not loaded}%
    \renewcommand\transparent[1]{}%
  }%
  \providecommand\rotatebox[2]{#2}%
  \ifx\svgwidth\undefined%
    \setlength{\unitlength}{79.74536192bp}%
    \ifx\svgscale\undefined%
      \relax%
    \else%
      \setlength{\unitlength}{\unitlength * \real{\svgscale}}%
    \fi%
  \else%
    \setlength{\unitlength}{\svgwidth}%
  \fi%
  \global\let\svgwidth\undefined%
  \global\let\svgscale\undefined%
  \makeatother%
  \begin{picture}(1,1.00000009)%
    \put(0,0){\includegraphics[width=\unitlength,page=1]{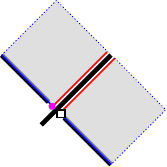}}%
    \put(0.43669211,0.28456712){\color[rgb]{0,0,0}\makebox(0,0)[lb]{\smash{$\bv_r$}}}%
    \put(0.61073407,0.34329975){\color[rgb]{0,0,0}\makebox(0,0)[lb]{\smash{$f^1$}}}%
    \put(0.28156133,0.62544782){\color[rgb]{0,0,0}\makebox(0,0)[lb]{\smash{$f^0$}}}%
  \end{picture}%
\endgroup%
}}\!\!\!\!\ar[r,"\Gl_+"]& g=\vcenter{\hbox{\def\svgscale{1}
\begingroup%
  \makeatletter%
  \providecommand\color[2][]{%
    \errmessage{(Inkscape) Color is used for the text in Inkscape, but the package 'color.sty' is not loaded}%
    \renewcommand\color[2][]{}%
  }%
  \providecommand\transparent[1]{%
    \errmessage{(Inkscape) Transparency is used (non-zero) for the text in Inkscape, but the package 'transparent.sty' is not loaded}%
    \renewcommand\transparent[1]{}%
  }%
  \providecommand\rotatebox[2]{#2}%
  \ifx\svgwidth\undefined%
    \setlength{\unitlength}{85.04839463bp}%
    \ifx\svgscale\undefined%
      \relax%
    \else%
      \setlength{\unitlength}{\unitlength * \real{\svgscale}}%
    \fi%
  \else%
    \setlength{\unitlength}{\svgwidth}%
  \fi%
  \global\let\svgwidth\undefined%
  \global\let\svgscale\undefined%
  \makeatother%
  \begin{picture}(1,1)%
    \put(0,0){\includegraphics[width=\unitlength,page=1]{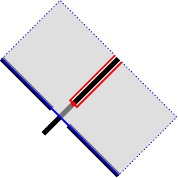}}%
    \put(0.56777839,0.42906645){\color[rgb]{0,0,0}\makebox(0,0)[lb]{\smash{$\be_{r-1}$}}}%
    \put(0.53208323,0.19059125){\color[rgb]{0,0,0}\makebox(0,0)[lb]{\smash{$\be_{r+u-1}$}}}%
  \end{picture}%
\endgroup%
}}\\
f=\vcenter{\hbox{\def\svgscale{1}
\begingroup%
  \makeatletter%
  \providecommand\color[2][]{%
    \errmessage{(Inkscape) Color is used for the text in Inkscape, but the package 'color.sty' is not loaded}%
    \renewcommand\color[2][]{}%
  }%
  \providecommand\transparent[1]{%
    \errmessage{(Inkscape) Transparency is used (non-zero) for the text in Inkscape, but the package 'transparent.sty' is not loaded}%
    \renewcommand\transparent[1]{}%
  }%
  \providecommand\rotatebox[2]{#2}%
  \ifx\svgwidth\undefined%
    \setlength{\unitlength}{132.39973309bp}%
    \ifx\svgscale\undefined%
      \relax%
    \else%
      \setlength{\unitlength}{\unitlength * \real{\svgscale}}%
    \fi%
  \else%
    \setlength{\unitlength}{\svgwidth}%
  \fi%
  \global\let\svgwidth\undefined%
  \global\let\svgscale\undefined%
  \makeatother%
  \begin{picture}(1,0.43577925)%
    \put(0,0){\includegraphics[width=\unitlength,page=1]{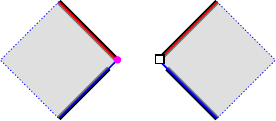}}%
    \put(0.78763622,0.18767819){\color[rgb]{0,0,0}\makebox(0,0)[lb]{\smash{$f^1$}}}%
    \put(0.17954187,0.18894462){\color[rgb]{0,0,0}\makebox(0,0)[lb]{\smash{$f^0$}}}%
    \put(0.46870946,0.21033684){\color[rgb]{0,0,0}\makebox(0,0)[lb]{\smash{$\hybto{r}$}}}%
    \put(0.54594389,0.27831281){\color[rgb]{0,0,0}\makebox(0,0)[lb]{\smash{$\bv_r$}}}%
  \end{picture}%
\endgroup%
}}\hspace{-.5cm}\ar[ru,equal]\ar[rd,equal]\\
&\hspace{-.7cm}\vcenter{\hbox{\def\svgscale{1}
\begingroup%
  \makeatletter%
  \providecommand\color[2][]{%
    \errmessage{(Inkscape) Color is used for the text in Inkscape, but the package 'color.sty' is not loaded}%
    \renewcommand\color[2][]{}%
  }%
  \providecommand\transparent[1]{%
    \errmessage{(Inkscape) Transparency is used (non-zero) for the text in Inkscape, but the package 'transparent.sty' is not loaded}%
    \renewcommand\transparent[1]{}%
  }%
  \providecommand\rotatebox[2]{#2}%
  \ifx\svgwidth\undefined%
    \setlength{\unitlength}{85.04839463bp}%
    \ifx\svgscale\undefined%
      \relax%
    \else%
      \setlength{\unitlength}{\unitlength * \real{\svgscale}}%
    \fi%
  \else%
    \setlength{\unitlength}{\svgwidth}%
  \fi%
  \global\let\svgwidth\undefined%
  \global\let\svgscale\undefined%
  \makeatother%
  \begin{picture}(1,1)%
    \put(0,0){\includegraphics[width=\unitlength,page=1]{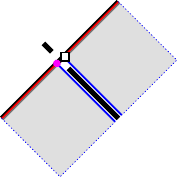}}%
    \put(0.42332278,0.64232188){\color[rgb]{0,0,0}\makebox(0,0)[lb]{\smash{$\bv_r$}}}%
    \put(0.62223354,0.65407989){\color[rgb]{0,0,0}\makebox(0,0)[lb]{\smash{$f^1$}}}%
    \put(0.29823703,0.28860898){\color[rgb]{0,0,0}\makebox(0,0)[lb]{\smash{$f^0$}}}%
  \end{picture}%
\endgroup%
}}\!\!\!\!\ar[r,"\Gl_+"]& g=\vcenter{\hbox{\def\svgscale{1}
\begingroup%
  \makeatletter%
  \providecommand\color[2][]{%
    \errmessage{(Inkscape) Color is used for the text in Inkscape, but the package 'color.sty' is not loaded}%
    \renewcommand\color[2][]{}%
  }%
  \providecommand\transparent[1]{%
    \errmessage{(Inkscape) Transparency is used (non-zero) for the text in Inkscape, but the package 'transparent.sty' is not loaded}%
    \renewcommand\transparent[1]{}%
  }%
  \providecommand\rotatebox[2]{#2}%
  \ifx\svgwidth\undefined%
    \setlength{\unitlength}{85.04839463bp}%
    \ifx\svgscale\undefined%
      \relax%
    \else%
      \setlength{\unitlength}{\unitlength * \real{\svgscale}}%
    \fi%
  \else%
    \setlength{\unitlength}{\svgwidth}%
  \fi%
  \global\let\svgwidth\undefined%
  \global\let\svgscale\undefined%
  \makeatother%
  \begin{picture}(1,1)%
    \put(0,0){\includegraphics[width=\unitlength,page=1]{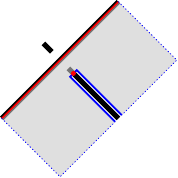}}%
    \put(0.58205593,0.77837895){\color[rgb]{0,0,0}\makebox(0,0)[lb]{\smash{$\be_{r-1}$}}}%
    \put(0.56970887,0.50278219){\color[rgb]{0,0,0}\makebox(0,0)[lb]{\smash{$\be_{r+u-1}$}}}%
  \end{picture}%
\endgroup%
}}
\end{tikzcd}
\]
\end{itemize}

Notice that the gluing map on hybrid admissible pairs always preserves the sign of $\scrP$. That is,
\[
\scrP(\Gl_+(f))=\scrP(f).
\]

\subsubsection{Shrinking of $(+)$-folding edges}
We define 
\[
\Sh_\pm:\scrM^{\pmfolding}\to \scrM^\pair_{(1\hybto{}0)} \amalg \scrM^{\negativefoldingconcave}  \amalg \scrM^{\leftrigid} \amalg \scrM^{\rightrigid}.
\]

For $f\in\scrM^{\pmfolding}$, $f(\separator)$ is $\bar\sfp$ or $\bar\sfq$ in $\bar\sL$ as discussed in Remark~\ref{rmk:separator}.
If the folding edge avoids the separator, then one of $\Sh_\pm(f)$ is defined by applying one of ($\Sh$-1) and ($\Sh$-2) in \S~\ref{section:stretching and shrinking}.
Indeed, we have $\Sh_-(f)=(f^0\hybto{r}f^1)$ when we apply ($\Sh$-1). In particular, $\separator^1$ inherits from $\separator$ and $f^0$ is infinitesimal so we do not need to care about $\separator^0$. Thus $\Sh_-(f) \in \scrM^\pair_{(1\hybto{}0)}$. 

On the other hand, if we apply ($\Sh$-2), then we have $\Sh_\pm(f) \in \scrM^{\negativefoldingconcave}$ by fixing the separator.

\begin{enumerate}
\item[($\Sh$-3)] Suppose that the $(+)$-folding edge $\be_r$ contains the separator $\separator$. If $f(\separator)=\bar\sfq$, then there is another inverse image of $\bar\sfq$ on $\be_r$. Let us define $\Sh(f)$ by eliminating the $(\pm)$-folding region simultaneously and by preserving the separator, i.e., $\Sh_+(f)(\separator)=\bar\sfq$. The similar procedure holds for the case of $f(\separator)=\bar\sfp$. Then $\Sh_+(f)\in \scrM^{\leftrigid}\amalg \scrM^{\rightrigid}$.
\[
\begin{tikzcd}
f=\vcenter{\hbox{
\begingroup%
  \makeatletter%
  \providecommand\color[2][]{%
    \errmessage{(Inkscape) Color is used for the text in Inkscape, but the package 'color.sty' is not loaded}%
    \renewcommand\color[2][]{}%
  }%
  \providecommand\transparent[1]{%
    \errmessage{(Inkscape) Transparency is used (non-zero) for the text in Inkscape, but the package 'transparent.sty' is not loaded}%
    \renewcommand\transparent[1]{}%
  }%
  \providecommand\rotatebox[2]{#2}%
  \ifx\svgwidth\undefined%
    \setlength{\unitlength}{44.40317599bp}%
    \ifx\svgscale\undefined%
      \relax%
    \else%
      \setlength{\unitlength}{\unitlength * \real{\svgscale}}%
    \fi%
  \else%
    \setlength{\unitlength}{\svgwidth}%
  \fi%
  \global\let\svgwidth\undefined%
  \global\let\svgscale\undefined%
  \makeatother%
  \begin{picture}(1,0.94095575)%
    \put(0,0){\includegraphics[width=\unitlength,page=1]{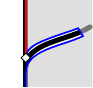}}%
    \put(-0.00528928,0.69276895){\color[rgb]{0,0,0}\makebox(0,0)[lb]{\smash{$\bar\sL_0'$}}}%
    \put(0.05442845,0.16463888){\color[rgb]{0,0,0}\makebox(0,0)[lb]{\smash{$\bar\sfq$}}}%
    \put(0.61746044,0.21497256){\color[rgb]{0,0,0}\makebox(0,0)[lb]{\smash{$\bar\sL_0$}}}%
  \end{picture}%
\endgroup%
}}\qquad\vcenter{\hbox{
\begingroup%
  \makeatletter%
  \providecommand\color[2][]{%
    \errmessage{(Inkscape) Color is used for the text in Inkscape, but the package 'color.sty' is not loaded}%
    \renewcommand\color[2][]{}%
  }%
  \providecommand\transparent[1]{%
    \errmessage{(Inkscape) Transparency is used (non-zero) for the text in Inkscape, but the package 'transparent.sty' is not loaded}%
    \renewcommand\transparent[1]{}%
  }%
  \providecommand\rotatebox[2]{#2}%
  \ifx\svgwidth\undefined%
    \setlength{\unitlength}{44.13531268bp}%
    \ifx\svgscale\undefined%
      \relax%
    \else%
      \setlength{\unitlength}{\unitlength * \real{\svgscale}}%
    \fi%
  \else%
    \setlength{\unitlength}{\svgwidth}%
  \fi%
  \global\let\svgwidth\undefined%
  \global\let\svgscale\undefined%
  \makeatother%
  \begin{picture}(1,0.94666654)%
    \put(0,0){\includegraphics[width=\unitlength,page=1]{sh_1.pdf}}%
    \put(0.86048151,0.20491033){\color[rgb]{0,0,0}\makebox(0,0)[lb]{\smash{$\bar\sL_0'$}}}%
    \put(0.13669012,0.57332994){\color[rgb]{0,0,0}\makebox(0,0)[lb]{\smash{$\bar\sL_0$}}}%
    \put(0.85920812,0.73388071){\color[rgb]{0,0,0}\makebox(0,0)[lb]{\smash{$\bar\sfp$}}}%
  \end{picture}%
\endgroup%
}}\in\scrM^{\pmfolding}\ar[r,"\Sh_+"]&
g=\vcenter{\hbox{
\begingroup%
  \makeatletter%
  \providecommand\color[2][]{%
    \errmessage{(Inkscape) Color is used for the text in Inkscape, but the package 'color.sty' is not loaded}%
    \renewcommand\color[2][]{}%
  }%
  \providecommand\transparent[1]{%
    \errmessage{(Inkscape) Transparency is used (non-zero) for the text in Inkscape, but the package 'transparent.sty' is not loaded}%
    \renewcommand\transparent[1]{}%
  }%
  \providecommand\rotatebox[2]{#2}%
  \ifx\svgwidth\undefined%
    \setlength{\unitlength}{44.40317599bp}%
    \ifx\svgscale\undefined%
      \relax%
    \else%
      \setlength{\unitlength}{\unitlength * \real{\svgscale}}%
    \fi%
  \else%
    \setlength{\unitlength}{\svgwidth}%
  \fi%
  \global\let\svgwidth\undefined%
  \global\let\svgscale\undefined%
  \makeatother%
  \begin{picture}(1,0.94095575)%
    \put(0,0){\includegraphics[width=\unitlength,page=1]{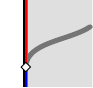}}%
    \put(-0.00528928,0.69276895){\color[rgb]{0,0,0}\makebox(0,0)[lb]{\smash{$\bar\sL_0'$}}}%
    \put(0.05442845,0.16463888){\color[rgb]{0,0,0}\makebox(0,0)[lb]{\smash{$\bar\sfq$}}}%
    \put(0.61746044,0.21497256){\color[rgb]{0,0,0}\makebox(0,0)[lb]{\smash{$\bar\sL_0$}}}%
  \end{picture}%
\endgroup%
}}\qquad\vcenter{\hbox{
\begingroup%
  \makeatletter%
  \providecommand\color[2][]{%
    \errmessage{(Inkscape) Color is used for the text in Inkscape, but the package 'color.sty' is not loaded}%
    \renewcommand\color[2][]{}%
  }%
  \providecommand\transparent[1]{%
    \errmessage{(Inkscape) Transparency is used (non-zero) for the text in Inkscape, but the package 'transparent.sty' is not loaded}%
    \renewcommand\transparent[1]{}%
  }%
  \providecommand\rotatebox[2]{#2}%
  \ifx\svgwidth\undefined%
    \setlength{\unitlength}{43.91974176bp}%
    \ifx\svgscale\undefined%
      \relax%
    \else%
      \setlength{\unitlength}{\unitlength * \real{\svgscale}}%
    \fi%
  \else%
    \setlength{\unitlength}{\svgwidth}%
  \fi%
  \global\let\svgwidth\undefined%
  \global\let\svgscale\undefined%
  \makeatother%
  \begin{picture}(1,0.95131306)%
    \put(0,0){\includegraphics[width=\unitlength,page=1]{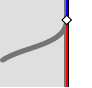}}%
    \put(0.85979671,0.20618284){\color[rgb]{0,0,0}\makebox(0,0)[lb]{\smash{$\bar\sL_0'$}}}%
    \put(0.85851698,0.73775061){\color[rgb]{0,0,0}\makebox(0,0)[lb]{\smash{$\bar\sfp$}}}%
    \put(0.13736101,0.57641178){\color[rgb]{0,0,0}\makebox(0,0)[lb]{\smash{$\bar\sL_0$}}}%
  \end{picture}%
\endgroup%
}}\in\scrM^{\leftrigid}\\
f=\vcenter{\hbox{
\begingroup%
  \makeatletter%
  \providecommand\color[2][]{%
    \errmessage{(Inkscape) Color is used for the text in Inkscape, but the package 'color.sty' is not loaded}%
    \renewcommand\color[2][]{}%
  }%
  \providecommand\transparent[1]{%
    \errmessage{(Inkscape) Transparency is used (non-zero) for the text in Inkscape, but the package 'transparent.sty' is not loaded}%
    \renewcommand\transparent[1]{}%
  }%
  \providecommand\rotatebox[2]{#2}%
  \ifx\svgwidth\undefined%
    \setlength{\unitlength}{52.21544929bp}%
    \ifx\svgscale\undefined%
      \relax%
    \else%
      \setlength{\unitlength}{\unitlength * \real{\svgscale}}%
    \fi%
  \else%
    \setlength{\unitlength}{\svgwidth}%
  \fi%
  \global\let\svgwidth\undefined%
  \global\let\svgscale\undefined%
  \makeatother%
  \begin{picture}(1,1.10793083)%
    \put(0,0){\includegraphics[width=\unitlength,page=1]{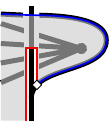}}%
    \put(0.06517453,0.16903534){\color[rgb]{0,0,0}\makebox(0,0)[lb]{\smash{$\bar\sfp$}}}%
    \put(0.49885236,0.23772115){\color[rgb]{0,0,0}\makebox(0,0)[lb]{\smash{$\bar\sL_0$}}}%
    \put(0.24471916,1.08468358){\color[rgb]{0,0,0}\makebox(0,0)[lb]{\smash{$\bar\sL_0'$}}}%
  \end{picture}%
\endgroup%
}}\ \vcenter{\hbox{
\begingroup%
  \makeatletter%
  \providecommand\color[2][]{%
    \errmessage{(Inkscape) Color is used for the text in Inkscape, but the package 'color.sty' is not loaded}%
    \renewcommand\color[2][]{}%
  }%
  \providecommand\transparent[1]{%
    \errmessage{(Inkscape) Transparency is used (non-zero) for the text in Inkscape, but the package 'transparent.sty' is not loaded}%
    \renewcommand\transparent[1]{}%
  }%
  \providecommand\rotatebox[2]{#2}%
  \ifx\svgwidth\undefined%
    \setlength{\unitlength}{51.81444759bp}%
    \ifx\svgscale\undefined%
      \relax%
    \else%
      \setlength{\unitlength}{\unitlength * \real{\svgscale}}%
    \fi%
  \else%
    \setlength{\unitlength}{\svgwidth}%
  \fi%
  \global\let\svgwidth\undefined%
  \global\let\svgscale\undefined%
  \makeatother%
  \begin{picture}(1,1.05889539)%
    \put(0,0){\includegraphics[width=\unitlength,page=1]{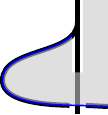}}%
    \put(0.08354196,0.62374524){\color[rgb]{0,0,0}\makebox(0,0)[lb]{\smash{$\bar\sL_0$}}}%
    \put(0.81595554,0.36785288){\color[rgb]{0,0,0}\makebox(0,0)[lb]{\smash{$\bar\sL_0'$}}}%
    \put(0.82336597,0.83052653){\color[rgb]{0,0,0}\makebox(0,0)[lb]{\smash{$\bar\sfp$}}}%
    \put(0,0){\includegraphics[width=\unitlength,page=2]{sh_4.pdf}}%
  \end{picture}%
\endgroup%
}}\in\scrM^{\pmfolding}\ar[r,"\Sh_+"]&
g=\vcenter{\hbox{
\begingroup%
  \makeatletter%
  \providecommand\color[2][]{%
    \errmessage{(Inkscape) Color is used for the text in Inkscape, but the package 'color.sty' is not loaded}%
    \renewcommand\color[2][]{}%
  }%
  \providecommand\transparent[1]{%
    \errmessage{(Inkscape) Transparency is used (non-zero) for the text in Inkscape, but the package 'transparent.sty' is not loaded}%
    \renewcommand\transparent[1]{}%
  }%
  \providecommand\rotatebox[2]{#2}%
  \ifx\svgwidth\undefined%
    \setlength{\unitlength}{52.21545569bp}%
    \ifx\svgscale\undefined%
      \relax%
    \else%
      \setlength{\unitlength}{\unitlength * \real{\svgscale}}%
    \fi%
  \else%
    \setlength{\unitlength}{\svgwidth}%
  \fi%
  \global\let\svgwidth\undefined%
  \global\let\svgscale\undefined%
  \makeatother%
  \begin{picture}(1,1.10793069)%
    \put(0,0){\includegraphics[width=\unitlength,page=1]{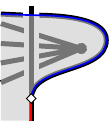}}%
    \put(0.49885207,0.23772112){\color[rgb]{0,0,0}\makebox(0,0)[lb]{\smash{$\bar\sL_0$}}}%
    \put(0.24471913,1.08468344){\color[rgb]{0,0,0}\makebox(0,0)[lb]{\smash{$\bar\sL_0'$}}}%
    \put(0.06517452,0.16903532){\color[rgb]{0,0,0}\makebox(0,0)[lb]{\smash{$\bar\sfp$}}}%
  \end{picture}%
\endgroup%
}}\ \vcenter{\hbox{
\begingroup%
  \makeatletter%
  \providecommand\color[2][]{%
    \errmessage{(Inkscape) Color is used for the text in Inkscape, but the package 'color.sty' is not loaded}%
    \renewcommand\color[2][]{}%
  }%
  \providecommand\transparent[1]{%
    \errmessage{(Inkscape) Transparency is used (non-zero) for the text in Inkscape, but the package 'transparent.sty' is not loaded}%
    \renewcommand\transparent[1]{}%
  }%
  \providecommand\rotatebox[2]{#2}%
  \ifx\svgwidth\undefined%
    \setlength{\unitlength}{51.81444759bp}%
    \ifx\svgscale\undefined%
      \relax%
    \else%
      \setlength{\unitlength}{\unitlength * \real{\svgscale}}%
    \fi%
  \else%
    \setlength{\unitlength}{\svgwidth}%
  \fi%
  \global\let\svgwidth\undefined%
  \global\let\svgscale\undefined%
  \makeatother%
  \begin{picture}(1,1.05889539)%
    \put(0,0){\includegraphics[width=\unitlength,page=1]{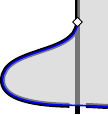}}%
    \put(0.08354196,0.62374524){\color[rgb]{0,0,0}\makebox(0,0)[lb]{\smash{$\bar\sL_0$}}}%
    \put(0.81595554,0.36785288){\color[rgb]{0,0,0}\makebox(0,0)[lb]{\smash{$\bar\sL_0'$}}}%
    \put(0.82336597,0.83052653){\color[rgb]{0,0,0}\makebox(0,0)[lb]{\smash{$\bar\sfp$}}}%
  \end{picture}%
\endgroup%
}}\in\scrM^{\rightrigid}\\
\end{tikzcd}
\]

Recall the sign convention defined in Definition~\ref{definition:polynomial of hybrid disk}. Then $\Sh_+$ preserves the sign and so $\scrP$ as well
\[
\scrP(\Sh_+(f))=\scrP(f).
\]
\end{enumerate}

\subsubsection{Stretching of $(+)$-folding edges}
We define
\[
\St:\scrM^{\pmfolding}\to \scrM^\pair_{(1\hybto{}0)} \amalg \scrM^\pair_{(0\hybto{}1)}.
\]

For $f\in \scrM^{\pmfolding}$, let $\bx\in \be_r$ be a singular point of the $(+)$-folding edge $\be_r$.
As we did in \S~\ref{section:stretching}, let us stretch $\be_r$ along either $f^{-1}(\sL')$ or $f^{-1}(\sL)$ until it hits another point $\by\in\be_s\subset\ePi$.\footnote{The choice of $f^{-1}(\sL)$ or $f^{-1}(\sL')$ is according to the relative position of $r$ and $\separator$. That is, we stretch along $f^{-1}(\sL')$ if and only if $r<\separator$.}
By cutting the domain of $f$ along the stretching trajectory, we have two subdomain $\Pi_t$ and $\Pi_u$ so that $\Pi_t$ contains $\bv_0$. Let $f^1\coloneqq f|_{\Pi_t}$ and $f^0\coloneqq f|_{\Pi_u}$ be the restrictions of $f$.

If both $\bx$ and $\by$ are contained in the same side of $\partial\Pi$ with respect to $\separator$, that is,
\[
\bx,\by\in(0,\separator)\quad\text{ or }\quad\bx,\by\in(\separator,t+1),
\]
then $\St(f)$ is defined by $g=(f^0\hybto{r} f^1)$ which is obtained by applying one of ($\St$-1),($\St$-2) and ($\St$-3) described in \S~\ref{section:stretching},
where $\separator^1$ is inherited from the separator $\separator$ of $f$ and
\[
\separator^0=
\begin{cases}
u+1 &\text{ if }\bx,\by\in(0,\separator);\\
0 &\text{ if }\bx,\by\in(\separator,t+1).
\end{cases}
\]
This yields a hybrid admissible pair $g\in\scrM^{\pair}_{(1\hybto{}0)}$ since the $(-)$-folding edge is contained in $\Pi^1$.

\begin{enumerate}
\item [($\St$-4)] If $\bx<\separator<\by$ or $\by<\separator<\bx$, then $\St(f)$ is defined by $g=(f^0\hybto{r} f^1)$ and by assigning $\separator^1=r=\by$ and $\separator^0$ inherited from $\separator$. It is easy to see that 
$g\in\scrM^{\pair}_{(0\hybto{}1)}$.
\[
\begin{tikzcd}
\vcenter{\hbox{}}\ar[r,"\St"]&
g=\vcenter{\hbox{}}=
\vcenter{\hbox{}}
\end{tikzcd}
\]
\end{enumerate}

\subsubsection{Cutting of concave corners and separators}\label{sec:Cutting of concave corners and the separator}
Now we define
\begin{align*}
\Cut_\pm&:\scrM^{\negativefoldingconcave}\to \scrM^{\pmfolding},& \Cut_+&:\scrM^{\leftrigid}\amalg\scrM^{\rightrigid} \to \scrM^{\pmfolding}.
\end{align*}

For $f\in\scrM^{\negativefoldingconcave}$, $\Cut_{\pm}(f)$ is defined by applying ($\Cut$-1) in \S~\ref{sec:Cutting concave vertices in two ways} near the concave vertex and by preserving the separator. Obviously, $\Cut_\pm(f)\in \scrM^{\pmfolding}$. Notice that ($\Cut$-2) and ($\Cut$-3) are not necessary since they treat only infinitesimal disks.

\begin{enumerate}
\item[($\Cut$-4)]
Suppose $f\in \scrM^{\leftrigid}\amalg\scrM^{\rightrigid}$. Then $\Cut_+(f)$ is defined by creating a $(\pm)$-folding edge near $f(\separator)$. Automatically, the separator $\separator'$ of $\Cut_+(f)$ is determined by $\Cut_+(f)(\separator')=f(\separator)$ and hence $\Cut_+(f)\in\scrM^{\pmfolding}$.
\[
\begin{tikzcd}
f=\vcenter{\hbox{}}\qquad\vcenter{\hbox{}}\in\scrM^{\leftrigid}\ar[r,"\Cut_+"]&
g=\vcenter{\hbox{}}\qquad\vcenter{\hbox{}}\in\scrM^{\pmfolding}\\
f=\vcenter{\hbox{}}\ \vcenter{\hbox{}}\in\scrM^{\rightrigid}\ar[r,"\Cut_+"]&
g=\vcenter{\hbox{}}\ \vcenter{\hbox{}}\in\scrM^{\pmfolding}\\
\end{tikzcd}
\]
\end{enumerate}

\begin{lemma}\label{lem:manipulation label preserving}
The following maps preserve $\scrP:\scrM \amalg \scrM^\pair\to A$ up to sign:
\begin{align*}
\Gl_+&:\scrM^\pair \to \scrM^{\pmfolding};\\
\Sh_\pm&:\scrM^{\pmfolding}\to \scrM^\pair_{(1,0)} \amalg \scrM^{\negativefoldingconcave}  \amalg \scrM^{\leftrigid} \amalg \scrM^{\rightrigid};\\
\St&:\scrM^{\pmfolding}\to \scrM^\pair_{(1,0)} \amalg \scrM^\pair_{(0,1)};\\
\Cut_\pm&:\scrM^{\negativefoldingconcave}\to \scrM^{\pmfolding};\\
\Cut_+&:\scrM^{\leftrigid}\amalg\scrM^{\rightrigid} \to \scrM^{\pmfolding}.
\end{align*}

Moreover, only the functions with negative subscripts reverse the sign of $\scrP$.
\end{lemma}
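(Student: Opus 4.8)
The plan is to treat Lemma~\ref{lem:manipulation label preserving} exactly as Lemma~\ref{lemma:operations preserve labels} treats the non-hybrid case: as a compilation of the sign computations already recorded at each of the five manipulations $\Gl_+$, $\Sh_\pm$, $\St$, and $\Cut_\pm$. The key structural observation is that $\scrP$ is a purely combinatorial quantity. By Definition~\ref{definition:polynomial of hybrid disk} and Definition~\ref{definition:polynomial} it is assembled from the canonical label word $\tilde f(\bv_1\cdots\bv_t)$, the sign $\sgn(f)$, and the filtered algebra morphism $\Psi=\Psi_{[m+n]}$, the latter being applied to the sub-word lying to the left of the separator $\separator$. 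Since $\Psi$ is an algebra homomorphism on each filtration level (Definition~\ref{def:filtered graded algebra}, Proposition~\ref{proposition:commutativity}), $\scrP$ behaves multiplicatively on words, while each manipulation alters the underlying disk only locally. I would therefore verify the claim operation-by-operation, checking in each case two things: that the label word is preserved (up to the substitution of one vertex label by a product dictated by the gluing or cutting), and that the sign factor transforms as asserted.

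First I would dispose of the label-word part. Away from the separator, each hybrid operation is literally one of the non-hybrid operations of Appendix~\ref{section:manipulation of disks} performed near a $(+)$-folding edge, a concave vertex, or a vertex to be glued; the label identities established there (the statements $\cP(\Gl_\pm(f))=\pm\cP(f)$, $\cP(\St(f))=\cP(f)$, and so on) apply verbatim to the portion of the word untouched by $\Psi$. The only genuinely new feature is that one must track how $\separator$ moves, hence which factors fall under $\Psi$. Here I would invoke the separator assignments already fixed in the definitions: $\Gl_+$ inherits $\separator^1$ (or $\separator^0$ in the hybrid gluing ($\Gl$-3)), the shrinking and cutting maps preserve the separator, and $\St$ reassigns it as specified in ($\St$-4). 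In every case the sub-word to the left of the separator is the one to which $\Psi$ is applied on both sides of the operation, so that multiplicativity of $\Psi$ yields the matching of the two polynomials.

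The substantive part is the sign bookkeeping, which I would organize according to the three branches of Definition~\ref{definition:polynomial}, namely $r<\separator^1$, $\separator^1<r$, and $r=\separator^1$. The hybrid disk sign $\sgn(f)$ already carries an extra $(-1)$ when the distinguished index $r$ lies to the right of $\separator$, and the pair sign $\sgn(f)=\sgn(f^1)\sgn(f^0)(-1)^{|\tilde f^1(\bv_1\cdots\bv_{r-1})|}$ (Definition~\ref{definition:hybrid admissible pair}) is designed to absorb precisely the Koszul sign produced by sliding the inserted sub-word $\tilde f^0(\bw_1\cdots\bw_u)$ into position. For the positive operations $\Gl_+$, $\St$, and $\Cut_+$ I would check that these conventions cancel, using the orientation-sign tables of Figure~\ref{figure:convex and concave vertices} in the concave-vertex cases; for $\Sh_-$ and $\Cut_-$ I would exhibit the single surplus $(-1)$ coming from the two ways of resolving a concave corner, mirroring the non-hybrid identity $\cP(\Cut_\pm(f))=\pm\cP(f)$.

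I expect the main obstacle to be the case $r=\separator^1$, where the glued-in piece $f^0$ is itself hybrid, as in the situations ($\St$-4) and ($\Gl$-3) that land in $\scrM^\pair_{(0\hybto{}1)}$. There the separator passes through the vertex being manipulated, so the factor $\tilde f^0(\bw_1\cdots\bw_u)$ no longer lies cleanly on one side of the separator: on the $\pmfolding$ side some of these labels fall under $\Psi$, whereas the third branch of Definition~\ref{definition:polynomial} places $\tilde f^0(\bw_1\cdots\bw_u)$ entirely outside $\Psi$. Confirming that these two prescriptions produce the same element of $A$ is the delicate step, and I would carry it out by appealing to the algebra-morphism property of $\Psi$ together with the explicit separator assignment recorded in ($\St$-4), checking that the portion of $f^0$ to the left of $\separator^0$ contributes identically under $\Psi$ on both sides. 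Once this compatibility is established, the remaining branches follow by the same local comparison, and collecting the per-operation equalities proves the lemma.
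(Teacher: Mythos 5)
Your proposal is correct and follows exactly the route the paper intends: the paper's own proof of Lemma~\ref{lem:manipulation label preserving} is literally ``This is not hard to check from the definitions and we omit the proof,'' i.e., the operation-by-operation verification you outline, using the sign facts already recorded where $\Gl_+$, $\Sh_\pm$, $\St$, $\Cut_\pm$ are defined (in \S\ref{sec:Manipulations of hybrid disks and pairs}, building on Appendix~\ref{section:manipulation of disks}) together with the separator conventions and Definition~\ref{definition:polynomial}. Your flagging of the $r=\separator^1$ branch --- where the third case of Definition~\ref{definition:polynomial} must be reconciled with the multiplicativity of $\Psi$ and the separator assignment of ($\St$-4) --- is precisely the one delicate check the paper leaves entirely to the reader.
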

\begin{proof}
This is not hard to check from the definitions and we omit the proof.
\end{proof}

\subsubsection{Dragging, ancestor, and descendant maps}
\begin{enumerate}
\item [($\Dr$-1)] If an admissible hybrid disk is not of rightmost or right-rigid type, then we define 
\[
\Dr^{\sR}:\scrM^{\leftmost}\amalg\scrM^{\leftrigid}\amalg\scrM^{\flexible}\amalg\scrM^{\vertexleftrigid}\to\scrM^{\rightmost}\amalg\scrM^{\rightrigid}\amalg\scrM^{\flexible}
\]
by dragging the separator in the right direction, i.e. counterclockwise, along $\partial \Pi$ until it becomes vertex-separated or right-rigid.
\item [($\Dr$-2)] If an admissible hybrid disk is not of leftmost or left-rigid type, then we define 
\[
\Dr^{\sL}:\scrM^{\rightmost}\amalg\scrM^{\rightrigid}\amalg\scrM^{\flexible}\to \scrM^{\leftmost}\amalg\scrM^{\leftrigid}\amalg\scrM^{\vertexleftrigid}\amalg\scrM^{\flexible}
\]
by dragging the separator in the left direction, i.e. clockwise, along $\partial \Pi$ until it becomes vertex-separated, leftmost or left-rigid.
\item [($\Dr$-3)] For $f\in \scrM^\pair_{(0\hybto{}1)}$, we define $\tilde{\Dr}^{\sR}(f)$ by the {\em ancestor disk} of the pair
\begin{align*}
\tilde{\Dr}^{\sR}:\scrM^\pair_{(0\hybto{}1)}&\to\scrM^{\flexible}\amalg\scrM^{\vertexleftrigid}\amalg\scrM^{\vertexbirigid}:f=(f^0\hybto{r}f^1)\mapsto f^1.
\end{align*}
\item [($\Dr$-4)] For $f\in\scrM^{\flexible}\amalg\scrM^{\vertexleftrigid}\amalg\scrM^{\vertexbirigid}$ with $\separator=r$, we define $\tilde{\Dr}^{\sL}(f)$ as the set of all hybrid admissible pairs whose ancestor is $f$ with $\separator=r$
\[
\tilde{\Dr}^{\sL}(f)\coloneqq \left\{(f^0\hybto{r}f^1)\in\scrM^\pair\mid f^1=f, r\ge 1\right\}.
\]
\end{enumerate}

\begin{remark}
In general, the map $\Dr^{\sL}$ on $\scrM^{\flexible}$ does not preserve the map $\scrP$ since the map $\Psi$ in Definition~\ref{definition:polynomial of hybrid disk} is applied to different terms.
Instead, by using the equation (\ref{eq:sum of hybrids}), one compensates for the effect of $\Psi$ by considering the set $\tilde{\Dr}^{\sL}$.
\end{remark}

\begin{lemma}\label{lem:label sum at vertex}
The following holds:
\[
\scrP(f)=\begin{cases}
\displaystyle\scrP(\Dr^{\sL}(f))+\sum_{g\in \tilde{\Dr}^{\sL}(f)} \scrP(g)& f\in\scrM^{\flexible};\\
\displaystyle\sum_{g\in \tilde{\Dr}^{\sL}(f)} \scrP(g)& f\in\scrM^{\vertexleftrigid}\amalg\scrM^{\vertexbirigid}.
\end{cases}
\]
\end{lemma}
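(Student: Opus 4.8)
The plan is to prove the lemma by expanding the morphism $\Psi$ at the separator vertex. Each $f$ under consideration is vertex-separated, so its separator is a vertex $\bv_s=\separator$, and for $f\notin\scrM^{\vertexbirigid}$ the label reads $\scrP(f)=\sgn(f)\Psi(\tilde f(\bv_1\cdots\bv_s))\tilde f(\bv_{s+1}\cdots\bv_t)$ by Definition~\ref{definition:polynomial of hybrid disk}. The whole content will be to split the factor $\Psi(\tilde f(\bv_1\cdots\bv_s))$ at its last letter and to read off the two resulting pieces geometrically.

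First I would use that $\Psi=\Psi_{[m+n]}$ is a morphism of graded algebras, so that $\Psi(\tilde f(\bv_1\cdots\bv_s))=\Psi(\tilde f(\bv_1\cdots\bv_{s-1}))\,\Psi(\tilde f(\bv_s))$. Writing $\bar\sfg_s\coloneqq\tilde f(\bv_s)$ and invoking the defining decomposition of $\Psi$ on a single generator from Definition~\ref{def:filtered graded algebra}, namely $\Psi(\bar\sfg_s)=\pi_*(\bar\sfg_s)+\Psi^\hyb(\bar\sfg_s)$, I obtain
\[
\scrP(f)=\sgn(f)\Psi\bigl(\tilde f(\bv_1\cdots\bv_{s-1})\bigr)\,\pi_*(\bar\sfg_s)\,\tilde f(\bv_{s+1}\cdots\bv_t)+\sgn(f)\Psi\bigl(\tilde f(\bv_1\cdots\bv_{s-1})\bigr)\,\Psi^\hyb(\bar\sfg_s)\,\tilde f(\bv_{s+1}\cdots\bv_t).
\]

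Next I would identify the two summands. For the first, when $f\in\scrM^{\flexible}$ the vertex $\bv_s$ maps into $\sL\cap\sL'$ away from $\sU$, so $\pi$ is defined there and $\pi_*(\bar\sfg_s)$ is precisely the label that the dragged disk $\Dr^{\mathsf{L}}(f)$ assigns to $\bv_s$; since dragging the separator one vertex to the left preserves the convexity and folding data, and hence $\sgn$, this first summand is exactly $\scrP(\Dr^{\mathsf{L}}(f))$. When instead $f\in\scrM^{\vertexleftrigid}$ we have $\bar\sfg_s=\bar\sfc_i$ with $\pi_*(\bar\sfc_i)=0$, so the first summand drops out, explaining the absence of a $\Dr^{\mathsf{L}}$-term in the second line of the lemma. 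For the second summand I would substitute $\Psi^\hyb(\bar\sfg_s)=\sum_{g_0\in\scrM(\bar\sfg_s)_{(0)}}\scrP(g_0)$ from~(\ref{eq:sum of hybrids}); after moving the leading $\Psi(\tilde f(\bv_1\cdots\bv_{s-1}))$ inside via the morphism property, each term reassembles into the label of the pair $(g_0\hybto{s}f)$, whose combined vertex word is $\bv_1\cdots\bv_{s-1}\,\bw_1\cdots\bw_u\,\bv_{s+1}\cdots\bv_t$ with combined separator $\separator^0$, in agreement with Definition~\ref{definition:polynomial}(3). As $g_0$ ranges over $\scrM(\bar\sfg_s)_{(0)}$ the pairs $(g_0\hybto{s}f)$ range over exactly $\tilde{\Dr}^{\mathsf{L}}(f)$, giving $\sum_{g\in\tilde{\Dr}^{\mathsf{L}}(f)}\scrP(g)$. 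The bi-rigid case is handled on its own footing: here $\scrP(f)=0$ by convention, so I must show $\sum_{g\in\tilde{\Dr}^{\mathsf{L}}(f)}\scrP(g)=0$; since $\scrM(\bar\sfx)_{(0)}=\{f_{\bar\sfx,\bar\sfp},f_{\bar\sfx,\bar\sfq}\}$ by Example~\ref{ex:disk near x}, this sum consists of only the two pairs $(f_{\bar\sfx,\bar\sfp}\hybto{s}f)$ and $(f_{\bar\sfx,\bar\sfq}\hybto{s}f)$, whose monogon factors carry opposite orientation signs, and I would check that this forces their labels to cancel.

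The main obstacle will be the sign bookkeeping rather than the geometry. I expect the delicate points to be verifying that $\sgn$ of the reassembled pair, as prescribed in Definition~\ref{definition:polynomial}, coincides with $\sgn(f)\sgn(g_0)$ together with the Koszul sign produced when the morphism property carries $\Psi$ across $\tilde f(\bv_1\cdots\bv_{s-1})$, and confirming that the formula of Definition~\ref{definition:polynomial}(3), in which $\Psi$ is applied only to $\tilde f^1(\bv_1\cdots\bv_{s-1})$, is genuinely equivalent to the combined-disk interpretation in which $\Psi$ runs up to the separator $\separator^0$ of the glued configuration. Once these sign and convention matches are established, the two summands assemble verbatim into the right-hand sides of the two cases of the lemma.
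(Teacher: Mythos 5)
Your proposal is correct and takes essentially the same route as the paper: the paper's own proof disposes of the flexible and vertex-left-rigid cases by declaring them ``clear from the definition of $\scrP$ and (\ref{eq:sum of hybrids})'', which is precisely the expansion $\Psi(\bar\sfg_s)=\pi_*(\bar\sfg_s)+\Psi^{\hyb}(\bar\sfg_s)$ at the separator letter that you carry out, and it handles the bi-rigid case exactly as you do, via the two pairs built from $f_{\bar\sfx,\bar\sfp}$ and $f_{\bar\sfx,\bar\sfq}$ whose labels cancel. The sign and convention checks you flag as delicate are not carried out in the paper either; it simply asserts them.
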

\begin{proof}
All cases except $f\in\scrM^{\vertexbirigid}$ are clear from the definition of $\scrP(f)$ and (\ref{eq:sum of hybrids}).

As seen earlier, $\tilde{\Dr}^{\sL}(f)$ for $f\in\scrM^{\vertexbirigid}$ consists of two elements
\[
\tilde{\Dr}^{\sL}(f)=\{(f_{\bar\sfx,\bar\sfp}\hybto{r} f), (f_{\bar\sfx,\bar\sfq}\hybto{r} f)\},
\]
and since $\scrP((f_{\bar\sfx,\bar\sfp}\hybto{r}f))=-\scrP((f_{\bar\sfx,\bar\sfq}\hybto{r}f))$, 
\[
\scrP(f)=0=\scrP((f_{\bar\sfx,\bar\sfp}\hybto{r}f))+\scrP((f_{\bar\sfx,\bar\sfq}\hybto{r}f)).\qedhere
\]
\end{proof}

\subsection{Proof of Proposition~\ref{proposition:new commutativity}}
The underlying idea of the following combinatorial arguments is to consider the 1-dimensional moduli space consisting of admissible hybrid disks and pairs, which can be used to extract an algebraic relation from the boundary of the moduli space.

Let $f$ be a hybrid non-infinitesimal admissible disk on $\Pi_t$. For $a\in [0,t+1]$, we denote the length of the image of $[0,a]\subset \partial \Pi_t$ by $\|a\|_f$. For convenience's sake, we set $\|a\|_f=0$ for all $a$ if $f$ is infinitesimal.

\begin{definition}[Separator index]
Let $f\in\scrM(\bar\sfg)$ be a hybrid admissible disk on $\Pi_t$. Then the {\em separator index} $s(f)$ is defined as
\[
s(f)\coloneqq \frac{\|\separator_f\|_f}{\|t+1\|_f}.
\]

Similarly, for a hybrid admissible pair $f=(f^0\hybto{r} f^1)\in\scrM^\pair(\bar\sfg)$ of $f^1$ on $\Pi_t$ and $f^0$ on $\Pi_u$, the separator index $s(f)$ is defined as
\[
s(f)\coloneqq \frac{\|\separator_{f^1}\|_{f^1}+\|\separator_{f^0}\|_{f^0}}{\|t+1\|_{f^1}+\|u+1\|_{f^0}}.
\]
\end{definition}

Notice that if $f$ does not have a positive folding edge, the separator index for the equivalent class $[f]$ is well-defined, or the separator index $s(f)$ will increase or decrease by stretching or shrinking it.

\begin{definition}\label{def:moduli graph}
For a given $\bar\sfg\in \bar\sG_{=\sfc}\amalg \bar\sG_{>\sfc}$,
let us construct a simple, labeled, directed graph $\bG_{\bar\sfg}$ as follows:
\begin{enumerate}
\item The vertices $\bV\bG_{\bar\sfg}$ consist of
\begin{align*}
\bV\bG_{\bar\sfg}\coloneqq\scrM(\bar\sfg)_{(1)} \amalg \scrM^\pair(\bar\sfg)_{(1\hybto{}0)} \amalg \scrM^\pair(\bar\sfg)_{(0\hybto{}1)}.
\end{align*}
\item For two vertices $f,g$ in $\bV\bG_{\bar\sfg}$, we assign an edge $\{f,g\}\in \bE\bG_{\bar\sfg}$ if there is a function 
\[
\bQ\in \{\Gl_\pm, \Sh_\pm, \St, \Cut_\pm, \Dr^{\sR},\tilde{\Dr}^{\sR},\Dr^{\sL}, \tilde{\Dr}^{\sL}\}
\]
such that either $g=\bQ(f)$ or $g \in \bQ(f)$. See Table~\ref{tab:subsets and operations}.
\item 
The orientation for each edge of $\bG_{\bar\sfg}$ is given as follows: Let $\{f,g\}$ be an edge.
\begin{enumerate}
\item if $g=\tilde{\Dr}^{\sR}(f)$ or $f\in\tilde{\Dr}^{\sL}(g)$, we assign an orientation $f\to g$;
\item for the rest of non-oriented edge $\{f,g\}$, we assign an orientation $f\to g$ if $s(f)<s(g)$.
\end{enumerate}
\item
Finally, the label $\bP:\bE\bG_{\bar\sfg}\to A$ is defined by
\begin{align*}
\bP(f\to g)=\begin{cases}
\scrP(g) & g=\bQ(f)\text{ or }f=\bQ(g), \bQ\in\{\Gl_-,\Cut_-\};\\
\scrP(f) & \text{otherwise}.
\end{cases}
\end{align*} 
\end{enumerate}
\end{definition}

\begin{table}[ht]
\[\def\arraystretch{1.5}
\begin{array}{c||c|c|c|c|c|c|c|c|c||c}
& \Gl_\pm & \Sh_\pm & \St & \Cut_+ & \Cut_- & \Dr^{\sR} & \Dr^{\sL} & \tilde{\Dr}^{\sR} & \tilde{\Dr}^{\sL} & \val_{\bG_{\bar\sfg}}\\
\hline\hline
\scrM_{(1\hybto{}0)}^\pair & \bigcirc & & & & & & & & & 1\\
\hline
\scrM_{(0\hybto{}1)}^\pair & \bigcirc & & & & & & & \bigcirc & & 2\\
\hline
\scrM^{\pmfolding} & & \bigcirc & \bigcirc & & & & & & & 2\\
\hline
\scrM^{\negativefoldingconcave} & & & & \bigcirc & \bigcirc & & & & & 2\\
\hline
\scrM^{\leftrigid} & & & & \bigcirc & & \bigcirc & & & & 2\\
\hline
\scrM^{\rightrigid} & & & & \bigcirc & & & \bigcirc & & & 2\\
\hline
\scrM^\leftmost & & & & & & \bigcirc & & & & 1\\
\hline
\scrM^\rightmost & & & & & & & \bigcirc & & & 1\\
\hline
\scrM^{\vertexleftrigid} & & & & & & \bigcirc & & & \bigcirc & \ge2\\
\hline
\scrM^{\vertexbirigid} & & & & & & & & & \bigcirc & 2\\
\hline
\scrM^{\flexible} & & & & & & \bigcirc & \bigcirc & & \bigcirc & \ge2
\end{array}
\]
\caption{Subsets and operations}
\label{tab:subsets and operations}
\end{table}

Notice that if $g=\Gl_-(f)$ or $g=\Cut_-(f)$ then $\scrP(g)=-\scrP(f)$.
Moreover, the set of exterior vertices is
\[
\bV_{\mathsf{ext}}\bG_{\bar\sfg}=\scrM^\leftmost(\bar\sfg)\amalg \scrM^\rightmost(\bar\sfg)\amalg \scrM^\pair_{(1\hybto{}0)}(\bar\sfg),
\]
and it is easy to see that for each interior vertex $f\in\bV_{\mathsf{int}}\bG_{\bar\sfg}$, the number of outgoing edges is
\begin{enumerate}
\item $0$ if $f$ is either in $\scrM^{\negativefoldingconcave}$ with $\separator<r$, or in $\scrM^{\vertexbirigid}$,
\item $2$ if $f$ is in $\scrM^{\negativefoldingconcave}$ with $r<\separator$, or
\item $1$ otherwise.
\end{enumerate}
Here the index $r$ is the position of the concave vertex. Similarly, for each exterior vertex $f\in\bV_{\mathsf{ext}}\bG_{\bar\sfg}$, the number of outgoing edge is $1$ if and only if either
\begin{enumerate}
\item $f\in\scrM^{\leftmost}$,
\item $f\in\scrM^\pair_{(1\hybto{}0)}$ with $f^0\not\in\cM^{\infinitesimaltriangle}$ and $r<\separator^1$, or
\item $f\in\scrM^\pair_{(1\hybto{}0)}$ with $f^0\in\cM^{\infinitesimaltriangle}$ and $\separator^1<r$.
\end{enumerate}

\begin{lemma}\label{lemma:handshaking}
For each interior vertex $f\in\bV_{\mathsf{int}}\bG_{\bar\sfg}$, the label sums of incoming and outgoing edges coincide. That is,
\[
\sum_{\substack{\be\in \bE\bG_{\bar\sfg}\\ \be=(f\to g)}} \bP(\be) - 
\sum_{\substack{\be\in \bE\bG_{\bar\sfg}\\ \be=(g\to f)}} \bP(\be)=0.
\]
\end{lemma}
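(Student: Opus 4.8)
The plan is to read Lemma~\ref{lemma:handshaking} as a local conservation law for the label $\bP$ on the directed graph $\bG_{\bar\sfg}$, and to verify it by exhausting the finitely many types of interior vertex recorded in Table~\ref{tab:subsets and operations}. For a fixed $f\in\bV_{\mathsf{int}}\bG_{\bar\sfg}$ I would list the edges incident to $f$, read off their orientations and labels directly from Definition~\ref{def:moduli graph}, and then check that the signed outgoing total equals the signed incoming total. The two structural inputs that make this feasible are Lemma~\ref{lem:manipulation label preserving}, which says that every manipulation preserves $\scrP$ up to sign and that precisely the maps with a negative subscript reverse it, and Lemma~\ref{lem:label sum at vertex}, which governs the vertex-separated types where $\scrP$ genuinely fails to be preserved along a dragging edge. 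Summing the resulting per-vertex identities over all of $\bV_{\mathsf{int}}\bG_{\bar\sfg}$ is what will later produce Proposition~\ref{proposition:new commutativity}.

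First I would dispose of the valency-two types on which the two incident operations are inverse manipulations of the kind studied in \S\ref{sec:Manipulations of hybrid disks and pairs}: namely $\scrM^{\pmfolding}$ (incident via $\Sh_\pm$ and $\St$), $\scrM^{\leftrigid}$ and $\scrM^{\rightrigid}$ (incident via $\Cut_+$ and a dragging map), and $\scrM^\pair_{(0\hybto{}1)}$ (incident via $\Gl_+$ and $\tilde{\Dr}^{\mathsf R}$). In each case the separator-index orientation of Definition~\ref{def:moduli graph} puts exactly one edge entering and one leaving $f$, and the edge-label rule is arranged so that the two incident labels agree in absolute value; tracking the single possible sign flip (via Lemma~\ref{lem:manipulation label preserving}) against the orientation convention shows that the signed difference is zero. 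These cases are mechanical once the orientation is pinned down.

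The substantive cases are the remaining ones. For $f\in\scrM^{\negativefolding\concavevertex}$ the two incident edges come from $\Cut_+$ and $\Cut_-$, and according to whether the separator lies before or after the concave corner both edges are incoming or both are outgoing (matching the outgoing count $0$ or $2$ noted after Definition~\ref{def:moduli graph}); since $\Cut_-$ reverses the sign while $\Cut_+$ does not, the relevant total telescopes to $\scrP(f)+(-\scrP(f))=0$, which is exactly the balance against the zero on the opposite side. For the vertex-separated types I would reduce everything to Lemma~\ref{lem:label sum at vertex}. At $f\in\scrM^{\vertexleftrigid}$ the outgoing $\Dr^{\mathsf R}$-edge carries $\scrP(f)$ and the incoming $\tilde{\Dr}^{\mathsf L}$-edges carry $\sum_{g\in\tilde{\Dr}^{\mathsf L}(f)}\scrP(g)$, so the identity $\scrP(f)=\sum_g\scrP(g)$ is the conservation law verbatim; at $f\in\scrM^{\flexible}$ the incoming $\Dr^{\mathsf L}$-edge additionally contributes $\scrP(\Dr^{\mathsf L}(f))$, and the flexible case of Lemma~\ref{lem:label sum at vertex} furnishes exactly the needed splitting; at $f\in\scrM^{\vertexbirigid}$ the label vanishes and the two $\tilde{\Dr}^{\mathsf L}$-edges built from $f_{\bar\sfx,\bar\sfp}$ and $f_{\bar\sfx,\bar\sfq}$ carry opposite labels, so both sides are zero.

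The hard part will be keeping the orientation and sign conventions consistent precisely in the vertex-separated types, because there $\Dr^{\mathsf L}$ does \emph{not} preserve $\scrP$: the morphism $\Psi$ of Definition~\ref{definition:polynomial of hybrid disk} is applied to a different initial segment of the word as the separator drags across a vertex, and the deficit is absorbed only after summing over the whole family $\tilde{\Dr}^{\mathsf L}(f)$ by means of the relation $\Psi^\hyb(\bar\sfg)=\sum_{f\in\scrM(\bar\sfg)_{(0)}}\scrP(f)$ in (\ref{eq:sum of hybrids}). I would therefore treat Lemma~\ref{lem:label sum at vertex} as the genuine content, push every vertex-separated balance onto it, and spend the remaining effort only on the sign-sensitive matching of the case-by-case formula in Definition~\ref{definition:polynomial} against the edge-label rule of Definition~\ref{def:moduli graph}, which is where an error would most easily creep in.
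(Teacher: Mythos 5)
Your proposal is correct and follows essentially the same route as the paper: a case-by-case check over the interior vertex types of Table~\ref{tab:subsets and operations}, disposing of the bivalent inverse-operation types via Lemma~\ref{lem:manipulation label preserving}, cancelling the two opposite-signed $\Cut_\pm$ edges at $\scrM^{\negativefolding\concavevertex}$, and reducing all vertex-separated types (including the vanishing bi-rigid case) to Lemma~\ref{lem:label sum at vertex}. The only difference is presentational — you spell out the bivalent cases $\scrM^{\leftrigid}$, $\scrM^{\rightrigid}$, $\scrM^{\pair}_{(0\hybto{}1)}$ that the paper's proof subsumes implicitly.
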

\begin{proof}
This can be checked case by case as follows:
\begin{enumerate}
\item If $f$ is not bivalent, then this is equivalent to Lemma~\ref{lem:label sum at vertex}.
\item For $f\in\scrM^{\pmfolding}$, there are one incoming and one outgoing edges with the same labels.
\item For $f\in\scrM^{\negativefoldingconcave}$, there are two outgoing or two incoming edges whose signs of labels are opposite as seen in ($\Sh_\pm$-2) or in ($\Cut$-2).
\item For $f\in\scrM^{\vertexbirigid}$, there are two incoming edges whose signs of labels are opposite as seen in Lemma~\ref{lem:label sum at vertex}.\qedhere
\end{enumerate}
\end{proof}

\begin{proof}[Proof of Proposition~{\rm\ref{proposition:new commutativity}}]
We deduce
\begin{align*}
0&=\sum_{\be\in\bE\bG_{\bar\sfg}} \bP(\be)-\bP(\be)
=\sum_{\substack{f\in \bV\bG_{\bar\sfg}\\ \be=(f\to g)}}\bP(\be)-\sum_{\substack{f\in \bV\bG_{\bar\sfg}\\ \be=(g\to f)}}\bP(\be)
=\sum_{\substack{f\in \bV_\ext\bG_{\bar\sfg}\\ \be=(f\to g)}}\bP(\be)-\sum_{\substack{f\in \bV_\ext\bG_{\bar\sfg}\\ \be=(g\to f)}}\bP(\be)\\
&= \left(\sum_{f\in\scrM^{\leftmost}(\bar\sfg_i)}\scrP(f)
+\sum_{\substack{f\in\scrM^\pair_{(1\hybto{}0)}(\bar\sfg_i)\\ f^0\not\in \cM^{\infinitesimaltriangle}, r<\separator^1}} \scrP(f)
+\sum_{\substack{f\in\scrM^\pair_{(1\hybto{}0)}(\bar\sfg_i)\\ f^0\in \cM^{\infinitesimaltriangle}, \separator^1<r}} (-\scrP(f))
\right)\\
&\mathrel{\hphantom{=}}-\left(
\sum_{f\in\scrM^{\rightmost}(\bar\sfg_i)}\scrP(f)
+\sum_{\substack{f\in\scrM^\pair_{(1\hybto{}0)}(\bar\sfg_i)\\ f^0\not\in \cM^{\infinitesimaltriangle}, \separator^1<r}} \scrP(f)
+\sum_{\substack{f\in\scrM^\pair_{(1\hybto{}0)}(\bar\sfg_i)\\ f^0\in \cM^{\infinitesimaltriangle}, r<\separator^1}} (-\scrP(f))
\right)\\
&=\sum_{f\in\scrM^{\leftmost}(\bar\sfg_i)}\scrP(f)+\sum_{\substack{f\in\scrM^\pair_{(1\hybto{}0)}(\bar\sfg_i)\\ r<\separator^1}}\scrP(f)
-\sum_{f\in\scrM^{\rightmost}(\bar\sfg_i)}\scrP(f)
-\sum_{\substack{f\in\scrM^\pair_{(1\hybto{}0)}(\bar\sfg_i)\\ \separator^1<r}} \scrP(f)
\end{align*}
where the second equality is the handshaking lemma for directed graphs, the third and the fourth equalities comes from Lemma~\ref{lemma:handshaking} and the observation above the lemma.
This proves Proposition~\ref{proposition:new commutativity}.
\end{proof}

\section*{Acknowledgement}
The authors are grateful to Gabriel C. Drummond-Cole, Vivek Shende, Otto van Koert, Kaoru Ono, and Tobias Ekholm for detailed and general comments including a meaningful example in the previous draft. The authors also thank anonymous referees for constructive comments. 
The second author thanks Research Institute for Mathematical Sciences, Kyoto University for its warm hospitality. 

The first author was supported by IBS-R003-D1. The second author was partially supported by IBS-R003-D1 and JSPS International Research Fellowship Program.


\begin{thebibliography}{99}
\bibitem{ABK2018}
B.~H. An, Y.~Bae, \and S.~Kim, `Legendrian singular links and singular connected sums', {\em J. Symplectic Geom.}, 16 (2018) no. 4, 885--930.

\bibitem{Avdek2012}
R.~Avdek, `Liouville hypersurfaces and connect sum cobordisms', arXiv:1204.3145.

\bibitem{BI2009}
S.~Baader \and M.~Ishikawa, `Legendrian graphs and quasipositive diagrams', {\em Ann. Fac. Sci. Toulouse Math. (6)}, 18 (2009), no. 2, 285--305.

\bibitem{Chekanov2002}
Y.~Chekanov, `Differential algebra of Legendrian links', {\em Invent. Math.}, 150 (2002), no. 3, 441--483.

\bibitem{EES2005}
T.~Ekholm, J.~B. Etnyre \and M.~G. Sullivan, `The contact homology of Legendrian submanifolds in $\mathbb{R}^{2n+1}$', {\em J. Differential Geom.}, 71 (2005), no. 2, 177--305.

\bibitem{EES2007}
T.~Ekholm, J.~B. Etnyre \and M.~G. Sullivan, `Legendrian contact homology in $P\times\mathbb{R}$', {\em Trans. Amer. Math. Soc.}, 359 (2007), no. 7, 3301--3335.

\bibitem{EL2017}
T.~Ekholm \and Y.~Lekili, `Duality between lagrangian and legendrian invariants', arXiv:1701.01284.

\bibitem{ENS2018}
T.~Ekholm, L.~Ng \and V.~Shende, `A complete knot invariant from contact homology', {\em Invent. Math.}, 211 (2018), no. 3, 1149--1200.

\bibitem{EN2015}
T.~Ekholm \and L.~L. Ng, `Legendrian contact homology in the boundary of a subcritical Weinstein 4-manifold', {\em J. Differential Geom.}, 101 (2015), no. 1, 67--157.

\bibitem{Eliashberg2017}
Y.~Eliashberg, `Weinstein manifolds revisited', arXiv:1707.03442.

\bibitem{Eliashberg2000}
Y.~Eliashberg, `Invariants in contact topology', In {\em Proceedings of the International Congress of Mathematicians, Vol. II (Berlin, 1998)}, {\em Doc. Math.} 1998, Extra Vol. II, 327--338.

\bibitem{ENS2002}
J.~B. Etnyre, L.~L. Ng \and J.~M. Sabloff, `Invariants of Legendrian knots and coherent orientations', {\em J. Symplectic Geom.}, 1 (2002), no. 2, 321--367.

\bibitem{GPS2017}
S.~Ganatra, J.~Pardon \and V.~Shende, `Covariantly functorial wrapped floer theory on liouville sectors', arXiv:1706.03152.

\bibitem{Geiges2008}
H.~Geiges, {\em An Introduction to Contact Topology}, Cambridge Studies in Advanced Mathematics, 109, (Cambridge University Press, 2008).

\bibitem{HS2014}
J.~Harper \and M.~Sullivan, `A bordered legendrian contact algebra', {\em J. Symplectic Geom.}, 12 (2014), no. 2, 237--255.

\bibitem{LS2013}
J.~E. Licata \and J.~M. Sabloff, `Legendrian contact homology in Seifert fibered spaces', {\em Quantum Topol.}, 4 (2013), no. 3, 265--301.

\bibitem{Nadler2017}
D.~Nadler, `Arboreal singularities', {\em Geom. Topol.}, 21 (2017), no. 2, 1231--1274.

\bibitem{Ng2003}
L.~L. Ng, `Computable Legendrian invariants', {\em Topology}, 42 (2003), no. 1, 55--82.

\bibitem{Ng2010}
L.~L. Ng, `Rational symplectic field theory for Legendrian knots', {\em Invent. Math.}, 182 (2010), no. 3, 451--512.

\bibitem{NT2004}
L.~L. Ng \and L.~Traynor, `Legendrian solid-torus links', {\em J. Symplectic Geom.}, 2 (2004), no. 3, 411--443.

\bibitem{OP2014}
D.~O'Donnol \and E.~Pavelescu, `Legendrian $\theta$-graphs', {\em Pacific J. Math.}, 270 (2014), no. 1, 191--210.

\bibitem{Prasolov2014}
M.~Prasolov, `Rectangular diagrams of Legendrian graphs', {\em J. Knot Theory Ramifications}, 23 (2014), no. 13, 1450074, 28 pp.

\bibitem{Sabloff2003}
J.~M. Sabloff, `Invariants of Legendrian Knots in Circle Bundles', {\em Commun. Contemp. Math.}, 5 (2003), no. 4, 569--627.

\bibitem{Shende2018}
V.~Shende, `Arboreal singularities from lefschetz fibrations', arXiv:1809.10359v1.

\bibitem{Sivek2011}
S.~Sivek, `A bordered Chekanov-Eliashberg algebra', {\em J. Topol.}, 4 (2011), no. 1, 73--104.

\bibitem{Sylvan2016}
Z.~Sylvan, `On partially wrapped Fukaya categories', arXiv:1604.02540.

\end{thebibliography}
\end{document}